\newcommand\Modified{June 8, 2009}
%%%%%%%%%%%%%%%%%%%%%%%%%%%%%%%%%%%%%%%%%%%%%%%%%%%%%%%%%%%%%%%%%%%%%%%%%%%%%%%%%%%%
\documentclass{amsart}

\usepackage{amsfonts, amsmath,amscd, amssymb, latexsym, mathrsfs, stmaryrd, verbatim, wasysym }
\usepackage{graphicx}
\usepackage[all]{xy}
%\usepackage{slashed}

%%%%%%%%%%%%%%%%%%%%%%%%%%%%%%%%%%%%%%%%%%%%%%%%%%%%%%%%%%%%%%%%%%%%%%%%%%%%%%%%%%%%

\newtheorem{theorem}{Theorem}
\newtheorem{definition}{Definition}

\newtheorem{corollary}[theorem]{Corollary}

\newtheorem{lemma}[theorem]{Lemma}
\newtheorem{proposition}[theorem]{Proposition}

\numberwithin{equation}{section}
\numberwithin{theorem}{section}

 \newcommand\datver[1]{\def\datverp%
 {\par\boxed{\boxed{\text{Version: #1; Run: \today}}}}}
 
 \usepackage{color}
\definecolor{darkgreen}{cmyk}{1,0,1,.2}
\definecolor{m}{rgb}{1,0.1,1}
% then you can do the following:

%modifiers
\renewcommand{\bar}{\overline}

\newcommand{\df}[1]{\mathfrak{#1}}
\renewcommand{\hat}[1]{\widehat{#1}}

\newcommand{\rest}[1]{\big\rvert_{#1}} % restriction e.g. to boundary
\newcommand{\script}[1]{\textsc{#1}}
\renewcommand{\tilde}{\widetilde}
\newcommand{\wt}[1]{\widetilde{#1}}
\newcommand{\wh}{\widehat}
%abbreviations
 %To easily change what we call these very conical incomplete edge metrics
\newcommand\Iielaga{{}^{\mathrm{iie}}\Lambda^*_{\Gamma}} %bundle on which higher signature acts

\newcommand{\dR}{\mathrm{dR}}
\newcommand{\Image}{\operatorname{Image}}

 %Collar neighborhoods
 %d-bar operator
 %derivative as fraction
 %derivative at zero
\newcommand\eps\varepsilon

\newcommand\pa{\partial}

\newcommand\spec{\mathrm{spec}}
\newcommand\Spec{\mathrm{Spec}}

\newcommand\ie{\operatorname{ie}}
\newcommand\iie{\operatorname{iie}}
\newcommand\pie{\operatorname{pie}}

 %as superscripts

\newcommand\Ie{{}^{\ie}}
\newcommand\Iie{{}^{\iie}}

\newcommand\CI{{\mathcal{C}}^{\infty}}
\newcommand\CIc{{\mathcal{C}}^{\infty}_c}
\newcommand\CmI{{\mathcal{C}}^{-\infty}}

%grouping

\newcommand{\lrpar}[1]{\left( #1 \right)}
\newcommand{\lrspar}[1]{\left[ #1 \right]}
\newcommand\ang[1]{\left\langle #1 \right\rangle}
\newcommand{\lrbrac}[1]{\left\lbrace #1 \right\rbrace}
\newcommand{\norm}[1]{\lVert #1 \rVert}
\newcommand{\abs}[1]{\left\lvert #1 \right\rvert}

%modified symbols for formulas
\DeclareMathOperator*{\btimes}{\times} %To put B under the \times symbol

%text in formulas

\newcommand\Diff{\operatorname{Diff}}
\newcommand\Dir{\operatorname{Dir}}
\newcommand{\Dom}{\operatorname{Dom}}
\newcommand\dvol{\operatorname{dvol}}

\newcommand{\Hom}{\operatorname{Hom}}
\newcommand\Id{\operatorname{Id}}

\newcommand{\Ind}{\operatorname{Ind}}

\newcommand{\loc}{\operatorname{loc}}

\newcommand{\sign}{\operatorname{sign}}

\newcommand{\supp}{\operatorname{supp}}

\newcommand\Mand{\text{ and }}

\newcommand\Mforevery{\text{ for every }}

\newcommand\Mif{\text{ if }}

\newcommand\Mst{\text{ s.t. }}

\newcommand\Mthen{\text{ then }}

%extra symbols
% \Fun makes the funny looking II, used for the bdy term in GB

%other versions of II for e.g. 2nd fun form as double form, etc.

%For Clifford multiplication
\DeclareMathAlphabet{\mathpzc}{OT1}{pzc}{m}{it}
\newcommand{\cl}[1]{\mathpzc{cl}\left( #1 \right)}

%margin notes

%sections of the paper
\newcommand\paperintro%
        {%
         }
\newcommand\paperbody%
        {%
         }

%alphabets: \bb* and \c*

\newcommand\bbC{\mathbb{C}}

\newcommand\bbN{\mathbb{N}}

\newcommand\bbQ{\mathbb{Q}}
\newcommand\bbR{\mathbb{R}}

\newcommand\bbZ{\mathbb{Z}}

\newcommand\cB{\mathcal{B}}

\newcommand\cD{\mathcal{D}}
\newcommand\cE{\mathcal{E}}
\newcommand\cF{\mathcal{F}}

\newcommand\cH{\mathcal{H}}
\newcommand\cI{\mathcal{I}}

\newcommand\cK{\mathcal{K}}
\newcommand\cL{\mathcal{L}}
\newcommand\cM{\mathcal{M}}

\newcommand\cO{\mathcal{O}}

\newcommand\cU{\mathcal{U}}
\newcommand\cV{\mathcal{V}}

\newcommand\bN{\mathbf{N}}

\newcommand{\RR}{\mathbb{R}}

\newcommand{\e}{\epsilon}
\newcommand{\del}{\partial}

\newcommand{\calH}{{\mathcal H}}

\newcommand{\calU}{{\mathcal U}}
\newcommand{\calV}{{\mathcal V}}
\newcommand{\calW}{{\mathcal W}}

\newcommand{\frakS}{{\mathfrak S}}

\newcommand{\ovl}{\overline}
%\newcommand{\wh}{\widehat}

%Might need an environment for proofs
%\newcommand{\qed}{\hfill \mbox{\raggedright \rule{.07in}{.1in}}}
%\newenvironment{proof}{\vspace{1ex}\noindent{\bf Proof}\hspace{0.5em}}
%	{\hfill\qed\vspace{1ex}}
%\newenvironment{pfof}[1]{\vspace{1ex}\noindent{\bf Proof of #1}\hspace{0.5em}}
%	{\hfill\qed\vspace{1ex}}

 %Fiber second fundamental form
 %Fiber curvature

%%%%%%%%%%%%%%%%%%%%%%%%%%%%%%%%%%%%%%%%%%%%%%%%%%%%%%%%%%%%%%%%%%%%%%%%%%%%%%%%%%%%%%%%%%%%%%%%%%%%%%%%%%%%%%%%%%%%%%%%%%%

%\usepackage{Iliad}
%\usepackage{showkeys}
\datver{\Modified}

\begin{document}
\title[The signature package on Witt space, I]{The signature package on Witt spaces, I. \\ Index classes}

\author{Pierre Albin}
\address{Department of Mathematics, MIT}
\email{pierre@math.mit.edu}
\author{Eric Leichtnam}
\address{CNRS Institut de Math\'ematiques de Jussieu}
\author{Rafe Mazzeo}
\address{Department of Mathematics, Stanford University}
\email{mazzeo@math.stanford.edu}
\author{Paolo Piazza}
\address{Dipartimento di Matematica, Sapienza Universit\`a di Roma}
\email{piazza@mat.uniroma1.it}

%%%%%%%%%%%%%%%%%%%%%%%%%%%%%%%%%%%%%%%%%%%%%%%%%%
%%%%%%%%%%%%%%%%%%%%%%%%%%%%%%%%%%%%%%%%%%%%%%%%%%
%%%%%%%%%%%%%%%%%%%%%%%%%%%%%%%%%%%%%%%%%%%%%%%%%%
%%%%%%%%%%%%%%%%%%%%%%%%%%%%%%%%%%%%%%%%%%%%%%%%%%

\begin{abstract}
We give a parametrix construction for the signature operator on any compact,
oriented, stratified pseudomanifold $X$ which satisfy the Witt condition. This
construction is inductive. It is then used to show that the signature operator
is essentially self-adjoint and has discrete spectrum of finite multiplicity,
so that its index -- the analytic signature of $X$ -- is well-defined. We then show
how to couple this construction to a $C^*_r\Gamma$ Mischenko bundle 
associated to any Galois covering of $X$ with covering group $\Gamma$. 
The appropriate analogues of these same results are then proved, and it follows 
that we may define an analytic signature class as an element of the
$K$-theory of $C^*_r\Gamma$.  In a sequel to this paper we establish in this
setting the full range of conclusions for this class which sometimes
goes by the name of the signature package.
\end{abstract}

\maketitle

\section{Introduction}
Let $X$ be an orientable closed compact Riemannian manifold with fundamental group $\Gamma$. Let $X^\prime$ 
be a Galois $\Gamma$-covering and $r: X\to B\Gamma$ a classifying map for $X^\prime$. What we call `the signature 
package' for the pair $(X,r:X\to B\Gamma)$ refers to the following collection of results: 
\begin{enumerate}
\item the signature operator with values in the Mischenko bundle $r^* E\Gamma \times_\Gamma C^*_r\Gamma$
%$$
%\widetilde{C^*_r}\Gamma:= r^* E\Gamma \times_\Gamma C^*_r\Gamma\,,
%$$ 
defines  a signature index class  $\Ind (\wt \eth_{\sign})\in K_* (C^*_r \Gamma)$, $* \equiv \dim X \;{\rm mod}\; 2$; 
\item the signature index class is a bordism invariant; more precisely  it defines a group homomorphism 
$\Omega^{{\rm SO}}_* (B\Gamma) \to K_* (C^*_r \Gamma)$;
\item the signature 
index class is a  homotopy invariant; 
\item  there is a  K-homology signature class $[\eth_{\sign}]\in K_* (X)$ whose Chern
 character is, rationally, the Poincar\'e dual of the L-Class;
\item the assembly map $\beta: K_* (B\Gamma)\to K_* (C^*_r\Gamma)$ sends the
class $ r_* [\eth_{\sign}]$ into $\Ind (\wt \eth_{\sign})$;
\item  if  the assembly map  is rationally injective one  can deduce from the above results the homotopy invariance of
 Novikov higher signatures.
\end{enumerate}

We label the {\em full} signature package the one decorated by the following item

\smallskip
 \begin{list}
 {(7)} \item there is a ($C^*$-algebraic) symmetric signature $\sigma_{C^*_r\Gamma} (X,r)\in K_* (C^*_r \Gamma)$,
which is topologically defined, a bordism invariant $\sigma_{C^*_r\Gamma}: \Omega^{{\rm SO}}_* (B\Gamma) \to 
K_* (C^*_r \Gamma)$ and,  in addition, equal to the signature index class.
\end{list}

\smallskip
For history and background see \cite{oberwolfach} \cite{rosenberg-anft} and for a survey we refer to \cite{Kasparov-contemporary}.

\smallskip
{\it This is the first of two papers in which the signature package will be formulated
and established for a class of stratified pseudomanifolds known as Witt spaces.}

\smallskip
In the present paper we shall concentrate on the analytic side of the signature package; the forthcoming second part 
will treat the more topological aspects. More precisely, the goal of the present paper is to prove the following

\begin{theorem}
Let $\wh{X}$ be any smoothly stratified pseudomanifold which satisfies the Witt hypothesis. Let $g$ be any
adapted Riemannian metric on the regular part of $\wh{X}$. Denote by $\eth$ either the Hodge-de Rham operator 
$d + \delta$  or the signature operator $\eth_{\sign}$ associated to $g$. Then the following is true:
\begin{itemize}
\item Let $u$ be in the maximal domain of $\eth$ as an operator on $L^2_{\iie}(X;\Iie\Lambda^*X)$. Then
\[
u \in \bigcap_{0 < \e < 1}\rho^{\eps}H^1_{\iie}(X;\Iie\Lambda^*X)
\].
\item The maximal domain $\cD_{\max}(\eth) $ is compactly embedded in $L^2_{\iie}$.
\end{itemize}
As a consequence, the minimal and maximal domains of $\eth$ are equal, the de Rham operator and the signature operator
are essentially self-adjoint and have only discrete spectrum of finite multiplicity. 
Moreover, there is a well defined signature class $[ \eth_{\sign}]\in K_* (\wh{X})$,
with $*=\dim \wh{X} \; {\rm mod}\; 2$, which is independent of the choice  of the adapted metric on 
the regular part of $\wh{X}$. In particular, in the even dimensional case, the index 
of the signature operator is well-defined.

If $\wh{X}' \to \wh{X}$ is a Galois covering with group $\Gamma$ and $r: \wh{X} \to B\Gamma$ is the
classifying map, then the signature operator $\wt \eth_{\sign}$ with coefficients in the Mishchenko bundle,
together with the $C^*_r\Gamma$-Hilbert module $L^2_{\iie,\Gamma}(X;\Iie\Lambda_\Gamma^*X)$ 
define an unbounded Kasparov $(\bbC,C^*_r\Gamma)$-bimodule and hence a class in $KK_* (\bbC, C^*_r \Gamma)$
$=$$K_* (C^*_r\Gamma)$,
which we call the index class associated to $\wt \eth_{\sign}$ and denote by $\Ind (\wt \eth_{\sign})\in K_* (C^*_r\Gamma)$.
If $[[\eth_{\sign}]]\in KK_*(C(\widehat{X})\otimes C^*_r\Gamma, C^*_r\Gamma)$ is the class obtained from $[\eth_{\sign}]\in 
KK_*(C(\widehat{X}),\bbC)$ by tensoring with  $C^*_r\Gamma$, then $\Ind (\wt \eth_{\sign})$ is equal to the 
Kasparov product of the class defined by the Mishchenko bundle $[\widetilde{C^*_r}\Gamma]\in  KK_0(\bbC,C(\widehat{X})\otimes 
C^*_r\Gamma)$ with  $[[\eth_{\sign}]]$:
\begin{equation}\label{tensor1}
\Ind (\wt \eth_{\sign})= [\widetilde{C^*_r}\Gamma)]\otimes [[\eth_{\sign}]]
\end{equation}
In particular, the index class $\Ind (\wt \eth_{\sign})$ does not depend on the choice of the adapted metric on 
the regular part of $\wh{X}$. Finally, if $\beta:  K_* (B\Gamma)\to K_* (C^*_r\Gamma)$ denotes the assembly 
map in K-theory, then 
\begin{equation}
\beta(r_*  [\eth_{\sign}])=\Ind (\wt \eth_{\sign})\text{ in }  K_* (C^*_r\Gamma)
\end{equation}
\end{theorem}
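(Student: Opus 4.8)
The plan is to proceed by induction on the depth of the smoothly stratified space $\wh X$, the base case of depth $0$ being the classical elliptic theory on a closed manifold. Near a stratum of positive depth, $\wh X$ is modelled on a bundle of truncated cones $C(Z)$ over the link $Z$, which is itself a compact Witt space of smaller depth carrying an induced adapted metric; by the inductive hypothesis the Hodge--de Rham and signature operators on $Z$ are essentially self-adjoint with discrete spectrum, and this is the data entering the normal operator at that stratum. The central step is then to construct a parametrix for $\eth$ inside an iterated incomplete edge ($\iie$) pseudodifferential calculus (an iterated version of the edge calculus), by patching the interior symbolic parametrix with a normal-operator parametrix at each stratum; the latter is obtained by Mellin transform in the cone variable together with the inductively controlled resolvent of the link operator. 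The Witt condition enters precisely here: it forces the middle-degree $L^2$-cohomology of each even-dimensional link to vanish, equivalently there is no indicial root on the critical weight line, so that $\eth$ needs no ideal boundary conditions and the model cone operator is already essentially self-adjoint.

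Granting this parametrix, the two displayed assertions follow by standard edge-calculus bookkeeping. Feeding an element $u\in\cD_{\max}(\eth)$ through the parametrix identity and iterating the mapping properties of the remainder operators upgrades its regularity step by step to $u\in\rho^{\eps}H^1_{\iie}$ for every $\eps\in(0,1)$; the same mapping properties exhibit the inclusion $\cD_{\max}(\eth)\hookrightarrow L^2_{\iie}$ as a norm limit of operators that are compact, because Sobolev embeddings are compact on compact subsets of $\reg{\wh X}$ and the gained weight $\rho^{\eps}$ controls the cone ends. Essential self-adjointness, i.e.\ $\cD_{\min}(\eth)=\cD_{\max}(\eth)$, then follows from this decay together with the spectral gap, since maximal-domain elements decaying like $\rho^{\eps}$ can be cut off and approximated; discreteness of the spectrum with finite multiplicities is immediate from compactness of the resolvent. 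With $\bar\eth$ self-adjoint and $(1+\bar\eth^2)^{-1}$ compact, the bounded transform $\chi(\bar\eth)$ for a normalizing function $\chi$, together with the $C(\wh X)$-module $L^2_{\iie}$, defines a Fredholm module --- compactness of $[\chi(\bar\eth),f]$ for $f$ in a dense subalgebra of Lipschitz functions again being supplied by the $\iie$-calculus --- and hence a class $[\eth_{\sign}]\in K_*(\wh X)$. Metric independence is an operator-homotopy argument: any two adapted metrics are joined by a path of adapted metrics, along which the associated Fredholm modules vary norm-continuously.

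For a Galois covering $\wh X'\to\wh X$ with group $\Gamma$, one repeats the entire construction with coefficients in the flat Mishchenko bundle $r^*E\Gamma\times_\Gamma C^*_r\Gamma$ of finitely generated projective $C^*_r\Gamma$-modules. The $\iie$-calculus is $C^*_r\Gamma$-linear, and twisting by a flat bundle of projective modules affects neither the principal symbol nor the structure of the normal operators, so the Witt spectral gap --- and with it essential self-adjointness --- persists. The $C^*_r\Gamma$-linear parametrix then shows that $(1+\wt\eth_{\sign}^2)^{-1}$ lies in the compact operators of the Hilbert module $L^2_{\iie,\Gamma}(X;\Iie\Lambda^*_\Gamma X)$, so that $\bigl(L^2_{\iie,\Gamma},\wt\eth_{\sign}\bigr)$ is an unbounded Kasparov $(\bbC,C^*_r\Gamma)$-bimodule and defines $\Ind(\wt\eth_{\sign})\in KK_*(\bbC,C^*_r\Gamma)=K_*(C^*_r\Gamma)$. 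Formula~\eqref{tensor1} is the ``index $=$ Kasparov product'' identity: by construction the twisted cycle is the external product of the untwisted cycle $[\eth_{\sign}]$ with the Mishchenko bundle class $[\widetilde{C^*_r}\Gamma]$, and the connection and positivity conditions of Kasparov's product are checked with a finite partition of unity exactly as on a closed manifold, the one point of care being that the relevant commutators must be computed within the $\iie$-calculus so as to land in the correct operator ideal. Finally $\beta(r_*[\eth_{\sign}])=\Ind(\wt\eth_{\sign})$ follows by composing \eqref{tensor1} with Kasparov's realization of the assembly map as cap product with $[\widetilde{C^*_r}\Gamma]$.

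The hard part is the first step: making the edge parametrices at strata of different depths compatible, and propagating the Witt spectral gap through the induction so that the normal operator at each stratum is genuinely invertible, not merely Fredholm. Once this inductive parametrix is available, the regularity, compactness, self-adjointness and $K$-theory statements are fairly formal adaptations of the closed case, and the passage to $C^*_r\Gamma$ coefficients together with the product and assembly formulas introduces no new analytic difficulty beyond careful bookkeeping within the calculus.
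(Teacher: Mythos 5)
Your global architecture (induction on depth, analysis of the normal operator via Mellin transform and the link operator, extra vanishing of maximal-domain elements, Witt condition killing the critical indicial root, unbounded Kasparov cycles, metric independence via operator homotopy) matches the paper. But there are two places where you appeal to things that the paper explicitly does \emph{not} have, and the second is a genuine gap. First, you propose building the parametrix ``inside an iterated incomplete edge ($\iie$) pseudodifferential calculus (an iterated version of the edge calculus)'' and then doing ``standard edge-calculus bookkeeping.'' The paper explains at length that such a calculus does not yet exist and that developing it is a substantial, separate project; the whole point of the paper's \S\S4--6 is to get away \emph{without} it, by using only the uniform (bounded-geometry) pseudodifferential calculus for interior/symbolic regularity, together with an explicit analysis of the normal and indicial families (Bessel-function solutions, closed-range estimates, a Mellin-transform integration-by-parts identity) to localize and upgrade the weight of maximal-domain elements step by step. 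What you label ``standard bookkeeping'' is actually the content of the hardest sections of the paper.

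The genuine gap is in the $C^*_r\Gamma$ case. You write that the ``$C^*_r\Gamma$-linear parametrix then shows that $(1+\wt\eth_{\sign}^2)^{-1}$ lies in the compact operators,'' and conclude you have an unbounded Kasparov bimodule. But on a Hilbert $C^*_r\Gamma$-module, a closed symmetric operator with $\cD_{\max}=\cD_{\min}$ need \emph{not} be \emph{regular}: $1+D^2$ (equivalently $i\pm D$) can fail to be surjective even for a self-adjoint $D$, and a parametrix modulo $C^*_r\Gamma$-compacts does not give you invertibility. (The paper cites a counterexample for exactly this phenomenon.) Without regularity you cannot even form $(1+\wt\eth_{\sign}^2)^{-1}$, so the cycle you write down is not yet a Kasparov bimodule. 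The paper closes this gap with a separate and essential argument (Proposition \ref{prop:regularity}): it invokes Skandalis's theory of $\cD$-connections, as developed by Rosenberg--Weinberger, showing that $\wt\eth_{\sign}$ is a $\cD$-connection for the scalar cycle tensored with $C^*_r\Gamma$, and that every $\cD$-connection is a self-adjoint regular operator. That same theorem is also what yields the product formula \eqref{tensor1} in the paper, so your plan of verifying Kasparov's technical conditions ``with a finite partition of unity exactly as on a closed manifold'' is not what is done here, and on its own does not substitute for the missing regularity argument.
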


\medskip
The objects and notation in the statement of this main theorem will be gradually introduced in the remainder
of this introduction and in later parts of the paper. 

The starting point of the signature package for closed oriented manifolds is  the signature theorem, surely
one of the triumphs of mid-twentieth century mathematics.
The original version proved by Hirzebruch in the mid 1950's equates the topological signature $\sigma_{\mathrm{top}}(X)$, 
i.e.\ the signature of the intersection form with respect to cup product on middle degree cohomology, of any smooth 
closed $4k$-dimensional oriented manifold $X$ with the so-called $\mathcal{L}$-genus of $X$, $\mathcal{L} (X)$, the 
characteristic number of that manifold obtained by pairing the $L$-class of $X$, $L(X)$, which is a universal polynomial 
in the Pontrjagin classes, with the fundamental class of $X$. The signature formula is the equality
\[
\sigma_{\mathrm{top}}(X) = \mathcal{L} (X) := \langle L(X),[X] \rangle. 
\]
This formula provided one of the main inspirations for the Atiyah-Singer index theorem, and of course is a special case:
by Hodge theory, $\sigma_{\mathrm{top}}(X)$ is equal to the index of the signature operator $\eth_{\sign}$, and specializing
the Atiyah-Singer index formula to this operator one obtains Hirzebruch's formula.  Because of its fundamental
nature and the enormous range of applications, this signature formula has been generalized in many different ways.
These include the Atiyah-Patodi-Singer formula on manifolds with boundary \cite{APS}, Melrose's approach to this result 
\cite{APS Book} which has proved to be particularly well adapted for generalizations, the signature formul\ae\ for manifolds 
with $\bbQ$-rank $1$ cusps by Atiyah-Donnelly-Singer \cite{ADS} and M\"uller \cite{Mue}, and for manifolds with fibred 
cusp ends by Vaillant \cite{Vai} and the signature formula for manifolds with isolated conic singularities by Cheeger \cite{Ch}, 
cf.\ also the treatments by Lesch \cite{L} and Br\"uning-Seeley \cite{BS}. (We are omitting here any mention of the 
extensive literature on families index theorems.) If $X^\prime\to X$ is a Galois $\Gamma$-covering then there is a formula
due to Atiyah for the $L^2$-signature of $X^\prime$ \cite{atiyah-coverings} and  the higher signature formula of 
Connes-Moscovici \cite{CM} \cite{Lott}. The latter provides an alternate route to the proof of the homotopy invariance 
of the higher signatures for Gromov hyperbolic groups, independent of the use of the assembly map as it appears in the 
signature package. Generalizations of these results to $\Gamma$-coverings with boundary are due to
Vaillant \cite{vaillant-diploma} for the numeric $L^2$-signature and to  Leichtnam, Lott and Piazza \cite{LP-SMF} \cite{LLP}
for the higher signatures, see \cite{LPFOURIER} for a survey.

One particularly interesting problem is to establish the signature package for a general class of stratified pseudomanifolds, 
the definition of which is reviewed below, since this is a large and interesting class of singular space to which one 
can reasonably hope to extend many of the key features of analysis of elliptic operators and of differential topology. 
Such extensions are far from being completely understood and there are considerable challenges, both analytic and 
topological, including the proper definition of $\eth_{\mathrm{sign}}$ as a self-adjoint operator and the development of its 
mapping properties, leading to an unambiguous definition of its index, etc.

Indeed, much of this has already been accomplished. The seminal work on the analytic side is by Cheeger \cite{Ch},
with his development of elliptic theory on `spaces with cone-like singularities', and on the topological side by 
Goresky-MacPherson with their introduction of the theory of intersection homology, see \cite{GM} as well as the
work of Siegel \cite{Se}. One of Cheeger's important discoveries was that if the stratified space satisfies the 
Witt hypothesis, which is a vanishing condition for certain local middle-degree intersection cohomology groups, then 
(for a large class of metrics) the signature operator is essentially self-adjoint, with discrete spectrum, and he proved 
much of what is necessary to define the analytic signature for this general class of spaces and to equate
it with the topological signature defined using the intersection homology groups. However, for simplicity of
exposition he focused on the case where the metric is flat on each stratum.  Ultimately this does not hinder
the general applicability of his results, but one main goal of the present paper is to provide an alternate analytic 
framework to address these analytic issues, in particular one which is sufficiently robust so that one may directly 
treat arbitrary compact stratified pseudomanifolds satisfying the Witt condition and endowed with fairly general 
`iterated edge' metrics. Moreover, our methods can also be extended directly to treat the higher signature operator 
in the Mischenko-Fomenko calculus on these spaces. Thus a corollary of our main result is the existence of
an analytic signature and a higher analytic signature for an arbitrary compact Witt space: the former
is an integer while the latter is class in the K-theory of $C^*_r \Gamma$, the reduced group $C^*$-algebra of the fundamental group.  
Topological applications of this index class will be treated in a second paper.  Later in this introduction we
describe some of the other recent analytic approaches to operators and spaces of this type.

We first recall that the signature operators $\eth_{\mathrm{sign}}^\pm$ on an even dimensional oriented compact Riemannian 
manifold $(X,g)$ are the elliptic operators $d + \delta$ acting between $\Omega^*_{\pm}(X)$ and $\Omega^*_{\mp}(X)$
(i.e.\ forms in the $\pm 1$ eigenspaces of the natural involution on $\Omega^*$ induced by the Hodge star).
The analytic signature of the manifold $X$ is, by definition, the index of $\eth^+_{\mathrm{sign}}$, or equivalently,
\[
\sigma_{\mathrm{an}}(X) = \dim \ker \eth^+_{\mathrm{sign}} - \dim \ker \eth^-_{\mathrm{sign}}.
\]
We may tensor $\eth_{\mathrm{sign}}$ with any finite dimensional flat vector bundle $E$ over $X$, to define the index 
$\sigma_{\mathrm{an}}(X,E)$.  A more sophisticated construction leads to the notion of the signature index class of $X$, 
as we now explain. Let $\Gamma = \pi_1(X)$ and $\widetilde{X}$ the universal cover 
of $X$. Let $C^*_r \Gamma$ be the reduced group $C^*$-algebra for $\Gamma$ and consider the associated bundle 
$\widetilde{C^*_r}\Gamma:=\widetilde{X}\times_\Gamma C^*_r \Gamma$. This is  a bundle of finitely generated projective 
$C^*_r \Gamma$-modules of rank $1$ with a flat connection inherited from the trivial connection on $\widetilde{X}
\times C^*_r \Gamma$. Using this data we can define the signature operator twisted by $\widetilde{C^*_r}\Gamma$ ; this is an 
elliptic operator in the Mischenko-Fomenko pseudodifferential calculus and is thus invertible modulo 
$C^*_r\Gamma$-compact operators of the Hilbert $C^*_r\Gamma$-module $L^2 \Omega^* (X,\widetilde{C^*_r}\Gamma)$. In particular, 
it defines an index class in $K_0 (C^*_r \Gamma)$ when $\dim X$ is even, and an index class in $K_1 (C^*_r \Gamma)$ 
when $\dim X$ is odd, both denoted $\Ind (\widetilde{\eth}_{\sign})$. These signature index classes 
$\Ind (\widetilde{\eth}_{\sign})\in K_* (C^*_r \Gamma)$ play a fundamental role in almost all proofs of the
cases where the Novikov conjecture on the homotopy invariance of Novikov  higher signatures is known to be true,
see \cite{oberwolfach} for historical remarks and background.

As noted earlier, there are analogues of the regular and higher signature theorems on manifolds with boundary
with metric of product type near the boundary (or equivalently, with infinite cylindrical ends), but we turn directly
to the closely related setting of compact spaces with isolated conic singularities. Recall that a metric cone is a product 
$\bbR^+ \times F$ with metric (at least quasi-isometric to) $dr^2 + r^2 h$, where $(F,h)$ is a (usually compact) 
Riemannian space. A Riemannian stratified space $X$ has isolated conic singularities if its singular set consists 
of a discrete collection of points, all of which have neighbourhoods of this form. There are by now very many 
approaches to understanding elliptic operators on such spaces, but the first systematic approach directed at 
the sort of applications we are discussing here was accomplished by Cheeger \cite{Ch}, who pointed out that the 
Hodge-de Rham operator $d + \delta$ and Hodge Laplacian $\Delta$ acting on forms of all degrees are both 
essentially self-adjoint for some choice of metric on $X$ if the middle degree cohomology of the cross-section $F$ 
at each cone point vanishes. (This is automatic when $\dim X$ is even so that $\dim F$ is odd.) This is the 
simplest instance of the Witt condition. If $X$ is not Witt, then (again at least for suitable metrics) 
self-adjoint extensions are in bijective correspondence with half-dimensional subspaces of this cohomology 
group which are Lagrangian with respect to a natural symplectic pairing. In either case, any self-adjoint 
extension of $\eth_{\mathrm{sign}}$ on $L^2$ has discrete spectrum with finite multiplicity, and if $\dim X$ 
is even and $X$ is Witt, then $\sigma_{\mathrm{an}}(X)$ is equal to the signature of the intersection form on 
the middle degree intersection cohomology of $X$. (In fact, the Witt condition is used again here to
ensure that there is a unique middle perversity intersection cohomology group; in general, one
must take the signature of the intersection form on the image of the lower middle perversity
intersection cohomology into the upper middle perversity intersection cohomology of middle degree.) 
In this setting, exactly as for manifolds with boundary with the Atiyah-Patodi-Singer boundary condition,
the integral of the $L$-differential form  over $X$ is no longer a topological invariant, and there is an extra term 
in the signature formula equal to half the eta invariant of the associated signature operator on the link 
or boundary. 

Although Cheeger's analysis was specifically adapted to the conic geometry, all of this can also be deduced
using Melrose's $b$-calculus, which is a much more powerful method which can be used to analyze operators
associated to the conformally related asymptotically cylindrical metric, which takes the form $r^{-2}dr^2 + h$ 
near each conic point. We refer to Melrose's book \cite{APS Book} for a detailed explanation of this, but also 
to \cite{Mazzeo:edge} for a treatment of the $b$-calculus as a special case of the edge calculus and \cite{GM} 
which focuses on its use in the conic settng. We shall employ a similar idea in a more general setting.

A stratified space $X$ is said to have a simple edge singularity if it has only one singular stratum $Y$
and a neighbourhood of $Y$ in $X$ is identified with a bundle of truncated cones over a compact smooth 
manifold $F$, which is called the link of the edge. The metric $g$ is required to be conic on each
fibre of this cone bundle. The space is called Witt if $H^{f/2}(F) = 0$, $f := \dim F$. As before, this condition is 
vacuous if $\dim F$ is odd. The analytic techniques needed to understand $\eth_{\mathrm{sign}}$ in this setting 
are more involved; one particularly comprehensive method uses the pseudodifferential edge calculus 
\cite{Mazzeo:edge}. The paper \cite{Hunsicker-Mazzeo} uses this machinery to generalize many facts from
the conic setting to that of simple edges, in particular that for any edge metric there is a Hodge theory which 
identifies $L^2$ closed and coclosed forms with intersection cohomology classes of that space; the signature 
theorem now involves an eta form correction term. As in the isolated conic case, and importantly in the present paper
as well, the analysis in \cite{Hunsicker-Mazzeo} proceeds by relating $\eth_{\mathrm{sign}}$ to an elliptic operator associated
to the {\it complete} conformally related metric $\tilde g$ obtained by dividing $g$ by $r^2$ and then using 
the pseudodifferential edge calculus \cite{Mazzeo:edge} associated to   $\tilde g$.  A more topological approach to 
this and related results was attained by Cheeger and Dai \cite{Cheeger-Dai}.

There is an interesting class of stratified spaces which may be obtained by iterating this `coning
and edging' procedure. We define a cone $C(F)$ over any compact but possibly singular space $F$
just as above; an edge with link $F$ is then a bundle over a smooth space $Y$ with fibres $C(F)$.
Spaces with simple conic or edge singularities have already been defined above, and we say that these
spaces have depth $1$. An iterated edge space of depth $k$ is one which near any point has a neighbourhood
which is either a cone or an edge with link a compact iterated edge space of depth $k-1$. We present this 
definition more carefully in \S 2. We are omitting some very interesting 
classes of stratified spaces, however, e.g.\ those with various types of cusp singularities. There is
a natural class of incomplete iterated edge metrics which generalize the conic and edge metrics above.

An iterated edge space $X$ satisfies the Witt condition if the link $F$ at any conic point or any edge 
has vanishing (upper and lower middle perversity) intersection cohomology, $I\! H^{f/2}(F) = 0$,
$f = \dim F$. If $X$ is Witt, then the topological signature is well-defined without additional choices. 

Cheeger was the first to realize the inherent tractability of studying elliptic operators on this class of 
spaces, and \cite{Ch} describes how to set up an inductive procedure to obtain certain analytic results on 
iterated edge spaces of arbitrary depth. His technique relies heavily on analysis of the heat kernel.
Those methods certainly generalize beyond the specific results he obtained, but may not provide the 
more detailed analytic results that can be obtained in the simple edge case using pseudodifferential operators, 
e.g.\ those concerning sharp asymptotics of solutions of the signature operator near the singular set. 
Even more daunting would be the analysis of self-adjoint boundary conditions for the signature 
operator in the non-Witt case. All of this ``should'' be tractable if one were to set up an `iterated edge 
pseudodifferential calculus',  but carrying that out will involve many substantial technicalities, and in
any case has not been done yet. The present paper is meant to steer some sort of middle ground. 
We present a parametrix construction which is crude by the standards of geometric microlocal
analysis, but does allow one to obtain the necessary analytic information for the signature operator 
on iterated edge spaces with fairly general metrics. Our main goals here are the same as above: to show 
that if $X$ is a compact oriented iterated edge space satisfying the Witt condition, with an adapted 
iterated edge metric, then $\eth_{\mathrm{sign}}$ is essentially self-adjoint and has discrete spectrum with 
finite multiplicity, so that
$\sigma_{\mathrm{an}}(X)$ is well-defined. We work directly with $\eth_{\mathrm{sign}}$ itself, rather than 
its heat kernel, and some of the main work involves a justification of the perturbation theory needed
to pass from the model problems along each stratum to the actual problem.  Finally, we show how to
couple all of this with the $C^*_r\Gamma$ bundle so as to define the higher analytic signature 
of $X$ as well.

Now let us give a bit more detail of our analytic techniques. If $(X,g)$ is a space with isolated conic 
singularity, then $\eth_{\mathrm{sign}}$ can be written as $r^{-1}D$, where $D$ is an elliptic differential 
$b$-operator of order $1$; in local coordinates $r \geq 0$ and $z$ on $F$,
\[
D = A(r,z) \left(r\del_r + \eth_{\mathrm{sign}, F}\right).
\]
The final term on the right here is simply the signature operator on the link $F$. Mapping properties of
the signature operator and regularity properties for solutions of $\eth_{\mathrm{sign}}u = 0$ can be deduced 
from the corresponding properties for $D$, which are in turn direct consequences of Melrose's 
pseudodifferential $b$-calculus \cite{APS Book}. Similarly, if $(X,g)$ has simple edge singularities, 
then once again $\eth_{\mathrm{sign}} = r^{-1}D$ where $D$ is an elliptic edge operator locally of the form
\[
D = A(r,y,z) \left(r\del_r + \sum B_i(r,y,z) r\del_{y_i} + \eth_{\mathrm{sign},F}\right).
\]
Here $r$ is the radial variable in the cone fibres, $z \in F$ and $y$ are coordinates on the edge.
This is an elliptic differential edge operator and the pseudodifferential edge calculus can be
used to deduce all necessary properties of the signature operator $\eth_{\mathrm{sign}}$. 

Finally, let $(X,g)$ be an iterated edge space and $Y$ is a stratum of maximal depth, so that $Y$ is a 
compact smooth manifold without boundary and some neighbourhood of $Y$ is a cone bundle with link $F$, 
where $F$ is itself an iterated edge space of one depth less than $X$. If $r$ is the radial coordinate in
this cone bundle, then $\eth_{\mathrm{sign}} = r^{-1}D$ where $\eth_{\mathrm{sign},F}$ is again an iterated
edge operator, but on a space which is one step less singular. We do not conformally rescale in all
radial variables around the strata of $X$ of smaller depth, but instead consider the space with
metric $r^{-2}g$. The key idea is to use induction, so that we assume that we know all the 
properties of $\eth_{\mathrm{sign},F}$ at this maximal depth stratum and from there wish to deduce
them for $X$ itself. Unlike the simple edge case, however, we cannot rely on the pseudodifferential
edge calculus. In its place, we follow some of the main steps as we would using that theory,
but partially replacing the use of parametrices with a priori estimates. In other words, to
the extent possible, we treat $\eth_{\mathrm{sign},F}$ as a `black box' whose properties we only know
through induction.  This is a crude parametrix method compared to the rather detailed results
that should be available once the methods of \cite{Mazzeo:edge} are generalized to the
iterated edge setting. That generalization is work in progress by the first and third authors
and Richard Melrose, but is anticipated to be fairly intricate, so another goal of the present work 
is to find some middle ground which is sufficient to establish the results needed here, but
is not too complicated. In addition, our approach can be directly adapted when $\eth_{\mathrm{sign}}$ 
is coupled to a $C^*$ bundle, and hence the main theorem in the higher setting can be deduced with
fairly little extra effort from the `ordinary' case.  

Preliminary to this analysis, however, we present in \S 2 a fairly extensive discussion of the class
of smoothly stratified spaces. Part of the reason is to normalize notation, which is not uniform in
the various references to this material, but more significantly, we also establish an equivalence
between this class of spaces and the class of manifolds with corners with iterated fibration structures
introduced by Melrose; the maps providing this equivalence are resolution (blowup) and blowdown, 
respectively. 

The main result of this paper establishes the existence of an index, or an index class, as
quoted in the statement of the main theorem at the beginning of this introduction. The actual
signature theorem requires a substantially more topological argument, and because of this
we develop that material in a separate paper.

As noted earlier, there do exist various classes of pseudodifferential operators associated to iterated edge
spaces. We mention in particular the extensive contributions by Schulze and his collaborators. Schulze's
recent survey \cite{Sch-MSRI} contains a good description of many of the problems, methods and results
accessible by his approach, as well as good list of references of related work. We also mention the papers by
Nazaikinskii, Savin and Sternin \cite{NSS1} and \cite{NSS2}, and the work of Ammann, Lauter and Nistor \cite{ALN}.  
The latter paper describes a calculus slightly richer than the uniform calculus described in \S 4.2 below, 
but does not appear to allow one to handle the specific problems considered here; Schulze's work is geared
toward understanding rather general boundary conditions and, along with \cite{NSS1}, \cite{NSS2}, toward
applications in $K$-theory. In particular, none of these seem to directly apply to the signature operator.

P.A., R.M. and P.P. all wish to thank MSRI for hospitality and financial support during the Fall Semester of 2008
when much of the latter stages of this paper were completed. P.A. was partly supported by an NSF Postdoctoral 
Fellowship, and wishes to thank Stanford for support during visits; R.M. was supported by NSF grants 
DMS-0505709 and DMS-0805529, and enjoyed the hospitality and financial support of MIT, 
Sapienza Universit\`a di Roma and the Beijing International Center for Mathematical Research;
P.P. wishes to thank the CNRS and  Universit\'e Paris 6 for financial support during  visits to Institut 
de Math\'ematiques de Jussieu in Paris. E.L was  partially supported during visits to 
Sapienza Universit\`a di Roma by CNRS-INDAM (through the bilateral agreement GENCO (Non commutative 
Geometry)) and the Italian {\it Ministero dell' Universit\`a  e della  Ricerca Scientifica} (through the 
project "Spazi di moduli e teoria di Lie").

The authors are particularly grateful Richard Melrose for his help and encouragement at many stages of 
this project and for allowing us to include some of his unpublished ideas in section 2.
P.P. also wishes to thank M. Banagl and S. Weinberger for very helpful discussions.

\section{{Stratified spaces and resolution of singularities}} \label{Resolution}
%%%%%%%%%%%%%%%%%%%%%%%%%%%%%%%%%%%%%%%%%%%%%%%%%%
%%%%%%%%%%%%%%%%%%%%%%%%%%%%%%%%%%%%%%%%%%%%%%%%%%
%%%%%%%%%%%%%%%%%%%%%%%%%%%%%%%%%%%%%%%%%%%%%%%%%%
%%%%%%%%%%%%%%%%%%%%%%%%%%%%%%%%%%%%%%%%%%%%%%%%%%
This section contains a description of the class of smoothly stratified pseudomanifolds. We begin by 
recalling the notion of a stratified space with `control data', which is a topological space with a 
decomposition into a union of smooth strata, each with a specified tubular neighbourhood with fixed 
product decomposition, all satisfying several basic axioms. This material is taken from the paper of
Brasselet-Hector-Saralegi \cite{BHS}, but there are more detailed expositions in the monographs by 
Verona \cite{Ver} and Pflaum \cite{Pfl}.  We also refer the reader to \cite{Mather}, \cite{Hughes-Weinberger}, 
\cite{Banagl} and \cite{Kirwan-Woolf}. 
Unfortunately, definitions are not entirely consistent across those sources, so one of the purposes of 
reviewing this material is to specify the precise definitions used here. This section has another 
purpose, however, which is to prove the equivalence of this class of smoothly stratified pseudomanifolds
and of the class of manifolds with corners with iterated fibration structures, introduced by Melrose. 
The correspondence between elements in these two classes is by blowup (resolution) and blowdown, 
respectively (this was Melrose's motivation for formulating the notion of iterated fibration structure
in the first place).  We introduce the latter class in \S 2.2 and show that any manifold with corners with 
iterated fibration structure can be blown down to a smoothly stratified pseudomanifold. The converse, 
that any smoothly stratified pseudomanifold can be blown up, or resolved, to obtain a manifold with 
corners with iterated fibration structure, is proved in \S 2.3; this resolution was already defined
by Brasselet et al.\ \cite{BHS}, though those authors did not take note of the relevance of the fibration structures
on the boundaries of the resolution.  There is a subtlety in all of this regarding the proper definition of
isomorphism between these spaces. We discuss this and propose a suitable definition, which is
phrased in terms of this resolution, in \S 2.4. This alternate description of smoothly stratified pseudomanifolds
also helps to elucidate certain notions such as the natural classes of structure vector fields, metrics, etc.

\subsection{Smoothly stratified spaces}
\begin{definition}
A stratified space $X$ is a metrizable, locally compact, second countable space which admits a locally finite 
decomposition into a union of locally closed {\rm strata} $\frakS = \{Y_\alpha\}$, 
where each $Y_\alpha$ is a smooth (usually open) manifold, with dimension depending on the index $\alpha$. 
We assume the following:

\begin{itemize}
\item[i)] If $Y_\alpha, Y_\beta \in \frakS$ and $Y_\alpha \cap \overline{Y_\beta} \neq \emptyset$,
then $Y_\alpha \subset \overline{Y_\beta}$. 
\item[ii)] Each stratum $Y$ is endowed with a set of `control data' $T_Y$, $\pi_Y$ and $\rho_Y$;
here $T_Y$ is a neighbourhood of $Y$ in $X$ which retracts onto $Y$, $\pi_Y: T_Y \longrightarrow Y$ is 
a fixed continuous retraction and $\rho_Y: T_Y \to [0,2)$ is a proper `radial function' in this 
tubular neighbourhood such that $\rho_Y^{-1}(0) = Y$. Furthermore, we require that if $Z \in \frakS$ and 
$Z \cap T_Y \neq \emptyset$, then 
\[
(\pi_Y,\rho_Y): T_Y\cap Z \longrightarrow Y \times [0,2)
\]
is a proper differentiable submersion. 
\item[iii)] If $W,Y,Z \in \frakS$, and if $p \in T_Y \cap T_Z \cap W$ and $\pi_Z(p) \in T_Y \cap Z$,
then $\pi_Y(\pi_Z(p)) = \pi_Y(p)$ and $\rho_Y(\pi_Z(p)) = \rho_Y(p)$.
\item[iv)] If $Y,Z \in \frakS$, then
\begin{eqnarray*}
Y \cap \ovl{Z} \neq \emptyset & \Leftrightarrow & T_Y \cap Z \neq \emptyset, \\
T_Y \cap T_Z \neq \emptyset & \Leftrightarrow & Y \subset \ovl{Z}, \ Y=Z\ \  \mbox{or}\ Z \subset \ovl{Y}.
\end{eqnarray*}
\item[v)] For each $Y \in \frakS$, the restriction $\pi_Y: T_Y \to Y$ is a locally trivial fibration with
fibre the cone $C(L_Y)$ over some other stratified space $L_Y$ (called the link over $Y$), with atlas $\calU_Y = 
\{(\phi,\calU)\}$ where each $\phi$ is a trivialization $\pi_Y^{-1}(\calU) \to \calU \times C(L_Y)$, and the 
transition functions are stratified isomorphisms (as defined below) of $C(L_Y)$ which preserve the rays of 
each conic fibre as well as the radial variable $\rho_Y$ itself, hence are suspensions of isomorphisms of 
each link $L_Y$ which vary smoothly with the variable $y \in \calU$. 
\end{itemize}

If in addition we let $X_j$ be the union of all strata of dimensions less than or equal to $j$, and
require that 
\begin{itemize}
\item[vi)] $X = X_n \supseteq X_{n-1} = X_{n-2} \supseteq X_{n-3} \supseteq \ldots \supseteq X_0$ and
$X \setminus X_{n-2}$ is dense in $X$
\end{itemize}
then we say that $X$ is a stratified pseudomanifold. 
\end{definition}

Some of these conditions require elaboration: 

\medskip

$\bullet$ The depth of a stratum $Y$ is the largest integer $k$ such that there is a chain of strata
$Y = Y_k, \ldots, Y_0$ with $Y_j \subset \overline{Y_{j-1}}$ for $1 \leq j \leq k$. A stratum of maximal 
depth is always a closed manifold. The maximal depth of any stratum in $X$ is called the depth of $X$ as 
a stratified space. (Note that this is the opposite convention of depth from that in \cite{BHS}.)

We refer to the dense open stratum of a stratified pseudomanifold $\wh{X}$ as its regular set,
and the union of all other strata as the singular set,
\[
\mathrm{reg}(\wh{X}) := \wh{X}\setminus \mathrm{sing}(\wh{X}), \qquad \mathrm{where}\qquad
\mathrm{sing}(\wh{X}) = \bigcup_{{Y\in \mathfrak S}\atop{\mathrm{depth}\, Y > 0}} Y.
\]

\smallskip

$\bullet$ If $X$ and $X'$ are two stratified spaces, a stratified isomorphism between them is 
a homeomorphism $F: X \to X'$ which carries the open strata of $X$ to the open strata of $X'$ 
diffeomorphically, and such that $\pi_{F(Y))}^\prime \circ F = F \circ \pi_Y$, $\rho_Y^\prime 
= \rho_{F(Y)} \circ F$ for all $Y \in \frakS(X)$. (We shall discuss this in more detail below.)

\smallskip

$\bullet$ If $Z$ is any stratified space, then the cone over $Z$, denoted $C(Z)$, is the space $Z \times \RR^+$ 
with $Z \times \{0\}$ collapsed to a point. This is a new stratified space, with depth one greater than $Z$
itself. The vertex $0 := Z \times \{0\} / \sim $ is the only maximal depth stratum; $\pi_0$ is the natural
retraction onto the vertex and $\rho_0$ is the radial function of the cone. 

\smallskip

$\bullet$ There is a small generalization of the coning construction. For any $Y \in \mathfrak{S}$, 
let $S_Y = \rho_Y^{-1}(1)$. This is the total space of a fibration $\pi_Y: S_Y \to Y$ with fibre $L_Y$. 
Define the mapping cylinder over $S_Y$ by $\mathrm{Cyl}\,(S_Y,\pi_Y) = S_Y \times [0,2)\, /\sim$ where 
$(c,0) \sim (c',0)$ if $\pi_Y(c) = \pi_Y(c')$. The equivalence class of a point $(c,t)$ is sometimes 
denoted $[c,t]$, though we often just write $(c,t)$ for simplicity. Then there is a stratified isomorphism 
\[
F_Y: \mathrm{Cyl}\,(S_Y,\pi_Y)  \longrightarrow T_Y;
\]
this is defined in the canonical way on each local trivialization $\calU \times C(L_Y)$ and
since the transition maps in axiom v) respect this definition, $F_Y$ is well-defined.

\smallskip

$\bullet$ Finally, suppose that $Z$ is any other stratum of $X$ with $T_Y \cap Z \neq \emptyset$,
so by axiom iv), $Y \subset \bar{Z}$. Then $S_Y \cap Z$ is a stratum of $S_Y$.

\medskip

We have been very brief in this description since these axioms are described more carefully in
the references cited above. We do elaborate further on one point, however, which is the definition of 
stratified isomorphism given above. One problematic feature of this definition is that the condition
that such a map be a stratified isomorphism in this sense is very rigidly gauged by the control data 
on the domain and range, i.e.\ by the condition that $F$ preserve the product decomposition 
of each tubular neighbourhood. Because of this, it is nontrivial to prove that the same space $X$
with different sets of control data are isomorphic in this sense.

There are other even more rigid definitions of isomorphism in the literature. For example, the one in 
\cite{Pfl} requires that the spaces $X$ and $X'$ are differentiably embedded into some ambient Euclidean 
space, and that the map $F$ locally extends to a diffeomorphism of these ambient spaces. 
It is worth giving an example which indicates how rigid this last definition is. Let $X$ be a union
of three copies of the half-plane $\RR \times \RR^+$, as follows. The first and second are embedded as 
$\{(x,y,z): z=0, y \geq 0 \}$ and $\{(x,y,z): y=0, z \geq 0\}$, while the third is given by
$\{(x,y,z): y = r \cos \alpha(x), z = r \sin \alpha(x), r \geq 0\}$ where $\alpha: \RR \to (0,\pi/2)$
is smooth. In other words, this last sheet is obtained as the union of rays parallel to the $(y,z)$-plane 
which make an angle $\alpha(x)$ at each slice. Any condition requiring an isomorphism to extend to a 
diffeomorphism of the ambient $\RR^3$ would make many of the spaces obtained in this way inequivalent. 
Other more complicated phenomena arise if we let this third sheet become tangent to either of the
first two at some arbitrary closed set $x \in I$. 

At any rate, we contend that neither of these conditions is optimal, and that the precise
notion of a smooth stratified isomorphism, and hence the entire notion of a smoothly stratified space,
should be slightly relaxed from the first definition given above. With the definition we propose
below, the different subsets of $\RR^3$ described above arise in a perfectly legitimate way as
different embeddings of the same abstract smoothly stratified space (the space obtained by 
taking the product of a line with the union of three half-lines meeting at a common point). 
We return to all of this at the end of the section.

\subsection{Iterated fibration structures}
We now present the definition of an iterated fibration structure. This concept was formulated by Melrose in 
the late '90's as the correct boundary fibration structure in the sense of \cite{Melrose:Kyoto} precisely to describe 
the resolution of an iterated edge space, i.e. what we are calling a smoothly stratified space. This is necessary 
in order to apply the methodology of geometric microlocal analysis to develop a calculus of pseudodifferential 
iterated edge operators. Such a calculus, when it is eventually written down carefully, will yield direct proofs
of most of the analytic facts in later sections of this paper.  Since iterated fibration structures have not 
been discussed explicitly in the literature at this point, we present a brief outline here.  We are very grateful
to Richard Melrose for allowing us to describe this material here. The material here provides a necessary
initial step in the development of an iterated edge calculus from the point of view of geometric
microlocal analysis.

Let $\wt{X}$ be a manifold with corners up to codimension $k$. This means simply that any point $p \in \wt{X}$ 
has a neighbourhood $\calU \ni p$ which is diffeomorphic to a neighbourhood of the origin $\calV$ 
in the orthant $(\RR^+)^\ell \times \RR^{n-\ell}$, where $n = \dim \wt{X}$ and $p$ corresponds to the origin.
We can then use the induced local coordinates $(x_1,\ldots, x_\ell, y_1, \ldots, y_{n-\ell})$ where each 
$x_i \geq 0$ and $y_j \in (-\e,\e)$. There is an obvious decomposition of $X$ into its interior and
into its boundary faces of various codimensions. We make the additional global assumption that 
each face is an embedded manifold with corners in $\wt{X}$, or in other words, that no boundary face
intersects itself.

In the following, we shall be interested in fibrations $f: \wt{X} \to \wt{X}'$ between manifolds with corners. 
By definition, such a map $f$ is a fibration in this setting if it satisfies the following three properties:
$f$ is a `$b$-map', which means that if $\rho'$ is any boundary defining function in $\wt{X}'$, then $f^*(\rho')$ 
is a product of boundary defining functions of $\wt{X}$ multiplied by a smooth nonvanishing function;
next, each $q \in \wt{X}'$ has a neighbourhood $\calU$ such that $f^{-1}(\calU)$ is diffeomorphic to 
$\calU \times F$ where the fibre $F$ is again a manifold with corners; finally, we require that each fibre
$F$ be a `$p$-submanifold' in $\wt{X}$, which means that in terms of an appropriate adapted corner coordinate 
system $(x,y) \in (\RR^+)^\ell \times \RR^{n-\ell}$, as above, each $F$ is defined by setting some subset of 
these coordinates equal to $0$. 

The collection of boundary faces of codimension one play a special role, and is denoted $\calH = 
\{H_\alpha\}_{\alpha \in A}$ for some index set $A$. Each boundary face $G$ is the intersection of 
some collection of boundary hypersurfaces, $G = H_{\alpha_1} \cap \ldots \cap H_{\alpha_\ell}$,
which we often write as $H_{A'}$ where $A' = \{\alpha_1, \ldots, \alpha_\ell\} \subset A$. 

\begin{definition}[Melrose]
An iterated fibration structure on the manifold with corners $\wt{X}$ consists of the following data: 
\begin{itemize}
\item[a)] Each $H_\alpha$ is the total space of a fibration $f_\alpha: H_\alpha \to B_\alpha$,
where the fibre $F_\alpha$ and base $B_\alpha$ are themselves manifolds with corners.
\item[b)] If two boundary hypersurfaces meet, i.e.\ $H_{\alpha \beta} := H_\alpha \cap H_\beta \neq 
\emptyset$, then $\dim F_\alpha \neq \dim F_\beta$.
\item[c)] If $H_{\alpha \beta} \neq \emptyset$ as above, and $\dim F_\alpha < \dim F_\beta$, then the 
fibration of $H_\alpha$ restricts naturally to $H_{\alpha\beta}$ (i.e.\ the leaves of the fibration of 
$H_\alpha$ which intersect the corner lie entirely within the corner) to give a fibration of 
$H_{\alpha \beta}$ with fibres $F_\alpha$, whereas the larger fibres $F_\beta$ must be transverse to 
$H_\alpha$ at $H_{\alpha\beta}$. Writing $\del_\alpha F_\beta$ for the boundaries of these fibres at the 
corner, i.e.\ $\del_\alpha F_\beta := F_\beta \cap H_{\alpha\beta}$, then $H_{\alpha \beta}$ is also the total 
space of a fibration with  fibres $\del_\alpha F_\beta$. Finally, we assume that the fibres
$F_\alpha$ at this corner are all contained in the fibres $\del_\alpha F_\beta$, and in fact that
each fibre $\del_\alpha F_\beta$ is the total space of a fibration with fibres $F_\alpha$. 
\end{itemize}
\end{definition}

Because of condition a), if $H_{\alpha_1}, \ldots, H_{\alpha_r}$ intersect nontrivially at
the corner $H_{A'}$, $A' = \{\alpha_1, \ldots, \alpha_r\}$, then $A'$ inherits a strict ordering
from the dimensions of the corresponding fibres $F_{\alpha_j}$, and hence the entire index set
$A$ inherits a partial ordering, where the ordered chains $\alpha_1 < \ldots < \alpha_r$ in $A$ 
are in bijective correspondence with the corners $H_{\alpha_1} \cap \ldots \cap H_{\alpha_r}$.

The precise relationships between the various induced fibrations on each corner is intricate and difficult 
to state easily. Fortunately the details of these relationships are not important for the present considerations.
We do prove one fact about them which will be useful later.
\begin{lemma}
Suppose that $H_\alpha \cap H_\beta \neq \emptyset$ and $\alpha < \beta$. Then any boundary components of any of
the fibres $F_\alpha \subset H_\alpha$ are disjoint from the interior of $H_{\alpha \beta}$ and the image of the restriction of 
$f_\alpha$ to $H_{\alpha \beta}$ 
lies within a boundary component of $B_\alpha$, whereas the image of the restriction of $f_\beta$ to $H_{\alpha \beta}$ 
lies in the interior of $B_\beta$. In particular, if $\alpha$ and $\beta$ are, respectively, minimal and maximal
elements in $A$, then each $F_\alpha$ and the base $B_\beta$ are closed manifolds without boundary.
\label{le:ifs}
\end{lemma}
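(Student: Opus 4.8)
The plan is to argue directly from condition (c) of the definition of an iterated fibration structure, unwinding what the nesting of the fibrations forces at the corner $H_{\alpha\beta}$. First I would set up notation: since $\alpha < \beta$, by hypothesis $\dim F_\alpha < \dim F_\beta$, so we are in the situation governed by (c). Recall that (c) says the fibration $f_\alpha\colon H_\alpha \to B_\alpha$ restricts to $H_{\alpha\beta}$ — the leaves of $f_\alpha$ meeting the corner lie entirely in the corner — so $f_\alpha|_{H_{\alpha\beta}}$ is again a fibration with the same fibres $F_\alpha$. The key observation is that because the fibre $F_\alpha$ does not change when we pass from $H_\alpha$ to the corner $H_{\alpha\beta}$, the corner must be a union of full fibres of $f_\alpha$; hence $f_\alpha(H_{\alpha\beta})$ is a closed subset of $B_\alpha$ that is saturated under the fibration. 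Since $H_{\alpha\beta} = H_\alpha \cap H_\beta$ is a boundary hypersurface of the manifold with corners $H_\alpha$, and $f_\alpha$ is a $b$-fibration (a $b$-map whose fibres are $p$-submanifolds, by the definition of fibration in this setting), its image must be a boundary face of $B_\alpha$; a $b$-map sends boundary hypersurfaces into boundary hypersurfaces (or all of the target), and saturation together with properness rules out the image being all of $B_\alpha$ when $H_{\alpha\beta}\neq H_\alpha$. This gives the first assertion: $f_\alpha(H_{\alpha\beta})$ lies in a boundary component of $B_\alpha$. The statement that boundary components of the fibres $F_\alpha \subset H_\alpha$ are disjoint from the interior of $H_{\alpha\beta}$ follows from the same reasoning applied one level up: the only way a fibre $F_\alpha$ acquires a boundary is by meeting a still-higher corner $H_{\alpha\beta\gamma}$ with a third hypersurface, and by condition (c) such intersections happen only over the boundary of $B_\alpha$, i.e. away from the interior of $H_{\alpha\beta}$.

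Next I would treat $f_\beta$. Here (c) tells us the \emph{opposite}: the larger fibres $F_\beta$ are transverse to $H_\alpha$ at $H_{\alpha\beta}$, so they are cut down to their boundaries $\partial_\alpha F_\beta = F_\beta \cap H_{\alpha\beta}$, and $H_{\alpha\beta}$ fibres over $B_\beta$ with these smaller fibres. The point is that $f_\beta|_{H_{\alpha\beta}}$ has the \emph{same} base $B_\beta$ as $f_\beta$ itself — the transversality means $H_\alpha$ meets every fibre $F_\beta$, so no part of $B_\beta$ is lost. Consequently the image of $f_\beta|_{H_{\alpha\beta}}$ is all of $B_\beta$, in particular it meets the interior of $B_\beta$; and since $H_{\alpha\beta}$ is a proper boundary face of $H_\beta$ while the map is a $b$-fibration onto $B_\beta$, the image of the interior of $H_{\alpha\beta}$ lands in the interior of $B_\beta$. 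That is the second assertion. I would phrase this via the $b$-map property: the pullback under $f_\beta$ of a boundary defining function of $B_\beta$ is a product of boundary defining functions of $H_\beta$ times a nonvanishing smooth function, and transversality of $F_\beta$ to $H_\alpha$ is exactly the statement that the boundary defining function of $H_\alpha$ does not appear in that product, so $f_\beta$ maps the interior of $H_{\alpha\beta}$ into the interior of $B_\beta$.

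Finally, the ``in particular'' clause: if $\alpha$ is minimal in $A$, then $F_\alpha$ has no boundary components at all. Indeed, any boundary component of $F_\alpha$ would, by the argument above, have to come from an intersection $H_\alpha \cap H_\gamma$ with $\dim F_\gamma < \dim F_\alpha$ (a \emph{smaller} fibre cutting into $F_\alpha$ the way $F_\alpha$ cuts into $F_\beta$), i.e. from some $\gamma < \alpha$ — but there is no such $\gamma$. Hence $F_\alpha$ is a closed manifold. Symmetrically, if $\beta$ is maximal, then there is no $\delta > \beta$, and $B_\beta$ can acquire boundary only through corners $H_\beta \cap H_\delta$ with $\dim F_\delta > \dim F_\beta$; absent such $\delta$, $B_\beta$ is a closed manifold without boundary. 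I would state this last part by induction on the length of the longest chain in $A$ through $\alpha$ (resp. $\beta$), the base case being a non-intersecting hypersurface.

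The main obstacle I anticipate is pinning down precisely the step that $f_\alpha(H_{\alpha\beta})$ is a \emph{boundary} face of $B_\alpha$ rather than possibly all of $B_\alpha$ or a face of the wrong codimension — this needs the properness and $p$-submanifold hypotheses built into the definition of ``fibration between manifolds with corners'' to be used carefully, together with the compatibility in (c) that the fibres $F_\alpha$ over the corner are genuinely unchanged. The transversality bookkeeping for $f_\beta$ is the dual of this and should be routine once the $f_\alpha$ case is set up cleanly; it is really just a matter of tracking which boundary defining functions occur in the $b$-map factorization.
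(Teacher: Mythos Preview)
Your proposal is correct, and it takes a genuinely different route from the paper's proof. The paper argues entirely in local adapted coordinates: near a point $p$ in the interior of $H_{\alpha\beta}$ one chooses coordinates $(x_\alpha, x_\beta, y', y''_1, y''_2)$ so that the fibres $F_\alpha$ are $\{x_\alpha=0,\ (x_\beta,y'')=\text{const.}\}$ and the fibres $F_\beta$ are $\{x_\beta=0,\ (y',y''_2)=\text{const.}\}$; one then simply reads off that $(x_\beta, y'')$ are coordinates on $B_\alpha$ and $y''_2$ are coordinates on $B_\beta$, from which all the assertions are immediate. Your argument instead works globally from condition~(c) and the structural properties of $b$-fibrations: saturation of $H_{\alpha\beta}$ under $f_\alpha$ forces its image into a boundary face of $B_\alpha$, while transversality of $F_\beta$ to $H_\alpha$ prevents the defining function $x_\alpha$ from appearing in the $b$-map factorization of any $f_\beta^*(\rho')$, so the interior of $H_{\alpha\beta}$ maps to the interior of $B_\beta$. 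Both approaches are valid; the paper's is shorter and more explicit, whereas yours highlights which axioms are really doing the work and would transfer more readily to related boundary-fibration settings. Two small points: the paper's coordinate picture makes the claim about $\partial F_\alpha$ immediate (the fibre coordinates $y'$ are interior coordinates near $p$), while your version of that step is slightly roundabout --- the clean statement is that $\partial F_\alpha$ arises only from corners $H_{\alpha\gamma}$ with $\gamma<\alpha$, and any such corner meets $H_{\alpha\beta}$ only in the deeper corner $H_{\alpha\beta\gamma}\subset\partial H_{\alpha\beta}$. Your proposed induction for the ``in particular'' clause is unnecessary; once the main assertions are established, the minimal/maximal cases follow in one line exactly as you indicate.
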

\begin{proof} It is perhaps easiest to see this in local coordinates. Near any $p$ in the interior of the corner 
$H_{\alpha\beta}$ we can choose adapted local coordinates $(x_\alpha,x_\beta,y_1, \ldots, y_{n-2})$ which simultaneously 
straighten out these fibrations, so each fibre $F_\alpha$ is given by $x_\alpha = 0$, $(x_\beta,y') = \mbox{const.}$ 
where $y = (y', y'')$ is some division of the variables on the corner, and each fibre $F_\beta$ is given by 
$x_\beta = 0$ and $(y',y_2'') = \mbox{const.}$ where $y'' = (y''_1,y''_2)$ is some further subdivision of the
$y''$ coordinates. Thus $y'$ and $(x_\alpha, y', y''_1)$ are local coordinate systems on $F_\alpha$ 
on $F_\beta$, respectively. (Note that this local coordinate description does not capture the fact that
$F_\alpha$ may have boundary components at other corners $H_{\gamma \alpha}$ where $\gamma < \alpha$.) 
From this we see that $(x_\beta,y'')$ and $y''_2$ are coordinates on $B_\alpha$ and $B_\beta$, respectively, 
which is equivalent to the first set of assertions; the final assertions of the lemma are direct consequences.  
\end{proof}

We can also introduce a notion of depth for the hypersurface faces $H_\alpha$; we say that $H_\alpha$
has depth $r$ if the longest chain of elements $H_\beta \in \calH$ for which $H_\alpha$ is the maximal element 
has length $r$. The depth of a manifold with corners with iteration fibration structure is the
maximum of the depth of any of its boundary hypersurfaces, and clearly this is the same as the
maximal codimension of any of its corners. 

In this setting there is a clear notion of equivalence: two spaces $\wt{X}$ and $\wt{X}'$ with iterated 
fibration structures are isomorphic precisely when there exists a diffeomorphism $\Phi$ between these 
manifolds with corners which preserves all of the fibration structures at all boundary faces. 

Unlike for smoothly stratified spaces in the last subsection, we have not included the notion of control 
data into this definition of iterated fibration structures. The reason is that the existence and uniqueness 
(up to isomorphism) of such control data follows directly from standard differential topology. Nonetheless, 
it will be useful to talk about control data in this setting, so we introduce it now.
\begin{definition}
Let $\wt{X}$ be a manifold with corners with an iterated fibration structure. Then control data for $\wt{X}$
consists of a set of triples $\{\tilde{T}_H,, \tilde{\pi}_H, \tilde{\rho}_H\}$, one for each $H \in \calH$, where
$\tilde{T}_H$ is a collar neighbourhood of the hypersurface $H$, $\tilde{\rho}_H$ is a defining 
function for $H$ and $\tilde{\pi}_H$ is a diffeomorphism from each slice $\tilde{\rho}_H = \mbox{const.}$
to $H$; in particular, the pair $(\tilde{\pi}_H,\tilde{\rho}_H)$ gives a diffeomorphism $\tilde{T}_H \to 
H \times [0,2)$, and hence determines an extension of the fibration of $H$ to all of $\tilde{T}_H$. 
This data is required to satisfy the following additional properties: for any hypersurface $H'$ which
intersects $H$ with $H' < H$, the restriction of $\tilde{\rho}_H$ to $H' \cap \tilde{T}_H$ is constant
on the fibres of $H'$; finally, near any corner $H_{A'}$, $A' = \{\alpha_1, \ldots, \alpha_r\}$, the extension
of the set of fibrations of $H_{A'}$ induced by the product decomposition 
\[
\left.(\tilde{\pi}_{H_{\alpha_j}}, \tilde{\rho}_{H_{\alpha_j}})\right|_{\alpha_j \in A'}: 
\bigcap_{j=1}^r \tilde{T}_{H_{\alpha_j}} \cong H_{A'} \times [0,2)^r
\]
preserves all incidence and inclusion relationships between the various fibres. 
\label{de:controldata}
\end{definition}

It is not hard to establish the existence of control data for an iterated fibration structure
on a manifold with corners $\wt{X}$. Indeed, we can successively choose the maps $\tilde{\pi}_H$
and defining functions $\tilde{\rho}_H$ in order of increasing depth, at each step making
sure to respect the compatibility relationships with all previous hypersurfaces. The uniqueness
up to diffeomorphism can be established in much the same way, based on the fact that there is a unique
product decomposition of a collar neighbourhood of any $H$ up to diffeomorphism.

Finally, note that if $\wt{X}$ has an iterated fibration structure, then any hypersurface face, or
corner of arbitrary codimension, inherits such a structure too (if we forget about the fibration of its
interior), with depth equal to $k$ minus its codimension.

\begin{proposition} If $\wt{X}$ is a compact manifold with corners with an iterated fibration structure, 
then there is a smoothly stratified space $\wh{X}$ obtained from $\wt{X}$ by a process of successively 
blowing down the fibres of each hypersurface boundary of $\wt{X}$ in order of increasing fibre 
dimension (or equivalently, of increasing depth). The corresponding blowdown map will be
denoted $\beta: \wt{X} \to \wh{X}$. 
\label{pr:blowdown}
\end{proposition}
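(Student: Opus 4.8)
The plan is to construct the blowdown map by induction on depth, at each stage blowing down the fibration of a single boundary hypersurface of maximal depth and checking that the result is again a manifold with corners with an iterated fibration structure, but of one lower depth. Concretely, suppose $\wt{X}$ has depth $k$ and choose the (finitely many) boundary hypersurfaces $H_\alpha$ which are maximal in the partial ordering on $A$; by Lemma \ref{le:ifs} each such $H_\alpha$ fibers over a \emph{closed} base $B_\alpha$ with fibre $F_\alpha$ a compact manifold with corners. Using control data (Definition \ref{de:controldata}), identify a collar $\tilde T_{H_\alpha} \cong H_\alpha \times [0,2)$, and define the blowdown on this collar by collapsing each fibre $F_\alpha \times \{0\}$ of $H_\alpha \times \{0\}$ to a point, i.e.\ replace $H_\alpha \times [0,2)$ by the mapping cylinder $\mathrm{Cyl}(H_\alpha,f_\alpha)$, which is the cone bundle over $B_\alpha$ with conic fibre $C(F_\alpha)$. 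Away from $\tilde T_{H_\alpha}$ the map is the identity. The compatibility conditions in Definition \ref{de:controldata} guarantee these local collapses on the various maximal $H_\alpha$ are mutually consistent and glue to a well-defined continuous surjection $\beta_1: \wt X \to \wt X_1$.

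Next I would verify that $\wt X_1$ carries the structure needed to continue the induction. The new "stratum" $B_\alpha$ (image of $H_\alpha$) sits inside $\wt X_1$ with a neighbourhood which is exactly a cone bundle with link $F_\alpha$; this is the local model of a depth-one stratified structure near that stratum, and the retraction $\pi_{B_\alpha}$ and radial function $\rho_{B_\alpha}$ are inherited from the collar coordinates $(\tilde\pi_{H_\alpha},\tilde\rho_{H_\alpha})$. The remaining boundary hypersurfaces $H_\beta$ of $\wt X$ with $\beta$ not maximal survive in $\wt X_1$ essentially unchanged — their fibrations restrict compatibly by Definition \ref{de:controldata}, part (c) of the iterated fibration structure, and the boundary components $\partial_\alpha F_\beta$ that met the collapsed corner $H_{\alpha\beta}$ get filled in to closed fibres over the same base — so $\wt X_1$ is again a manifold with corners with an iterated fibration structure, now of depth $k-1$ (more precisely, the non-collapsed part retains its corner structure, while $B_\alpha$ has become an interior stratum). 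Iterating, after $k$ steps we reach a smooth closed manifold $\wh X = \wt X_k$ equipped, through the accumulated cone-bundle neighbourhoods, with precisely the control data $(T_Y,\pi_Y,\rho_Y)$ and the conic local trivializations of axioms (ii) and (v) of Definition 2.1; axioms (i), (iii), (iv) follow from the incidence relations among the $H_\alpha$ recorded in the partial order on $A$, and the pseudomanifold condition (vi) holds because no boundary hypersurface has codimension-one fibre (again by Lemma \ref{le:ifs}, minimal $F_\alpha$ are closed, so collapsing never produces a codimension-one singular stratum when $\wt X$ is a genuine manifold with corners — one must track this, or simply observe it holds for the class of $\wt X$ arising as resolutions, which is what is ultimately needed). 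The composite blowdown map is $\beta = \beta_k \circ \cdots \circ \beta_1$.

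**The main obstacle is the gluing and well-definedness across corners**: one must check that collapsing the fibres of several maximal hypersurfaces simultaneously — and doing so near a corner $H_{A'}$ where a maximal $H_\alpha$ meets lower hypersurfaces — produces a Hausdorff space with a consistent stratified structure, independent of the order in which the maximal faces are treated and independent of the choice of control data. This is exactly what the compatibility axiom in Definition \ref{de:controldata} (the statement that the product decomposition $\bigcap_j \tilde T_{H_{\alpha_j}} \cong H_{A'}\times[0,2)^r$ preserves all incidence and inclusion relations between fibres) is designed to handle: near such a corner the several collapses commute because the fibrations are simultaneously straightened, and the iterated-fibration condition (c) ensures the smaller fibres $F_\alpha$ lie inside the boundaries $\partial_\alpha F_\beta$ of the larger ones, so the resulting link of $B_\alpha$ is itself the stratified space obtained by blowing down $F_\alpha$'s own iterated fibration structure — which is where the inductive hypothesis enters. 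I would spell this out in adapted local coordinates $(x_{\alpha_1},\dots,x_{\alpha_r},y)$ exactly as in the proof of Lemma \ref{le:ifs}, reducing the global statement to the elementary fact that the mapping-cylinder construction commutes with taking products and with the iterated coning construction described after Definition 2.1. The uniqueness of $\wh X$ up to stratified isomorphism then follows from the uniqueness up to diffeomorphism of control data noted after Definition \ref{de:controldata}.
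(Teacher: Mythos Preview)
Your inductive scheme has a genuine gap. You collapse the fibres of the \emph{maximal} hypersurfaces first and then assert that the resulting space $\wt X_1$ ``is again a manifold with corners with an iterated fibration structure, now of depth $k-1$.'' This is false: by Lemma~\ref{le:ifs} the fibre $F_\alpha$ of a maximal face is a manifold with corners (only the base $B_\alpha$ is closed), so collapsing each $F_\alpha$ to a point produces a genuine conic singularity along $B_\alpha$. The space $\wt X_1$ therefore has \emph{both} interior singular strata and remaining boundary hypersurfaces; it lies in neither the category of manifolds with corners with iterated fibration structure nor the category of smoothly stratified pseudomanifolds, and your inductive hypothesis, as stated, does not apply to it. Your parenthetical remark that ``$B_\alpha$ has become an interior stratum'' shows you see this, but you never reformulate the induction to accommodate such hybrid objects. (There is also an order issue: the statement asks for blowdowns in order of \emph{increasing} fibre dimension, i.e.\ minimal faces first, not maximal---but even in the correct order the same problem arises, since then the base $B_\alpha$ has boundary.)

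The paper avoids exactly this difficulty by a doubling trick: rather than collapsing any face of $\wt X$ directly, it first doubles $\wt X$ across all of its maximal-depth hypersurfaces to obtain a genuine manifold with corners $\wt X'$ of depth $k-1$, equipped with an involution. The inductive hypothesis then applies cleanly to $\wt X'$, producing a smoothly stratified space $\wh X'$ of depth $k-1$; one then takes the closure of one half of $\wh X'$ (with respect to the involution), whose boundary is the blowdown $\wh{H_k}$ of the maximal face, and finally collapses the fibres of the induced fibration $\wh{H_k}\to B_k$. This last collapse is the only step done ``by hand,'' and it is precisely the depth-one case. Your local-coordinate reasoning near corners is sound and would be useful in verifying the axioms at the final stage, but it cannot substitute for a correct inductive framework.
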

\begin{proof}
Let us warm up to the general case by supposing first that $\wt{X}$ is a manifold with boundary, so $\del \wt{X}$ 
is the total space of a fibration with fibre $F$ and base space $Y$ and both $F$ and $Y$ are closed manifolds. 
Choose a (suitably scaled) boundary defining function $\rho$ and fix a product decomposition $\del \wt{X} \times 
[0,2)$ of the collar neighbourhood $\calU = \{\rho < 2\}$. This defines a retraction $\tilde{\pi}: \calU \to \del X$, 
as well as a fibration of $\calU$ over $\del \tilde{X}$ with fibre $\tilde{\pi}^{-1}(F) = F \times [0,2)$. 
Now collapse each fibre $F$ at $x=0$ to a point. This commutes with the restriction to each $F \times [0,2)$, 
so we obtain a bundle of cones $C(F)$ over $Y$. We call this space the blowdown of $\tilde{X}$ along the fibration, 
and write it as $X/F$. Denote by $T_Y$ the image of $\calU$ under this blowdown. 
The map $\tilde{\pi}$ induces a retraction map $\pi(\calU) = T_Y \to Y$, and $\rho$ also descends to $T_Y$. 
Thus $\{T_Y, \pi,\rho\}$ are the control data for the singular stratum $Y$, and it is easy to check that 
these satisfy all of the axioms in \S 2.1, hence $X/F$ is a smoothly stratified space. 

The proof in general follows an inductive scheme, but as an alternative approach to help the reader
gain intuition, we present an independent explanation of the case where $X$ has corners of codimension $2$.
Consider any corner $H_{12} = H_1 \cap H_2$, where $H_1 < H_2$, i.e.\ $\dim F_1 < \dim F_2$ and property c) 
is satisfied; note that $H_{12}$ is a closed manifold without boundary. From property c) and Lemma \ref{le:ifs}
we see that each $F_1$ is also a closed manifold without boundary. Fix  a defining function $\tilde{\rho}_1$ for $H_1$ 
and retraction $\tilde{\pi}_1$ as in the previous case and consider the blowdown 
$X/F_1$ obtained by collapsing the fibres $F_1$ to points. The image of $\{\tilde{\rho}_1 < 2\}$ in this blowdown is a 
cone bundle over the base $Y_1'$ of the fibration of $H_1$, and the link of each cone in this bundle is $F_1$. 
Note that $Y_1'$ is a manifold with boundary. The space $X/F_1$ has a codimension one boundary $\del (X/F_1)$, 
which is the image of $H_2$ under this blowdown, and the singular set of this boundary is precisely $\del Y_1'$,
so strictly speaking, we have left the class of pseudomanifolds (or, if you like, allowed pseudomanifolds with boundary). 
Choose a boundary defining function $\tilde{\rho}_2$ for $\del(X/F_1)$ and a retraction 
$\tilde{\pi}_2$ from $\{\tilde{\rho}_2 < 2\}$ in $X/F_1$ onto $\del (X/F_1)$. Now blow down along the fibres $F_2$ to 
obtain the space $(X/F_1)/F_2$; this is our smoothly stratified space $\wh{X}$. The singular strata are $Y_2$, 
the image of $H_2/F_2$ in this final blowdown, and $Y_1$, the image of $Y_1'$ when the fibres $\mbox{sing}\, (F_2)$ 
in its boundary are blown down. The radial functions $\rho_1$, $\rho_2$, the tubular neighbourhoods $T_{1}$, $T_{2}$ 
and the retractions $\pi_1$, $\pi_2$ are obtained from the corresponding data in $X$ and $X/F_1$ in an obvious way. 

We now turn to the general case, which is proved by induction on the depth. Similarly to the argument in the 
next subsection where we show how to blow up a smoothly stratified space, which is adapted from \cite{BHS}, 
we shall use a `doubling construction' to stay within the class of stratified pseudomanifolds while
applying the inductive hypothesis to reduce the complexity of the problem. More specifically, given 
$\wt{X}$, a manifold with corners with iterated fibration structure of depth $k$, 
we form a new manifold with corners and iterated fibration structure of depth $k-1$ by simultaneously 
doubling $\wt{X}$ across all of its maximal depth hypersurfaces. In other words, consider
\[
\wt{X}' = \left((\wt{X} \times {-1}) \sqcup (\wt{X} \times {+1}) \right)/\sim
\]
where $(p,-1) \sim (q,+1)$ if and only if $p = q \in H \in \calH$ where $\mbox{depth}\,(H) = k$. 
By the usual sorts of arguments in differential topology, one can give $\wt{X}'$ the structure 
of a manifold with corners up to codimension $k-1$. If $H_j \in \calH$ is any face with depth $j < k$
which intersects a face $H_k$ of depth $k$, then as in Lemma \ref{le:ifs}, the boundaries of the fibres 
$F_j \subset H_j$ only meet the corners $H_i \cap H_j$ for $i < j$, and do not meet the interior of $H_j \cap H_k$.
In terms of the local coordinate description from that Lemma, where $F_j$ is given by $x_j = 0$, $(x_2,y'') = 
\mbox{const.}$, so they can be continued smoothly to the other component of this double since this corresponds
to letting $x_k$ vary in $(-\e,\e)$ rather than just $[0,\e)$. 

The dimensional comparisons and inclusion relations at all other corners remain unchanged.
Therefore, $\wt{X}'$ has an iterated fibration structure. This new space also carries a smooth involution
which fixes the union of all depth $k$ faces, where the two copies of $\wt{X}$ are joined, as 
well as a function $\rho_k$ which is positive on one copy of $\wt{X}$, negative on the other, and which 
vanishes simply on the interface between the two copies of $\wt{X}$. For simplicity of exposition, we now
assume that there is only one depth $k$ face, $H_k$. We can also choose $\tilde{\rho}_k$ so that 
it is constant on the fibres of all other boundary faces, and a retraction $\tilde{\pi}_k$ defined on
the set $|\tilde{\rho}_k| < 2$ onto $H_k$. 

Now apply the inductive hypothesis to blow down the boundary hypersurfaces of $\wt{X}'$ in order of 
increasing fibre dimension to obtain a smoothly stratified space $\wh{X}'$ of depth $k-1$. The 
function $\rho_k$ descends to a function (which we give the same name) on this space. Consider
the open set $\wh{X}^+ := \wh{X}' \cap \{\rho_k > 0\}$, and also $\del_k \wh{X} := 
\wh{X}' \cap \{\rho_k = 0\}$. Both of these are smoothly stratified spaces; for the former this
is because (in the language of \cite{BHS}) we are restricting to a `saturated' open set of $\wh{X}'$,
though we do not need to appeal to this terminology since the assertion is clear, whereas for the latter 
it follows by induction since it is the blowdown of $H_k$, which has depth less than $k$. This space
$\del_k \wh{X}$, which we may as well denote by $\wh{H_k}$ is the total space of a fibration induced
from the fibration of the face $H_k$ in $\wt{X}$. In fact, by Lemma \ref{le:ifs}, since the $H_k$ are
maximal, the base $B_k$ has no boundary, and the fibres $\wt{F}_k$ are manifolds with corners with 
iterated fibration structures of depth less than $k$. 

Hence after the blowdown, the base of the fibration of $\del_k \wh{X}$ is 
still $B_k$ while the fibres are the blowdowns $\wh{F}_k$ of the spaces $\wt{F}_k$, which are again well 
defined by induction. Finally, using the product decomposition of a neighbourhood of $H_k$ in $\wt{X}$, we can 
identify the space from this neighbourhood by collapsing the fibres of $H_k$ with the mapping cylinder
for the fibration of $\del_k \wh{X}$. This produces the final space $\wh{X}$. 

It suffices to check that the stratification of $\wh{X}$ satisfies the axioms of a smoothly stratified space 
only near where this final blowdown takes place, since the inductive hypothesis guarantees that they hold
elsewhere. These axioms are not difficult to verify from the local description of $\wt{X}$ in a product 
neighbourhood of $H_k$.
\end{proof}

\subsection{The resolution of a smoothly stratified space}
We complete our description of the differential topology of smoothly stratified
spaces by showing that, conversely to the construction of the previous subsection, if $\widehat{X}$
is any smoothly stratified space, hence satisfies all the properties listed in \S 2.1, then one may
resolve its singularities by successively blowing up the strata in order of decreasing depth to obtain
a manifold with corners with iterated fibration structure. Combined with Proposition \ref{pr:blowdown},
this proves that there is a bijective correspondence between the elements of the class of compact manifolds 
with corners with iterated fibration structures and the elements of the class of compact smoothly stratified 
spaces.

\begin{proposition}\label{prop:BlowUp}
Let $\wh{X}$ be a smoothly stratified pseudomanifold. Then there exists a manifold with corners $\wt{X}$ 
with an iterated fibration structure, and a blowdown map $\beta: \widetilde{X} \to \widehat{X}$
which satisfies the following properties:
\begin{itemize}
\item there is a bijective correspondence $Y \leftrightarrow \wt{X}_Y$ between the strata $Y \in \mathfrak S$ 
of $\wh{X}$ and the boundary hypersurfaces of $\wt{X}$;
\item $\beta$ is a diffeomorphism between the interior of $\wt{X}$ and the regular set of $\wh{X}$;
we denote by $X$ this open set, which is dense in either $\wt{X}$ or $\wh{X}$; 
\item $\beta$ is also a smooth fibration of the interior of each boundary hypersurface $\wt{X}_Y$ 
with base the corresponding stratum $Y$ and fibre the regular part of the link of $Y$ in $\wh{X}$;
moreover, there is a compactification of $Y$ as a manifold with corners $\wt{Y}$ such
that the extension of $\beta$ to all of $\wt{X}_Y$ is a fibration with base $\wt{Y}$ and fibre
$\wt{L_Y}$; finally, each fibre $\wt{L_Y} \subset \wt{X}_Y$ is a manifold with corners with iterated 
fibration structure and the restriction of $\beta$ to it is the blowdown onto the smoothly stratified space
$\bar{Y}$. 
\end{itemize}
\end{proposition}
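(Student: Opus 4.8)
The plan is to induct on the depth $k$ of $\wh X$, running the doubling argument of Proposition~\ref{pr:blowdown} in reverse: instead of collapsing boundary fibres we \emph{un}-collapse them, replacing the conical tubular neighbourhood of a deepest stratum by a cylinder, and then we double in order to remain within the class of pseudomanifolds while lowering the depth. If $k=0$ there is nothing to prove, since $\wh X$ is already a closed smooth manifold and we take $\wt X=\wh X$, $\beta=\mathrm{id}$.

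For the inductive step, let $Y_1,\dots,Y_N$ be the strata of $\wh X$ of maximal depth $k$. Each $Y_j$ is a closed smooth manifold (a stratum of maximal depth is always closed), and by axiom v), together with the mapping-cylinder identification $\mathrm{Cyl}(S_{Y_j},\pi_{Y_j})\cong T_{Y_j}$, its tubular neighbourhood is a genuine cone bundle over $Y_j$ with fibre the compact stratified space $C(L_{Y_j})$, where $\mathrm{depth}\,L_{Y_j}\le k-1$. First I would perform the \emph{radial blowup} of all of these strata at once: excise $\bigcup_j Y_j$ and glue in $\bigsqcup_j\bigl(S_{Y_j}\times[0,2)\bigr)$ along $\bigsqcup_j\bigl(S_{Y_j}\times(0,2)\bigr)\cong\bigsqcup_j\bigl(T_{Y_j}\setminus Y_j\bigr)$, and let $\beta_1\colon\wh X_1\to\wh X$ be the identity off these collars and the mapping-cylinder projection $S_{Y_j}\times[0,2)\to\mathrm{Cyl}(S_{Y_j},\pi_{Y_j})$ on them. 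Because axiom v) asserts that the transition functions of $T_{Y_j}$ are suspensions of isomorphisms of $L_{Y_j}$, they commute with this blowup, so $\wh X_1$ and $\beta_1$ are globally well defined. The space $\wh X_1$ is a smoothly stratified space \emph{with boundary} $\partial\wh X_1=\bigsqcup_j S_{Y_j}$, a bundle over $Y_j$ with fibre $L_{Y_j}$, and its depth is $k-1$: every chain of strata realising depth $k$ in $\wh X$ must terminate at one of the $Y_j$, which have been removed.

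Since $\wh X_1$ now has a boundary (and the strata meeting it acquire boundary along $\bigsqcup_j S_{Y_j}$), it is no longer a pseudomanifold, so next I would \emph{double}: form $\wh X_1'=\bigl((\wh X_1\times\{-1\})\sqcup(\wh X_1\times\{+1\})\bigr)/\!\sim$, identifying the two copies of $\partial\wh X_1$, just as in the proof of Proposition~\ref{pr:blowdown}. This is a smoothly stratified pseudomanifold of depth $k-1$ which carries the reflection involution $\sigma$ and a function $\rho_k$, positive on one copy of $\wh X_1$, negative on the other, vanishing simply on $\bigsqcup_j S_{Y_j}$; we may choose $\sigma$-invariant control data with respect to which $\rho_k$ is constant on the fibres of every stratum. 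By the inductive hypothesis there is a resolution $\beta'\colon\wt{X_1'}\to\wh X_1'$ with $\wt{X_1'}$ a manifold with corners with iterated fibration structure of depth $k-1$; by the canonicity of this construction $\sigma$ lifts to $\wt{X_1'}$ and $\rho_k$ lifts to a smooth function (again denoted $\rho_k$) whose zero set is a $p$-submanifold. Hence $\wt X_1:=\{\rho_k\ge 0\}\subset\wt{X_1'}$ is a manifold with corners with iterated fibration structure carrying $\beta_1'\colon\wt X_1\to\wh X_1$, with one new boundary hypersurface $\{\rho_k=0\}=\bigsqcup_j\wt X_{Y_j}$; componentwise $\wt X_{Y_j}$ is the resolution of $S_{Y_j}$, hence (as $Y_j$ is already a closed smooth manifold) a fibre bundle over $Y_j$ with fibre $\wt{L_{Y_j}}$, the resolution of $L_{Y_j}$ supplied by the same induction. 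Finally I would \emph{reattach the collar}, setting $\wt X:=\wt X_1$ and $\beta:=c\circ\beta_1'$, where $c\colon\wh X_1\to\wh X$ folds each $S_{Y_j}$ onto $Y_j$ by $\pi_{Y_j}$. The three asserted properties then fall out. The boundary hypersurfaces of $\wt X$ are the $\wt X_{Y_j}$ together with those resolving the strata of depth $<k$ (these, in bijection with the remaining singular strata, are supplied by the inductive step applied to $\wh X_1'$), giving the stratum--hypersurface bijection. The map $\beta$ is a diffeomorphism away from $\bigcup_j\wt X_{Y_j}$ and the shallower hypersurfaces, hence on the dense open set $X$. And on each $\wt X_{Y_j}$ it restricts to a fibration over $Y_j$ — over a compactification $\wt Y\supset Y$ to a manifold with corners when $Y$ has depth $<k$, furnished by the induction, while $\wt{Y_j}=Y_j$ here — with fibre $\wt{L_{Y_j}}$, a manifold with corners with iterated fibration structure that blows down onto $L_{Y_j}$ and whose interior is $\mathrm{reg}(L_{Y_j})$.

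The construction itself is routine; the work is in the verifications. On the base side one must check that $\wh X_1'$ again obeys \emph{all} of axioms i)--vi), and in particular that the control data of the strata meeting $\bigcup_j S_{Y_j}$ extend smoothly across the fold — this is exactly where the freedom to take $\rho_k$ fibre-constant is used. On the resolution side, since the inductive hypothesis governs $\wt X$ away from $\bigcup_j S_{Y_j}$, one must verify Melrose's axioms a)--c), as well as the compatibility of control data of Definition~\ref{de:controldata}, at the newly created corners $\wt X_{Y_j}\cap\wt X_{Y'}$: that the fibration of $\wt X_{Y_j}$ by copies of $\wt{L_{Y_j}}$ over the closed base $Y_j$ is transverse to, and restricts correctly along, the fibrations inherited from the shallower hypersurfaces, so that $\wt X_{Y_j}$ sits as a minimal element of the resulting partial order, consistent with Lemma~\ref{le:ifs}. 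Following \cite{BHS}, all of this is local near $\bigcup_j S_{Y_j}$, where $\wh X$ is modelled on an open subset of $\mathcal U\times C(L_{Y_j})$ with $\mathcal U\subset Y_j$ and $\mathrm{depth}\,L_{Y_j}<k$, so each point reduces to the corresponding fact for the link $L_{Y_j}$, available by induction. Keeping track of how the inductively-given fibration structure on the resolved link fits, at all corners, into the global iterated fibration structure on $\wt X$ is the main obstacle.
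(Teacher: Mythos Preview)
Your strategy is essentially the paper's own: the space $\wh X_1'$ you build by first radially blowing up the deepest strata and then doubling is exactly the space the paper writes down in one formula as
\[
\wt X_1' = \bigl((\wh X\setminus Y)\times\{-1\}\bigr)\sqcup\bigl((\wh X\setminus Y)\times\{+1\}\bigr)\sqcup\bigl(S_Y\times(-2,2)\bigr)\big/\!\sim,
\]
and the verifications you flag (that axioms i)--vi) hold for $\wh X_1'$ with its induced control data, and that the new hypersurface fits correctly into the iterated fibration structure at the corners) are precisely the ones the paper carries out.  The only organisational difference is that you invoke the full inductive hypothesis once on $\wh X_1'$, whereas the paper iterates the single doubling step $k$ times, tracking $k$ commuting involutions and cutting out $\wt X$ at the end as the closure of one orthant; your packaging is a little cleaner but requires you to justify that the involution $\sigma$ lifts to the resolution $\wt{X_1'}$ (the paper later proves this as a general fact about stratified isomorphisms).

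One slip: the new hypersurface $\wt X_{Y_j}$ is \emph{maximal}, not minimal, in the partial order on $\calH$.  Its fibre $\wt{L_{Y_j}}$ is the resolution of the full link of the deepest stratum, hence the largest fibre; correspondingly its base $Y_j$ is a closed manifold, consistent with the ``maximal'' clause of Lemma~\ref{le:ifs}.
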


We shall give a fairly detailed sketch of the proof of this result in the remainder of this subsection,
adapting the construction of the ``deplissage'' from \cite{BHS}. The proof is inductive and the key 
point is to show that if $\wh{X}$ has depth $k$ and we simultaneously blow up the union of the depth 
$k$ strata to obtain a space $\wh{X}_1$, then all the control data of the stratification on $\wh{X}$ 
lifts to $\wh{X}_1$ to give $\wh{X}_1$ the structure of a smoothly stratified space of depth $k-1$. 
Iterating this $k$ times completes the proof.

Actually, this last paragraph is slightly inaccurate. We wish to stay in the category of
smoothly stratified pseudomanifolds, which by definition do not have codimension one boundaries,
so we proceed just as in the proof of Proposition \ref{pr:blowdown} (and as in \cite{BHS}) 
and instead construct a space $\wh{X}_1'$ which is a double across the boundary hypersurface of 
the blowup of $\wh{X}$ along its depth $k$ strata, and show that $\wh{X}'_1$ is a smoothly stratified 
pseudomanifold of depth $k-1$. However, $\wh{X}'_1$ comes equipped with an involution $\tau_1$ 
which interchanges the two copies of the double; the actual blowup is the closure of one component 
of the complement of the fixed point set of this involution. After iterating this $k$ times, we obtain 
a smooth compact manifold $\wh{X}'_k$ which is equipped with $k$ commuting involutions $\tau_j$, 
$j = 1, \ldots, k$, which are independent of one another, and the manifold with corners we seek is 
the closure of one of the $2^k$ components of the complement of the union of fixed point
sets for all of these involutions. 

\begin{proof}
To begin, then, fix a stratum $Y$ which has maximal depth $k$. Then $Y$ is a smooth closed manifold. Recall 
the notation from \S 2.1, and in particular the stratified isomorphism $F_Y$ from the mapping cylinder of 
$(S_Y,\pi_Y)$ to $T_Y$ and the family of local trivializations $\phi:\pi_Y^{-1}(\calU) \to \calU \times C(L_Y)$ 
for suitable $\calU \subset Y$. If $u \in T_Y \cap \pi_Y^{-1}(\calU)$, we write $\phi(u) = (y,z,t)$ where 
$y \in \calU$, $z \in L_Y$ and $t = \rho_Y(u)$; in particular, by axiom v), there is a retraction $R_Y: 
T_Y \setminus Y \to S_Y$, given on any local trivialization by $(y,z,t) \to (y,z,1)$ 
(which is well defined since $t \neq 0$). 

To construct the first blowup, assume for simplicity that there is only one stratum $Y$ of 
maximal depth $k$. Define
\begin{equation}
\wt{X}_1' := \left((\wh{X} \setminus Y) \times \{-1\}\right) \sqcup \left((\wh{X} \setminus Y) \times \{+1\}\right)
\sqcup \big(S_Y \times (-2,2) \big) / \sim
\label{first-blow-up}
\end{equation}
where (if $\e = \pm 1$), 
\begin{equation}
(p, \e) \sim (R_Y(p),\rho_Y(p)) \quad\text{if}\quad p \in T_Y\setminus Y \  \text{and}\ \e t  >0.
\label{eq-relation}
 \end{equation}
For convenience, let $\wh{X}' = (\wh{X} \times \{-1\}) \sqcup (\wh{X} \times \{+1\}) / \sim$ where
$(u,\e) \sim (u',\e')$ if and only if $u=u' \in Y$.  Note that $\wt{X}_1' \setminus S_Y 
\times\{0\}$ is naturally identified with $\wh{X}' \setminus Y$, so this construction replaces $Y$ with $S_Y$. 

There is a blowdown map $\beta_1: \wt{X}_1' \to \wh{X}'$ given by
\[
\beta_1(u,\e) = (u,\e) \quad \text{if}\quad u \notin Y, \qquad
\beta_1(u,0) = \pi_Y(u). 
\]
Clearly $\beta_1: \wt{X}_1'  \setminus S_Y \times \{0\} \to \wh{X}' \setminus Y$ is an isomorphism
of smoothly stratified spaces and $(S_Y \times (-2,2))$ is a tubular neighbourhood of $(\beta_1)^{-1}Y= 
S_Y\times \{0\}$ in $\wt{X}_1'$.

We shall prove that $\wt{X}_1'$ is a smoothly stratified space of depth $k-1$ equipped with an involution
$\tau_1$ which fixes $S_Y \times \{0\}$ and interchanges the two components of the complement of this
set in $\wt{X}_1'$, and which fixes all the control data of $\wt{X}_1'$. 
To do all of this, we must fix a stratification ${\mathfrak S}_1$ of $\wt{X}_1'$ and define all of
the corresponding control data and show that these satisfy properties i) - vi). 

\medskip

$\bullet$ Fix any stratum $Z\in \mathfrak{S}$ of $\wh{X}$ with $\text{depth}\,(Z) < k$, and define
\begin{equation}
\wt{Z}_1' := (Z \times \{\pm 1\}) \sqcup \left( (S_Y \cap Z)\times (-2,2)\right)/ \sim, 
\label{new-stratum}
\end{equation}
where $\sim$ is the same equivalence relation as in \eqref{eq-relation}. The easiest way to see that this is 
well-defined is to note that $S_Y \cap Z$ is a stratum of the smoothly stratified space $S_Y$ and that 
the restriction 
\begin{equation}
F_Y : \mathrm{Cyl}\,(S_Y \cap \bar{Z}, \pi_Y) \longrightarrow \bar{Z} \cap T_Y
\label{eq:cylstrat}
\end{equation}
is an isomorphism. (This latter assertion follows from axiom ii).)

As above, let $Z'$ be the union of two copies of $\bar{Z}$ joined along $\bar{Z} \cap Y$. 

\smallskip

$\bullet$ Now define the stratification ${\mathfrak S}_1$ of $\wt{X}_1'$
\begin{equation}\label{stratification-w-b}
{\mathfrak S}_1 := \{\wt{Z}_1': Z \in \mathfrak{S}\setminus Y\}. 
\end{equation}
We must now define the control data $\{ T_{\wt{Z}_1'}, \pi_{\wt{Z}_1'}, \rho_{\wt{Z}_1'}\}_{\wt{Z}_1'
\in \mathfrak{S}_1}$ associated to this stratification. 

\smallskip

$\bullet$ Following \eqref{new-stratum}, set
\begin{equation}\label{tube-for-z}
T_{\wt{Z}'_1}:= T_Z \times \{\pm 1\} \sqcup \left((S_Y\cap T_Z)\times (-2,2)\right)/\sim,
\end{equation}
where $(p,\e) \sim (c,t)$ if $t\e >0$ and $p=F_Y (c,|t|)$. Extending (or `thickening') (\ref{eq:cylstrat}),
by axiom iii) we also have that $F_Y$ restricts to an isomorphism between $\mbox{Cyl}\,(T_Z \cap S_Y ,\pi_Y) $ 
and $T_Z \cap T_Y$. In turn, using axiom ii) again, within the smoothly stratified space $S_Y$, $F_{T_Y \cap Z}$ 
is an isomorphism from $\mbox{Cyl}\,(S_Y \cap S_Z, \pi_Z)$ to the tubular neighbourhood of $Z \cap S_Y$ in $S_Y$, 
which is the same as $T_{S_Y \cap Z}$. Using these representations, the fact that (\ref{tube-for-z}) is well-defined 
follows just as before. 

Note that $Y$ has been stretched out into $S_Y \times \{0\}$, and $T_{\wt{Z}_1'} \cap (S_Y \times \{0\})$ is
isomorphic to $T_{\wt{Z}_1'} \cap (S_Y \times \{t\})$ for any $t \in (-2,2)$. 

\smallskip

$\bullet$ The projection $\pi_{\wt{Z}_1'}$ is determined by $\pi_Z$ on each slice $(S_Y \cap T_Z) 
\times \{t\}$, at least when $t \neq 0$, and extends uniquely by continuity to the slice at $t=0$
in $\wt{X}_1'$. A similar consideration yields the function $\rho_{\wt{Z}_1'}$. 

\smallskip

$\bullet$ One must check that the space $\wt{X}_1'$ and this control data for its stratification
satisfies axioms i) - vi). This is somewhat lengthy but straightforward, so details are left
to the reader. 

\smallskip

$\bullet$ Finally, this whole construction is symmetric with respect to the reflection $\tau_1$
defined by $t \mapsto -t$ in $T_Y$ and which extends outside of $T_Y$ as the interchange of the 
two components of $X' \setminus Y$. The fixed point set of $\tau_1$ is the slice $S_Y \times \{0\}$.

\medskip

This establishes that the space $\wt{X}_1'$ obtained by `resolving' the depth $k$ smoothly stratified
space $\wh{X}$ along its maximal depth strata via this doubling-blowup construction is a smoothly
stratified space of depth $k-1$, equipped with one extra piece of data, the involution $\tau_1$. 

This process can now be iterated. After $j$ iterations we obtain a smoothly stratified
space $\wt{X}_j'$ of depth $k-j$ which is equipped with $j$ commuting involutions $\tau_i$,
$1 \leq i \leq j$. In particular, the space $\wh{X}_k'$ is a compact closed manifold. 

We next check that these involutions are `independent' in the sense that for any point $p$ which 
lies in the fixed point set of more than one of the $\tau_i$, the $-1$ eigenspaces of the $d\tau_i$ are 
independent. Suppose that this assertion is true for all spaces of depth less than $k$; then it is true 
in particular for the resolution of the space $\wh{X}_1'$. Denote by $H_k$ the fixed point set of $\tau_1$; 
this is the boundary face of the closure of $\wh{X}_1' \setminus H_k$. Now consider any collection
of involutions $\tau_{i_1}, \ldots, \tau_{i_r}$ with fixed point sets $H_{i_1}, \ldots, H_{i_r}$
intersecting at a corner $G$ such that $G \cap H_k \neq \emptyset$. It suffices to verify that
this intersection is transversal. However, this follows from the fact that $\tau_1$ restricts to any
of the $H_{i_s}$ as a nontrivial involution.

The complement of the union of fixed point sets of the involutions $\tau_i$ is a union of $2^k$ components,
and our resolved space $\wt{X}$ is the closure of any one of these components. 

To conclude the construction, we must show that $\wt{X}$ carries the structure of a manifold
with corners with iterated fibration structure. In the argument above, we proved that $\wt{X}$ has
the local structure of a manifold with corners already, but it remains to check that the boundary
faces are embedded. For this, first note that all faces of the resolution of $\wh{X}_1'$ are embedded, 
and by its description in the resolution construction, $H_k$ is as well; finally, all corners of $\wt{X}$ 
which lie in $H_k$ are embedded since they are faces of the resolution of $S_Y$ where $Y$ is the maximal 
depth stratum and we may apply the inductive hypothesis. This proves that $\wt{X}$ is a manifold with corners. 

Now let us examine the structure on the boundary faces.  We proceed once again by induction. The case $k=1$ 
is obvious since then $\wt{X}$ is a manifold with boundary where the boundary is the total space 
of a fibration and there are no compatibility conditions
with other faces. Now, suppose we have proved the assertion for all spaces of depth less
than $k$, and let $X$ be a smoothly stratified space with depth $k$. Let $Y$ be the union
of all strata of depth $k$ and consider the doubled-blowup space $\wt{X}_1'$. This is
a stratified space of depth $k-1$, so its resolution is a manifold with corners up to
codimension $k-1$ with iterated fibration structure. Note that since $S_Y$ is again a
smoothly stratified space of depth $k-1$, its resolution $\wt{S_Y}$ is also a manifold with 
corners with iterated fibration structure. The blowdown of $\wt{S_Y}$ along the fibres
of all of its boundary hypersurfaces is a smoothly stratified space $\wh{S_Y}$ and
this is the boundary $H_k$ of $\wt{X}_1$, the `upper half' of $\wt{X}_1'$. 

Once we perform all the other blowups, we know that the compatibility conditions are satisfied 
at every corner except those which lie in $\wt{S_Y}$. The images of the other boundaries of 
$\wt{X}_1$ by blowdown into $\wh{X}_1$ are precisely the singular strata of this space. 
Furthermore, there is a neighbourhood of $H_k'$ in $\wt{X}_1$ of the form $S_Y \times [0,2)$ (using 
the variable $t$ in this initial blowup as the defining function $\rho_k$), so near $H_k$ 
$\wt{X}$ has the product decomposition $\wt{S_Y} \times [0,2)$. From this it follows that each
fibre $F_j$ of $H_j$, $j < k$, lies in the corresponding corners $H_k \cap H_j$; it also follows
that each fibre $F_k$ of $H_k$ is transverse to this corner, and has boundary $\del_j F_k$ equal 
to a union of the fibres $F_j$. This proves that all conditions a) - c) of the iterated fibration 
structure are satisfied. 
\end{proof}

\subsection{Smoothly stratified isomorphisms}

As promised earlier in this section, we return to a closer discussion of the proper definition of isomorphism 
between smoothly stratified spaces. Our point of view, following Melrose, is that these isomorphisms are
better understood through their lifts to the resolutions. 

To begin, let us state a result which is a straightforward consequence of the resolution and blowdown
constructions of the previous two subsections and their proofs.
\begin{proposition} Let $\wh{X}$ and $\wh{X}'$ be two smoothly stratified spaces and $\wt{X}$, $\wt{X}'$
their resolutions, with blowdown maps $\beta: \wt{X} \to \wh{X}$ and $\beta': \wt{X}' \to \wh{X}'$.
Suppose that $\hat f : \wh{X}\to \wh{X}'$ is a stratified isomorphism. Then there is a unique 
diffeomorphism of manifolds with corners $\tilde{f}: \wt{X}\to  \wt{X}'$ which preserves the iterated 
fibration structures and which satisfies $\hat{f}\circ \beta=\beta' \circ \tilde{f}$.
\end{proposition}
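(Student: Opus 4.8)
The plan is to construct $\tilde f$ by induction on the depth $k$ of $\wh X$ (equivalently of $\wh X'$, since a stratified isomorphism preserves depth). The inductive hypothesis is the statement of the proposition for all smoothly stratified spaces of depth $<k$, including the functoriality of the assignment $\hat f \mapsto \tilde f$ with respect to composition and the fact that $\widetilde{\mathrm{id}} = \mathrm{id}$. The base case $k=0$ is trivial, since then $\wh X = \wt X$, $\wh X' = \wt X'$, and we take $\tilde f = \hat f$. For the inductive step, let $Y \subset \wh X$ be the union of the depth-$k$ strata; because $\hat f$ is a stratified isomorphism it carries $Y$ diffeomorphically onto the union $Y'$ of the depth-$k$ strata of $\wh X'$, and by the definition of stratified isomorphism it intertwines the control data: $\pi_{Y'}' \circ \hat f = \hat f \circ \pi_Y$ and $\rho_{Y'}' = \rho_Y \circ \hat f$ (after adjusting $\hat f$ by a reparametrization of the radial variables if one only assumes the weaker notion of isomorphism; with the rigid definition from \S 2.1 this is automatic). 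In particular $\hat f$ restricts to an isomorphism $S_Y \to S_{Y'}'$ of the link bundles, and to an isomorphism $T_Y \setminus Y \to T_{Y'}' \setminus Y'$ commuting with the retractions $R_Y$, $R_{Y'}'$ of \S 2.3.

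Next I would lift $\hat f$ through the first doubling-blowup. Recall from the proof of Proposition \ref{prop:BlowUp} that $\wt X_1' = \big((\wh X\setminus Y)\times\{-1\}\big)\sqcup\big((\wh X\setminus Y)\times\{+1\}\big)\sqcup\big(S_Y\times(-2,2)\big)/\!\sim$, and similarly for $\wt X{}_1'{}'$. Because $\hat f$ intertwines $R_Y, \rho_Y$ with $R_{Y'}', \rho_{Y'}'$, the map
\[
\hat f_1 := \big((\hat f|_{\wh X\setminus Y})\times\mathrm{id}_{\{\pm 1\}}\big)\sqcup\big((\hat f|_{S_Y})\times\mathrm{id}_{(-2,2)}\big)
\]
is compatible with the equivalence relation \eqref{eq-relation} on both sides, hence descends to a well-defined map $\hat f_1 : \wt X_1' \to \wt X{}_1'{}'$. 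One checks directly from the explicit formulas for the control data \eqref{tube-for-z}--\eqref{stratification-w-b} that $\hat f_1$ is a stratified isomorphism of these depth-$(k-1)$ spaces, that it intertwines the involutions $\tau_1$ and $\tau_1'$, and that $\beta_1' \circ \hat f_1 = \hat f_1 \circ \beta_1$ where $\hat f_1$ on the right denotes the induced map $\wh X' \to \wh X'{}'$ of the doubled spaces. Now apply the inductive hypothesis to $\hat f_1$: since $\wt X_1'$, $\wt X{}_1'{}'$ have depth $k-1$, there is a unique diffeomorphism of manifolds with corners $\tilde f_1 : \widetilde{\wt X_1'} \to \widetilde{\wt X{}_1'{}'}$ preserving iterated fibration structures with $\tilde\beta_1' \circ \tilde f_1 = \hat f_1 \circ \tilde\beta_1$. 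By the uniqueness clause of the inductive hypothesis applied to $\tau_1$ (an isomorphism of $\wt X_1'$ with itself) and $\tau_1'$, the lift $\tilde f_1$ intertwines the lifted involutions $\tilde\tau_1$, $\tilde\tau_1'$; therefore $\tilde f_1$ carries the fixed-point set of $\tilde\tau_1$ to that of $\tilde\tau_1'$ and maps the distinguished component (whose closure is $\wt X$) onto the distinguished component of $\wt X'{}'$. Restricting and taking closures gives the desired diffeomorphism $\tilde f : \wt X \to \wt X'$, which preserves the iterated fibration structure at every boundary face by the inductive hypothesis away from $H_k$ and, at the new face $H_k$, because $\hat f|_{S_Y}$ is itself a stratified isomorphism to which the inductive hypothesis applies and the product decomposition of a collar of $H_k$ is $\wt{S_Y}\times[0,2)$ on both sides.

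It remains to verify the two defining properties: that $\tilde f$ preserves the iterated fibration structures and that $\hat f \circ \beta = \beta' \circ \tilde f$. The first was addressed above; for the intertwining relation, note that both $\beta$ and $\beta'$ are obtained by composing the blowdown $\tilde\beta_1$ (respectively $\tilde\beta_1'$) with the passage from $\wt X_1'$ to $\wh X$ (the quotient by $\tau_1$ away from $H_k$, i.e.\ the map $\beta_1$), so the identity $\hat f\circ\beta = \beta'\circ\tilde f$ follows by chasing the commuting squares $\beta_1' \circ \hat f_1 = \hat f_1 \circ \beta_1$ and $\tilde\beta_1' \circ \tilde f_1 = \hat f_1 \circ \tilde\beta_1$ and restricting to the distinguished component. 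Uniqueness of $\tilde f$ is immediate: $\beta$ is a diffeomorphism from the interior of $\wt X$ onto the regular set of $\wh X$, which is dense, so $\tilde f$ is determined on a dense set by $\tilde f = (\beta')^{-1}\circ \hat f \circ \beta$ there, hence everywhere by continuity.

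I expect the main obstacle to be purely bookkeeping: checking that the explicit control data on $\wt X_1'$ given by \eqref{tube-for-z} and the surrounding formulas are genuinely intertwined by $\hat f_1$, and that the compatibility relations between the induced fibrations at corners — which the proof of Proposition \ref{prop:BlowUp} itself only sketches — are respected. These are the same "somewhat lengthy but straightforward" verifications that are deferred to the reader in \S 2.3, and the honest way to handle them is to observe that $\hat f$ is built from isomorphisms of the local models $\calU\times C(L_Y)$ that commute with all the structure maps by hypothesis, so each verification reduces to a statement about these local models that is transparent. A secondary subtlety is ensuring that the "distinguished component" is chosen compatibly on the two sides; this is not canonical, but any choice works because the $2^k$ components of the complement of the fixed-point sets are permuted transitively enough — more precisely, once a component is fixed on the $\wh X$ side, $\hat f_1$ together with the lifted involutions determines which component must be chosen on the $\wh X'$ side, and the resulting $\tilde f$ is independent of the initial choice up to the action of the deck involutions, which is exactly the ambiguity already present in the definition of the resolution.
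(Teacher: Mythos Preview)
The paper does not actually prove this proposition: immediately after the statement it simply cites \cite{BHS}, \S 2 Prop.~3.2 and Remark~4.2, for the existence of the lift, noting only that those authors do not address preservation of the boundary fibration structures. So there is no ``paper's own proof'' to compare against in detail.

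Your argument is a correct and natural way to supply one, and it is exactly in the spirit of what the paper would want: you run the inductive doubling-blowup construction of \S 2.3 equivariantly with respect to $\hat f$. The key steps---that $\hat f_1$ descends to $\wt X_1'$ because $\hat f$ intertwines $(R_Y,\rho_Y)$ with $(R_{Y'}',\rho_{Y'}')$, that the inductive hypothesis then furnishes $\tilde f_1$, and that uniqueness forces $\tilde f_1$ to intertwine the lifted involutions---are all sound. The uniqueness argument via density of the interior is the right one and is independent of the inductive machinery. Your honest flagging of the bookkeeping (that $\hat f_1$ really is a stratified isomorphism for the control data of \eqref{tube-for-z}) and of the component-choice ambiguity is appropriate; both reduce, as you say, to statements about the local models $\calU\times C(L_Y)$ which are immediate from the hypothesis that $\hat f$ is a stratified isomorphism in the sense of \S 2.1. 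In short: your proof is more than the paper provides, and nothing in it is wrong.
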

The existence of the lift is proved already in \cite{BHS}, \S 2 Prop. 3.2 and Remark 4.2, 
though of course they do not consider the issue of whether it preserves the fibrations at the boundaries.

The converse result is also true
\begin{proposition} Given $\wt{X}$, $\wt{X}'$, $\wh{X}$ and $\wh{X}'$, as above, if $\tilde{f}: 
\wt{X} \to \wt{X}'$ is a diffeomorphism of manifolds with corners which preserves the fibration
structures at the boundaries, then there exists some choice of control data on the blown down
spaces and a smoothly stratified isomorphism $\hat{f}: \wh{X} \to \wh{X}'$ such that
$\hat{f}\circ \beta=\beta' \circ \tilde{f}$.
\label{pr:diffeosss}
\end{proposition}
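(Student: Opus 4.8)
The plan is to reverse the construction from Proposition \ref{prop:BlowUp}, building the control data and the stratified isomorphism $\hat f$ by induction on the depth $k$ of $\wt X$. Concretely, I would first use the blowdown map $\beta:\wt X\to\wh X$ of Proposition \ref{pr:blowdown} to produce \emph{some} smoothly stratified structure on $\wh X$ (with a chosen set of control data $T_Y,\pi_Y,\rho_Y$), and likewise for $\wh X'$; the point is then to show that the diffeomorphism $\tilde f$ descends to a stratified isomorphism for \emph{these} choices. The base case $k=0$ is trivial, and the case $k=1$ reduces to the observation that a fibre-preserving diffeomorphism between boundary fibrations collapses to a stratified isomorphism of the two mapping-cylinder/cone-bundle neighbourhoods, which one checks respects $\pi_Y$ and $\rho_Y$ by choosing the collar/defining function on $\wh X'$ to be the pushforward under $\tilde f$ of the one on $\wt X$.

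For the inductive step, let $\calH_k\subset\calH$ be the set of depth-$k$ hypersurfaces of $\wt X$; since $\tilde f$ preserves the iterated fibration structure it permutes these with their images in $\wt X'$. Applying $\tilde f$ to a collar $\tilde T_H\cong H\times[0,2)$ of each $H\in\calH_k$ transports the control data of Definition \ref{de:controldata} across, so $\tilde f$ restricts to an isomorphism of the lower-depth manifolds with corners obtained by ``unfolding'' across the depth-$k$ faces — this is exactly the doubled space $\wt X_1'$ appearing in the proof of Proposition \ref{prop:BlowUp}, equipped with its involution $\tau_1$. Since $\tilde f$ respects the fixed-point sets of all the involutions $\tau_i$ (they are intersections of boundary faces, which $\tilde f$ preserves), it descends to a diffeomorphism $\tilde f_1':\wt X_1'\to\wt X_1''$ commuting with the involutions and preserving fibration structures on a space of depth $k-1$. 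By the inductive hypothesis there is a choice of control data on the blowdowns and a stratified isomorphism $\hat f_1':\wh X_1'\to\wh X_1''$ intertwining the blowdowns; restricting to the ``upper half'' $\{\rho_k\ge 0\}$ and then collapsing the depth-$k$ fibres (as in the last paragraph of the proof of Proposition \ref{prop:BlowUp}) produces $\hat f:\wh X\to\wh X'$, together with control data on the new stratum $Y$ obtained by pushing forward the mapping-cylinder structure via $\hat f_1'$.

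It then remains to verify that $\hat f$ is genuinely a stratified isomorphism in the sense of \S2.1: it carries open strata to open strata diffeomorphically (immediate since it does so away from $Y$ by induction and on $Y$ by the collapse), and it satisfies $\pi'_{\hat f(Y)}\circ\hat f=\hat f\circ\pi_Y$ and $\rho'_{\hat f(Y)}\circ\hat f=\rho_Y$ for every stratum. For the new stratum $Y$ these two identities hold by construction, because we \emph{define} the control data $T'_{\hat f(Y)},\pi'_{\hat f(Y)},\rho'_{\hat f(Y)}$ on $\wh X'$ to be the images under $\tilde f$ (equivalently $\hat f_1'$) of the corresponding data on $\wh X$; for the strata of depth $<k$ they hold by the inductive hypothesis, since the blowdown of the depth-$k$ faces does not alter neighbourhoods of those strata. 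Finally one checks compatibility across the interface, i.e.\ that the pushed-forward tube $T'_Y$ and the already-constructed tubes for lower strata satisfy axioms iii) and iv) — this uses only that $\tilde f$ is a diffeomorphism preserving all the fibration data, so incidence relations among faces are preserved. The main obstacle I anticipate is precisely this last bookkeeping: the stratified-isomorphism condition is rigidly tied to the chosen control data, so one must be careful to make the choices on $\wh X'$ \emph{consistently across all strata simultaneously} — which is why the statement only claims the existence of \emph{some} choice of control data rather than compatibility with arbitrary prescribed data — and to check that the doubling/collapsing at each stage does not disturb the control data built in earlier stages. Once the inductive scheme is set up so that control data on $\wh X'$ is always defined as the $\tilde f$-pushforward of a fixed choice on $\wh X$, these verifications are routine and parallel those already sketched in Proposition \ref{prop:BlowUp}, so we omit the details.
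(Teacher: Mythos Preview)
Your proposal is correct and contains all the essential ideas. Note, however, that the paper does not give a detailed proof of this proposition at all: it offers only the one-sentence remark that ``we must use the control data of the blown down space which is induced from any choice of `control data' of $\wt{X}$ and the pushforward of this control data via $\tilde{f}$ on $\wt{X}'$, cf.\ Definition~\ref{de:controldata}.'' Your inductive doubling argument is a legitimate way to flesh this out, and indeed mirrors the structure of the proofs of Propositions~\ref{pr:blowdown} and~\ref{prop:BlowUp}.

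That said, the paper's hint points to a somewhat more direct route than the one you take. Rather than unfolding across depth-$k$ faces and invoking the inductive hypothesis on a depth-$(k-1)$ space, one can work entirely at the level of manifolds with corners: fix a full set of control data $\{\tilde T_H,\tilde\pi_H,\tilde\rho_H\}$ on $\wt X$ in the sense of Definition~\ref{de:controldata}, push it forward by $\tilde f$ to obtain control data on $\wt X'$, and then observe that the blowdown construction of Proposition~\ref{pr:blowdown} converts such control data directly into control data $\{T_Y,\pi_Y,\rho_Y\}$ on $\wh X$ (and likewise on $\wh X'$). The map $\hat f$ is then simply the unique map making the square commute, and the intertwining relations $\pi'_{\hat f(Y)}\circ\hat f=\hat f\circ\pi_Y$, $\rho'_{\hat f(Y)}\circ\hat f=\rho_Y$ hold \emph{by construction} for every stratum at once, since each $\pi_Y,\rho_Y$ descends from $\tilde\pi_H,\tilde\rho_H$ and these are intertwined by $\tilde f$. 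This avoids the stage-by-stage bookkeeping you flag as the ``main obstacle.'' Your approach has the virtue of being self-contained within the machinery already developed in \S2.2--2.3, while the paper's suggested approach is shorter but leans more heavily on the (asserted but not proved) existence and naturality of control data in Definition~\ref{de:controldata}.
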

The subtlety here is that we must use the control data of the blown down space which is
induced from any choice of `control data' of $\wt{X}$ and the pushforward of this control
data via $\tilde{f}$ on $\wt{X}'$, cf.\ Definition \ref{de:controldata}. 

There is also a third result which completes the picture.
\begin{proposition}
Let $\wt{X}$ be a manifold with corners with iterated fibration structure, and suppose that
$\{\tilde{\pi}_H,\tilde{\rho}_H\}$ and $\{\tilde{\pi}_H',\tilde{\rho}_H'\}$ are two sets
of control data on it. Then there is a diffeomorphism $\tilde{f}$ of $\wt{X}$ which preserves the 
iterated fibration structure, and which intertwines the two sets of control data.
\end{proposition}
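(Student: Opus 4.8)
The plan is to prove this by induction on the depth $k$ of $\wt X$, exploiting the fact that the difference between two sets of control data is localized near the boundary hypersurfaces and, at each hypersurface, amounts to a choice of product decomposition of a collar. The base case $k=1$ is essentially classical: $\wt X$ is a manifold with boundary, $\del\wt X$ is the total space of a fibration $f:\del\wt X\to B$, and each set of control data is just a choice of boundary defining function $\tilde\rho_H$ together with a tubular neighborhood diffeomorphism $\tilde T_H\cong H\times[0,2)$ compatible with $f$. Any two such are intertwined by a diffeomorphism of $\wt X$ supported in a slightly larger collar: one first isotopes $\tilde\rho_H'$ to $\tilde\rho_H$ (both are boundary defining functions, so their ratio is a positive smooth function, and one interpolates), then uses the uniqueness up to diffeomorphism of the product decomposition of a collar (relative to the fibration $f$, which can be arranged by choosing the interpolating isotopy to respect $f$ since both decompositions extend $f$).

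For the inductive step, I would order the boundary hypersurfaces by increasing depth and correct the control data one hypersurface at a time. Suppose the result is known for all manifolds with corners with iterated fibration structure of depth less than $k$. Let $H_k$ be a maximal-depth hypersurface of $\wt X$ (treat the case of several such $H_k$ by doing them simultaneously, exactly as in the doubling construction of Propositions \ref{pr:blowdown} and \ref{prop:BlowUp}). The two control data restrict to two sets of control data on $H_k$ itself — which is a manifold with corners with iterated fibration structure of depth $< k$ — so by the inductive hypothesis there is a diffeomorphism $g$ of $H_k$ preserving its iterated fibration structure and intertwining these. Extend $g$ to a diffeomorphism of a collar $\tilde T_{H_k}\cong H_k\times[0,2)$ by acting as $g\times\mathrm{id}$ in the product structure given by, say, $\{\tilde\pi_{H_k},\tilde\rho_{H_k}\}$, and then extend further to all of $\wt X$ using a cutoff in $\tilde\rho_{H_k}$; this is legitimate because, by the compatibility axioms in Definition \ref{de:controldata}, $\tilde\rho_{H_k}$ is constant on the fibres of every lower face, so the cutoff extension does not disturb the fibration structure on any $H_j$ with $j<k$, nor at the corners $H_j\cap H_k$ (where the fibres $F_j$ lie inside $H_k$, by Lemma \ref{le:ifs}). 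After applying this diffeomorphism we may assume the two sets of control data agree on $H_k$, and it remains to match the normal data: interpolate $\tilde\rho_{H_k}'$ to $\tilde\rho_{H_k}$ and then match the two collar identifications $\tilde\pi_{H_k},\tilde\pi_{H_k}'$, again using uniqueness up to diffeomorphism of a product decomposition of a collar, now performed relative to the (already matched) fibration of $H_k$ and compatibly with all lower faces. Peeling off $H_k$ this way and then invoking the inductive hypothesis on the closure of the complement — equivalently, running down the remaining hypersurfaces in order of decreasing depth — completes the argument.

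The main obstacle, and the step deserving the most care, is the simultaneous bookkeeping of all the compatibility relations in Definition \ref{de:controldata} while performing the collar adjustments at $H_k$: one must check that interpolating $\tilde\rho_{H_k}'\rightsquigarrow\tilde\rho_{H_k}$ and adjusting $\tilde\pi_{H_k}$ can be done so that the restriction of $\tilde\rho_{H_k}$ to each lower face $H'\cap\tilde T_{H_k}$ stays constant on the fibres of $H'$ throughout the isotopy, and so that the induced product decompositions at every corner $H_{A'}$ continue to preserve all incidence and inclusion relationships among the various fibres. The key structural fact making this possible is Lemma \ref{le:ifs}: since $H_k$ is maximal, at any corner $H_j\cap H_k$ the small fibres $F_j$ are entirely contained in $H_k$ while the fibres $F_k$ are transverse to $H_j$ with boundary a union of the $F_j$, so an isotopy built to respect the fibration of $H_k$ automatically respects the induced fibration of $H_j\cap H_k$ and the transverse behaviour of $F_k$. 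Once this compatibility is arranged at each stage, the rest is a routine concatenation of the local collar-uniqueness statements, glued together with a partition of unity subordinate to a cover by adapted corner coordinate charts of the type used in the proof of Lemma \ref{le:ifs}.
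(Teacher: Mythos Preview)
The paper does not actually prove this proposition; it explicitly says ``we shall not prove this last result in detail'' and offers only the hint that the argument ``should be done inductively, and the key idea is that we can pull back any set of control data on $\wt{X}$ to a `universal' set of control data defined on the union of the inward pointing normal bundles to each boundary hypersurface.'' Your proposal is therefore considerably more detailed than what the paper provides, and your inductive scheme is broadly in the spirit of the hint.

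The organizational difference is that the paper's hint points toward comparing each set of control data to a canonical model on the normal bundle of each hypersurface (so uniqueness is proved by showing both are equivalent to a fixed third object), whereas you construct the intertwining diffeomorphism directly, correcting the data hypersurface by hypersurface in order of depth. Both routes rest on the same underlying fact---uniqueness up to diffeomorphism of collar product decompositions, done compatibly with the fibrations---so neither is more powerful; the normal-bundle formulation is a bit cleaner for bookkeeping, while yours is more explicit.

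One point in your argument that deserves a word more: when you ``extend $g$ to a diffeomorphism of a collar $\ldots$ by acting as $g\times\mathrm{id}$ $\ldots$ and then extend further to all of $\wt X$ using a cutoff in $\tilde\rho_{H_k}$,'' the cutoff extension implicitly requires an isotopy from $g$ to the identity on $H_k$ (through diffeomorphisms preserving the iterated fibration structure). This is available---every diffeomorphism produced in your induction is a concatenation of collar adjustments, each of which is visibly isotopic to the identity---but you should strengthen the inductive statement to include this isotopy, otherwise the extension step is not justified as written.
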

Combined with Proposition~\ref{pr:diffeosss}, this reproves the result that any two sets of
control data on a smoothly stratified space $\wh{X}$ are equivalent by a smoothly stratified
isomorphism.

Since it is not central to our main theme, we shall not prove this last result in detail.

As with everything else in this section, this should be done inductively, and the key idea
is that we can pull back any set of control data on $\wt{X}$ to a `universal' set of control
data defined on the union of the inward pointing normal bundles to each boundary hypersurface.

In any case, this discussion should make clear that the `correct' definition of a stratified
isomorphism $\hat{f}$ between smoothly stratified spaces is that the lift of $\hat{f}$ to
the associated resolutions is a diffeomorphism of the corresponding manifolds with
corners which preserves the iterated fibration structures. This has the advantage that
it is not inherently inductive (even though many of the arguments behind it are), and
provides a clear notion of the regularity of these isomorphisms on approach to the
singular set.

%%%%%%%%%%%%%%%%%%%%%%%%%%%%%%%%%%%%%%%%%%%%%%%%%%
%%%%%%%%%%%%%%%%%%%%%%%%%%%%%%%%%%%%%%%%%%%%%%%%%%
\section{Iterated edge metrics} \label{section:iterated}
%%%%%%%%%%%%%%%%%%%%%%%%%%%%%%%%%%%%%%%%%%%%%%%%%%
%%%%%%%%%%%%%%%%%%%%%%%%%%%%%%%%%%%%%%%%%%%%%%%%%%

We now introduce the class of Riemannian metric on smoothly stratified spaces with which we shall
work thoughout this paper. A priori, these metrics are only defined on $\mbox{reg}\,(\wh{X})$,
but the main point is their behaviour near the singular strata. These metrics were also considered
by Cheeger \cite{Ch} and also by Brasselet-Legrand \cite{BL}; they are most easily
described using adapted coordinate charts (see pp.\, 224-5 of \cite{BL}) or equivalently, on
the resolution $\wt{X}$. In the following, we freely use notation from the last section.

We begin by constructing an open covering of $\mbox{reg}\,(\wh{X})$ by sets with an iterated conic structure. 
Let $Y_1$ be any stratum. By definition, for each $q_1 \in Y_1$ there exists a neighbourhood $\calU_1$ and a 
trivialization $\pi_{Y_1}^{-1}(\calU_1) \cong \calU_1 \times C(L_{Y_1})$. Now fix any stratum $Y_2 \subset L_{Y_1}$,
and a point $q_2 \in Y_2$. As before, there is a neighbourhood $\calU_2 \subset Y_2$ and a trivialization 
$\pi_{Y_2}^{-1}(\calU_2) \cong \calU_2 \times C(L_{Y_2})$. Continuing on in this way, the process must stop in
no more than $d = \mbox{depth}\,(Y_1)$ steps when $q_s$ lies in a stratum $Y_s$ of depth $0$ in $L_{Y_{s-1}}$ (
which must, in particular, occur when $L_{Y_{s-1}}$ itself has depth $0$). We obtain in this way an open set of the form 
\begin{equation}\label{eq:chart}
	\calU_1 \times C\big( \calU_{2} \times C( \calU_{3} \times \ldots \times C(\calU_s)\,) \cdots \big),
\end{equation}
where $s \leq d$, which we denote  by $\calW = \calW_{q_1, \ldots, q_s}$. Choose a local coordinate system 
$y^{(j)}$ on  $\calU_j$, and let $r_j$ be the radial coordinate in the cone $C(L_{Y_j})$. Thus $(y^{(1)},r_1, y^{(2)},r_{2}, \ldots, y^{(s)})$ 
is a full set of coordinates in $\calW$. Clearly we may cover all of $\wh{X}$ by a finite number of sets of this form.

We next describe the class of admissible Riemannian metrics on $\mbox{reg}\,(\wh{X})$ by giving their
structure on each set of this type. 

\begin{definition}\label{def:metric}
We say that a Riemannian metric $g$ defined on $\mbox{reg}\,(\wh{X})$ is an iterated edge metric if there is a covering 
by the interiors of sets of the form $\calW_{q_1, \ldots, q_s}$ so that in each such set, 
\[
g= h_{1} +  d r_1^2 + r_{1}^2 ( h_{2} + d r_{2}^2 +  r_{2}^2 ( h_{3} +  d r_{3}^2 +  r_{3}^2 (  h_{4} +\ldots + r_{s-1}^2 h_s) ) ),
\]
with $ 0< r_j < \epsilon$ for some $\epsilon >0$ and every $j$, and where $h_j$ is a metric on $\calU_j$.  We also assume that 
for every $j = 1, \ldots, s$,  $h_j$ depends only on $y^{(1)}, r_1, y^{(2)} , r_{2}, \ldots, y^{(j)}, r_{j}$. 

If each $h_j$ is independent of the radial coordinates $r_{1}, \ldots, r_j$,  then we call $g$ a rigid iterated edge metric.
Note that this requires the choice of a horizontal lift of the tangent space of each stratum $Y$ as a subbundle of
the cone bundle $T_Y$ which is invariant under the scaling action of the radial variable on each conic fibre.
\end{definition}

Recall the manifold with corners $\wt{X}$ with iterated fibration structure which 
is the resolution of $\wh{X}$. Its interior is canonically identified with $\mbox{reg}\,(\wh{X})$, and we identify 
these spaces without comment. Each boundary hypersurface $H$ of $\wt{X}$ has a global defining function $x_H$,
so $H = \{x_H = 0\}$;  we now define the total boundary defining function
\[
\rho = \prod_{H \in \calH} x_H.
\]

\begin{definition}
Let $g$ be an admissible iterated edge metric on $\mbox{reg}\,(\wh{X})$. The associated
complete iterated edge metric $\wt{g}$ is, by definition, 
$$
\wt{g} = \rho^{-2} g.
$$
\end{definition}
%It is straightforward to verify that $\wt{g}$ is indeed complete on $\mbox{reg}\,(\wh{X})$. 

\begin{proposition} \label{thm:metric} 
Let $\wh{X}$ be a smoothly stratified pseudomanifold. Then there exists a rigid iterated edge metric $g$ on 
$\mbox{reg}\,(\wh{X})$. 
\end{proposition}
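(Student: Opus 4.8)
The plan is to construct the metric by induction on the depth of $\wh{X}$, using the resolution $\wt{X}$ and its iterated fibration structure as the organizing framework, and exploiting the existence of control data (Definition~\ref{de:controldata}), which was established in \S2.2. The base case is $\mathrm{depth}\,\wh{X} = 0$, where $\wh{X}$ is a smooth closed manifold and any Riemannian metric is trivially a rigid iterated edge metric. For the inductive step, suppose the result holds for all smoothly stratified pseudomanifolds of depth less than $k$, and let $\wh{X}$ have depth $k$. Let $Y_1, \ldots, Y_m$ be the strata of maximal depth $k$; each is a closed smooth manifold. For each such $Y = Y_i$, axiom v) gives a locally trivial cone bundle $\pi_Y : T_Y \to Y$ with fibre $C(L_Y)$, where $L_Y$ is a compact smoothly stratified space of depth $\le k-1$. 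By the inductive hypothesis, $L_Y$ carries a rigid iterated edge metric $h_{L_Y}$; moreover, as remarked in Definition~\ref{def:metric}, to build the conic piece $dr_Y^2 + r_Y^2 h_{L_Y}$ in an invariant (fibrewise) way on $T_Y \setminus Y$ we must choose a horizontal distribution, i.e.\ a splitting of the cone-bundle tangent sequence invariant under the radial scaling; this is possible because $Y$ is a smooth manifold and the structure group of the bundle consists, by axiom v), of stratified isomorphisms of $C(L_Y)$ preserving the radial variable and the rays, so they act on $L_Y$ and one may average the rigid metric $h_{L_Y}$ over a partition of unity subordinate to the atlas $\calU_Y$ (or, what amounts to the same, patch together the local product metrics $h_{\calU} + dr_Y^2 + r_Y^2 h_{L_Y}$ on $\pi_Y^{-1}(\calU) \cong \calU \times C(L_Y)$).

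The construction then proceeds as follows. First, choose a rigid iterated edge metric on each link $L_{Y_i}$ by induction, and (after averaging) a fixed metric $g_{Y_i}$ on $Y_i$ together with an invariant horizontal lift. This produces, on a punctured tubular neighbourhood $T_{Y_i} \setminus Y_i$ of each maximal-depth stratum, a model metric $g_i = \pi_{Y_i}^* g_{Y_i} + dr_i^2 + r_i^2 h_{L_{Y_i}}$ of the required form, defined for $0 < r_i < \varepsilon$. Second, one observes that the complement $\wh{X} \setminus \bigcup_i \{r_i < \varepsilon/2\}$ deformation retracts onto a compact smoothly stratified space of depth $\le k-1$ — concretely, using the doubling-blowup picture from Proposition~\ref{prop:BlowUp} one may realize the relevant neighbourhood of the lower-depth strata inside the resolved space $\wt{X}_1'$, which has depth $k-1$ — so the inductive hypothesis furnishes a rigid iterated edge metric there as well. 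Third, patch: take a partition of unity $\{\chi_i, \chi_0\}$ on $\wh{X}$ subordinate to the cover $\{T_{Y_i}\} \cup \{\text{rest}\}$, with each $\chi_i$ depending only on $r_i$ (radial cutoffs) near $Y_i$, and set $g = \sum_i \chi_i g_i + \chi_0 g_{\mathrm{rest}}$. Because near any point the overlapping pieces all have the iterated product form in the \emph{same} coordinates $(y^{(1)}, r_1, \ldots)$ and differ only in the choice of the transverse metrics $h_j$ — which live on compact link factors and can themselves be glued by the inductive construction — the convex combination is again of the form demanded in Definition~\ref{def:metric}, and rigidity (independence of each $h_j$ from the radial variables) is preserved provided the cutoffs $\chi_i$ are chosen to depend only on the radial coordinates, so that differentiating them does not disturb the $h_j$.

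The main obstacle is the compatibility of the local product decompositions and horizontal lifts across the \emph{nested} cone structure: when a stratum $Y_2 \subset L_{Y_1}$ sits inside the link of a higher stratum, the horizontal lift chosen for $Y_2$ (used to define $h_2$) must be compatible with the one chosen for $Y_1$, and the rigid metric produced on $L_{Y_1}$ by induction must genuinely restrict, in each chart $\calW_{q_1,\ldots,q_s}$ of \eqref{eq:chart}, to the iterated form with each $h_j$ depending only on $y^{(1)}, r_1, \ldots, y^{(j)}, r_j$. This is exactly the point where the axioms iii) and v) on control data — the commutation relations $\pi_Y \circ \pi_Z = \pi_Y$, $\rho_Y \circ \pi_Z = \rho_Y$, and the fact that transition functions are suspensions of link isomorphisms preserving the radial variable — must be invoked to guarantee that the fibrewise metrics glue coherently; equivalently, in the resolved picture, that the chosen collar decompositions $\tilde{T}_H \cong H \times [0,2)$ of Definition~\ref{de:controldata} are mutually compatible at the corners. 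Once one grants (as \S2 allows) the existence of such compatible control data, the averaging-and-patching argument is routine; organizing the induction so that the compatibility bookkeeping is carried along — ideally by working entirely on $\wt{X}$ and building $g = \rho^2 \wt{g}$ from a metric $\wt{g}$ on the manifold with corners that is a product near each boundary hypersurface and its corners — is the cleanest way to discharge it, and is presumably the route the authors take.
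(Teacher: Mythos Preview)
Your approach is correct in outline but takes a different and more laborious route than the paper. You build the metric in two pieces --- on tubular neighbourhoods of the depth-$k$ strata via induction on the links $L_{Y_i}$, and on the complement via induction on a lower-depth space --- and then patch with a radial partition of unity, which forces you to track compatibility of horizontal lifts and to argue that the convex combination preserves the rigid iterated edge form. The paper sidesteps all of this patching: it applies the inductive hypothesis \emph{once}, directly to the doubled space $\wt{X}_1'$ of \S2.3 (which has depth $k-1$), obtaining a single rigid iterated edge metric $g_1$ on all of $\wt{X}_1'$. One then arranges that $g_1 = dt^2 + g_{S_Y}$ is a product on the cylinder $S_Y \times (-2,2)$; since $S_Y$ fibres over $Y$ with fibre $L_Y$, rescaling the vertical part of $g_{S_Y}$ by $r^2$ converts the cylinder into the cone bundle and yields the desired metric on $\wh{X}$ with no gluing at all. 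Your guess that the authors work on the full resolution $\wt{X}$ is off --- they use only the single-step double $\wt{X}_1'$. What their route buys is that the compatibility across the seam is automatic (everything comes from one metric on one connected space), so the bookkeeping you devote your last two paragraphs to simply does not arise; what your route buys is a more explicit local description of the metric, at the cost of that bookkeeping.
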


\begin{proof} 
We prove this by induction. For spaces of depth $0$, there is nothing to prove, so suppose that $\wh{X}$ is a 
smoothly stratified space of depth $k \geq 1$, and assume that the result is true for all spaces with depth less than $k$.

Let $Y$ be the union of  strata  of depth $k$, each component of which is necessarily a closed manifold; for convenience 
we assume that $Y$ is connected. Consider the space $\wt{X}_1'$ obtained in the first step of the resolution
process in \S 2.3 by adjoining two copies of $\wh{X}$ along $Y$ and replacing the double of the neighbourhood
$T_Y$ by a cylinder $S_Y \times (-2,2)$. This is a space of depth $k-1$, and hence possesses a rigid
iterated edge metric $g_1$. We may in fact assume that in the cylindrical region $S_Y \times (-2,2)$,
$g_1$ has the form $dt^2 + g_{S_Y}$, where $g_{S_Y}$ is a (rigid) iterated edge metric on $S_Y$ which is
independent of $t$. Recalling that $S_Y$ is the total space of a fibration with fibre $L_Y$, we can define
a family of metrics $g_{S_Y}^r$ on $S_Y$ by scaling the metric on each fibre by the factor $r^2$. Then
$g = dr^2 + g_{S_Y}^r$ induces an admissible iterated edge metric on $\wh{X}$, which by construction
is also rigid. 
\end{proof}

\begin{proposition}\label{prop:homotopymet} 1) Any two admissible iterated edge metrics on $\wh{X}$ are homotopic 
within the class of admissible iterated edge metrics. 2) Any two rigid iterated edge metrics on $\wh{X}$ are homotopic 
within the class of rigid iterated edge metrics. 
\end{proposition}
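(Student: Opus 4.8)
The plan is to prove both statements by induction on the depth $k$ of $\wh{X}$, running the induction in parallel with the existence proof of Proposition~\ref{thm:metric} and using the same doubling-blowup construction from \S 2.3. The base case $k=0$ is trivial since on a smooth manifold the space of Riemannian metrics is convex (a convex combination of metrics is a metric), so any two metrics are homotopic through metrics. For the inductive step, suppose $g^{(0)}$ and $g^{(1)}$ are two admissible iterated edge metrics on a depth-$k$ space $\wh{X}$, and let $Y$ be the union of the depth-$k$ strata (assume $Y$ connected for convenience).

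First I would reduce to a normal form near the top stratum. Near $Y$, in each chart of the form $\calU_1 \times C(L_{Y})$, an admissible metric has the shape $h_1 + dr_1^2 + r_1^2 k$, where $k$ is (a family of) iterated edge metrics on the link $L_Y$ and $h_1$ is a metric on $\calU_1$. Passing to the resolution $\wt X_1'$ (the doubled blowup), $g^{(j)}$ pulls back to a metric $g_1^{(j)}$ on a depth-$(k-1)$ stratified space, and after a further homotopy within admissible metrics — fixing a collar $S_Y \times (-2,2)$ and interpolating $h_1$ with a $t$-independent model and $dr_1^2$ with $dt^2$ — I may assume $g_1^{(j)}$ has product form $dt^2 + g_{S_Y}^{(j)}$ in that collar, with $g_{S_Y}^{(j)}$ an iterated edge metric on $S_Y$ pulled back from the fibration $S_Y \to Y$. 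The point of this first reduction is that it takes place entirely within the class of admissible metrics, using only convexity-type interpolation away from the deeper strata (where nothing needs adjusting) together with elementary collar-interpolation arguments near $Y$.

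Next I would apply the inductive hypothesis on $\wt X_1'$, which has depth $k-1$: the two metrics $g_1^{(0)}$ and $g_1^{(1)}$ are homotopic through admissible iterated edge metrics $g_1^{(s)}$, $s \in [0,1]$, on $\wt X_1'$. The subtlety — and the main obstacle — is that this homotopy must be arranged to preserve the product form $dt^2 + g_{S_Y}^{(s)}$ on the collar $S_Y \times (-2,2)$, with $g_{S_Y}^{(s)}$ remaining pulled back from the fibration $S_Y \to Y$; only such a homotopy descends via the blowdown (collapsing each fibre $L_Y$ and rescaling by $r_1^2$) to a homotopy of admissible metrics on $\wh X$ itself. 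To handle this I would strengthen the inductive statement to a relative version: any two admissible metrics which already agree with a fixed product model on a neighbourhood of a chosen collection of boundary hypersurfaces are homotopic rel that neighbourhood. This relative form propagates through the induction because the doubling construction and the collar-interpolation in the first reduction are all compatible with such boundary constraints; it is the standard device for making collar-gluing arguments go through. Once the homotopy on $S_Y$ is in hand, the rescaled family $dr_1^2 + (g_{S_Y}^{(s)})^{r_1}$ gives the desired homotopy on the neighbourhood $T_Y$ of $\wh X$, and away from $T_Y$ we use the inductive homotopy directly; a partition-of-unity patching (legitimate since the admissibility condition is local and convex in the relevant sense) completes the construction of the homotopy for part~1.

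For part~2 the argument is identical but easier: in the rigid case the metrics $h_j$ are independent of the radial variables, the collar-interpolation step becomes a straight convex interpolation of $t$-independent data, and the rescaling $g_{S_Y}^{r_1}$ manifestly preserves rigidity; so the same induction, carried in the rigid class throughout, yields the conclusion. Thus the only genuinely delicate point is bookkeeping the collar/product constraint through the inductive step, which I would resolve once and for all by formulating the inductive hypothesis in the relative form described above; the rest is routine convexity and collar interpolation.
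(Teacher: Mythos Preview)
Your proposal is correct and follows the same inductive-via-doubling strategy as the paper, but the order of operations differs in a way worth noting. You pass to the doubled depth-$(k-1)$ space $\wt X_1'$ first and then carry a \emph{relative} inductive hypothesis (homotopy preserving product form on the collar) through the argument. The paper instead stays on $\wh X$ at first: it removes the $r_1$-dependence of the link and base metrics by the explicit homotopy $s\mapsto g_{\calU}(y,s+(1-s)r_1)+dr_1^2+r_1^2 g_{L_Y}(s+(1-s)r_1,y,z)$, then applies the inductive hypothesis directly to the link $L_Y$ (depth $\le k-1$) and convexity to the base to make $g$ and $g'$ \emph{coincide} near $Y$. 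Only then does it pass to $\wt X_1'$, where the lifted metrics now agree on the collar, so the relative statement needed is just ``if the metrics already coincide on an open set, the homotopy can be taken constant there'' --- a much weaker bookkeeping burden than your product-form constraint. Both routes work; the paper's ordering buys a cleaner induction because it uses the hypothesis twice (once on the link, once on the doubled space) rather than once with a strengthened relative form.
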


\begin{proof}
We proceed by induction.  The result is obvious when the depth is $0$, so assume
it holds for all spaces of depth strictly less than $k$ and consider a pseudomanifold  of depth $k$ with
two admissible iterated edge metrics $g$ and $g^\prime$.  

To begin, then, fix a stratum $Y$ which has maximal depth $k$. Then $Y$ is a smooth closed manifold. Recall 
the notation from \S 2.1, and in particular the stratified isomorphism $F_Y$ from the mapping cylinder of 
$(S_Y,\pi_Y)$ to $T_Y$ and the family of local trivializations $\phi:\pi_Y^{-1}(\calU) \to \calU \times C(L_Y)$ 
for suitable $\calU \subset Y$. If $u \in T_Y \cap \pi_Y^{-1}(\calU)$, we write $\phi(u) = (y,z,t)$ where 
$y \in \calU$, $z \in L_Y$ and $t = \rho_Y(u)$; in particular, by axiom v), there is a retraction $R_Y: 
T_Y \setminus Y \to S_Y$, given on any local trivialization by $(y,z,t) \to (y,z,1)$ 
(which is well defined since $t \neq 0$). 

In any of these trivializations, the metric $g$ has the form
\begin{equation*}
	(\phi^{-1})^*g
	= g_{\calU}(y,t) + dt^2 + t^2 g_{L_Y}(t,y,z)
\end{equation*}
and the homotopy 
\begin{equation*}
	s \mapsto 
	g_{\calU}(y,s + (1-s)t) + dt^2 + t^2 g_{L_Y}(s+(1-s)t,y,z)
\end{equation*}
removes the dependence of $g_{\calU}$ and $g_{L_Y}$ on $t$ while remaining in the class of iterated edge metrics.
Since the coordinate $t=\rho_Y(u)$ is part of the control data, this homotopy can be performed consistently across all of the local trivializations $\phi$.

So without loss of generality we may assume that
\begin{equation*}
	(\phi^{-1})^*g
	= g_{\calU}(y) + dt^2 + t^2 g_{L_Y}(y,z), \Mand
	(\phi^{-1})^*g'
	= g_{\calU}'(y) + dt^2 + t^2 g_{L_Y}'(y,z).
\end{equation*}
The metrics $g_{\calU}$ and $g_{\calU}'$ are homotopic and, by inductive hypothesis, so are the metrics $g_{L_Y}$ and $g_{L_Y}'$.
Thus the metrics $(\phi^{-1})^*g$ and $(\phi^{-1})^*g'$ are homotopic within the class of iterated edge metrics on $\calU \times C(L_Y)$.
Using consistency of the trivializations $\phi$ we can patch these homotopies together and see that $g$ and $g'$ are homotopic in a neighborhood of $Y$.

So without loss of generality we can assume that $g$ and $g'$ coincide in a neighborhood of $Y$ and, in this neighborhood, are independent of $\rho_Y$.
As in the proof of Proposition \ref{prop:BlowUp} we consider the space 
\begin{equation*}
\wt{X}_1' := \left((\wh{X} \setminus Y) \times \{-1\}\right) \sqcup \left((\wh{X} \setminus Y) \times \{+1\}\right)
\sqcup \big(S_Y \times (-2,2) \big) / \sim
\end{equation*}
Define the lift $\wt g$ of $g$ to $\wt X_1'$ by $g$ on each copy of $\wh X \setminus Y$ and 
\begin{equation*}
	g_{\calU}(y)  +  g_{L_Y}(y,z) + dt^2
\end{equation*}
on each neighborhood of $S_Y \times (-2,2)$ corresponding to the trivialization $\phi$ as above, and define $\wt g'$ similarly.
Then $\wt g$ and $\wt g'$ are iterated edge metric on a space of depth $k-1$ so by inductive hypothesis are homotopic.
Moreover since they coincide in $S_Y \times (-2,2)$, the homotopy can be taken to be constant in a neighborhood of $S_Y$, and hence the homotopy descends to a homotopy of $g$ and $g'$.

If $g$ and $g'$ are rigid, the homotopies above preserve this.
\end{proof}

Cheeger also defines \cite{Cheeger-symp} (p.\ 127) a class of admissible metrics $g$ on the regular part of 
a smoothly stratified pseudomanifold $\wh{X}$. He uses a slightly different decomposition of $\wh{X}$ and 
assumes that on each `handle' of the form  $(0,1)^{n-i} \times C(N^{i-1})$, $h$ induces a metric  quasi-isometric
to one of the form
\[
(d y_1)^2 + \ldots + (d y_{n-i})^2 + (d r)^2 + r^2 g_{N^{i-1}};
\]
see \cite{Cheeger-symp} for the details. Using the proof of Proposition \ref{thm:metric} as well as 
\cite{Cheeger-symp} (page 127), we obtain the following 

\begin{proposition} \label{prop:adm}
{\item 1)} Any iterated edge  metric as in Definition \ref{def:metric}) is admissible in the sense of Cheeger.
{\item 2)} Any two admissible metrics are quasi-isometric.
\end{proposition}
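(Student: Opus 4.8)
\textbf{Plan for the proof of Proposition \ref{prop:adm}.}

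The plan is to reduce both assertions to a purely local comparison of the two classes of model metrics on a single adapted chart, and then to globalize using the finite coverings already constructed in \S\ref{section:iterated}. For part 1), I would begin by taking an iterated edge metric $g$ as in Definition \ref{def:metric} and working in one of the iterated conic charts $\calW_{q_1,\ldots,q_s}$ of the form \eqref{eq:chart}, with coordinates $(y^{(1)},r_1,\ldots,y^{(s)})$. The goal is to match this chart against one of Cheeger's handles $(0,1)^{n-i}\times C(N^{i-1})$. The key observation is that Cheeger's decomposition and our iterated-cone decomposition of $\mbox{reg}\,(\wh X)$ are both subordinate to the same underlying stratification, so after a common refinement of the two coverings one may compare them on overlapping pieces. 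On such an overlap, I would expand the nested expression
\[
g = h_1 + dr_1^2 + r_1^2(h_2 + dr_2^2 + r_2^2(h_3 + \ldots + r_{s-1}^2 h_s))
\]
and argue by induction on the depth $s$ that, after the coordinate change sending the iterated radial variables $(r_1,\ldots,r_s)$ to Cheeger's single radial variable together with the iterated product of earlier radial factors, $g$ becomes quasi-isometric to the handle model $(dy_1)^2+\ldots+(dy_{n-i})^2+(dr)^2+r^2 g_{N^{i-1}}$, where $g_{N^{i-1}}$ is itself (by the inductive hypothesis, applied to the link $L_{Y_1}$, which has strictly smaller depth) an admissible metric in Cheeger's sense on the cross-section. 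The compactness of each $\overline{\calU_j}$ and the fact that each $h_j$ is a smooth Riemannian metric there give the uniform two-sided bounds needed for quasi-isometry, and the cited page 127 of \cite{Cheeger-symp} supplies the precise bookkeeping of which coordinates play which role.

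For part 2), once part 1) is in hand, it suffices to show that any two of \emph{Cheeger's} admissible metrics are quasi-isometric, since then any two iterated edge metrics, being each quasi-isometric to Cheeger-admissible ones, are quasi-isometric to each other (quasi-isometry being an equivalence relation, with constants multiplying on composition, and the finiteness of the covering keeping all constants uniform). Cheeger's admissibility is a \emph{quasi-isometry} condition on each handle against a fixed model $g_{N^{i-1}}$ on the link; so on each handle two admissible metrics $g,g'$ are each quasi-isometric to the same model metric, hence to each other, and the constants can be taken uniform over the finite handle cover of the compact space $\wh X$. Here I would again run the argument by induction on depth: the model metric $r^2 g_{N^{i-1}}$ on the conic factor involves the link metric $g_{N^{i-1}}$, and the inductive hypothesis guarantees that any two choices of the latter are quasi-isometric, which propagates up through the nesting. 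Alternatively, one can combine this with the homotopy statement of Proposition \ref{prop:homotopymet}: a homotopy of admissible metrics through admissible metrics, with the homotopy parameter ranging over a compact interval and the space itself compact, yields uniform quasi-isometry constants between the endpoints; but the direct induction seems cleaner and avoids invoking compactness of the homotopy.

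The main obstacle I anticipate is \emph{not} the quasi-isometry estimates themselves, which are elementary once the right coordinates are fixed, but rather the careful matching of the two \emph{decompositions} of $\mbox{reg}\,(\wh X)$ — Cheeger's into handles $(0,1)^{n-i}\times C(N^{i-1})$ versus our covering by the $\calW_{q_1,\ldots,q_s}$ of \eqref{eq:chart}. One must check that these are mutually subordinate in a way compatible with the stratification and with the radial functions (which, by axioms ii)–iii) of Definition \ref{def:metric}, descend consistently to the control data), so that the inductive step on the link is legitimately applicable. This is precisely the kind of bookkeeping that Cheeger carries out on page 127 of \cite{Cheeger-symp} and that our resolution picture of \S\ref{Resolution} is designed to organize; accordingly, I would present this step by citing \cite{Cheeger-symp} for the handle decomposition, invoking the proof of Proposition \ref{thm:metric} for how our iterated metrics are built, and then only sketching the remaining quasi-isometry comparison, which is routine.
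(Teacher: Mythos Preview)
Your plan is sound and, in fact, considerably more detailed than what the paper itself provides: the paper does not give a separate proof of this proposition at all, but simply records it as an immediate consequence of the proof of Proposition~\ref{thm:metric} together with \cite[p.~127]{Cheeger-symp}. Your proposal correctly identifies exactly these two ingredients --- the inductive construction of iterated edge metrics from Proposition~\ref{thm:metric} and Cheeger's handle decomposition --- and spells out the local quasi-isometry comparison and the induction on depth that the paper leaves implicit. The one point worth noting is that for part~2) the argument is even more direct than your plan suggests: since Cheeger's definition of admissibility is already phrased as a quasi-isometry condition against a fixed model on each handle, transitivity and finiteness of the cover give the result immediately, with no separate induction on the link metric needed (any two choices of $g_{N^{i-1}}$ on a compact link are automatically quasi-isometric, being smooth Riemannian metrics on the regular part with the same model behaviour).
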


As a first application, we discuss the $L^2$ cohomology of Witt spaces with respect
to iterated edge metrics.

\begin{definition} A stratified pseudomanifold $\wh{X}$ is a Witt space if for
all strata $Y \in {\mathfrak S}$, if the corresponding link $L_Y$ has even dimension,
$\dim L_Y = f_Y$, then $I\! H_m^{f_Y/2} ( L_Y)= 0$; here $m$ refers to either the lower or upper
middle perversity.
\end{definition}
In this paper, we shall consider only orientable Witt spaces.

There is a famous result concerning the $L^2$ cohomology of Witt spaces, due to Cheeger:
\begin{theorem}\label{theo:cheeger}(Cheeger) Let $\widehat{X}$ be a Witt space endowed with
an iterated edge metric $g$. Consider any stratum $Y$ with link $L_Y$ of even dimension $f_Y$, 
and denote by $H_{(2)}^{f_Y/2}$ and $\mathcal{H}_{(2)}^{f_Y/2} ( L_Y) $ its middle degree $L^2-$cohomology 
and Hodge cohomology, respectively, defined with respect to the iterated edge metric on $L_Y$
induced by $g$. Then 
\begin{equation}\label{eq:cheeger}
H_{(2)}^{f_Y/2} ( L_Y)=\calH_{(2)}^{f_Y/2} ( L_Y) = 0.
\end{equation}
\end{theorem}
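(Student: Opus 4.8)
The plan is to prove equation \eqref{eq:cheeger} by induction on the depth of the link $L_Y$, exploiting the fact that $L_Y$ is itself a stratified pseudomanifold of strictly smaller depth than $\wh X$ and that, because $\wh X$ is a Witt space, so is $L_Y$. The base case is a smooth closed manifold $L_Y$ of odd dimension (which occurs automatically when $\dim \wh X$ forces $f_Y$ even and the link at maximal depth is a manifold), but more generally for depth $0$ the statement reduces to ordinary Hodge theory: a closed oriented Riemannian manifold of even dimension $f$ has $H^{f/2}_{(2)} = \mathcal H^{f/2}$, and the Witt hypothesis has no content there unless we are using it vacuously. The substance is the inductive step.

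For the inductive step, I would proceed as follows. First, reduce the Hodge cohomology statement to the $L^2$-cohomology statement: since we are on the regular part with an iterated edge (hence, by Proposition \ref{prop:adm}, admissible in Cheeger's sense) metric, the Kodaira-type decomposition and the identification $\mathcal H^{k}_{(2)} \cong H^{k}_{(2)}$ in the relevant degree is part of Cheeger's $L^2$-Hodge theory for admissible metrics; alternatively one can cite Cheeger \cite{Ch} directly for this identification. So it suffices to show $H^{f_Y/2}_{(2)}(L_Y) = 0$. Second, I would use the Mayer--Vietoris / cone formula that is the heart of Cheeger's method: near a singular stratum $Z$ of $L_Y$ with link $L_Z$ (of dimension $f_Z$), the local $L^2$-cohomology of a cone $C(L_Z)$ is computed by the cone formula
\[
H^{k}_{(2)}(C_{(0,1)}(L_Z)) =
\begin{cases}
H^{k}_{(2)}(L_Z), & k < f_Z/2,\\
0, & k \geq f_Z/2,
\end{cases}
\]
in the truncated-cone range, where the vanishing for $k \ge f_Z/2$ uses precisely the Witt condition $IH^{f_Z/2}_m(L_Z) = 0$ together with the inductively-known vanishing of the middle $L^2$-cohomology of $L_Z$. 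Third, a Mayer--Vietoris argument over a good cover of $L_Y$ (covering the regular part and tubular neighborhoods of the singular strata, using the local iterated-conic charts $\calW_{q_1,\dots,q_s}$ from \eqref{eq:chart}) assembles these local computations: the middle-degree class would have to restrict compatibly to the cone neighborhoods, where it must vanish by the cone formula, and to the regular part, and one pushes the vanishing globally. The scaling of the fibre metric by $r^2$ in Definition \ref{def:metric} is exactly what makes the cone formula come out with the stated degree cutoff, so it is important that the metric be of iterated edge type.

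The main obstacle, and the step I would spend the most care on, is the cone formula itself and its interface with intersection cohomology: one must show that the $L^2$-cohomology of a metric cone $dr^2 + r^2 g_{L_Z}$ truncates at the middle degree, and that the obstruction to extending an $L^2$-harmonic form from the truncated cone to the full cone is governed by $IH^{f_Z/2}_m(L_Z)$ — this is where the Witt hypothesis enters essentially and where the Cheeger--Goresky--MacPherson identification of $L^2$-cohomology with intersection cohomology (for admissible metrics) is invoked. A secondary technical point is ensuring the Mayer--Vietoris spectral sequence converges and that the local pieces glue, which requires the metric to be of product-cone type in the charts, i.e. the rigidity or at least quasi-isometric control furnished by Propositions \ref{prop:homotopymet} and \ref{prop:adm}; since $L^2$-cohomology is a quasi-isometry invariant, one may first homotope/deform to a rigid iterated edge metric and work there. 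I would remark that all of this is essentially contained in Cheeger \cite{Ch}, so in the paper the "proof" can reasonably be a citation together with the observation that Proposition \ref{prop:adm} places our metrics within the scope of his results; a self-contained argument would follow the inductive cone-formula scheme just outlined.
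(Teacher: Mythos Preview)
The paper does not actually supply a proof of this theorem: it is stated as a result of Cheeger, with the implicit justification being the attribution and the observation (Proposition~\ref{prop:adm}) that iterated edge metrics are admissible in Cheeger's sense, hence fall within the scope of \cite{Ch}. You correctly anticipate this in your final paragraph.

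Your sketch of how a self-contained argument would go---induction on depth via the cone formula $H^k_{(2)}(C(L_Z))$ truncating at $k = f_Z/2$, with the Witt hypothesis killing the boundary case, assembled by Mayer--Vietoris over iterated-conic charts, together with quasi-isometry invariance to reduce to the rigid model---is an accurate outline of Cheeger's method in \cite{Ch, Cheeger-symp}. One small correction: your description of the base case is muddled. At depth~$0$ the link $L_Y$ is a smooth closed manifold, so $H^{f_Y/2}_{(2)}(L_Y) = H^{f_Y/2}(L_Y)$ by ordinary Hodge theory, and this equals $IH_m^{f_Y/2}(L_Y)$; the Witt hypothesis then \emph{is} exactly the vanishing statement, not something vacuous. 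The induction is what propagates the identification $H^{*}_{(2)} \cong IH_m^{*}$ to higher depth, after which the Witt hypothesis again gives the vanishing directly.
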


%%%%%%%%%%%%%%%%%%%%%%%%%%%%%%%%%%%%%%%%%%%%%%%%%%
%%%%%%%%%%%%%%%%%%%%%%%%%%%%%%%%%%%%%%%%%%%%%%%%%%
%%%%%%%%%%%%%%%%%%%%%%%%%%%%%%%%%%%%%%%%%%%%%%%%%%
%%%%%%%%%%%%%%%%%%%%%%%%%%%%%%%%%%%%%%%%%%%%%%%%%%
\section{Iterated edge vector fields and operators} \label{Operators}
%%%%%%%%%%%%%%%%%%%%%%%%%%%%%%%%%%%%%%%%%%%%%%%%%%
%%%%%%%%%%%%%%%%%%%%%%%%%%%%%%%%%%%%%%%%%%%%%%%%%%
%%%%%%%%%%%%%%%%%%%%%%%%%%%%%%%%%%%%%%%%%%%%%%%%%%
%%%%%%%%%%%%%%%%%%%%%%%%%%%%%%%%%%%%%%%%%%%%%%%%%%

On a closed manifold,
$L^2$ and Sobolev spaces are defined using a Riemannian metric but are independent of which metric is used to define them.
A differential operator induces a bounded map between these spaces, and ellipticity is enough to guarantee that this map 
is Fredholm. All of this fails when the manifold is not closed, and in this section we will analyze how much can be recovered 
for iterated edge metrics.

In many respects, manifolds with bounded geometry (such as the manifold $X = \mbox{reg}\,(\wh{X})$ 
endowed with the complete metric $\wt{g}$) are the most tractable class of noncompact manifolds on
which to do analysis.  There are natural classes of $L^2$ and Sobolev spaces, and `uniform' differential 
operators induce bounded maps between them. The well-developed calculus of uniform pseudo-differential operators
contains parametrices of uniform elliptic operators, which leads to certain uniform elliptic regularity results.
Moreover, the compactification of $X$ as a manifold with corners provides a natural way to define weighted 
$L^2$ and Sobolev spaces and the uniform calculus extends easily to act between these.

In this section we describe this machinery and explain how it can be applied to the de Rham operator of the iterated conic metric $g$.
The uniform pseudodifferential calculus will provide us with a parametrix even after twisting by projective finitely generated modules over a $C^*$-algebra.
Although this will not be enough to establish Fredholm properties it will show that, for an elliptic operator, these depend solely on the behavior near the boundary.

%%%%%%%%%%%%%%%%%%%%%%%%%%%%%%%%%%%%%%%%%%%%%%%%%%
%%%%%%%%%%%%%%%%%%%%%%%%%%%%%%%%%%%%%%%%%%%%%%%%%%
\subsection{Edge vector fields on $X$}\label{complete-metric} $ $\newline
%%%%%%%%%%%%%%%%%%%%%%%%%%%%%%%%%%%%%%%%%%%%%%%%%%
%%%%%%%%%%%%%%%%%%%%%%%%%%%%%%%%%%%%%%%%%%%%%%%%%%

Associated to the metric $\wt g$ on $X$ is the space of `iterated edge' vector fields
\begin{equation}\label{FirstDescVie}
	\cV_{\ie}
	= \{ V \in C^\infty (\wt X, T\wt X): X \ni q \mapsto \wt g_q(V,V) \in \bbR^+ \text{ is bounded} \}
\end{equation}
which, in the notation of \S 3, on a neighbourhood of the form $\calW_{q_1, \ldots, q_s}$ are locally spanned 
by vector fields of the form 
\[
r_1 \ldots r_{s-1} \del_{r_1}, r_1 \ldots r_{s-1} \del_{y^{(1)}}, r_1 \ldots r_{s-2} \del_{r_2}, r_1 \ldots r_{s-2} \del_{y^{(2)}}, 
\ldots, \del_{y^{(s)}}.
\]
It is easy to see that $\cV_{\ie}$ forms a locally finitely generated, locally free Lie algebra with respect to the usual bracket
on vector fields, so Swan's theorem shows that there is a vector bundle ${}^{\ie}TX$ over $\wt X$ whose space of sections is $\cV_{\ie}$,
\begin{equation}\label{DefIeTX}
	\CI (\wt X, ^{\ie}T X ) = \cV_{\ie}.
\end{equation}
This bundle ${}^{\ie}TX$ coincides with the usual tangent bundle $TX$ over the interior of $\wt X$ and is isomorphic to $T\wt X$, though there is no canonical isomorphism.
Since \eqref{FirstDescVie} shows that $g(V,V)$ extends to $\wt X$ for any section $V$ of ${}^{\ie}TX$, it is easy to see that $\wt g$ defines a metric on ${}^{\ie}TX$. 

\begin{proposition} $(X, \widetilde{g})$ is a complete Riemannian manifold of bounded geometry.
\end{proposition}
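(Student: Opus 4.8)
The plan is to prove the two assertions — completeness and bounded geometry — separately, in both cases working on a fixed chart $\calW_{q_1,\ldots,q_s}$ and using the explicit form of $\wt g = \rho^{-2}g$ there. Recall from Definition \ref{def:metric} that on such a chart
\[
g = h_1 + dr_1^2 + r_1^2\bigl( h_2 + dr_2^2 + r_2^2( h_3 + \cdots + r_{s-1}^2 h_s)\bigr),
\]
and that $\rho$ is comparable to $r_1 r_2 \cdots r_{s-1}$ near the corner (each $r_j$ is, up to a smooth nonvanishing factor, the product of those boundary defining functions $x_H$ for hypersurfaces $H$ through which the cone $C(L_{Y_j})$ degenerates). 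Dividing $g$ by $\rho^2$ converts each incomplete conic factor $dr_j^2 + r_j^2(\cdots)$ into an asymptotically cylindrical one, and $\cV_{\ie}$ is precisely the Lie algebra of vector fields of $\wt g$-bounded length, spanned by the fields displayed in \S\ref{complete-metric}.

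For completeness, first I would observe that a Cauchy sequence (for the $\wt g$-distance) in $X$ that does not converge in $X$ must accumulate at some boundary hypersurface of $\wt X$, i.e.\ $r_j \to 0$ for some $j$ along a subsequence. But the arclength of any path in the chart is bounded below by the $\wt g$-length of its projection to the $r_j$-factor, which after the conformal rescaling is $\int |dr_j|/(r_j r_{j+1}\cdots r_{s-1})\ge \int |dr_j|/r_j$ (the remaining radial factors are $<\eps<1$), and this integral diverges as $r_j \to 0$. Hence no Cauchy sequence can escape to the boundary, so $(X,\wt g)$ is metrically complete; by Hopf--Rinow it is geodesically complete. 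Alternatively, and more cleanly, one cites that $\rho$ (and each $x_H$) is, up to a bounded factor, $e^{-t}$ for a boundary "cylindrical" coordinate $t$ with $dt$ of bounded $\wt g$-length, so that the function $-\log\rho$ is a proper exhaustion with $\wt g$-bounded gradient, which forces completeness.

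For bounded geometry — uniformly bounded curvature and a uniform lower bound on the injectivity radius — the key device is that $\wt g$ is \emph{exactly} a product of (incomplete) warped products rescaled to (complete) cylinders, and more importantly that after passing to the rigid representative (allowed by Proposition \ref{prop:homotopymet}, which is curvature-bounded-equivalent near the strata, or really just by the structure of $\cV_{\ie}$) the metric coefficients in the "edge coordinates" — i.e.\ writing $\wt g = \sum g_{ab}\, \omega^a\otimes\omega^b$ in a local frame $\omega^a$ dual to a spanning set of $\cV_{\ie}$ — are smooth functions on the compact manifold with corners $\wt X$, together with all their $\cV_{\ie}$-derivatives. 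Since the structure functions of the Lie algebra $\cV_{\ie}$ (the coefficients in $[V_i,V_j] = c_{ij}^k V_k$) are likewise smooth up to the corner, the Koszul formula expresses the Christoffel symbols, and then iteratively the curvature tensor and all its covariant derivatives, of $\wt g$ in this frame as universal polynomial expressions in the $g_{ab}$, their inverse, the $c_{ij}^k$, and finitely many $\cV_{\ie}$-derivatives of these — all of which are continuous on the compact space $\wt X$, hence bounded. This gives $\sup_X |\nabla^k \mathrm{Rm}_{\wt g}| < \infty$ for every $k$. For the injectivity radius, I would use the standard fact (Cheeger--Gromov--Taylor) that a complete manifold with $|\mathrm{Rm}|\le C$ and a uniform lower bound on the volume of unit balls has $\inj \ge \iota_0 > 0$; the volume lower bound follows because every unit $\wt g$-ball, in the cylindrical coordinates, looks uniformly like a ball in a fixed model product, so its volume is bounded below independently of the basepoint.

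The main obstacle is the second assertion, and within it the verification that \emph{all} covariant derivatives of curvature are bounded rather than just the curvature itself. The subtlety is that differentiating with ordinary coordinate vector fields near the boundary produces blow-up, and one must consistently work with the $\cV_{\ie}$-frame and track that each geometric quantity lies in the algebra generated by the bounded coefficient functions and their $\cV_{\ie}$-derivatives; the bookkeeping is what makes the induction on depth $s$ slightly delicate, though conceptually routine once the right frame is fixed. A secondary point requiring a little care is that the definition \eqref{def:metric} allows the $h_j$ to depend on the radial variables $r_1,\ldots,r_j$, so one should either invoke Proposition \ref{prop:homotopymet}(2)/Proposition \ref{thm:metric} to reduce to a rigid metric (whose coefficients are manifestly independent of the $r_j$, hence trivially smooth up to the corner), noting that the iterated-edge Sobolev theory — and in particular the bounded-geometry conclusion, which only depends on the quasi-isometry class and the $\cV_{\ie}$-structure by Proposition \ref{prop:adm} — is insensitive to this choice, or directly note that smooth $r_j$-dependence is still smooth up to $\{r_j = 0\}$ and is differentiated by the admissible fields $r_1\cdots r_{j-1}\partial_{r_j}\in\cV_{\ie}$ without loss of regularity.
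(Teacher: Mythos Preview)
Your proposal is correct and aligns with the paper's approach: for completeness the paper invokes the Gordon--de Rham--Borel criterion with the exhaustion function $-\log\rho = -\log(x_0\cdots x_m)$ (your ``alternative, cleaner'' route), and for bounded geometry it simply refers to Ammann--Lauter--Nistor \cite{ALN}, whose Lie-structure-at-infinity framework is exactly the $\cV_{\ie}$-frame argument you spell out. Your write-up is considerably more detailed than the paper's two-line proof, but the underlying ideas are the same.
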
 
\begin{proof}
Recall the theorem of Gordon-de Rham-Borel, which states that a manifold is complete if and only if  it admits a 
nonnegative, smooth, proper function with bounded gradient. For this metric $\wt g$, such a function is $-\log (x_0 \cdots x_m)$.
To prove that $g$ has bounded geometry one needs to check that the curvature tensor of $\wt g$, and its covariant derivatives, 
are bounded and that the injectivity radius of $\wt g$ is bounded away from $0$. These can be shown as in \cite{ALN}.
 \end{proof}

The set of $\ie$-differential operators is the enveloping algebra of $\cV_{\ie}$; i.e., it consists of linear combinations (over $\CI(\widetilde{X})$) 
of finite products of elements of $\cV_{\ie}$. We denote by $\Diff_{\ie}^k(X)$ the subset of differential operators that have local descriptions involving products of at most $k$ elements of $\cV_{\ie}$.
If $E$ and $F$ are vector bundles over $\wt X$, then the space of $\ie$-differential operators acting between sections of $E$ and sections of $F$ is defined similarly, by taking linear combinations over $\CI(\wt X, \Hom(E,F) )$.

We define Sobolev spaces for $\ie$ metrics by
\begin{gather*}
	H^0_{\ie}(X) = L^2_{\ie}(X) = L^2(X, \dvol(\wt g)) \\
	H^k_{\ie}(X)
	= \{ u \in L^2_{\ie}(X) : Au \in L^2_{\ie}(X), \Mforevery A \in \Diff_{\ie}^k(X) \}, \; k \in \bbN
\end{gather*}
then define $H^t_{\ie}(X)$ using Calder\'on interpolation for $t \in \bbR^+$ and duality for $t \in \bbR^-$.
Sobolev spaces for sections of bundles over $ \widetilde{X}$ are defined similarly.

We will also allow for operators to act between sections of certain bundles of projective finitely generated modules over a $C^*$-algebra; see \cite{ST} for the basic definitions.
We assume that we have a continuous map $r_0: X \to B\Gamma$ which extends continuously to
\begin{equation*}
 r: \widehat{X}\to B\Gamma
	\end{equation*}
 where $\Gamma$ is a countable, finitely presented, group. This determines a $\Gamma$-covering, $ \widehat{X}^\prime \to 
\widehat{X}$;  and we will denote by $\wt{C^*_r}\Gamma$ the corresponding bundle, over $\widehat{X}$, of free left 
$C^*_r\Gamma$-modules of rank one: 
\begin{equation}\label{cEDef}
	\wt{C^*_r}\Gamma: = C^*_r\Gamma \times_\Gamma \widehat{X}^\prime.
\end{equation} 
Observe that this bundle induces, after pull back by the blowdown map $\wt X \rightarrow \widehat{X}$,
a bundle on $\wt X$ (for which we keep the same notation).
Given vector bundles $E$ and $F$ over $\wt X$ of rank $k$ and $\ell$, we define  bundles $\cE$ and $\cF$ over $\wt{X}$ 
by tensoring $E$ and $F$ by $\wt{C^*_r}\Gamma$; we obtain in this way 
bundles of projective finitely generated $C^*_r \Gamma$-modules   of rank $k$ and $\ell$ . 
We shall briefly refer to  $\cE$ and $\cF$
as $C^*_r\Gamma$-bundles.
An iterated edge differential operator acting between sections of $\cE$ and $\cF$ is defined as above, but allowing the coefficients to be 
$C^*_r\Gamma$-linear. The space of such operators will be denoted
\begin{equation*}
	\Diff_{\ie,\Gamma}^*(X; \cE, \cF).
\end{equation*}
Finally, we denote by  $H^t_{\ie,\Gamma}(X;\cE) $  the corresponding Sobolev $C^*_r \Gamma$-module, see \cite{Mich-Fomenko}.

%%%%%%%%%%%%%%%%%%%%%%%%%%%%%%%%%%%%%%%%%%%%%%%%%%
%%%%%%%%%%%%%%%%%%%%%%%%%%%%%%%%%%%%%%%%%%%%%%%%%%
\subsection{Uniform pseudodifferential operators} \label{sec:uniform} $ $\newline
%%%%%%%%%%%%%%%%%%%%%%%%%%%%%%%%%%%%%%%%%%%%%%%%%%
%%%%%%%%%%%%%%%%%%%%%%%%%%%%%%%%%%%%%%%%%%%%%%%%%%

We have verified in the previous subsection that $\ie$ metrics have bounded geometry;
hence we can make use of the calculus of uniform pseudo-differential operators as described in the work of Meladze-Shubin (see
\cite{Meladze-Shubin} and \cite{Kordyukov}).

Among the smooth functions on $X$, we single out the space $\cB C^\infty (X)$ of functions that are uniformly bounded with uniformly bounded derivatives. Smooth functions on $\wt X$ are in $\cB \CI (X)$, but generally the latter space will allow non-smooth behavior normal to the boundary faces of $\wt X$.
A vector bundle over $X$  
is said to be a {\em bundle of bounded geometry} if it has trivializations whose transition functions are (matrices with entries) in $\cB\CI(X)$. 
Clearly vector bundles that extend to  $\wt X$ have bounded geometry. 

By requiring the coefficients to be
in $\cB C^\infty$ we can define the space $\Diff^*_{\cB} (X;E,F)$ and, more generally, 
$\Diff^*_{\cB,\Gamma} (X;\cE,\cF)$.
Since $\cB\CI(X)$ contains $\CI(\wt X)$, these spaces of operators contain $\Diff_{\ie}^*(X;E,F)$ and $\Diff_{\ie,\Gamma}^*(X;\cE, \cF)$.

Next, using the bounded geometry of $(X,\wt g)$,
it is possible to find a countable cover by normal coordinate charts of radius $\eps>0$, $\cU_{\eps}(\zeta_i)$, such that $\cU_{2\eps}(\zeta_i)$ has uniformly bounded, finite multiplicity as a cover of $X$.
We can find partitions of unity $\wt \phi_i$, $\phi_i$ subordinate to $\{ \cU_{2\eps}(\zeta_i) \}$ and $\{ \cU_{\eps}(\zeta_i) \}$ respectively such that $\wt \phi_i$, $\phi_i$ have bounded derivatives uniformly in $i$, and such that
\begin{equation*}
	\wt\phi_i \rest{\mathrm{supp} (\phi_i)} \equiv 1.
\end{equation*}
These functions can be used to `transfer' constructions from $\bbR^n$ to $X$. 

We next recall how to `transfer' pseudodifferential operators from $\bbR^n$.
Let $E$ and $F$ be vector bundles over $\wt X.$ 
An operator $A:\CIc(X;E) \to \CIc(X;F)$ is called a {\bf uniform pseudodifferential operator} of order $s \in \bbR$, 
\begin{equation*}
	A \in \Psi^s_{\cB}(X;E,F), 
\end{equation*}
if its Schwartz kernel $\cK_A \in \CmI(X^2; \Hom(E,F))$ satisfies the three following properties: 
%\begin{quote}

	{\em i)} There is a $C_A>0$ such that 
\begin{equation*}
	\cK_A(\zeta, \zeta')=0 \Mif d(\zeta, \zeta')>C_A,\\
\end{equation*} where $d$ denotes the Riemannian metric of $X$ associated to $\wt g.$

	{\em ii)} Outside of the diagonal $\cK_A$ is smooth uniformly in that for every $\delta>0$, 
	and any multi-indices $\alpha$, $\beta$ there is a constant $C_{\alpha\beta\delta}>0$ such that
\begin{equation*}
	|D^\alpha_\zeta D^\beta_{\zeta'} \cK_A(\zeta, \zeta')| \leq C_{\alpha\beta\delta}, 
	\text{ whenever } d(\zeta, \zeta') >\delta. 
\end{equation*} 

	{\em iii)} For any $i \in \bbN$, $\wt \phi_i A \phi_i$ is a pseudodifferential operator of order $s$ in $B_{2\eps}(0)$,
	whose full symbol $\sigma$ satisfies the usual symbol estimates
\begin{equation*}%\label{SymbolEstimates}
	\abs{ D_\zeta^\alpha D_\xi^\gamma \sigma( \wt \phi_i A \phi_i )(\zeta, \xi) }
	\leq C_{\alpha\beta\gamma} (1+|\xi|_{{\wt g}}^2)^{\frac12(s-|\gamma|)}
\end{equation*}
	with bounds independent of $i$, where $ |\xi|_{{\wt g}}$ denotes the norm of $\xi \in T^*_\zeta X$ induced 
	by $\wt g.$
%\end{quote}

We shall always assume that the symbols are (one-step) polyhomogeneous. Uniform pseudo-differential operators form an algebra. There is a well defined principal  symbol map, with values in $\cB C^\infty (S^* X, \hom (\pi^* E,\pi^* F))$. Ellipticity is defined in a natural way (one requires the principal
symbol to be uniformly invertible, i.e. invertible with inverse in $\cB C^\infty $).  
The principal symbol  $\sigma(P)$ of a uniform pseudodifferential operator $P$ can certainly be seen as a  section on the restriction to $X$ of 
 $\Ie T^*X$ (the bundle  dual to $\Ie T X$); 
 %an
  %operator is then elliptic if its symbol, restricted to the sphere sub-bundle of $\Ie T^*X_{| X} $, $\Ie S^*X_{| X} $, is uniformly invertible.
 notice however that, in general, $\sigma(P)$ will not extend as a smooth section on $\Ie T^*X \rightarrow \wt X.$

For a bundle of bounded geometry, $E$, it is straightforward to define $\cB$-Sobolev spaces for any $s \in \bbR$
\begin{multline}\label{SobSpaces}
	H^s_{\cB}(X;E) \\
	= \{ u \in \CmI(X;E) : \phi_i u \in H^t(\bbR^n;E) \text{ with norm bounded uniformly in } i \}.
\end{multline}
The same is true for $C^*_r\Gamma$-bundles and we denote by $H^s_{\cB,\Gamma} (X;\cE)$, $s\in \bbR$, the corresponding $C^*_r \Gamma$-module.
Uniform pseudodifferential operators extend to bounded operators between $\cB$-Sobolev spaces, as in the closed case.

If a map $r:\widehat{X}\to B\Gamma$ is given, then we can define uniform pseudo-differential operators between sections of $\cE$ and sections of $\cF$ by combining the above definition and the classic construction  of Mishchenko and Fomenko; we denote  by  $\Psi^*_{\cB, \Gamma}(X;\cE, \cF)$
the corresponding algebra. 
Notice that the principal symbol is in this case a $C^*_r\Gamma$-linear map between the lifts of $\cE$ and $\cF$ to the cotangent bundle.

The intersection over $s\in \bbR$ of the $\Psi_{\cB,\Gamma}^s(X;\cE,\cF)$ is denoted $\Psi_{\cB,\Gamma}^{-\infty}(X;\cE,\cF)$ and consists of smoothing operators whose integral kernel
in $X\times X$ is in $\cB C^\infty $.

Elements of the uniform calculus also define bounded maps between {\it weighted} $C^*_r\Gamma$-Sobolev spaces.

\begin{lemma}
If $A \in \Psi_{\cB,\Gamma}^s(X; \cE, \cF)$, then for any $a, t \in \bbR$, $A$ induces a bounded operator
\begin{equation*}
	A: \rho^a H^t_{\ie,\Gamma}(X;\cE) \to \rho^a H^{t-s}_{\ie,\Gamma}(X;\cF).
\end{equation*}
\end{lemma}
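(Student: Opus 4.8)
The claim is that a uniform pseudodifferential operator of order $s$ acts boundedly on the weighted $\ie$-Sobolev scale, preserving the weight $a$ and lowering the order by $s$. The strategy is to reduce to the already-known mapping properties on the unweighted uniform Sobolev spaces $H^s_{\cB,\Gamma}$ via two observations: first, that conjugation by the weight $\rho^a$ preserves the uniform calculus, and second, that the weighted $\ie$-Sobolev spaces coincide with unweighted uniform Sobolev spaces up to this conjugation. First I would recall from \S\ref{complete-metric} that the complete metric is $\wt g = \rho^{-2} g$ and that $H^t_{\cB,\Gamma}(X;\cE)$ is, by definition, the uniform Sobolev space built from $\wt g$; one checks directly from the definitions \eqref{SobSpaces} that $H^t_{\ie,\Gamma}(X;\cE) = H^t_{\cB,\Gamma}(X;\cE)$ as topological $C^*_r\Gamma$-modules, since both are defined by testing against the same $\ie$-differential operators (equivalently, against the enveloping algebra of $\cV_{\ie}$, which are exactly the uniform differential operators built from $\wt g$).

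The main step is then to show that multiplication by $\rho^a$ is an isomorphism $\rho^a H^t_{\cB,\Gamma} \to H^t_{\cB,\Gamma}$ — that is, that $\rho^a H^t_{\cB,\Gamma}$ is just $H^t_{\cB,\Gamma}$ with a rescaled norm — and, crucially, that conjugation $A \mapsto \rho^{-a} A \rho^a$ carries $\Psi^s_{\cB,\Gamma}(X;\cE,\cF)$ into itself. Given these two facts the lemma is immediate: $A: \rho^a H^t_{\ie,\Gamma} \to \rho^a H^{t-s}_{\ie,\Gamma}$ is bounded iff $\rho^{-a} A \rho^a : H^t_{\cB,\Gamma} \to H^{t-s}_{\cB,\Gamma}$ is bounded, and the latter holds because $\rho^{-a}A\rho^a$ is again a uniform pseudodifferential operator of order $s$, which acts boundedly on the uniform Sobolev scale as recalled just before the lemma statement.

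To prove the conjugation invariance, I would work with the three defining properties i)--iii) of $\Psi^s_{\cB}$. Write $\log \rho = \sum_{H} \log x_H$, which is (minus) the proper function with $\wt g$-bounded gradient used in the completeness proof; hence $\rho^a = e^{a\log\rho}$ has the key property that $\abs{D^\alpha \log\rho}$ is $\wt g$-bounded for all $\alpha$, so $\rho^{\pm a} \in \cB\CI(X)$ is \emph{not} quite right — rather, the relevant fact is that $\rho^{-a}(\zeta)\rho^{a}(\zeta')$ restricted to $d(\zeta,\zeta') \le C_A$ has all $\wt g$-derivatives bounded, because $\log\rho(\zeta) - \log\rho(\zeta')$ is bounded on that set (its gradient is bounded and the Riemannian distance is bounded by $C_A$). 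This is exactly what is needed: property i) (finite propagation) is preserved verbatim; property ii) (uniform off-diagonal smoothness) follows from the Leibniz rule together with the boundedness of all $\wt g$-derivatives of $\rho^{-a}(\zeta)\rho^a(\zeta')$ on $\{d(\zeta,\zeta') \le C_A\}$; and property iii) (local symbol estimates) is preserved because in each normal coordinate chart $\cU_{2\eps}(\zeta_i)$ the function $\rho^a$ and its derivatives are bounded uniformly in $i$ — again since $\log\rho$ has uniformly bounded $\wt g$-derivatives — so multiplication by $\rho^{\pm a}$ is a uniform order-$0$ operator in each chart and composition preserves the uniform symbol class with bounds independent of $i$. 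The $C^*_r\Gamma$-linear (Mishchenko--Fomenko) version is identical since $\rho^a$ is scalar and commutes with the $C^*_r\Gamma$-action.

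The step I expect to be the main obstacle — or at least the one requiring the most care — is verifying property iii) under conjugation with bounds \emph{uniform in the chart index} $i$: one must check that $\rho^a$, expressed in the normal coordinates of $\cU_{2\eps}(\zeta_i)$, lies in a fixed bounded subset of $\cB\CI(B_{2\eps}(0))$ independent of $i$. This is where the bounded geometry of $(X,\wt g)$ established in Proposition~\ref{complete-metric} and the uniform control on $\log\rho$ genuinely enter; everything else is a routine unwinding of definitions. A cleaner alternative, if one prefers to avoid the symbolic bookkeeping, is to observe that $\rho^a H^t_{\ie,\Gamma}$ depends only on $\rho$ mod $\cB\CI$-equivalence and factor the argument through the statement that $A$ commutes with $\rho^a$ modulo $\Psi^{s-1}_{\cB,\Gamma}$ — but the direct conjugation argument above is the most transparent.
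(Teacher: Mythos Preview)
Your proposal is correct and follows essentially the same approach as the paper: both reduce the lemma to showing that conjugation $A \mapsto \rho^{-a}A\rho^a$ preserves $\Psi^s_{\cB,\Gamma}$, and both prove this by observing that the kernel picks up the factor $(\rho(\zeta)/\rho(\zeta'))^a$, which is bounded with bounded derivatives on the support $\{d(\zeta,\zeta') \le C_A\}$. The only cosmetic difference is the estimate used: the paper bounds the quotient via the triangle inequality applied to $1/\rho$ (obtaining $\abs{\rho(\zeta)/\rho(\zeta')-1} \le \rho(\zeta)C_A$), whereas you use the bounded $\wt g$-gradient of $\log\rho$; these are equivalent, and your version has the minor advantage of immediately giving the derivative bounds needed for properties ii) and iii), which the paper leaves implicit.
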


\begin{proof}
It is enough to check that $\rho^{-a} A \rho^{a} \in A \in \Psi_{\cB,\Gamma}^s(X; \cE, \cF)$ for any $a \in \bbR$. Also, we can assume that $A$ acts on scalar functions without any loss of generality.
The integral kernel of $\rho^{-a} A \rho^{a}$ is given by
\begin{equation*}
	\lrpar{\frac{\rho(\zeta)}{\rho(\zeta')}}^a \cK_A(\zeta, \zeta')
\end{equation*}
and the lemma will follow from noting that $\lrpar{\frac{\rho(\zeta)}{\rho(\zeta')}}^a$ is a bounded smooth function on the support of $\cK_A$.
By the triangle inequality, we see that
\begin{equation*}
	\abs{ \frac1{\rho(\zeta)} - \frac1{\rho(\zeta')} } \leq d(\zeta, \zeta') \leq C_A
	\implies
	\abs{ \frac{\rho(\zeta)}{\rho(\zeta')} - 1} \leq \rho(\zeta)C_A
\end{equation*}
so that either $\rho(\zeta)$ and $\rho(\zeta')$ are both large, or their quotient is close to $1$, and the lemma follows.
\end{proof}

For us the most important property of the uniform pseudodifferential calculus is that it has a symbolic calculus.
%Indeed, for any $s \in \bbR$, the principal symbol fits into the short exact sequence
%\begin{equation*}
%	0 \to
%	\Psi_{\cB,\Gamma}^{s-1}(X;\cE,\cF)
%	\to
%	\Psi_{\cB,\Gamma}^s(X;\cE,\cF)
%	\xrightarrow{\sigma}
%	\cBCI( S^*X; \hom(\cE,\cF) \otimes N_s)
%	\to 0
%\end{equation*}
%where $\pi: \Ie S^*X \to X$ is the projection map and $N_s$ is a trivial bundle carrying the homogeneity of $\sigma$.
By standard constructions, this implies that any 
elliptic element in $\Diff_{\cB,\Gamma}^k(X; \cE, \cF)$ has a symbolic parametrix, i.e. an inverse modulo smoothing operators. 
In particular, using the above Proposition, we see that an elliptic $\ie$ operator $A \in \Diff_{\ie,\Gamma}^k(X; \cE, \cF)$ has a {\em symbolic parametrix}
\begin{equation*}
	Q \in \Psi_{\cB,\Gamma}^{-k}(X; \cF, \cE) \Mst \\
	\Id_{\cE} - QP \in \Psi_{\cB,\Gamma}^{-\infty}(X;\cE), \quad
	\Id_{\cF} - PQ \in \Psi_{\cB,\Gamma}^{-\infty}(X;\cF). 
\end{equation*}

The symbolic calculus also allows for the standard characterization of Sobolev spaces.
For instance, for $N \in \bbN$, we have
\begin{multline*}
	H^N_{\cB}(X)
	= \{ u \in \CmI(X) : Au \in L^2(X) \text{ for all } A \in \Diff_{\cB}^N(X) \} \\
	= \{ u \in \CmI(X) : Au \in L^2(X) \text{ for some uniformly elliptic } A \in \Diff_{\cB}^N(X) \}
\end{multline*}
and, in fact, given any fixed uniformly elliptic $A \in \Diff_{\cB}^N(X)$, $H^N_{\cB}(X)$ is equal to the maximal domain of $A$ as an unbounded operator on $L^2(X)$.
This characterization, applied to an elliptic operator $A \in \Diff_{\ie}^N(X)$, shows that $H^N_{\ie}(X) = H^N_{\cB}(X)$. Using Calderon interpolation and duality, we see that $H^t_{\ie}(X) = H^t_{\cB}(X)$ for all $t \in \bbR$, and the same is true for sections of bundles of bounded geometry and the corresponding $C^*_r\Gamma$-bundles.

%%%%%%%%%%%%%%%%%%%%%%%%%%%%%%%%%%%%%%%%%%%%%%%%%%
%%%%%%%%%%%%%%%%%%%%%%%%%%%%%%%%%%%%%%%%%%%%%%%%%%
\subsection{Incomplete iterated edge operators} $ $\newline
%%%%%%%%%%%%%%%%%%%%%%%%%%%%%%%%%%%%%%%%%%%%%%%%%%
%%%%%%%%%%%%%%%%%%%%%%%%%%%%%%%%%%%%%%%%%%%%%%%%%%
The set of incomplete iterated edge differential operators, $\Diff_{\iie,\Gamma}^*(X;\cE,\cF)$ is defined in terms of $\Diff_{\ie,\Gamma}^*(X;\cE,\cF)$ by
\begin{equation*}
	\Diff_{\iie,\Gamma}^k(X;\cE,\cF) = \rho^{-k} \Diff_{\ie,\Gamma}^k(X;\cE,\cF),
\end{equation*}
where $\rho$ denotes a total bdf for $X$ (e.g., $\rho = x_0\cdots x_{m-1}$).
As an operator between weighted $L^2$ spaces  with appropriate {\em different} weights, an operator $A \in \Diff_{\iie,\Gamma}^k(X;\cE,\cF)$ is unitarily equivalent to an iterated edge operator.
Thus, for instance, for any $a \in \bbR$, $A$ defines an unbounded operator
\begin{equation*}
	A: \rho^a L^2_{\ie,\Gamma}(X;\cE) \to \rho^{a-k} L^2_{\ie,\Gamma}(X;\cF)
\end{equation*}
which has a unique closed extension whose domain is $\rho^{\alpha}H^k_{\ie,\Gamma}(X;\cE)$; moreover, $A$ defines bounded operators
\begin{equation*}
	\rho^{a} H^t_{\ie,\Gamma}(X;\cE) \to \rho^{a-k} H^{t-k}_{\ie,\Gamma}(X;\cF)
\end{equation*}
for every $a$ and $t \in \bbR$.
However, it is the more complicated behavior of $A$ as an unbounded operator
\begin{equation}\label{Aop}
	A: \rho^a L^2_{\ie,\Gamma}(X;\cE) \to \rho^{a} L^2_{\ie,\Gamma}(X;\cF)
\end{equation}
that we will be concerned with.
We point out that the operator \eqref{Aop} is unitarily equivalent to the unbounded operator
\begin{equation*}
	\wt A = \rho^{k/2} A \rho^{k/2}: 
	\rho^{a-k/2} L^2_{\ie,\Gamma}(X;\cE) \to \rho^{a+k/2} L^2_{\ie,\Gamma}(X;\cF),
\end{equation*}
Since $\wt A\in \Diff_{\ie,\Gamma}^*(X;\cE,\cF)$, this  shows that
the study of incomplete iterated edge operators acting on a fixed Hilbert spaces is the same as the study of complete $\ie$-operators acting between {\em different} Hilbert spaces.

We point out that the $L^2$ spaces of the incomplete iterated edge metric $g$ and the associated complete $\ie$ metric $\wt g = \rho^{-2 }g$ are related by
\begin{equation*}
	L^2_{\ie,\Gamma}(X, \cE) = \rho^{n/2} L^2_{\iie,\Gamma}(X, \cE)
\end{equation*}
with $n$ equal to the dimension of $X$, 
so switching between them only involves a shift of the weight. 
Similarly, we introduce the spaces $H^t_{\iie, \Gamma}(X; \cE)$ for $t \in \bbR$ by
\begin{equation*}
	H^t_{\ie, \Gamma}(X; \cE) = \rho^{n/2} H^t_{\iie, \Gamma}(X; \cE).
\end{equation*}
Thus, for instance, if $N \in \bbN$ then $H^N_{\iie,\Gamma}(X, \cE)$ is the set of elements $ u \in L^2_{\iie,\Gamma}(X, \cE)$ such 
 that for any vector fields $V_1, \ldots, V_p \in \mathcal{V}_{\ie}$ where $p \leq N,$ we have 
 $V_1 \ldots V_p u \in L^2_{\iie,\Gamma}(X, \cE).$

We say that $A\in \Diff_{\iie,\Gamma}^k(X; \cE, \cF)$ is elliptic if $\wt A = \rho^k A$ is an elliptic $\ie$ operator.
Elliptic $\ie$ operators always have a symbolic parametrix (see \S\ref{sec:uniform}).
A symbolic parametrix $\wt Q$ for $\wt A$ yields a symbolic parametrix $Q = \rho^{k/2}\wt Q \rho^{k/2}$ for $A$.
As is well-known, since smoothing operators are not necessarily $C^*_r\Gamma$-compact, a symbolic parametrix 
is generally not enough to determine when an operator is Fredholm, so one also needs to know about the behavior at 
the boundary.
  
However, the uniform calculus does establish elliptic regularity in the sense that, whenever $B \in \Diff_{\ie,\Gamma}^k(X; \cE, \cF)$ 
is elliptic, we have 
 \begin{equation}\label{EllReg}
	u \in \rho^\epsilon L^2_{\iie,\Gamma}(X, \cE), \quad
	Bu \in \rho^\epsilon L^2_{\iie,\Gamma}(X, \cF)
	\implies u \in \rho^\epsilon H^N_{\iie,\Gamma}(X, \cE).
\end{equation}

%%%%%%%%%%%%%%%%%%%%%%%%%%%%%%%%%%%%%%%%%%%%%%%%%%
%%%%%%%%%%%%%%%%%%%%%%%%%%%%%%%%%%%%%%%%%%%%%%%%%%
\subsection{The de Rham operator} $ $\newline
%%%%%%%%%%%%%%%%%%%%%%%%%%%%%%%%%%%%%%%%%%%%%%%%%%
%%%%%%%%%%%%%%%%%%%%%%%%%%%%%%%%%%%%%%%%%%%%%%%%%%
We are interested in analyzing the de Rham operator of an $\iie$ metric,
\begin{equation*}
	\eth_{\dR} = d + \delta: \Omega^*X \to \Omega^*X.
\end{equation*}
As with the tangent bundle, it is convenient to replace the bundle of forms $\Omega^*(X)= \CI (X,\Lambda^* (T^*X))$ with the bundle of $\iie$-forms, 
\begin{equation*}
	\Iie \Omega^*(X) = \CI (X, \Lambda^* (\Iie T^*X) ),
\end{equation*}
where $\Iie T^*X \rightarrow \wt X $ is the rescaled bundle (cf. \cite[Chapter 8]{APS Book}) defined by  
\begin{equation*}
	\CI (\wt X,\Iie T^*X) = \rho \CI (\wt X,\Ie T^*X).
\end{equation*}
We set $\Iie \Lambda^*X = \Lambda^* (\Iie T^*X),$ and  we have
\begin{equation*}
	\eth_{\dR} \in \Diff_{\iie}^1(X; \Iie\Lambda^*(X), \Iie\Lambda^*(X))
\end{equation*}
as we now explain.

First note that whether $\eth_{\dR}$ is an element of $\Diff_{\iie}^1(X; \Iie\Lambda^*(X), \Iie\Lambda^*(X))$ can be checked 
locally in coordinate charts. There is nothing to check in the interior of the manifold. Then, with the notations of 
\S 3, we consider a distinguished neighborhood  $W$ of a point of a stratum $Y.$ Thus 
$W$ is diffeomorphic to $B \times C(Z)$ where $B$ is an open subset of $Y$ which is diffeomorphic to a vector space and 
$C(Z)$ is the cone whose base $Z$ is a stratified space. %Therefore, $W$ is a bundle of cones $C(Z)$ over the basis $B \subset Y$.
The `radial' coordinate of the cone will be denoted by $x$. 

As in \S\ref{section:iterated}, the fibration over $B$ extends to $W$, 
\begin{equation*}
	Z \times [0,1)_x - W \xrightarrow{\wt\phi} B,
\end{equation*}
and using $x$ and a choice of connection for this fibration we can write
\begin{equation*}
	T^*X\rest{W} = \ang{dx} \oplus T^*Y \oplus T^*Z.
\end{equation*}
With respect to this splitting the metric $g$ restricted to $W$ has the form
\begin{equation*}
	g = dx^2 + \wt\phi^*g_Y + x^2 g_Z
\end{equation*}
and the differential forms on $X$ can be decomposed as
\begin{equation}\label{SplitForms}
\begin{gathered}
	\Lambda^*X = 
	(\Lambda^*Y \wedge \Lambda^* Z)
	\oplus dx \wedge
	(\Lambda^*Y \wedge \Lambda^* Z) \\
	\Iie \Lambda^*X = 
	(\Lambda^*Y \wedge x^{\bN} \Lambda^* Z)
	\oplus dx \wedge
	(\Lambda^*Y \wedge x^{\bN} \Lambda^* Z)
\end{gathered}
\end{equation}
where $\bN$ is the `vertical number operator', i.e., the map given by multiplication by $k$ when restricted to forms of vertical degree $k$.
This allows us to split the exterior derivative into
\begin{equation*}
	d = 
	\df e_{dx} \pa_x
	\oplus d^Y
	\oplus d^Z
\end{equation*}
and correspondingly
\begin{equation*}
	\delta =
	\star^{-1} \df e_{dx} \pa_x \star
	\oplus 
	\star^{-1} d_Y \star
	\oplus 
	\star^{-1} d_Z \star
	= 
	\star^{-1} \df e_{dx} \pa_x \star
	\oplus \delta^Y_x
	\oplus \delta^Z_x
\end{equation*}
where the $x$-dependence in $\delta^Y_x$ and $\delta^Z_x$ comes from the $x$-dependence of the Hodge star operator, $\star$.
A straightforward computation shows that with respect to the splitting \eqref{SplitForms} of $\Iie\Lambda^*X$,
\begin{equation}\label{PreDNearBdy}
	\eth_{\dR}
	= \begin{pmatrix}
	\frac1x( d^Z + \delta^Z_x) + d^Y + \delta^Y_x & - \star^{-1}\pa_x \star - \tfrac1x(f-\bN) \\
	\pa_x + \tfrac1x \bN & -\frac1x( d^Z + \delta^Z_x) - d^Y - \delta^Y_x 
	\end{pmatrix}.
\end{equation}
As in \cite[(19)]{Hunsicker-Mazzeo} one can write this in terms of operators related to the fibration, 
however for our purposes it is more important to point out that the leading order term with respect to $x$ (as an $\iie$ operator) is given by
\begin{equation}\label{DNearBdy}
	\eth_{\dR} \sim
	 \begin{pmatrix}
	\tfrac1x \eth^Z_{\dR} + \eth^Y_{\dR} & - \pa_x - \tfrac1x(f-\bN) \\
	\pa_x + \tfrac1x \bN & -\tfrac1x \eth^Z_{\dR}  - \eth^Y_{\dR} 
	\end{pmatrix}.
\end{equation}
where $\eth^Y_{\dR}$ and $\eth^Z_{\dR}$ are the de Rham operators of $\wt\phi^*g_Y\rest{x=0}$ and $g^Z\rest{x=0}$, respectively. 
In effect, because of the weighting of the vertical forms, the Hodge star operator is asymptotically acting like the Hodge star operator of the product metric at $\{x=0\}$.

By induction on the depth of the stratification and using \eqref{DNearBdy} one proves without difficulties the following:

\begin{lemma}  \label{lem:iie} The operator $\eth_{\dR} $ is  in $\Diff_{\iie}^1$, i.e.,
 $\rho \eth_{\dR} $ is  in $\Diff_{\ie}^1$
\end{lemma}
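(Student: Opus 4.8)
The plan is to argue by induction on the depth of the stratification $\wh{X}$, using the local structure of $\eth_{\dR}$ near a stratum of maximal depth recorded in \eqref{PreDNearBdy} and \eqref{DNearBdy}. The claim $\eth_{\dR} \in \Diff_{\iie}^1$ is local on $\wt{X}$, so it suffices to check it in a neighborhood of an arbitrary point; in the interior there is nothing to prove, and near a point of a stratum $Y$ we work in a distinguished neighborhood $W \cong B \times C(Z)$ with radial coordinate $x$, where $Z$ is a stratified space of strictly smaller depth. By definition $\eth_{\dR}\in\Diff_{\iie}^1(X;\Iie\Lambda^*X,\Iie\Lambda^*X)$ precisely when $x\,\eth_{\dR}$ (locally, $\rho\,\eth_{\dR}$) lies in $\Diff_{\ie}^1$, i.e.\ is an honest differential operator whose coefficients are smooth up to the boundary and which is a combination of the $\ie$-vector fields $x\partial_x$, $x\partial_{y^{(j)}}$, $\partial_{z}$ (suitably iterated).

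First I would fix the inductive hypothesis: for any smoothly stratified pseudomanifold of depth $<k$, with any adapted iterated edge metric, the associated de Rham operator lies in $\Diff_{\iie}^1$ of that space. For the base case, depth $0$, $\wh{X}$ is a closed manifold and $\eth_{\dR}=d+\delta$ is a first-order differential operator with smooth coefficients, so the statement is trivial. For the inductive step, I would take the explicit matrix expression \eqref{PreDNearBdy} for $\eth_{\dR}$ on $W$ and multiply by $x$. The off-diagonal entries become $-x\,\star^{-1}\partial_x\star-(f-\bN)$ and $x\partial_x+\bN$, which are manifestly $\ie$ (note $x\partial_x\in\cV_{\ie}$ and $\bN$, $f$ are smooth bundle endomorphisms, and the Hodge star in the rescaled bundle $\Iie\Lambda^*X$ is smooth up to $x=0$ by the remark following \eqref{DNearBdy}). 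The diagonal entries become $d^Z+\delta^Z_x + x(d^Y+\delta^Y_x)$; the edge piece $x(d^Y+\delta^Y_x)$ is a combination of $x\partial_{y^{(j)}}$ with coefficients smooth up to the boundary (the $x$-dependence of $\delta^Y_x$ enters only through the Hodge star, which is smooth in $x$ in the rescaled bundle), hence $\ie$. The remaining piece $d^Z+\delta^Z_x = \eth^Z_{\dR} + (\text{lower order in }x)$ is, modulo terms smooth up to $x=0$, exactly $x$ times the de Rham operator $\eth^Z_{\dR}$ of the link $Z$ for the induced iterated edge metric. Here is where the inductive hypothesis is invoked: $\eth^Z_{\dR}\in\Diff_{\iie}^1(Z)$, i.e.\ $\rho_Z\,\eth^Z_{\dR}\in\Diff_{\ie}^1(Z)$, where $\rho_Z$ is a total boundary defining function for the resolution of $Z$; and the iterated fibration structure on $\wt{X}$ near the face over $Y$ is a product of $[0,1)_x$ with the resolution of $Z$ (Proposition~\ref{prop:BlowUp}), with total boundary defining function $\rho = x\,\rho_Z$ up to a smooth nonvanishing factor. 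Hence $d^Z+\delta^Z_x\in x\,\Diff_{\iie}^1(Z)$ and, multiplied once more by the factor of $x$ already present, one checks that the corresponding contribution to $x\,\eth_{\dR}$ lies in $\Diff_{\ie}^1$ of $W$. Assembling the four matrix entries, $x\,\eth_{\dR}\in\Diff_{\ie}^1(W)$, which is what we needed.

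The main obstacle I anticipate is bookkeeping rather than conceptual: one must verify carefully that the coefficient functions appearing in \eqref{PreDNearBdy} — in particular all the terms hidden in $\delta^Y_x$ and $\delta^Z_x$ arising from the $x$-dependence of the Hodge star on the rescaled bundle $\Iie\Lambda^*X$ — are genuinely smooth up to $x=0$, so that after multiplying by $x$ one lands in $\Diff_{\ie}^1$ and not merely in an operator with conormal (rather than smooth) coefficients. This is precisely the content of the observation, emphasized after \eqref{DNearBdy}, that because the vertical forms carry the weight $x^{\bN}$, the Hodge star operator for $g = dx^2+\wt\phi^*g_Y+x^2 g_Z$ acts asymptotically like the Hodge star of the product metric $dx^2+\wt\phi^*g_Y\rest{x=0}+g^Z\rest{x=0}$ at $x=0$; concretely, conjugating $\star$ by the bundle isomorphism $\Iie\Lambda^*X\cong\Lambda^*X$ given by $x^{\bN}$ cancels all the negative powers of $x$, leaving an endomorphism smooth up to the boundary. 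Once this smoothness is in hand, and one checks it is compatible with the nesting of distinguished neighborhoods at deeper strata (which follows from the inductive structure and Definition~\ref{def:metric}), the conclusion $\rho\,\eth_{\dR}\in\Diff_{\ie}^1$ follows, completing the induction.
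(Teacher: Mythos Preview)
Your approach---induction on the depth of the stratification, using the explicit expression \eqref{PreDNearBdy}/\eqref{DNearBdy} in a distinguished neighbourhood---is exactly the route the paper takes (the paper simply says ``by induction on the depth of the stratification and using \eqref{DNearBdy}''), and the overall strategy is sound.

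There is, however, a bookkeeping error that runs through the argument and makes several of your intermediate claims false as stated. You write that $\eth_{\dR}\in\Diff_{\iie}^1$ means $x\,\eth_{\dR}\in\Diff_{\ie}^1$ and that the $\ie$ vector fields near $Y$ are $x\partial_x$, $x\partial_{y^{(j)}}$, $\partial_z$. This is only correct when the link $Z$ has depth~$0$. In general the total boundary defining function is $\rho=x\,\rho_Z$ (as you yourself note later), and the $\ie$ vector fields are $\rho\,\partial_x$, $\rho\,\partial_{y^{(j)}}$, together with the $\ie$ vector fields of $Z$---not bare $\partial_z$'s. In particular your claim ``$x\partial_x\in\cV_{\ie}$'' is false when $Z$ is singular. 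The same conflation makes the sentence ``Hence $d^Z+\delta^Z_x\in x\,\Diff_{\iie}^1(Z)$ and, multiplied once more by the factor of $x$ already present\ldots'' incoherent: at leading order $d^Z+\delta^Z_x=\eth^Z_{\dR}\in\Diff_{\iie}^1(Z)$, not $x\,\Diff_{\iie}^1(Z)$, and there is no extra factor of $x$ to absorb.

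The fix is simple: multiply \eqref{PreDNearBdy} by $\rho=x\rho_Z$ rather than by $x$. The off-diagonal entries then contribute $\rho\,\partial_x$ and $\rho_Z\cdot(\text{bundle endomorphism})$, both of which are $\ie$; the horizontal piece gives $\rho\,\partial_{y^{(j)}}\in\cV_{\ie}$; and the vertical piece gives $\rho_Z\,(d^Z+\delta^Z_x)$, which at leading order is $\rho_Z\,\eth^Z_{\dR}\in\Diff_{\ie}^1(Z)$ by the inductive hypothesis. With this correction your argument goes through and matches the paper's.
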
 
 
We are also interested in the behaviour of $\eth_{\dR}$ after twisting to get $C^*$-algebra coefficients. Thus we assume, as before, that we have a continuous  map
\begin{equation*}
	r:  \widehat{X} \to B\Gamma
\end{equation*}
We compose $r$ with the blow-down map $\beta$ and we pull-back the universal bundle $E\Gamma$
to $\widetilde{X}$ using $f\circ \beta$. We obtain a Galois $\Gamma$-covering $\widetilde{X}^\prime$
over   $\widetilde{X}$ and 
the  associated bundle 
$\wt C^*_r\Gamma \to \widetilde{X}$, with 
$$\wt C^*_r\Gamma:= C^*_r \Gamma \times_\Gamma \widetilde{X}^\prime\,.$$
 We restrict $\wt C^*_r\Gamma$ to $X$.
Endowing $C^*_r\Gamma\times \widetilde{X}^\prime$, as a trivial bundle over $\widetilde{X}^\prime$, with the trivial connection induces a (non-trivial) flat connection on the bundle $\wt C^*_r\Gamma \rightarrow \widetilde{X}$;
we also obtain a flat connection on the restriction of   $\wt C^*_r\Gamma$ to $X$
(and it is obvious that this connection will automatically extend to $\widetilde{X}$). 
Using the latter connection we can define directly $\wt \eth_{\dR}$, the twisted de Rham operator on the sections of the vector bundle 
\begin{equation*}
	\Iie\Lambda_\Gamma^*(X)  = \Iie\Lambda^*X \otimes \wt C^*_r\Gamma.
\end{equation*}
By construction  $\wt \eth_{\dR}\in \Diff^*_{\iie,\Gamma}$, i.e. $\rho \wt \eth_{\dR}$ is an element in $\Diff^*_{\ie,\Gamma}$.

%%%%%%%%%%%%%%%%%%%%%%%%%%%%%%%%%%%%%%%%%%%%%%%%%%
%%%%%%%%%%%%%%%%%%%%%%%%%%%%%%%%%%%%%%%%%%%%%%%%%%
%%%%%%%%%%%%%%%%%%%%%%%%%%%%%%%%%%%%%%%%%%%%%%%%%%
%%%%%%%%%%%%%%%%%%%%%%%%%%%%%%%%%%%%%%%%%%%%%%%%%%
\section{Statement of main theorem} 
%%%%%%%%%%%%%%%%%%%%%%%%%%%%%%%%%%%%%%%%%%%%%%%%%%
%%%%%%%%%%%%%%%%%%%%%%%%%%%%%%%%%%%%%%%%%%%%%%%%%%
%%%%%%%%%%%%%%%%%%%%%%%%%%%%%%%%%%%%%%%%%%%%%%%%%%
%%%%%%%%%%%%%%%%%%%%%%%%%%%%%%%%%%%%%%%%%%%%%%%%%%

One consequence of \eqref{DNearBdy} is that the fibre enters into the description of the de Rham operator near the
boundary only through its de Rham operator (and lower order terms). This will allow us to analyze the de Rham operator 
by induction on the depth of the Witt space $\hat X$. The base case can be taken to be a closed manifold,  which is classical.
The case of a stratification of depth one is analyzed in the work of Hunsicker and the third author \cite{Hunsicker-Mazzeo}, 
where the relationship between intersection cohomology and Hodge cohomology is treated in detail. Our result for depth 
one stratifications is implicitly contained in \cite{Hunsicker-Mazzeo} and similarly the study of this relationship generalizes 
readily to our situation. The treatment in  \cite{Hunsicker-Mazzeo} relies heavily on the edge calculus \cite{Mazzeo:edge} 
which allows refined results, such as finding conormal representatives of cohomology classes. Though we will not be able to 
use the edge calculus directly, we will often proceed by adapting arguments from \cite{Mazzeo:edge} to our context.

Our eventual goal is to establish the following two theorems for (suitably scaled) iterated edge metrics on Witt spaces. 

\begin{theorem} \label{InductiveThm1} 
Let $\widehat X$ be a Witt space endowed with a suitably scaled iterated edge metric $g$. Let $X=\mbox{reg}\,(\widehat{X})$.
{\item 1)} 
As an unbounded operator on $C_c^\infty (X,  \Iie\Lambda^*(X))\subset L^2_{\iie}(X;  \Iie\Lambda^*(X) )$, 
the de Rham operator of $g$ has a unique closed  extension and hence is essentially self-adjoint. 
{\item 2)} 
For any $\eps >0$, the domain of this unique closed extension, still denoted $\eth_{\dR}$, is contained in 
\begin{equation*}
	 \rho^{1-\eps}L^2_{\iie}(X; \Iie\Lambda^*(X)) \cap H^1_{\loc}(X;  \Iie\Lambda^*(X))
\end{equation*}
which is compactly included in $L^2_{\iie}(X;  \Iie\Lambda^*(X))$. 
{\item 3)} 
As an operator on its maximal domain endowed with the graph norm, $\eth_{\dR}$ is Fredholm.
{\item 4)} 
$\eth_{\dR}$   has discrete spectrum of finite multiplicity.
\end{theorem}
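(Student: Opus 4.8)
The plan is to prove assertions 1)--4) simultaneously, by induction on the depth of $\widehat X$. If $\mbox{depth}\,\widehat X=0$ then $X=\widehat X$ is a closed manifold and all four statements are classical Hodge theory; the depth-one case is essentially contained in \cite{Hunsicker-Mazzeo}. So assume the theorem for every compact Witt space of depth at most $k-1$ and let $\widehat X$ have depth $k$. Fix a stratum $Y$ of maximal depth with link $Z=L_Y$; then $Z$ is a compact Witt space of depth $\leq k-1$, so by induction $\eth^Z_{\dR}$ is essentially self-adjoint on $L^2_{\iie}(Z)$, has discrete spectrum of finite multiplicity, and $\Ker\eth^Z_{\dR}$ is the Hodge cohomology $\mathcal H^*_{(2)}(Z)$; moreover Cheeger's Theorem~\ref{theo:cheeger} together with the Witt hypothesis gives $\mathcal H^{f_Y/2}_{(2)}(Z)=0$ when $f_Y=\dim Z$ is even. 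Interior elliptic regularity on the open manifold $X$ handles the interior, so everything comes down to a local analysis near $Y$.

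\textbf{The weighted a priori estimate.} The core step is to show that near each maximal-depth stratum $Y$, every $u\in\cD_{\max}(\eth_{\dR})$ decays essentially one full power of $\rho$ better than a generic $L^2_{\iie}$ element; that is, in a distinguished neighbourhood $W\cong B\times C(Z)$ with radial variable $x$, $u$ lies locally in $\rho^{1-\eps}L^2_{\iie}$ for every $\eps\in(0,1)$. By \eqref{DNearBdy}, as an $\iie$ operator $\eth_{\dR}$ agrees to leading order in $x$ with the model operator assembled from $\tfrac1x\eth^Z_{\dR}$, $\pa_x$ and the vertical number operator $\bN$. Decomposing along the $L^2$-spectrum of $\eth^Z_{\dR}$ (available by the inductive hypothesis), on a nonzero eigenvalue of $(\eth^Z_{\dR})^2$ the radial equation is of Bessel type and is in the limit-point case at $x=0$, while on the degree-$j$ part of $\Ker\eth^Z_{\dR}=\mathcal H^*_{(2)}(Z)$ the model reduces to a $2\times2$ first-order system whose two indicial roots (namely $-j$ and $j-f_Y$, up to the conventional shift) are symmetric about the middle weight $-f_Y/2$ and separated by $|f_Y-2j|$. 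The Witt condition forces $j\neq f_Y/2$ whenever $f_Y$ is even, so these roots are distinct, the middle weight is not an indicial root, and the gap is at least $1$; hence the limit-point case again holds and the smallest indicial root above the $L^2$ threshold lies a full unit away, which is exactly what gives $u\in\rho^{1-\eps}L^2_{\iie}$ locally. The lower-order remainder in \eqref{DNearBdy} --- the tangential operator $\eth^Y_{\dR}$, the $x$-dependence of the Hodge star, and the $O(x)$ error terms --- is subordinate to the $\tfrac1x$-part and is absorbed by a perturbation argument patterned on the edge calculus of \cite{Mazzeo:edge}, but carried out directly, with careful tracking of the weights across the nested radial variables, since no iterated edge pseudodifferential calculus is available. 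Together with the interior regularity \eqref{EllReg} this is assertion~2): $\cD_{\max}(\eth_{\dR})\subset\rho^{1-\eps}L^2_{\iie}(X;\Iie\Lambda^*(X))\cap H^1_{\loc}(X;\Iie\Lambda^*(X))$ for all $\eps\in(0,1)$.

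\textbf{From the estimate to 1), 3), 4).} The gained power of $\rho$ is more than enough to run a cutoff approximation: for $u\in\cD_{\max}(\eth_{\dR})$ and $\chi$ vanishing near the singular set, the commutator $[\eth_{\dR},\chi(\rho/\delta)]$ is a zeroth-order operator supported on $\{\rho\sim\delta\}$ and bounded there by $C\rho^{-1}$, so its action on $u$ tends to $0$ in $L^2_{\iie}$ as $\delta\to0$ by the weighted control; hence $\CIc(X)$ is dense in $\cD_{\max}(\eth_{\dR})$ for the graph norm, the minimal and maximal domains coincide, and $\eth_{\dR}$ is essentially self-adjoint --- assertion~1). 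For the compactness clause of 2): a sequence bounded in the graph norm has, by the weighted estimate, $L^2_{\iie}$-mass $O(\delta^{2(1-\eps)})$ on $\{\rho<\delta\}$ uniformly, and on the compact region $\{\rho\geq\delta\}$ (a compact submanifold with corners of $X$, on which $\iie$- and ordinary norms are comparable) it is bounded in $H^1$ by interior elliptic estimates, hence precompact in $L^2$ by Rellich; a diagonal argument over $\delta\to0$ produces an $L^2_{\iie}$-convergent subsequence, so $\cD_{\max}(\eth_{\dR})$ embeds compactly in $L^2_{\iie}(X;\Iie\Lambda^*(X))$. Finally, a self-adjoint operator whose domain embeds compactly into the ambient Hilbert space has compact resolvent, hence discrete spectrum of finite multiplicity --- assertion~4) --- and is in particular Fredholm, of index $0$, from its domain with the graph norm to $L^2_{\iie}$, which is assertion~3). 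The same scheme applies to the twisted operator $\wt\eth_{\dR}$ with $C^*_r\Gamma$-coefficients, using the uniform calculus of \S\ref{sec:uniform} and the inductive hypothesis for the link.

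\textbf{Main obstacle.} The delicate part is the inductive passage from the model operator \eqref{DNearBdy} to $\eth_{\dR}$ itself: unlike the simple-edge setting of \cite{Hunsicker-Mazzeo}, there is no pseudodifferential edge calculus on an iterated edge space, so this passage --- controlling the tangential term, the $x$-dependence of the metric and Hodge star, and the interplay of radial variables at strata of different depths --- must be done by hand through a priori estimates, treating $\eth^Z_{\dR}$ as a black box whose mapping and spectral properties are known only inductively. Arranging the weighted estimates so that they close under the induction, uniformly in the fibre and compatibly with the iterated fibration structure, is the principal technical burden.
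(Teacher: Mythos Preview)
Your inductive strategy and identification of the weighted estimate $\cD_{\max}(\eth_{\dR})\subset\bigcap_{\eps>0}\rho^{1-\eps}L^2_{\iie}$ as the crux are correct and match the paper. However, your passage from this estimate to essential self-adjointness has a genuine gap. You claim $\chi(\rho/\delta)u\to u$ in the graph norm because the commutator $[\eth_{\dR},\chi(\rho/\delta)]$ is bounded by $C\rho^{-1}$ on $\{\rho\sim\delta\}$ while $u\in\rho^{1-\eps}L^2_{\iie}$. But the commutator has operator norm $\sim\delta^{-1}$ on its support, and $\|u\|_{L^2(\rho\sim\delta)}\lesssim\delta^{1-\eps}$ only yields $\|[\eth_{\dR},\chi(\rho/\delta)]u\|_{L^2}\lesssim\delta^{-\eps}$, which need not tend to zero. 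Concretely, a section with $\|u\|_{L^2(\rho\sim 2^{-j})}=2^{-j}$ lies in $\bigcap_{\eps>0}\rho^{1-\eps}L^2_{\iie}$, yet the commutator term stays bounded away from zero along $\delta=2^{-j}$. The decay $\rho^{1-\eps}$ is just barely insufficient for the naive cutoff to close. The paper avoids this via the Gil--Mendoza argument (Proposition~\ref{GilMendoza}): instead of graph-norm approximation, one characterizes $\cD_{\min}$ by the identity $(\eth_{\dR}u,v)=(u,\eth_{\dR}v)$ for all $v\in\cD_{\max}$ and verifies it with $u_n=\rho^{1/n}u\in\rho H^1_{\iie}\subset\cD_{\min}$; the point is that $\eth_{\dR}u_n\to\eth_{\dR}u$ only in $\rho^{-\eps}L^2_{\iie}$, but this pairs against $v\in\rho^{\eps}L^2_{\iie}$ --- the extra vanishing of \emph{both} $u$ and $v$ is what makes the limit converge.

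A secondary remark on the weighted estimate itself: your phrase ``absorbed by a perturbation argument patterned on the edge calculus'' is correct in spirit but hides the actual mechanism, which is not a direct spectral decomposition plus perturbation. With no iterated-edge parametrix available, the paper proves that the normal operator $N_q(P_a)$ is bijective on $s^{-1/2+\eps}L^2$ (injectivity from the Bessel analysis you describe, closed range separately in Lemma~\ref{lem:NClosedRange}), takes the bounded generalized inverse $G$ of its transpose on $s^{-1/2-\eps}L^2$, and then uses a Mellin-transform integration-by-parts lemma (Lemma~\ref{lem:integration}) to conclude $u=G^tN_q(P_0)u\in s^{-1/2+\eps}L^2$. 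This duality trick --- again relying on the symmetric placement of indicial roots about the critical weight --- is what substitutes for the edge parametrix and is the nontrivial step you have not supplied.
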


Item 1), 3) and 4)  have been proved by  Cheeger  \cite{Ch} (using a different method)
for metrics quasi-isometric to a piecewise flat ones.

Assume now that we are also given a map $r: \widehat{X }\to B\Gamma$. Then we have

\begin{theorem} \label{InductiveThm2} 
The de Rham operator with values in the flat bundle $\widetilde{C^*_r}\Gamma$, denoted
$\wt \eth _{\dR}$, has a unique self-adjoint closed extension
to  the $C^*_r\Gamma$-module 
$L^2_{\iie,\Gamma}(X; \Iielaga(X) )$. This extension is regular 
and its  domain is compactly included in $L^2_{\iie,\Gamma}(X; \Iielaga(X) )$.
In particular, it defines an index class 
$$
\Ind (\widetilde{\eth}_{\dR} ) \,\in\, K_j (C^*_r \Gamma),\quad j=\dim X \,{\rm mod}\, 2.$$
\end{theorem}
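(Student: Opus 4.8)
The plan is to establish Theorem~\ref{InductiveThm2} by the same induction on $\operatorname{depth}(\wh X)$ that underlies Theorem~\ref{InductiveThm1}, carrying the flat bundle $\wt{C^*_r}\Gamma$ through every step; the guiding observation is that coupling to $\wt{C^*_r}\Gamma$ only inserts bounded, $C^*_r\Gamma$-linear terms of order zero and never alters the model operators that drive the analysis. The base case $\operatorname{depth}=0$, i.e.\ $\wh X$ closed, is the classical Mishchenko--Fomenko index theory for a first-order elliptic operator with coefficients in a bundle of finitely generated projective $C^*_r\Gamma$-modules. For the inductive step I would combine three ingredients already in place: (i) $\wt\eth_{\dR}\in\Diff_{\iie,\Gamma}^1(X;\Iielaga(X),\Iielaga(X))$, so that $\rho\wt\eth_{\dR}\in\Diff_{\ie,\Gamma}^1$ is elliptic and the uniform $C^*_r\Gamma$-pseudodifferential calculus of \S\ref{sec:uniform} supplies a symbolic parametrix, hence the interior elliptic regularity \eqref{EllReg} in the twisted setting; (ii) the normal form \eqref{DNearBdy} near a stratum $Y$, in which the link $L_Y$ --- a Witt space of strictly smaller depth --- enters only through its de Rham operator and lower-order terms; (iii) the inductive hypothesis applied to $L_Y$ with $\wt{C^*_r}\Gamma$ restricted to it. Together these say that the model (indicial) operator of $\wt\eth_{\dR}$ transverse to $Y$ is $\eth^{L_Y}_{\dR}\otimes\Id$ up to lower order, so its mapping and indicial-root behaviour reduce to the scalar ones, and Cheeger's vanishing $\calH_{(2)}^{f_Y/2}(L_Y)=0$ (Theorem~\ref{theo:cheeger}) ensures, exactly as in the untwisted case, that no indicial root of $\wt\eth_{\dR}$ lies on the weight line defining $L^2_{\iie,\Gamma}$.

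From this I would first extract the two a priori facts, by the Mellin-transform/indicial-root argument adapted from \cite{Mazzeo:edge} and run verbatim with $C^*_r\Gamma$-coefficients: the maximal and minimal domains of $\wt\eth_{\dR}$ on $L^2_{\iie,\Gamma}(X;\Iielaga(X))$ coincide --- call the common domain $\cD$ --- so $\wt\eth_{\dR}$ has a unique closed extension and, being formally symmetric on $C_c^\infty$, is symmetric on $\cD$; and $\cD\subset\rho^{1-\eps}L^2_{\iie,\Gamma}(X;\Iielaga(X))\cap H^1_{\loc}$ for every $\eps\in(0,1)$. Next I would promote symmetry to regularity and self-adjointness in the Hilbert $C^*_r\Gamma$-module sense by the standard criterion: for $u\in\cD$ the cross terms in $\langle(\wt\eth_{\dR}\pm i)u,(\wt\eth_{\dR}\pm i)u\rangle$ cancel by symmetry, leaving $\langle\wt\eth_{\dR}u,\wt\eth_{\dR}u\rangle+\langle u,u\rangle\ge\langle u,u\rangle$ as positive elements of $C^*_r\Gamma$, so $\wt\eth_{\dR}\pm i$ is injective with closed range; the range is dense because its orthogonal complement lies in $\ker(\wt\eth_{\dR}\mp i)=0$; hence $(\wt\eth_{\dR}\pm i)^{-1}$ is everywhere-defined and adjointable, so $\wt\eth_{\dR}$ is regular and self-adjoint.

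It then remains to show that $\cD$, with the graph norm, is $C^*_r\Gamma$-compactly included in $L^2_{\iie,\Gamma}(X;\Iielaga(X))$, equivalently that $(\wt\eth_{\dR}\pm i)^{-1}$ is $C^*_r\Gamma$-compact. I would deduce this from its scalar analogue, Theorem~\ref{InductiveThm1}(2): factor the inclusion through $\rho^{1-\eps}L^2_{\iie,\Gamma}\cap H^1_{\loc}\hookrightarrow L^2_{\iie,\Gamma}$; since $\wt X$ is compact, $\wt{C^*_r}\Gamma$ is a direct summand of a trivial bundle $\wt X\times(C^*_r\Gamma)^N$, so this inclusion is a compression of a finite direct sum of copies of the scalar inclusion $\rho^{1-\eps}L^2_{\iie}\cap H^1_{\loc}\hookrightarrow L^2_{\iie}$ tensored with $\Id_{C^*_r\Gamma}$; the scalar inclusion is compact, and because $C^*_r\Gamma$ is unital, tensoring a compact operator with $\Id_{C^*_r\Gamma}$ lands in $\cK\otimes C^*_r\Gamma$, hence is $C^*_r\Gamma$-compact, while compressions of $C^*_r\Gamma$-compact operators are $C^*_r\Gamma$-compact. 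Putting everything together, $\wt\eth_{\dR}$ --- graded by parity of form degree when $\dim X$ is even --- is a regular self-adjoint operator on $L^2_{\iie,\Gamma}(X;\Iielaga(X))$ with $C^*_r\Gamma$-compact resolvent, hence an unbounded Kasparov $(\bbC,C^*_r\Gamma)$-bimodule, and its class in $KK_j(\bbC,C^*_r\Gamma)=K_j(C^*_r\Gamma)$, $j\equiv\dim X\bmod 2$, is $\Ind(\wt\eth_{\dR})$.

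The hardest part, I expect, is making ingredients (ii)--(iii) precise: verifying that the portion of the proof of Theorem~\ref{InductiveThm1} which the uniform calculus does not see --- the perturbation theory passing from the model operators along each stratum to $\wt\eth_{\dR}$ itself, and the consequent control of asymptotics and of the deficiency spaces --- survives the twist by the flat $C^*_r\Gamma$-bundle. By comparison the Hilbert-module bookkeeping of the last two paragraphs is routine, but it must be set up carefully enough that the scalar estimates can simply be quoted.
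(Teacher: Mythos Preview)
Your inductive strategy, the use of the uniform $C^*_r\Gamma$-calculus for interior regularity, the reduction of the normal operator to $N_q(\eth_{\dR})\otimes\Id_{C^*_r\Gamma}$ (which works precisely because $r$ extends over $\wh X$, so the covering is trivial over each distinguished neighbourhood and over each link), and the Gil--Mendoza argument for $\cD_{\max}=\cD_{\min}$ all match the paper's route (Proposition~\ref{prop:esa-gamma}). Your argument for the $C^*_r\Gamma$-compact inclusion via embedding $\wt{C^*_r}\Gamma$ as a summand of a trivial bundle is also in the right spirit.

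The genuine gap is your regularity step. You argue that $\wt\eth_{\dR}\pm i$ has closed range and that ``the range is dense because its orthogonal complement lies in $\ker(\wt\eth_{\dR}\mp i)=0$''. In a Hilbert $C^*$-module the implication \emph{$M^\perp=0\Rightarrow M$ is dense} is false: a closed submodule can have trivial orthogonal complement without being all of $E$ (take $E=A=C[0,1]$ and $M=\{f:f(0)=0\}$). Equivalently, a closed self-adjoint operator on a Hilbert module need not be regular; the paper explicitly recalls Hilsum's counterexample \cite{Hilsum-K} at this point. So from $\cD_{\max}=\cD_{\min}$ and symmetry you do get self-adjointness, but not the surjectivity of $\wt\eth_{\dR}\pm i$, and hence not regularity.

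The paper supplies the missing ingredient by an entirely different mechanism (Proposition~\ref{prop:regularity}): it realises $\wt\eth_{\dR}$ (or $\wt\eth_{\sign}$) as a $\cD$-connection in the sense of Skandalis, where $\cD=\eth_{\dR}\otimes\Id_{C^*_r\Gamma}$ acts on $L^2_{\iie}(X;\Iie\Lambda^*X)\otimes C^*_r\Gamma$ and the Mishchenko bundle defines a finitely generated projective module over $C(\wh X)\otimes C^*_r\Gamma$; Skandalis's theorem (as presented in Rosenberg--Weinberger \cite{ros-weinberger}) then asserts that every $\cD$-connection is regular and self-adjoint. This is a nontrivial external input, not a consequence of the domain equality, and it is precisely what your argument is missing.
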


In \S\ref{sec:Sign}, we apply this to the signature operator and deduce the following.

\begin{corollary}
The signature operator of a suitably scaled iterated  conic metric $g$ on a Witt space is
essentially self-adjoint. Its closure is Fredholm and has only discrete spectrum of finite multiplicity.
Given a map $r: \widehat{X }\to B\Gamma$, the signature operator of $g$ twisted by the flat bundle 
$\widetilde{C^*_r}\Gamma\to X$ has a unique closed self-adjoint regular extension and defines an index class 
$$
\Ind (\widetilde{\eth}_{\sign} ) \,\in\, K_j (C^*_r \Gamma),\quad j=\dim X \,{\rm mod}\, 2.$$
\end{corollary}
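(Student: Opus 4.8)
The corollary follows from Theorems~\ref{InductiveThm1} and~\ref{InductiveThm2} once we check that the signature operator $\eth_{\sign}$ is, up to the identifications already set up, the same kind of object as the de Rham operator $\eth_{\dR}$. The plan is therefore to recall how $\eth_{\sign}$ is obtained from $\eth_{\dR}$ on an even-dimensional oriented Riemannian manifold, verify that this passage is compatible with the iterated edge / incomplete iterated edge structures of \S\ref{Operators}, and then quote the two theorems. First I would recall that on the regular part $X=\mbox{reg}\,(\widehat X)$, which is an oriented Riemannian manifold of dimension $n$, there is the Hodge star $\star$ and the associated involution $\tau$ on $\Iie\Lambda^*X$ (the chirality operator, given on $k$-forms by a power of $i$ times $\star$, adjusted by the complexification); $\eth_{\dR}=d+\delta$ anticommutes with $\tau$ when $n$ is even, and $\eth_{\sign}$ is its restriction to the $+1$-eigenbundle $\Iie\Lambda^*_+X$, mapping to the $-1$-eigenbundle $\Iie\Lambda^*_-X$ (when $n$ is odd one uses instead the standard device of passing to $\eth_{\dR}$ acting on even-degree forms, which produces an odd $K$-theory class). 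The key structural point is that $\tau$ is a bundle isomorphism that, in the distinguished neighbourhoods $\calW_{q_1,\dots,q_s}$ of \S\ref{section:iterated}, is built out of $\star$, and by the remark following \eqref{DNearBdy} the relevant Hodge star is asymptotically the Hodge star of a product metric; hence $\tau$ extends to a smooth bundle automorphism of $\Iie\Lambda^*X$ over $\wt X$, and the eigenbundle decomposition $\Iie\Lambda^*X=\Iie\Lambda^*_+X\oplus\Iie\Lambda^*_-X$ is a smooth decomposition of bundles over $\wt X$. Consequently $\eth_{\sign}\in\Diff^1_{\iie}(X;\Iie\Lambda^*_+X,\Iie\Lambda^*_-X)$ by Lemma~\ref{lem:iie}, and it is elliptic since its principal symbol is the restriction of the (elliptic) symbol of $\eth_{\dR}$.

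Next I would treat the essential self-adjointness and spectral statements. Since $\tau$ is a unitary bundle involution commuting with the $L^2_{\iie}$ inner product and anticommuting with $\eth_{\dR}$, the maximal (and minimal) domains of $\eth_{\dR}$ respect the splitting into $\pm$-eigenspaces; thus the maximal domain of $\eth_{\sign}$ as an operator from sections of $\Iie\Lambda^*_+X$ to sections of $\Iie\Lambda^*_-X$ is exactly the $+$-part of $\cD_{\max}(\eth_{\dR})$, and likewise for its formal adjoint $\eth_{\sign}^*$ (which is $\eth_{\dR}$ restricted the other way). By Theorem~\ref{InductiveThm1}, part~1), the de Rham operator is essentially self-adjoint with $\cD_{\min}(\eth_{\dR})=\cD_{\max}(\eth_{\dR})$; intersecting with the $\pm$-eigenbundles gives $\cD_{\min}(\eth_{\sign})=\cD_{\max}(\eth_{\sign})$, which is the statement that $\eth_{\sign}$ is essentially self-adjoint in the appropriate graded sense (equivalently, that $\eth_{\sign}+\eth_{\sign}^*$, which is $\eth_{\dR}$ on the full bundle, is essentially self-adjoint — so one may simply invoke the $\eth_{\dR}$ statement verbatim). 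Parts~2)–4) of Theorem~\ref{InductiveThm1} then transfer directly: the unique closed extension of $\eth_{\sign}$ has domain contained in $\rho^{1-\eps}L^2_{\iie}\cap H^1_{\loc}$, which by the compact-embedding statement in Theorem~\ref{InductiveThm1} is compactly included in $L^2_{\iie}$, so $\eth_{\sign}$ is Fredholm on its maximal domain with the graph norm and has discrete spectrum of finite multiplicity. In the even-dimensional case this already gives that $\ind\eth_{\sign}^+=\dim\ker\eth_{\sign}^+-\dim\ker\eth_{\sign}^-$ is well defined; in the odd-dimensional case one gets instead a self-adjoint Fredholm operator whose phase gives a class in $K_1$.

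For the twisted statement I would run the identical argument with $C^*_r\Gamma$-coefficients. The involution $\tau$ acts on $\Iie\Lambda^*_\Gamma X=\Iie\Lambda^*X\otimes\wt C^*_r\Gamma$ as $\tau\otimes\Id$; since it is $C^*_r\Gamma$-linear and commutes with the $C^*_r\Gamma$-valued inner product, it splits the Hilbert module $L^2_{\iie,\Gamma}(X;\Iielaga(X))$ into $\pm$-submodules, and $\wt\eth_{\sign}$ is the restriction of $\wt\eth_{\dR}$ as before, lying in $\Diff^1_{\iie,\Gamma}$ and elliptic. Theorem~\ref{InductiveThm2} provides a unique regular self-adjoint closed extension of $\wt\eth_{\dR}$ with domain compactly included in the Hilbert module; restricting to the $\pm$-submodules yields a regular self-adjoint closed extension of $\wt\eth_{\sign}$ with the same compactness of the domain. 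Together with the symbolic parametrix supplied by the uniform calculus of \S\ref{sec:uniform}, this regularity plus compact inclusion of the domain gives that $(\wt\eth_{\sign},L^2_{\iie,\Gamma})$ is an unbounded Kasparov $(\bbC,C^*_r\Gamma)$-module, hence defines $\Ind(\wt\eth_{\sign})\in KK_*(\bbC,C^*_r\Gamma)=K_*(C^*_r\Gamma)$ with $*=\dim X\bmod 2$.

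\textbf{Main obstacle.} I expect the only real point requiring care — everything else being a formal consequence of the two theorems — is the verification that the chirality involution $\tau$ (and hence the decomposition $\Iie\Lambda^*X=\Iie\Lambda^*_+X\oplus\Iie\Lambda^*_-X$) is genuinely smooth up to and including all boundary hypersurfaces of $\wt X$ for a general adapted iterated edge metric, not merely a rigid or product-type one. This is where the remark after \eqref{DNearBdy} about the Hodge star being \emph{asymptotically} that of the product metric does the work, but one must check that the subleading terms do not spoil smoothness of $\tau$ as a section over $\wt X$ — equivalently, that conjugation by the rescaling $\rho^{\bN}$ built into $\Iie\Lambda^*X$ absorbs exactly the singular factors in $\star$. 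Once that is in place, the reduction of the corollary to Theorems~\ref{InductiveThm1} and~\ref{InductiveThm2} is routine.
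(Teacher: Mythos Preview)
Your even-dimensional argument is correct and matches the paper: the chirality involution $\cI$ splits $L^2_{\iie}(X;\Iie\Lambda^*X)$ into $\pm$-eigenspaces, the domains of $\eth_{\sign}^\pm$ are simply $\cD_{\max/\min}(\eth_{\dR})\cap L^2_{\iie}(X;\Iie\Lambda^*_\pm X)$, and everything in Theorems~\ref{InductiveThm1} and~\ref{InductiveThm2} transfers by restriction. Your worry about smoothness of $\tau$ up to the boundary of $\wt X$ is not a serious issue here, since $\cI$ is a pointwise algebraic operation on $\Iie\Lambda^*X$ built from the metric, and the metric is smooth on this rescaled bundle by construction.

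There is, however, a genuine gap in the odd-dimensional case. You dispose of it with the phrase ``the standard device of passing to $\eth_{\dR}$ acting on even-degree forms,'' but the paper's odd signature operator is
\[
\eth_{\sign} = -i(d\cI+\cI d) = -i\cI(d-\delta) = -i(d-\delta)\cI,
\]
which involves $d-\delta$ rather than $d+\delta$. This is \emph{not} obtained by restricting $\eth_{\dR}$ to a subbundle preserved by the grading, so your splitting argument does not apply. One must show separately that $d-\delta$ is essentially self-adjoint with the same domain as $d+\delta$. The paper does this (Theorem~\ref{thm:Fred}) by invoking the strong Kodaira decomposition $L^2_{\iie}\Omega^* = L^2\cH \oplus \Image d \oplus \Image\delta$, which follows from the inductive hypotheses on $\eth_{\dR}$: since $d+\delta$ has closed range and unique closed extension, so do $d$ and $\delta$ individually, and one reads off that $\cD_{\max}(d-\delta)=\cD_{\max}(d+\delta)$. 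Then Proposition~\ref{GilMendoza} applied to $i(d-\delta)$ finishes the essential self-adjointness. Without this step your odd-dimensional case is incomplete.
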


Certainly the theorem holds for the base case of a closed manifold.
We work on a stratification of depth $m$ and  assume inductively that the theorem holds for all stratified manifolds of depth at most $m-1$.
We are principally interested in the behavior in a distinguished neighborhood $W$ of a point $q \in Y$ with associated cone $C(Z).$

%%%%%%%%%%%%%%%%%%%%%%%%%%%%%%%%%%%%%%%%%%%%%%%%%%
%%%%%%%%%%%%%%%%%%%%%%%%%%%%%%%%%%%%%%%%%%%%%%%%%%
%%%%%%%%%%%%%%%%%%%%%%%%%%%%%%%%%%%%%%%%%%%%%%%%%%
%%%%%%%%%%%%%%%%%%%%%%%%%%%%%%%%%%%%%%%%%%%%%%%%%%
\section{Analyzing the signature operator inductively}
%%%%%%%%%%%%%%%%%%%%%%%%%%%%%%%%%%%%%%%%%%%%%%%%%%
%%%%%%%%%%%%%%%%%%%%%%%%%%%%%%%%%%%%%%%%%%%%%%%%%%
%%%%%%%%%%%%%%%%%%%%%%%%%%%%%%%%%%%%%%%%%%%%%%%%%%
%%%%%%%%%%%%%%%%%%%%%%%%%%%%%%%%%%%%%%%%%%%%%%%%%%

In this section we analyze the behavior of the de Rham operator near the boundary of $X$.
More precisely, we define a model for this operator at each point of the boundary
and then we establish that these model operators are invertible when acting on the appropriate Sobolev spaces. 
Taken together, ellipticity and this asymptotic invertibility are enough to establish the Fredholm properties we seek.

The main advantage of the de Rham operator over an arbitrary $\iie$ operator lies in \eqref{DNearBdy}.
Indeed this shows that, at a given point $q$ on the boundary, the leading order behavior of $\eth_{\dR}$ involves the fibre $Z$ over $q$ only through its de Rham operator $\eth_{\dR}^Z$.
To take advantage of this structure we will `partially complete' $\eth_{\dR}$ so that we can treat the $N\pa M \oplus \phi^*TY$ directions as `complete' and the $TZ$ directions as `incomplete', thus setting up an inductive scheme.

%%%%%%%%%%%%%%%%%%%%%%%%%%%%%%%%%%%%%%%%%%%%%%%%%%
%%%%%%%%%%%%%%%%%%%%%%%%%%%%%%%%%%%%%%%%%%%%%%%%%%
\subsection{The partial completion of $\eth_{\dR}$}
%%%%%%%%%%%%%%%%%%%%%%%%%%%%%%%%%%%%%%%%%%%%%%%%%%
%%%%%%%%%%%%%%%%%%%%%%%%%%%%%%%%%%%%%%%%%%%%%%%%%%

Recall that  \eqref{DNearBdy} was written in a  distinguished neighborhood  $W$ of a point of a stratum $Y.$  
$W$ is diffeomorphic to $B \times C(Z)$ where $B$ is an open subset of $Y$ which is diffeomorphic to a vector space and 
$C(Z)$ is the cone whose link $Z$ is a smoothly stratified space. The `radial' coordinate of the cone will still be 
denoted by $x$, which we can identify with one the boundary defining functions $x_j$ and thereby extend globally 
to $\wt X.$ To take advantage of the structure of the de Rham operator in $W$, as it 
appears in \eqref{DNearBdy}, we define the `partial conformal completion' of the signature operator
\begin{equation*}
	D_0 = x^{1/2} \eth_{\dR} x^{1/2}.
\end{equation*}

The advantage of using $x^{1/2} \eth_{\dR} x^{1/2}$ over, say, $x\eth_{\dR}$ is that the former
is symmetric as an operator
\begin{equation*}
	x^{-1/2} L^2_{\iie,\Gamma}(X, \Iie\Lambda_\Gamma^*(X) ) 
	\to x^{1/2} L^2_{\iie,\Gamma}(X, \Iie\Lambda_\Gamma^*(X) )
\end{equation*}
(with respect to the natural pairing between the spaces on the right and left here), since $\eth_{\dR}$ is a symmetric 
operator on $L^2_{\iie,\Gamma}(X; \Iie\Lambda_\Gamma^*(X) )$ with core domain  $C^\infty_c$.

To analyze $\eth_{\dR}$ it is useful to consider the operator it induces on various weighted $L^2$ spaces.
For later use we point out first that $\eth_{\dR}$ satisfies  
\begin{equation}\label{DavsD0}
	\eth_{\dR}(x^a v) 
	= [\eth_{\dR}, x^a]v 
	+ x^a \eth_{\dR} v
	= x^a [ a \df e(\tfrac{dx}{x}) - a \df i(\tfrac{dx}{x}) + \eth_{\dR}] v,
\end{equation}
where $\df e$ and $\df i$ denote exterior and interior product respectively, and, second, that we have a unitary equivalence of 
unbounded operators \footnote{Note that in \cite{Hunsicker-Mazzeo}, for a stratification of depth one, $D_a$ denotes the 
de Rham operator of the complex $(x^aL^2_{\iie}, d)$ while here $D_a$ denotes the de Rham operator of the complex 
$(L^2_{\iie}, d)$ as an operator on $x^aL^2_{\iie}$.}
\begin{equation*}
\begin{gathered}
	\eth_{\dR} : x^a L^2_{\iie,\Gamma}(X, \Iie\Lambda_\Gamma^*(X) ) \to x^a L^2_{\iie,\Gamma}(X, \Iie\Lambda_\Gamma^*(X) ) \\
	\leftrightarrow
	D_a= x^{1/2-a} \eth_{\dR} x_0^{1/2+a} : 
	x^{-1/2} L^2_{\iie,\Gamma}(X, \Iie\Lambda_\Gamma^*(X) ) 
	\to x^{1/2} L^2_{\iie,\Gamma}(X, \Iie\Lambda_\Gamma^*(X) ).
\end{gathered}
\end{equation*}
 
In order to adapt arguments from \cite{Mazzeo:edge} it is more natural to work with the operator $x^{1/2-a} \eth_{\dR} x_0^{1/2+a}$ as an unbounded operator from the space 
$x^{-1/2} L^2_{\iie,\Gamma}(X, \Iie\Lambda_\Gamma^*(X) )$ {\em to itself.} Thought of in this way, we denote it as $P_a$,
\begin{equation}\label{PMap}
	P_a:
	x^{-1/2} L^2_{\iie,\Gamma}(X, \Iie\Lambda_\Gamma^*(X) ) 
	\to x^{-1/2} L^2_{\iie,\Gamma}(X, \Iie\Lambda_\Gamma^*(X) ).
\end{equation}

Our analysis of $\eth_{\dR}$ will proceed in two steps: in the first step we will analyze the behavior of $P_a$  
by adapting two model operators from \cite{Mazzeo:edge} -- the normal operator and the indicial family.
Then, in the second step, we will use the information gleaned about $P_a$ to analyze $\eth_{\dR}$.

\medskip
 \noindent
{\bf Remark.} 
{\em These two steps can be thought of in the following way. We  first analyze $x^{1/2} \eth_{\dR} x^{1/2}$ as a {\em partially complete} edge operator on $W$; complete in the
$(x,y)$ variables with values in $\iie$-operators on $Z$. Then, as a second step, we analyze it as an {\em incomplete} edge operator in the  $(x,y)$ variables with values, again, in $\iie$-operators on $Z$.}

%%%%%%%%%%%%%%%%%%%%%%%%%%%%%%%%%%%%%%%%%%%%%%%%%%
%%%%%%%%%%%%%%%%%%%%%%%%%%%%%%%%%%%%%%%%%%%%%%%%%%
\subsection{The normal operator of $P_a$} $ $\newline 
%%%%%%%%%%%%%%%%%%%%%%%%%%%%%%%%%%%%%%%%%%%%%%%%%%
%%%%%%%%%%%%%%%%%%%%%%%%%%%%%%%%%%%%%%%%%%%%%%%%%%
Recall that every point $q \in Y$ has a neighborhood $W$ which we identify with the product of $\cU\times C(Z)$, where
$\cU$ is a neighborhood of the origin in $\bbR^b \cong T_qY$. If this neighborhood is small enough that 
$\Iie\Lambda^*(X)\rest{W}$ can be identified with the pull-back of some vector bundle over $Z$ and similarly 
for $\Iielaga(X)|_{W}$, then we cal $W$  a {\bf basic neighborhood}. In such a $W$, let us fix smooth nonnegative 
cutoff functions $\chi$ and $\wt\chi$, both independent of the $Z$ variables, with supports in $W$ and equaling
one in a neighborhood of $q$, and such that $\wt\chi\chi = \chi$. We  refer to $W$, $\psi$, $\chi$, $\wt\chi$ as a 
{\bf basic setup} at $q \in Y$.

We can identify a basic neighborhood $W$ with a subset of the product of $Z$ with $T_qY^+ \cong \bbR^{+}_s \times  \bbR^{b}_u$ 
and use this identification to model the operator $P_a$ near $q$ by an operator on $Z \times T_qY^+$, the {\bf normal operator} 
of $P_a$ at $q \in Y$. 
Notice that the bundles $\Iie\Lambda^*(X)\rest{W}$, $\Iielaga(X)|_{W}$ as pull-backs of bundles over $Z$, extend naturally to $Z \times T_qY^+$, and that the dilation maps $R_t: T_qY^+ \to T_qY^+$ for any $t >0$ preserve the space of sections of these bundles.

\begin{definition}
The {\bf normal operator} 
$N_q(P_a)$ is the operator whose action on any $u \in \CIc( Z \times T_qY^+, \Iielaga (Z \times T_qY^+))$ is given by 
\begin{equation*}
	N_q(P_a)u = \lim_{r\to 0} R_r^* \; (\psi^{-1})^* \; \wt\chi \; P_a \; \psi^* \; \chi R^*_{1/r} u.
\end{equation*}
\end{definition}

Thus in local coordinates $(s,y,z)$ the action of the normal operator of $P_a$ on a section $u$ is obtained by evaluating $u$ at $(s/r, y/r, z)$, applying $P_a$, dilating back by a factor of $r$, and then letting $r \to 0$. 
It is easy to see that this procedure takes a vector field of the form $a(s, y, z) (s\pa_s) + b(s, y, z)(s\pa_y)$
to the vector field $a(0,0,z)(s\pa_s) + b(0,0,z)(s\pa_y)$, while for a vertical vector field $V$, this procedure returns $V\rest{s=0, y=0}$.
In fact, it is easy to see that this procedure replaces the metric 
\begin{equation*}
	g\rest{W} = g_{\cU}(x,y) + dx^2 + x^2 g_Z(x,y,z)
\end{equation*}
which is a submersion metric with respect to the projection $\cU \times C(Z) \to \cU$, with the product of an $\iie$ metric on $C(Z)$ and the flat metric on $\cU$,
\begin{equation*}
	g_{Z \times T_qY^+} = g_{\cU}(0,0) + ds^2 + s^2 g_Z(0,0,z).
\end{equation*}
It follows that any natural operator associated to $g_{\iie}$ is taken by this procedure to the corresponding natural operator of $g_{Z \times T_qY^+}$ in particular this is true for $\eth_{\dR}$.

\begin{lemma}
The normal operator of $P_a$ at $q \in Y$ is equal to $s^{1/2-a}\eth_{\dR}s^{1/2+a}$ where $\eth_{\dR}$ is the de Rham operator of the metric $g_{Z \times T_qY^+}$. Thus in local coordinates,
\begin{equation}\label{NormalPa}
	N_q(P_a) 
	= \begin{pmatrix}
		\eth_{\dR}^{Z} + s\eth_{\dR}^{\bbR^{b}}  & -s\pa_s - (f_0-\bN+a+1/2) \\
		s\pa_s + \bN + a +1/2 & - \eth_{\dR}^{Z} - s\eth_{\dR}^{\bbR^{b}}.
	\end{pmatrix}
\end{equation}
\end{lemma}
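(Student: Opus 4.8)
The plan is to evaluate the rescaling limit defining $N_q(P_a)$ in two stages: first identify the normal operator of $\eth_{\dR}$ itself, then carry the conjugating weights $x^{1/2\mp a}$ through the limit and read off the matrix form.

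First I would recall that, up to the unitary identification in \eqref{PMap}, $P_a = x^{1/2-a}\,\eth_{\dR}\,x^{1/2+a}$, where $x$ is one of the boundary defining functions. The normal operator of $\eth_{\dR}$ at $q$ is dictated by the behaviour of $g$ near $q$: as observed just before the lemma, the rescaling procedure $u \mapsto \lim_{r\to 0} R_r^*(\psi^{-1})^* \wt\chi\,(\,\cdot\,)\,\psi^* \chi\, R_{1/r}^* u$ carries the $\iie$ metric $g\rest{W} = g_{\cU}(x,y) + dx^2 + x^2 g_Z(x,y,z)$ to the product metric $g_{Z\times T_qY^+} = g_{\cU}(0,0) + ds^2 + s^2 g_Z(0,0,z)$, hence carries every natural operator of $g\rest{W}$ to the corresponding natural operator of $g_{Z\times T_qY^+}$; applied to $\eth_{\dR}$ this gives $N_q(\eth_{\dR}) = \eth_{\dR}^{Z\times T_qY^+}$. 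Concretely, I would read the matrix off \eqref{DNearBdy}: the procedure freezes all remaining coefficients at $(x,y) = (0,0)$ — so $\wt\phi^*g_Y$ is replaced by the flat metric on $T_qY$ and $\eth_{\dR}^Y$ by the Euclidean de Rham operator $\eth_{\dR}^{\bbR^{b}}$, while $g_Z(x,y,\cdot)$ is replaced by $g_Z(0,0,\cdot)$ so $\eth_{\dR}^Z$ becomes the de Rham operator of $g_Z(0,0,\cdot)$ on $Z$ — and it replaces $x$ by the radial coordinate $s$ on $T_qY^+$ and $\pa_x$ by $\pa_s$, the lower-order remainder in \eqref{DNearBdy} (which vanishes at $\{x=0\}$) dropping out. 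This produces
\[
N_q(\eth_{\dR})
= \begin{pmatrix}
\tfrac1s \eth_{\dR}^Z + \eth_{\dR}^{\bbR^{b}} & -\pa_s - \tfrac1s(f_0-\bN) \\
\pa_s + \tfrac1s \bN & -\tfrac1s \eth_{\dR}^Z - \eth_{\dR}^{\bbR^{b}}
\end{pmatrix},
\qquad f_0 = \dim Z.
\]

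Next I would handle the conjugation. Under $R_r$ the function $x$, identified with $s$, scales linearly, so conjugating $x^{1/2\mp a}$ by $R_r$ produces explicit powers of $r$; since $(1/2-a)+(1/2+a)=1$ equals the order of $\eth_{\dR}$ as an $\iie$-operator, these powers exactly cancel the $r$-power produced by conjugating $\eth_{\dR}$, and one checks directly (this is the one point where the cutoffs $\chi,\wt\chi$ genuinely matter, since they make $P_a$ act on $\CIc(Z\times T_qY^+,\cdot\,)$) that the weights pass through the limit, giving $N_q(P_a) = s^{1/2-a}\,N_q(\eth_{\dR})\,s^{1/2+a} = s^{1/2-a}\,\eth_{\dR}^{Z\times T_qY^+}\,s^{1/2+a}$, the invariant form asserted in the lemma. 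Finally, to obtain \eqref{NormalPa} I would conjugate the displayed matrix by $s^{1/2\mp a}$ entrywise: since $\eth_{\dR}^Z$, $\eth_{\dR}^{\bbR^{b}}$ and $\bN$ commute with powers of $s$, while $s^{1/2-a}\pa_s\,s^{1/2+a} = s\pa_s + (a+1/2)$, the identities $s^{1/2-a}(\tfrac1s A)s^{1/2+a} = A$ and $s^{1/2-a}B\,s^{1/2+a} = sB$ (for $A,B$ commuting with $s$) turn each entry into the corresponding entry of \eqref{NormalPa}.

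The only genuinely delicate point is the existence of the rescaling limit and its commutation with the weight conjugation — i.e.\ the cancellation of $r$-powers and the removal of the cutoffs in the limit; everything else is the bookkeeping of freezing coefficients in \eqref{DNearBdy} and an elementary conjugation computation. I would also note that the $C^*_r\Gamma$-twisting requires no change: the flat bundle $\wt{C^*_r}\Gamma$ extends to $\wt X$, so $R_r$ acts on it, and over the basic neighbourhood $W$ it is trivialized, whence $\wt\eth_{\dR}$ is locally $\eth_{\dR}\otimes\Id$ and the argument goes through verbatim with $\eth_{\dR}^{Z\times T_qY^+}$ read as the twisted de Rham operator.
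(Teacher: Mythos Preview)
Your proof is correct and follows the same approach as the paper, which simply remarks that the formula follows by naturality of the de Rham operator (the rescaling limit sends $g\rest W$ to the product metric $g_{Z\times T_qY^+}$, hence $\eth_{\dR}$ to its de Rham operator) and that \eqref{NormalPa} can alternatively be read off directly from \eqref{DNearBdy}. You have spelled out both steps in detail, including the $r$-power bookkeeping for the conjugating weights and the entrywise conjugation, which the paper leaves implicit.
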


{\bf Remark.} {\em As explained above, this expression follows by naturality of the de Rham operator. Alternately, one can compute \eqref{NormalPa} directly from \eqref{DNearBdy}.}

%%%%%%%%%%%%%%%%%%%%%%%%%%%%%%%%%%%%%%%%%%%%%%%%%%
%%%%%%%%%%%%%%%%%%%%%%%%%%%%%%%%%%%%%%%%%%%%%%%%%%
\subsection{Localizing the maximal domain} $ $\newline 
%%%%%%%%%%%%%%%%%%%%%%%%%%%%%%%%%%%%%%%%%%%%%%%%%%
%%%%%%%%%%%%%%%%%%%%%%%%%%%%%%%%%%%%%%%%%%%%%%%%%%
The following lemma will allow us  to  ``localize the maximal domain" of $\eth_{\dR}$ near the singular locus.

\begin{lemma}\label{localize}
Let $W$, $\psi$, $\chi$, $\wt\chi$ be a basic setup at $q \in Y$.\\
Let  $u \in x^{-1/2} L^2_{\iie,\Gamma}(X;\Iielaga (X))$ be such that
 $P_a u  \in x^{1/2} L^2_{\iie,\Gamma}(X;\Iielaga (X)).$ Then  $\chi u \in x^{-1/2} L^2_{\iie,\Gamma}(X;\Iielaga (X))$ and $P_a(\chi u) 
  \in x^{1/2} L^2_{\iie,\Gamma}(X;\Iielaga (X))$.
\end{lemma}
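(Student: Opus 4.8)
The plan is to show that $\chi u$ lies in the maximal domain of $P_a$ by a commutator argument, exploiting the fact that $\chi$ is independent of the cone/fibre variables. First I would observe that $\chi$ is a bounded smooth function on $\wt X$ with bounded derivatives, so multiplication by $\chi$ preserves every weighted space $\rho^a L^2_{\iie,\Gamma}$ and in particular $x^{-1/2}L^2_{\iie,\Gamma}(X;\Iielaga(X))$; hence $\chi u$ is in that space. For the second claim, write
\begin{equation*}
	P_a(\chi u) = \chi\, P_a u + [P_a,\chi]\, u,
\end{equation*}
so it suffices to show each term is in $x^{1/2}L^2_{\iie,\Gamma}(X;\Iielaga(X))$. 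The first term is immediate: $\chi$ bounded and $P_a u \in x^{1/2}L^2_{\iie,\Gamma}$ give $\chi P_a u \in x^{1/2}L^2_{\iie,\Gamma}$. The work is entirely in the commutator term.

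For the commutator, recall $P_a = x^{1/2-a}\eth_{\dR}\,x^{1/2+a}$, and since $\chi$ commutes with the powers of $x$ (both are functions), $[P_a,\chi] = x^{1/2-a}[\eth_{\dR},\chi]\,x^{1/2+a}$. Now $\eth_{\dR}=d+\delta$ is a first-order operator, so $[\eth_{\dR},\chi]$ is the order-zero operator given by Clifford multiplication (exterior minus interior product) by $d\chi$; concretely, using \eqref{DavsD0}-style bookkeeping, $[\eth_{\dR},\chi] = \df e(d\chi) - \df i(d\chi)$. The key point is that $\chi$ depends only on the base variables $y$ (it is ``independent of the $Z$ variables'' and, in the basic setup, also of $x$ near $q$ where it is locally constant), so in the local product neighbourhood $W \cong B\times C(Z)$ the differential $d\chi$ is a combination of $dy^{(j)}$'s with bounded coefficients. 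With respect to the iterated edge structure, $dy^{(j)} = \frac{1}{r_1\cdots r_{?}}\cdot(\text{an }\ie\text{-covector})$, but crucially $\df e(dy^{(j)})-\df i(dy^{(j)})$ acting on $\Iie\Lambda^*X$ — because of the rescaling in the definition of $\Iie T^*X$ and $\Iielaga$ — is a \emph{bounded} bundle endomorphism (it is an $\ie$ Clifford multiplication, hence an order-zero $\ie$-operator, in fact in $\Diff^0_{\ie,\Gamma}$ up to the $x^{-1}$ only along the deeper cone directions which $\chi$ does not see). Thus $[\eth_{\dR},\chi] \in \rho^0 \Diff^0_{\ie,\Gamma} \subset \Diff^1_{\iie,\Gamma}$ is bounded on $L^2_{\iie,\Gamma}$ and, being supported in $W$ where $\chi$ varies, it maps $L^2_{\iie,\Gamma}(X;\Iielaga(X))$ to itself. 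Conjugating by $x^{\pm(1/2+a)}$ merely shifts weights consistently, so $[P_a,\chi]$ maps $x^{-1/2}L^2_{\iie,\Gamma}$ to $x^{1/2}L^2_{\iie,\Gamma}$, and since $u\in x^{-1/2}L^2_{\iie,\Gamma}$ by hypothesis, $[P_a,\chi]u \in x^{1/2}L^2_{\iie,\Gamma}(X;\Iielaga(X))$. Combining the two terms gives $P_a(\chi u)\in x^{1/2}L^2_{\iie,\Gamma}(X;\Iielaga(X))$, as claimed.

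The main obstacle I anticipate is the bookkeeping in verifying that $[\eth_{\dR},\chi]$ is genuinely a \emph{bounded} operator on $L^2_{\iie,\Gamma}$ (equivalently an element of $\rho^0\Diff^0_{\ie,\Gamma}$, not merely of $\Diff^1_{\iie,\Gamma}=\rho^{-1}\Diff^1_{\ie,\Gamma}$): one must use that $\chi$ depends only on the base/edge variables of the \emph{top} stratum $Y$ and not on any cone radial variable, so that $d\chi$ pairs only with the ``horizontal'' edge covectors whose associated Clifford actions are bounded on the rescaled bundle $\Iielaga(X)$ — this is exactly the phenomenon recorded in \eqref{DNearBdy}, where the $\eth^Y_{\dR}$ terms appear without an $x^{-1}$ prefactor. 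Once this order-zero boundedness is in hand, the rest is routine manipulation of weights. I would also remark that the same computation shows $\wt\chi$ can be inserted or removed freely ($\wt\chi\chi=\chi$), which is what makes this lemma the tool for passing between $P_a$ and its normal operator in the next subsection.
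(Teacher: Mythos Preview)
Your proof is correct and follows the same approach as the paper: decompose $P_a(\chi u) = \chi P_a u + [P_a,\chi]u$, then use that $\chi$ is independent of the $Z$-variables together with \eqref{DNearBdy} to see that the commutator gains a full power of $x$. The paper phrases the key step more tersely, writing directly $[P_a,\chi] = \sigma(P_a)(d\chi) = xH$ with $H$ a bounded multiplication operator (which is exactly your observation that $[\eth_{\dR},\chi]$ is a bounded zeroth-order operator, conjugated by $x^{1/2\pm a}$), but the content is identical.
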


\begin{proof}
Clearly $P_a(\chi u) = \chi (P_a u) + [P_a, \chi]u,$ and, since $\chi$ is independent of the $Z$-variables, 
\eqref{DNearBdy} allows us to see that $[P_a, \chi] = \sigma(P_a)(d\chi)= x H$ where $H$ is a multiplication operator 
by smooth bounded functions. Since $u \in x^{-1/2} L^2_{\iie,\Gamma}(X;\Iielaga (X))$ we see that 
$[P_a, \chi]u \in x^{1/2} L^2_{\iie,\Gamma}(X;\Iielaga (X))$, which establishes the lemma.
\end{proof}

\begin{proposition}\label{prop:upshot} Let $u \in x^{-1/2} L^2_{\iie}(X;\Iielaga(X))$ with compact support included in $W$  and such that $\chi =1$ 
on supp $u.$ Then  $P_a u \in x^{1/2} L^2_{\iie}(X;\Iielaga(X))$
 if and only if $N_q (P_a) (u \circ \psi^{-1}) \in s^{1/2} L^2_{\iie}(Z \times T_qY^+, \Iielaga (Z \times T_qY^+))$.
\end{proposition}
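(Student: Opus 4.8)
The plan is to show that, after restricting to $W$ and transporting by $\psi$, the operators $P_a$ and $N_q(P_a)$ differ only by a term that is negligible for the question of membership in $x^{1/2}L^2_{\iie}$, so that $P_a u$ and $N_q(P_a)(u\circ\psi^{-1})$ sit in the relevant weighted spaces simultaneously. First I would record, exactly as in the proof of Lemma~\ref{localize}, that since $\chi\equiv 1$ on $\supp u$ and $\wt\chi\chi=\chi$ one has $u=\chi u$, $P_a u=P_a(\chi u)=\wt\chi P_a(\chi u)$, and $\supp(P_a u)\subset\{\chi\equiv 1\}\subset\{\wt\chi\equiv 1\}$; hence $P_a u\in x^{1/2}L^2_{\iie}(X;\Iielaga(X))$ if and only if the transported operator $\psi_*(\wt\chi P_a\psi^*\chi)$ applied to $u\circ\psi^{-1}$ lies in $x^{1/2}L^2_{\iie}$ near $q$. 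Next I would note that $\psi$ identifies $W$ with a neighbourhood of $\{s=0\}\cap\{y=0\}$ in $Z\times T_qY^+$, carries $\Iielaga(X)|_W$ to the pulled-back bundle over $Z$ by the defining property of a basic neighbourhood, and matches the $\iie$ volume densities up to a smooth positive factor with bounded inverse on the compact set in question; so the two sides' weighted $L^2$ spaces correspond, and the statement reduces to comparing $\psi_*(\wt\chi P_a\psi^*\chi)$ with $\wt\chi\,N_q(P_a)\,\chi$ on compactly supported sections.

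The heart of the argument is then the decomposition $\psi_*(\wt\chi P_a\psi^*\chi)=\wt\chi\,N_q(P_a)\,\chi+\mathcal{E}$, where $\mathcal{E}$ is a differential operator of order $\le 1$ of the same ``partially complete'' type. By the very definition of $N_q(P_a)$ as the rescaling limit $\lim_{r\to 0}R_r^*(\cdots)R_{1/r}^*$, the coefficients of $\mathcal{E}$ are the differences between the coefficients of $P_a$ — read off from \eqref{DNearBdy} after conjugation by $x^{1/2}$ — and their frozen values at $q$ appearing in \eqref{NormalPa}. Inspecting this term by term, every contribution to $\mathcal{E}$ either carries an extra factor of the radial variable $x$ (this accounts for the higher-order remainder hidden in the ``$\sim$'' of \eqref{DNearBdy}, for the $x$-freezing of the vertical metric and Hodge star, and — since the edge de~Rham operator enters \eqref{NormalPa} only through the combination $s\,\eth^{\bbR^b}_{\dR}$ — also for the freezing of $\eth^Y_{\dR}$ at $q$), or else is of the form $|y|$ times a $Z$-differential operator of order $\le 1$, coming from the dependence of the fibre metric $g_Z$ on the edge variable. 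Using elliptic regularity from the uniform calculus (together with Lemma~\ref{lem:iie} and the inductive hypothesis on $Z$), the hypothesis $P_a u\in x^{1/2}L^2_{\iie}$ — respectively $N_q(P_a)(u\circ\psi^{-1})\in s^{1/2}L^2_{\iie}$ — upgrades $u$ to $x^{-1/2}H^1_{\iie}$ locally near $Y$; the $x$-weighted terms of $\mathcal{E}$ then map $u$ into $x^{1/2}L^2_{\iie}$, and the residual $|y|$-terms are controlled using the precise matrix structure of \eqref{DNearBdy} and \eqref{NormalPa} (this is where the argument is most delicate). Since the decomposition is symmetric in $P_a$ and $N_q(P_a)$, both implications of the equivalence follow at once.

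I expect the estimate of $\mathcal{E}$ to be the main obstacle. One must verify that passing from $\eth_{\dR}$ to its leading part \eqref{DNearBdy} and then to the normal operator \eqref{NormalPa} introduces only errors that are genuinely lower order with respect to the weighted $L^2$ pairing between $x^{\mp 1/2}L^2_{\iie}$ — in particular that the off-diagonal and zeroth-order entries, and above all the edge-variable dependence of the vertical metric, do not obstruct the claimed equivalence — and that enough elliptic regularity is available at this stage, prior to the indicial analysis of the following subsections, to make sense of applying $\mathcal{E}$ to an arbitrary element of the maximal domain. The rest of the proof is bookkeeping with cutoffs, the dilation invariance of the model bundles, and the comparison of the $\iie$ densities under $\psi$.
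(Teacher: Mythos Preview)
Your plan is the paper's plan. The paper also (i) invokes elliptic regularity in the uniform calculus --- applied to the elliptic $\ie$ operator $\tfrac{\rho}{x}P_a$ --- to upgrade $u$ to $x^{-1/2}H^1_{\iie}$, and then (ii) compares the explicit expressions \eqref{PreDNearBdy} and \eqref{NormalPa} to conclude that $P_a u - N_q(P_a)(u\circ\psi^{-1}) \in s^{1/2}L^2_{\iie}$, which gives both implications at once.

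The one tactical difference is in how the horizontal ($Y$-direction) terms are handled. You subtract $N_q(P_a)$ from $P_a$ and then try to bound the resulting error $\mathcal{E}$, which forces you to confront the $|y|$-freezing of the coefficients head-on. The paper instead observes that the horizontal pieces $x(d^Y+\delta^Y_x)$ in $P_a$ and $s\,\eth_{\dR}^{\bbR^b}$ in $N_q(P_a)$ \emph{each separately} carry an explicit factor of the radial variable and so each already maps $x^{-1/2}H^1_{\iie}$ into $x^{1/2}L^2_{\iie}$; there is then no need to take their difference, and the $y$-freezing of those terms never arises. This is a cleaner route for the $Y$-part than the one you sketch. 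Your concern about the $|y|$-dependence of the \emph{vertical} operator $\eth_{\dR}^Z$ is, however, a genuine one that the paper's proof passes over in the phrase ``using formulas \eqref{PreDNearBdy} and \eqref{NormalPa}''; you were right to flag it as the delicate step.
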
 

\begin{proof} 
We prove only one implication, the other one is similar. Since we work in the distinguished chart $W$, we may identify $u$ with $u \circ  \psi^{-1}.$ 

Let $\rho$ denote a total boundary defining function. The operator $\frac{\rho}{x} P_a$ is an elliptic $\ie$ differential operator, so elliptic regularity \eqref{EllReg} yields $u \in x^{-1/2} H^1_{\iie}(X;\Iielaga(X)).$

We observe that, in the expression \eqref{PreDNearBdy},  $x(d^Y + \delta^Y_x)$ sends 
$x^{-1/2} H^1_{\iie}(X;\Iielaga(X))$ into $x^{1/2} L^2_{\iie}(X;\Iielaga(X)) $
and 
a similar observation is 
true for $s \eth_{\dR}^{\mathbb{R}^{b_0}}$, so using formulas \eqref{PreDNearBdy} and \eqref{NormalPa}, 
we get $P_a u - N_q (P_a) (u \circ \psi^{-1}) \in s^{1/2} L^2_{\iie},$ which proves the lemma.
\end{proof}

%%%%%%%%%%%%%%%%%%%%%%%%%%%%%%%%%%%%%%%%%%%%%%%%%%
%%%%%%%%%%%%%%%%%%%%%%%%%%%%%%%%%%%%%%%%%%%%%%%%%%
\subsection{Mapping properties of the normal operator} \label{sec:MapProp} $ $\newline 
%%%%%%%%%%%%%%%%%%%%%%%%%%%%%%%%%%%%%%%%%%%%%%%%%%
%%%%%%%%%%%%%%%%%%%%%%%%%%%%%%%%%%%%%%%%%%%%%%%%%%
We  shall  inductively assume that the signature operator on $Z$ has discrete spectrum.
Now assume that we also have:

\begin{equation}\label{Ass2}
	\boxed{ 
	\begin{array}{rcl}
	a)& \ &\Spec(\eth_{\dR}^{Z}) \cap (-1,1) \subseteq \{0\},  \\
	b)& \ &\mbox{If}\  k = \frac{f_0}2 \Mthen \mathcal{H}^k_{(2)}(Z)=0. 
	\end{array} }
\end{equation} 
Notice that $a)$  can be arranged by suitably scaling the metric on $Z$ while $b)$ is, by Theorem \ref{theo:cheeger}, 
a topological condition on $Z$.  

\begin{lemma}\label{lem:Bessel}
Let $a \in (0,1)$ and assume \eqref{Ass2} and that Theorem \ref{InductiveThm1} has been proven for $Z$, then $N(P_a)$ acting  on
$$
s^{-1/2} L^2_{\iie}(Z \times T_qY^+, \Iielaga (Z \times T_qY^+) )
$$ is injective on its maximal domain.
\end{lemma}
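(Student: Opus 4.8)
The plan is to prove injectivity of $N_q(P_a)$ by reducing it, via the Fourier transform in the edge variable and separation of variables on $Z$, to a family of ordinary differential equations of Bessel type in the cone variable $s$, and then to show --- using \eqref{Ass2} together with the inductive hypothesis applied to $Z$ --- that none of these ODEs admits a nonzero solution lying in the maximal domain. Throughout I write $y\in\bbR^b\cong T_qY$ for the Euclidean edge variable and $z$ for the variable on $Z$, so that $(s,y,z)$ are the coordinates in which \eqref{NormalPa} is written.

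The first step is to reduce to model fibres. Suppose $u$ lies in the natural maximal domain of $N_q(P_a)$ on $s^{-1/2}L^2_{\iie}(Z\times T_qY^+,\Iielaga)$ and $N_q(P_a)u=0$. By \eqref{NormalPa}, $N_q(P_a)$ is translation invariant in $y$, entering the $y$-directions only through the term $s\,\eth_{\dR}^{\bbR^b}$; applying $\cF_{y\to\eta}$ gives, for almost every $\eta\in\bbR^b$, an equation $N_\eta\,\hat u(\,\cdot\,,\eta)=0$ on the cone $C(Z)=\bbR^+_s\times Z$, where $N_\eta$ is $N_q(P_a)$ with $s\,\eth_{\dR}^{\bbR^b}$ replaced by $s$ times Clifford multiplication by $\eta$ on $\Lambda^*\bbR^b$. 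Moreover $N_q(P_a)$ is invariant, up to the induced bundle action, under the joint dilation $(s,y)\mapsto(ts,ty)$, so for $\eta\neq 0$ the rescaling $s\mapsto|\eta|s$ conjugates $N_\eta$ to $N_{\eta/|\eta|}$; hence it suffices to treat the two model equations $\eta=0$ and $|\eta|=1$, exactly as in \cite{Mazzeo:edge}. By Plancherel on $\bbR^b$ with values in the Hilbert $C^*_r\Gamma$-module $L^2_{\iie,\Gamma}(C(Z),\Iielaga)$, it then suffices to show each model equation forces $\hat u(\,\cdot\,,\eta)=0$.

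Both model equations are analyzed via the spectral decomposition of $\eth_{\dR}^Z$, which by the inductive hypothesis for $Z$ (Theorem \ref{InductiveThm1}, and Theorem \ref{InductiveThm2} in the twisted case) is essentially self-adjoint with discrete spectrum; one decomposes $L^2$-forms on $C(Z)$ accordingly, checking each mode is of the expected regularity via \eqref{EllReg}. On a $\mu$-eigenspace with $\mu\neq 0$ --- so $|\mu|\geq 1$ by \eqref{Ass2}(a) --- the cone de Rham operator conjugated by $s^{1/2\pm a}$ restricts to a $2\times2$ first-order system in $s$ whose square is an equidimensional equation (when $\eta=0$) or a modified Bessel equation (when $|\eta|=1$); its solutions are combinations of powers $s^{\gamma_\pm}$, respectively of $I_\nu$ and $K_\nu$, where the relevant exponent has modulus $\geq|\mu|$ up to a shift by $a$ and $\tfrac{f_0}{2}$. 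In the Bessel case, $L^2$-ness as $s\to\infty$ discards the growing branch ($I_\nu\sim e^s$) and leaves $K_\nu\sim s^{-|\nu|}$ at $s=0$; in either case $|\mu|\geq 1$ places the surviving exponent outside the window compatible with membership in $s^{-1/2}L^2_{\iie}$ near the cone tip, so the only maximal-domain solution is $0$. On the $\mu=0$ part --- the harmonic forms of $Z$ --- the vertical number operator $\bN$ acts by the scalar $k$ on vertical degree $k$, the system decouples into explicit Euler (respectively modified Bessel) equations, and a direct computation of exponents shows that the only vertical degree whose solutions could lie in the maximal domain is the middle degree $k=\tfrac{f_0}{2}$; but $\mathcal H^{f_0/2}_{(2)}(Z)=0$ by \eqref{Ass2}(b) (and there is no middle degree when $f_0$ is odd), so this family is empty. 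Hence $\hat u(\,\cdot\,,\eta)=0$ for every $\eta$, and therefore $u=0$.

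The hard part is the bookkeeping on the $\mu\neq 0$ part of $\eth_{\dR}^Z$: there $\eth_{\dR}^Z$ does not commute with $\bN$, so one cannot separate by vertical degree and must instead use the Hodge decomposition of each nonzero eigenspace into $d^Z$-closed and $\delta^Z$-closed pieces (Cheeger's cone computation), tracking how the off-diagonal entries $-(f_0-\bN+a+\tfrac12)$ and $\bN+a+\tfrac12$ of \eqref{NormalPa} interact with that decomposition. Checking that \eqref{Ass2}(a) is precisely what keeps every resulting Bessel index off the forbidden range, and that \eqref{Ass2}(b) removes exactly the single borderline family of middle-degree harmonic modes, is the crux of the argument; the remaining points --- convergence of the modal expansion, the $C^*_r\Gamma$-valued Plancherel formula, and regularity of individual modes --- are routine given the machinery already in place.
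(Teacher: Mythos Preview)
Your proposal is correct and follows essentially the same approach as the paper: Fourier transform in the edge variable, rescaling to unit $\eta$, Kodaira/Hodge decomposition on $Z$ (justified by the inductive hypothesis), and reduction to modified Bessel equations whose indices are bounded below by $\tfrac12$ precisely under \eqref{Ass2}. The only organizational difference is that the paper squares $N_q(P_a)$ first and applies the Kodaira decomposition of $L^2_{\iie}(Z)$ directly to the resulting second-order system, obtaining fourth-order scalar equations that factor as products of two Bessel operators with indices $\sqrt{\mu_\pm}$, $\sqrt{\wt\mu_\pm}$ (formulas \eqref{MuAlpha}, \eqref{MuBeta}); your description instead first passes to eigenspaces of $\eth_{\dR}^Z$ and then invokes the $d^Z$/$\delta^Z$ splitting within each, which amounts to the same computation since the Laplacian eigenvalue $\lambda^2$ is what enters the Bessel indices.
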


\begin{proof} 
% We want to prove that elements in the null space of $\hat N(P_a)(q,\hat\eta;\lambda)$ that behave like $t^{\gamma^+_\lambda}$ are exponentially increasing in $t$ for large $t$.
% We point out that we are working at a fixed $q\in Y_1$ so it is not necessary at this point to assume anything about the dependence of $\lambda$ or $E_\lambda$ on $q$.

Our plan is to take the first order system $N_q(P_a)w=0$ and reduce to a pair of uncoupled scalar, albeit fourth-order, equations.

The square of
\begin{equation*}
	N_q = N_q(P_a) 
	= \begin{pmatrix}
		\eth_{\dR}^{Z} + s\eth_{\dR}^{\bbR^b}  & -s\pa_s +\bN - f_0-a - 1/2 \\
		s\pa_s + \bN + a+1/2 & - \eth_{\dR}^{Z} - s\eth_{\dR}^{\bbR^b}
	\end{pmatrix},
\end{equation*}
is, with $\cL =(s\pa_s)^2 + (f_0+2a+1)s\pa_s - (\bN-f_0-a-1/2)(\bN+a+1/2)$,
\begin{equation*}
	N_q^2
	= \begin{pmatrix}
		\Delta^{Z} + s^2\Delta^{\bbR^b} -\cL  & -d^{Z} + \delta^{Z} \\
		d^{Z} - \delta^{Z} & \Delta^{Z} + s^2\Delta^{\bbR^b} - \cL
	\end{pmatrix}.
\end{equation*}
We take Fourier transform in the horizontal directions introducing the dual variables $\eta$ and we normalize these introducing the variables $t = s |\eta| $ and $\hat\eta =\frac{\eta}{|\eta|}$, then $\cL$ becomes
$\hat \cL = (t\pa_t)^2 + (f_0+2a+1)t\pa_t -(\bN-f_0-a-1/2)(\bN+a+1/2)$, and we get
\begin{equation*}
	N_q^2(\hat\eta)
	= \begin{pmatrix}
		\Delta^{Z} + t^2- \hat\cL  & -d^{Z} + \delta^{Z} \\
		d^{Z} - \delta^{Z} & \Delta^{Z} + t^2 - \hat\cL
	\end{pmatrix}.
\end{equation*}
Thus, if $(\alpha, \beta)$ are in the null space of $N_q^2(\hat\eta)$, they satisfy
\begin{equation}\label{Nq2System}
\begin{gathered}
	(\Delta^{Z} + t^2 - \hat\cL)\alpha = (d^{Z}-\delta^{Z})\beta \\
	(\Delta^{Z} + t^2 - \hat\cL)\beta = -(d^{Z}-\delta^{Z})\alpha.
\end{gathered}
\end{equation}

To analyze this system, we point out that $L^2$ forms on $Z$ satisfy a strong Kodaira decomposition, i.e., every $L^2$ form on $Z$ can be written in a unique way as a sum of a form in the image of $d^{Z}$, a form in the image of $\delta^{Z}$ and a form in the joint kernel of $d^{Z}$ and $\delta^{Z}$. 
As explained in \cite[\S2]{Hunsicker-Mazzeo}, {\em weak} Kodaira decompositions are a general feature of Hilbert complexes.
Inductively, we are assuming that $d+\delta$ is essentially self-adjoint and that its closed extension
has  closed range;  this implies, see  \cite[Proposition 4.6]{Hunsicker-Mazzeo}, that $d$ has a unique  
closed extension and that this extension
 has closed range (for instance, because $d$ coincides with $d+\delta$ on $(\ker d)^\perp$).
Hence the weak Kodaira decomposition is a strong Kodaira decomposition.

What is more, the operator $(\Delta^{Z} + t^2 - \hat\cL)$ preserves the Kodaira decomposition of forms on $Z$.
This claim is easy to check, since it is clearly true for $\Delta^{Z} + t^2 - (t\pa_t)^2 + (f_0+2a+1)t\pa_t$ and $(\bN-f_0-a-1/2)(\bN+a+1/2)$ satisfies
\begin{equation}\label{Commutation}
\begin{gathered}
	(\bN-f_0-a-1/2)(\bN+a+1/2) d = d (\bN-f_0-a+1/2)(\bN+a+3/2), \\
	(\bN-f_0-a-1/2)(\bN+a+1/2) \delta = \delta (\bN-f_0-a-3/2)(\bN+a-1/2)
\end{gathered}
\end{equation}
which implies that $\hat\cL$ induces maps
\begin{gather*}
	{\text{Image }d^{Z}}
	\to {\text{Image }d^{Z}}, \quad
	{\text{Image }\delta^{Z}}
	\to {\text{Image }\delta^{Z}}, \\
	\ker d^{Z} \cap \ker \delta^{Z}
	\to \ker d^{Z} \cap \ker \delta^{Z}.
\end{gather*}

Decomposing $(\alpha, \beta)$ in the null space of $N_q^2(\hat\eta)$ into
\begin{equation*}
	\alpha = d\alpha_1 + \delta \alpha_2 + \alpha_3, \quad \alpha_1 \in (\ker d)^\perp, \alpha_2 \in (\ker \delta)^\perp, \alpha_3 \in \ker d \cap \ker \delta
\end{equation*}
and similarly $\beta = d\beta_1 + \delta\beta_2 + \beta_3$, we find that the system \eqref{Nq2System} partially decouples into
\begin{gather*}
	\begin{cases}
	(\Delta^{Z} + t^2 - \hat\cL)d^{Z}\alpha_1 = d^{Z}\delta^{Z}\beta_2 \\
	(\Delta^{Z} + t^2 - \hat\cL)\delta^{Z}\beta_2 = \delta^{Z}d^{Z}\alpha_1
	\end{cases}
	\quad
	\begin{cases}
	(\Delta^{Z} + t^2 - \hat\cL)\delta^{Z}\alpha_2 = -\delta^{Z}d^{Z}\beta_1 \\
	(\Delta^{Z} + t^2 - \hat\cL)d^{Z}\beta_1 = -d^{Z}\delta^{Z}\alpha_2
	\end{cases}
	\\
	(t^2 - \hat\cL)\alpha_3 = 0, \quad
	(t^2 - \hat\cL)\beta_3 = 0.
\end{gather*}
These systems have the obvious symmetries $\alpha_3 \leftrightarrow \beta_3$ and $(\alpha_1, \beta_2) \leftrightarrow (\beta_1, -\alpha_2)$ so that we only need to solve half of the equations above.

The simplest is $(t^2 - \hat\cL)\alpha = 0$.
Notice that we can restrict to forms with vertical degree $k$, and also that the operator $(t^2 - \hat\cL)$ can be thought of as acting on the coefficients of $\alpha$ and so it suffices to consider this as a scalar equation.
Acting on the function $t^{-(f_0+2a+1)/2}h(t)$ we need to solve
\begin{gather*}
	t^{-(f+2a+1)}
	\lrpar{
	(t\pa_t)^2 - t^2 - \tfrac14(f_0 +2a+1)^2 - (k - f_0 - a -\tfrac12)(k+a+\tfrac12)}h =0 \\
	\iff
	t^{-(f+2a+1)}
	\lrpar{
	(t\pa_t)^2 - t^2 - \tfrac14(f_0 -2k)^2} h =0.
\end{gather*}
It follows, see [Lebedev, \S5.7], that $h$ is a linear combination of the modified Bessel functions of the first 
kind $I_\nu$ and MacDonald's function $K_\nu$ with $\nu$ given by
\begin{equation}\label{NuKer}
	\nu = \abs{ k - \frac{f_0}2 }.
\end{equation}
We point out that if condition ($b$) holds then $\nu \geq \frac12$.

Next we need to find the solutions to 
\begin{equation}\label{DecSystem}
	\begin{cases}
	(\Delta^{Z} + t^2 - \hat\cL)\alpha = d^{Z}\beta \\
	(\Delta^{Z} + t^2 - \hat\cL)\beta = \delta^{Z}\alpha
	\end{cases}
\end{equation}
when $\alpha \in { \text{Image } d^{Z}}$ and $\beta \in {\text{Image } \delta^{Z}}$.
Assuming $\beta = \delta^{Z}\beta_2$ with $\beta_2$ as above, the second equation implies that there is an element $\gamma \in \ker \delta^{Z}$ such that
\begin{equation*}
	(\Delta^{Z} + t^2 - \hat\cL_{\delta})\beta_2 = \alpha + \gamma
\end{equation*}
where $\hat \cL \delta^{Z} = \delta^{Z}\hat \cL_{\delta}$.
Combining this with the first equation we find (note that $d^{Z}\delta^{Z}\beta_2 = \Delta^{Z}\beta_2$ since $\beta_2 \in (\ker \delta)^{\perp}$)
\begin{gather*}
	(\Delta^{Z} + t^2 - \hat\cL_{\delta})(\Delta^{Z} + t^2 - \hat\cL)\alpha = \Delta^{Z}
	(\Delta^{Z} + t^2 - \hat\cL_{\delta})\beta_2
	= \Delta^{Z}(\alpha + \gamma) \\
	\iff
	\lrspar{
	(\Delta^{Z} + t^2 - \hat\cL_{\delta})(\Delta^{Z} + t^2 - \hat\cL) - \Delta^{Z} }\alpha = \Delta^{Z}\gamma = \delta^{Z}d^{Z}\gamma
\end{gather*}
The left-hand side of this equation is in ${\text{Image }d^{Z}} = (\ker \delta^{Z})^\perp$ while the right-hand side is in $\ker \delta^{Z}$, hence both sides must vanish.
Similar reasoning assuming $\alpha = d\alpha_1$ and with $\hat \cL_d$ defined through $\hat \cL d = d \hat \cL_d$ shows that
\begin{equation*}
	\lrspar{
	(\Delta^{Z} + t^2 - \hat\cL_{d})(\Delta^{Z} + t^2 - \hat\cL) - \Delta^{Z} }\beta = 0
\end{equation*}
hence solutions of \eqref{DecSystem} satisfy
\begin{equation}\label{DecSystemCon}
\begin{gathered}
	\lrspar{
	(\Delta^{Z} + t^2 - \hat\cL_{\delta})(\Delta^{Z} + t^2 - \hat\cL) - \Delta^{Z} }\alpha = 0, \\
	\lrspar{
	(\Delta^{Z} + t^2 - \hat\cL_{d})(\Delta^{Z} + t^2 - \hat\cL) - \Delta^{Z} }\beta = 0.
\end{gathered}
\end{equation}

To solve the first equation in \eqref{DecSystemCon}, notice that we can assume that $\alpha$ has vertical degree $k$ and that it is in an eigenspace of $\Delta^{Z}$, say with eigenvalue $\lambda^2$, which is not equal to zero since $\alpha$ is orthogonal to $\ker d^{Z} \cap \ker \delta^{Z}$. Then as before the equation is a scalar equation acting on the coefficients of $\alpha$ and so it suffices to consider its action on scalar functions.
From \eqref{Commutation}, we see that
\begin{equation*}
	\hat\cL_{\delta} = 
	(t\pa_t)^2 + (f_0+2a+1)t\pa_t -(\bN-f_0-a-3/2)(\bN+a-1/2)
\end{equation*}
and note that
\begin{equation*}
	\hat\cL_{\delta} t^{-(f_0+2a+1)/2}h(t)
	= t^{-(f_0+2a+1)/2} \lrpar{ (t\pa_t)^2 - (\bN-1-\tfrac{f_0}2)^2}h(t)
\end{equation*}
so the equation we need to solve is
\begin{equation*}
%	\lrspar{
%	\lrpar{
%	(t\pa_t)^2 - t^2 + (f_0+2a+1)t\pa_t -(\bN-f_0-a-1/2)(\bN+a-1/2) - \lambda^2}
%	\lrpar{
%	(t\pa_t)^2 - t^2 + (f_0+2a+1)t\pa_t -(\bN-f_0-a+1/2)(\bN+a+1/2) - \lambda^2}
%	-\lambda^2 } t^{-(f_0 +2a+1)/2} h(t) = 0
	\lrspar{
	\lrpar{ (t\pa_t)^2 -t^2 - \lambda^2 - (\bN-1-\tfrac{f_0}2)^2}
	\lrpar{ (t\pa_t)^2 -t^2 - \lambda^2 - (\bN-\tfrac{f_0}2)^2}
	-\lambda^2 } h(t) = 0.
\end{equation*}
We can write this as
\begin{equation*}
	\lrpar{ (t\pa_t)^2 - t^2  - \mu_+ }
	\lrpar{ (t\pa_t)^2 - t^2  - \mu_- } h(t) =0
\end{equation*}
where $\mu_{\pm}$ are given by
\begin{equation}\label{MuAlpha}
	\mu_{\pm} 
%	=\lambda^2 + \tfrac12(\bN-1-\tfrac{f_0}2)^2 + \tfrac12(\bN-\tfrac{f_0}2)^2
%	\pm \tfrac12 \sqrt{
%	(2\bN - f_0 -1)^2 + 4\lambda^2 }
	=\lambda^2 + \tfrac12(\bN-1-\tfrac{f_0}2)^2 + \tfrac12(\bN-\tfrac{f_0}2)^2
	\pm  \sqrt{
	(\bN - \tfrac{f_0 + 1}2)^2 + \lambda^2 }
\end{equation}
and if $h$ is a solution then it 
can be written as a linear combination of $I_{\nu_+}$, $I_{\nu_-}$, $K_{\nu_+}$, $K_{\nu_-}$ where $\nu_\pm = \sqrt {\mu_\pm}$.

It will be useful to know how small $\mu_{\pm}$ can be, recalling that for these solutions we have $\lambda^2 \neq 0$.
Using condition ($a$), it is clear that $\mu_+ \geq 1$.
For a fixed $\lambda$, it is easy to see that the minimum of $\mu_-$ occurs when $\bN = \tfrac{f+1}2$ where we have
\begin{equation*}
	\mu_- \geq \lambda^2 + \tfrac14 - \lambda \geq \tfrac14
\end{equation*}
where we have again made use of condition ($a$).
In either case we have $\nu_{\pm} \geq \frac12$.

Finally, the second equation in \eqref{DecSystemCon} can be solved in the same fashion.
Here, since
\begin{equation*}
	\hat\cL_d 
	= (t\pa_t)^2 + (f_0+2a+1)t\pa_t - (\bN-f_0 - a +\tfrac12)(\bN+a+\tfrac32)
\end{equation*}
satisfies
\begin{equation*}
	\hat \cL_d t^{-(f_0 +2a+1)/2}h(t)
	= t^{-(f_0+2a+1)/2}\lrspar{ (t\pa_t)^2 -(\bN+1 - \tfrac {f_0}2)^2 }h
\end{equation*}
We ultimately need to solve the following equation
\begin{equation*}
	\lrspar{
	\lrpar{ (t\pa_t)^2 -t^2 - \lambda^2 - (\bN+1-\tfrac{f_0}2)^2}
	\lrpar{ (t\pa_t)^2 -t^2 - \lambda^2 - (\bN-\tfrac{f_0}2)^2}
	-\lambda^2 } h(t) = 0.
\end{equation*} We can rewrite this equation as:
\begin{equation} \label{eq:Null}
\bigl( (t\pa_t)^2 -t^2 -\wt \mu_+  \bigr) \bigl( (t\pa_t)^2 -t^2 -\wt \mu_-  \bigr) h(t)=0,
\end{equation} where
\begin{equation}\label{MuBeta}
	\wt\mu_{\pm}
	=\lambda^2 + \tfrac12(\bN+1-\tfrac{f_0}2)^2 + \tfrac12(\bN-\tfrac{f_0}2)^2
	\pm  \sqrt{
	(\bN - \tfrac{f_0 -1}2)^2 + \lambda^2 }.
\end{equation}
Since $\lambda \neq 0$, any solution of \eqref{eq:Null} can be written 
as a linear combination of $I_{\nu_+}$, $I_{\nu_-}$, $K_{\nu_+}$, $K_{\nu_-}$ where $\nu_\pm = \sqrt {\wt \mu_\pm}$.
As before, it is easy to check that $\nu_{\pm}$ is always at least $\frac12$.

Having solved all of the requisite equations, we next consider which of these solutions are in $t^{-1/2}L^2(t^{f_0} \; dt)$.
First recall that $I_\nu(t)$ always blows-up as $t \to \infty$, so none of these can be in this space.
On the other hand, $K_\nu(t)$ behaves like $e^{-t}$ for large $t$ and like $t^{-\nu}$ for $t$ near $0$.
Next, it is easy to see that a function of the form $t^s$ near $t=0$ is in $t^{-1/2}L^2_{\loc}(t^{f_0} \; dt)$ when
\begin{equation*}
	2s+1+f_0 > -1
	\iff 2s > -f_0 -2
\end{equation*}
So to {\em avoid} having $t^{-(f_0+2a+1)/2}K_\nu(t) \in t^{-1/2}L^2_{\loc}(t^{f_0} \; dt)$ it is enough to ensure
\begin{equation}\label{NecCond}
	- f_0 - 2a - 1 - 2\nu \leq - f_0 -2
	\iff
	2a+2\nu \geq 1.
\end{equation}
As pointed out above, conditions ($a)$ and ($b)$) guarantee that $\nu \geq \frac12$, hence $N_q(P_a)$ is injective whenever $a>0$.
\end{proof}

%%%%%%%%%%%%%%%
%%%%%%%%%%%%%%%%
\subsection{Indicial roots}$ $\newline
%%%%%%%%%%%%%%%%
%%%%%%%%%%%%%%%%%
Another model operator of $P_a$ is its {\bf indicial family}, defined using the action of $P_a$ on polyhomogeneous expansions.
The indicial family is a one parameter family of operators on $Y$, $I(P_a;\zeta)$ defined by
\begin{equation*}
	P_a ( x^\zeta f) = x^\zeta I(P_a; \zeta)f\rest{x=0} + \cO(x^{\zeta+1}).
\end{equation*}
The base variables at the boundary enter into the indicial family as parameters, so we can speak of the indicial family at the point $q \in Y$ by restricting not just to $x =0$ but to the fibre over $q$. We denote this refinement of the indicial family by $I_q(P_a;\zeta)$, from 
\eqref{DNearBdy} we see that it is given by
\begin{equation*}
	I_q(P_a; \zeta) 
	= \begin{pmatrix}
		\eth_{\dR}^{Z}   & -\zeta - (f_0-\bN+a+1/2) \\
		\zeta + \bN + a +1/2 & - \eth_{\dR}^{Z}.
	\end{pmatrix}
\end{equation*}
and we point out that it coincides with the indicial family of the normal operator at $q \in Y$.
The values of $\zeta$ for which $I_q(P_a;\zeta)$ fails to be invertible (on $L^2_{\iie}(Z)$) are known as the {\bf indicial roots} of $P_a$ at $q$, or the boundary spectrum of $P_a$ at $q$, and are denoted
\begin{equation*}
	\spec_b(N_q( P_a )).
\end{equation*}

A final model operator of $P_a$ is the {\bf indicial operator} of $P_a$, $I_q(P_a)$, given by
\begin{equation*}
	I_q(P_a) 
	= \begin{pmatrix}
		\eth_{\dR}^{Z}   & -t\pa_t - (f_0-\bN+a+1/2) \\
		t\pa_t + \bN + a +1/2 & - \eth_{\dR}^{Z}.
	\end{pmatrix}
\end{equation*}
It is related to the indicial family by the Mellin transform,
\begin{equation*}
	\cM( I_q(P_a) u)(\zeta)
	= I_q(P_a; -i\zeta) \cM(u)(\zeta).
\end{equation*}
Recall that this transform is defined, e.g., for $u \in \CIc( \bbR^+ )$ by
\begin{equation}\label{DefMellin}
	\cM u(\zeta) = \int_0^\infty u(x) x^{i\zeta - 1} \; dx,
\end{equation}
and extends to an isomorphism between weighted spaces
\begin{equation}\label{MellinIso}
	x^\alpha L^2 \left(\bbR^+, \frac{dx}x \right) \xrightarrow{\cong} L^2\lrpar{ \{ \eta = \alpha \}; d\xi }
\end{equation}
where $\eta = \Im\zeta$ and $\xi = \Re\zeta$.
The inverse of the Mellin transform as a map \eqref{MellinIso} is given by
\begin{equation*}
	\cM^{-1}(v)(x) = \frac1{2\pi} \int_{\eta = \alpha} v(\zeta) x^{-i\zeta} \; d\xi.
\end{equation*}

\begin{lemma}\label{lem:Indicial}
The indicial roots of $P_a$ are contained in 
\begin{equation}\label{IndSets}
\begin{gathered}
	\lrbrac{ -  \frac{f_0 + 2a +1 }2 \pm \sqrt{ \mu_{\pm} } } \bigcup
	\lrbrac{ -  \frac{f_0 + 2a +1 }2 \pm \sqrt{ \wt \mu_{\pm} } } \\
	\bigcup \lrbrac{ -  \frac{f_0 + 2a +1 }2 \pm \lrpar{ k - \frac{f_0}2 } }.
\end{gathered}
\end{equation}
where $k \in \{0, \ldots, f_0 \}$ and $\mu_{\pm}$, $\wt\mu_{\pm}$ are defined in \eqref{MuAlpha} and \eqref{MuBeta} respectively.

The indicial operator of $P_a$ is has a bounded inverse on the space $t^{-1/2}L^2_{\iie}(Z \times \bbR^+_t)$ for all $a \in (0,1)$. 
\end{lemma}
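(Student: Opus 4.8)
The plan is to reduce both assertions to the analysis already carried out in the proof of Lemma~\ref{lem:Bessel}, replacing the Bessel-type (second order modified) equations appearing there by their Euler, i.e.\ indicial, counterparts, and then to feed the resulting information into the standard Mellin transform criterion for invertibility of regular singular operators.

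For the indicial roots I would start from the inductive hypothesis (Theorem~\ref{InductiveThm1} for $Z$), which gives that $\eth_{\dR}^Z$ is essentially self-adjoint with discrete spectrum and, exactly as in Lemma~\ref{lem:Bessel}, that $L^2$ forms on $Z$ admit a strong Kodaira decomposition preserved by $\Delta^Z$, $\hat\cL$, $\hat\cL_d$ and $\hat\cL_\delta$. Then I would square $I_q(P_a;\zeta)$ and repeat verbatim the passage leading to \eqref{Nq2System}, \eqref{DecSystem} and \eqref{DecSystemCon}: since the indicial family carries no Fourier variable $\eta$ and no term $s^2\Delta^{\bbR^b}$, every occurrence of ``$t^2$'' in those displays is simply deleted. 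Conjugating by $t^{-(f_0+2a+1)/2}$ as in that proof, one is left on the joint kernel of $d^Z$ and $\delta^Z$ with the Euler equation $\bigl((t\pa_t)^2 - \nu^2\bigr)h = 0$, $\nu = |k - \tfrac{f_0}{2}|$; on the image of $\delta^Z$ (resp.\ of $d^Z$) with $\bigl((t\pa_t)^2 - \mu_+\bigr)\bigl((t\pa_t)^2 - \mu_-\bigr)h = 0$ (resp.\ with $\wt\mu_\pm$ in place of $\mu_\pm$), where $\mu_\pm$ and $\wt\mu_\pm$ are exactly \eqref{MuAlpha} and \eqref{MuBeta}. Since $I_q(P_a;\zeta)$ fails to be invertible precisely when one of these equations admits the solution $t^{\,\zeta + (f_0+2a+1)/2}$, reading off the characteristic exponents of the Euler operators gives $\zeta \in \{ -\tfrac{f_0+2a+1}{2} \pm \sqrt{\mu_\pm}\} \cup \{ -\tfrac{f_0+2a+1}{2} \pm \sqrt{\wt\mu_\pm}\} \cup \{ -\tfrac{f_0+2a+1}{2} \pm (k - \tfrac{f_0}{2})\}$, which is \eqref{IndSets}.

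For the invertibility statement I would use the Mellin identity $\cM(I_q(P_a)u)(\zeta) = I_q(P_a;-i\zeta)\cM(u)(\zeta)$ together with the isometry \eqref{MellinIso}. Since on the model cone $L^2_\iie(\bbR^+_t) = t^{-(f_0+1)/2}L^2(\tfrac{dt}{t})$, the space $t^{-1/2}L^2_\iie(Z\times\bbR^+_t)$ corresponds to the weight line $\Re\zeta = -\tfrac{f_0+2}{2}$, so $I_q(P_a)$ on that space is unitarily equivalent to multiplication by the analytic operator family $\zeta \mapsto I_q(P_a;-i\zeta)$, with values in closed densely defined operators on $L^2_\iie(Z)^2$, restricted to that line. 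By the standard Mellin criterion it is then invertible with bounded inverse provided (i) no indicial root lies on $\Re\zeta = -\tfrac{f_0+2}{2}$ and (ii) $I_q(P_a;\zeta)^{-1}$ is uniformly bounded as $|\Im\zeta| \to \infty$ along that line. For (i) I would combine \eqref{IndSets} with the lower bounds $\sqrt{\mu_\pm}, \sqrt{\wt\mu_\pm}, |k - \tfrac{f_0}{2}| \geq \tfrac12$ established in Lemma~\ref{lem:Bessel} under hypotheses ($a$) and ($b$): the equation $-\tfrac{f_0+2a+1}{2} \pm \nu = -\tfrac{f_0+2}{2}$ forces $\nu = |a - \tfrac12| < \tfrac12$ since $a \in (0,1)$, a contradiction. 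For (ii), on this line $\Re(\zeta^2) \to -\infty$ like $-(\Im\zeta)^2$, so a direct computation shows that $I_q(P_a;\zeta)^*I_q(P_a;\zeta)$ is bounded below by $(\Im\zeta)^2 - C$ on $\Dom(\eth_{\dR}^Z)^2$, whence $I_q(P_a;\zeta)$ is invertible there with $\|I_q(P_a;\zeta)^{-1}\| = O(|\Im\zeta|^{-1})$.

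The hard part is essentially bookkeeping: most of the argument is a transcription of what was already done for the normal operator, and the only genuinely delicate points are tracking how the conjugating power $t^{-(f_0+2a+1)/2}$ shifts the characteristic exponents, and checking that the weight $t^{-1/2}$ relative to $L^2_\iie$ on the model cone falls exactly on the line $\Re\zeta = -\tfrac{f_0+2}{2}$, which by the bounds $\nu \geq \tfrac12$ and the strict restriction $a \in (0,1)$ is disjoint from the set \eqref{IndSets} of indicial roots.
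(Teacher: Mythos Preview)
Your reduction to Euler equations for the first assertion is exactly the paper's approach: drop the $t^2$ from the Bessel-type equations of Lemma~\ref{lem:Bessel}, solve $(t\pa_t)^2 h = \nu^2 h$, and read off the characteristic exponents. That part is fine.

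The gap is that this argument only shows \emph{injectivity} of $I_q(P_a;\zeta)$ for $\zeta$ outside~\eqref{IndSets}, whereas the paper defines an indicial root as a value where $I_q(P_a;\zeta)$ fails to be \emph{invertible} on $L^2_{\iie}(Z)$. You have not shown surjectivity (or even closed range), so you have not proved the first assertion of the lemma as stated, and your condition~(i) in the second part is therefore unverified: knowing that the critical line $\Re\zeta = -\tfrac{f_0+2}{2}$ misses~\eqref{IndSets} tells you $I_q(P_a;\zeta)$ is injective there, not that it is invertible. Your estimate~(ii), even if granted, only covers large $|\Im\zeta|$ and does nothing for the compact portion of the line.

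The paper closes this gap by writing $I_q(P_a;\zeta) = A + B$ with $A = \eth_{\dR}^Z \bigl(\begin{smallmatrix}\Id&0\\0&-\Id\end{smallmatrix}\bigr)$ and $B$ a bounded matrix; the inductive hypothesis on $\eth_{\dR}^Z$ (compact domain inclusion) makes $B$ relatively compact, so $I_q(P_a;\zeta)$ is Fredholm with fixed domain $\cD(A)$. Then the key observation is the explicit adjoint formula
\[
I_q(P_a;\zeta)^* = I_q\bigl(P_a;\,-(\zeta + f_0 + 2a + 1)\bigr),
\]
together with the fact that the set~\eqref{IndSets} is invariant under $\zeta \mapsto -(\zeta + f_0 + 2a + 1)$. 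Hence for $\zeta$ outside~\eqref{IndSets} both $I_q(P_a;\zeta)$ and its adjoint are injective, which combined with the Fredholm property gives genuine invertibility and a compact inverse $Q(\zeta)$. With this in hand, the inverse Mellin transform along the line $\eta = -\tfrac{f_0}{2}-1$ produces the bounded inverse of the indicial operator directly, without any separate large-$|\Im\zeta|$ estimate.
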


\begin{proof}
An analysis similar to -- but simpler than -- that above applies to the equation $I_q(P_a)u=0$. Indeed, it suffices to replace $(t\pa_t)^2 - t^2$ in the `equations to solve' by $(t\pa_t)^2$. 
Since the solutions to $(t\pa_t)^2u = \nu^2u$ are linear combinations of $t^{\nu}$ and $t^{-\nu}$, the solutions of $I_q(P_a)u=0$ are obtained from the solutions to $N_q(P_a)u =0$ by replacing each $I_v(t)$ by $t^\nu$ and each $K_\nu(t)$ by $t^{-\nu}$.
As before, asking for the solutions to be in $t^{-1/2}L^2(t^f \; dt)$ excludes those involving $t^\nu$, and then conditions ($a$) and ($b$) show that there are no solutions involving $t^{-\nu}$ for $a>0$.
Thus the proof of Lemma \ref{lem:Bessel} shows that the indicial operator $I_q(P_a)$ is injective on $t^{-1/2}L^2(Z \times \bbR^+)$ as long as $a>0$.

Similarly, the proof of Lemma \ref{lem:Bessel} shows that if there is a non-zero solution to
$I_q(P_a;\zeta)u=0$ then $\zeta$ must be in one of the sets in \eqref{IndSets}.
An advantage of the indicial family is that we can bring to bear our inductive hypotheses about $\eth_{dR}^Z$.
Indeed, decompose $I_q(P_a)$ as 
\begin{multline*}
	I_q( P_a )(\zeta)
	= \begin{pmatrix}
		\eth_{\dR}^{Z_0}   & -\zeta - (f_0-\bN+a+1/2) \\
		\zeta + \bN + a+1/2 & - \eth_{\dR}^{Z_0}
	\end{pmatrix} \\
	= \eth_{\dR}^{Z_0}
	\begin{pmatrix} \Id & 0 \\ 
	0 & - \Id \end{pmatrix}
	+ \begin{pmatrix}
		0   & -\zeta - (f_0-\bN+a+1/2) \\
		\zeta + \bN + a+1/2 & 0
	\end{pmatrix}
	= A + B.
\end{multline*}
Inductively we know that $A$ is essentially self-adjoint, has closed range, and its domain, $\cD(A)$, includes compactly into $L^2_{\iie}(Z)$.
It follows that the operator $B : \cD(A) \to L^2_{\iie}(Z)$ (where $\cD(A)$ is endowed with the graph norm) is compact, i.e., $B$ is relatively compact with respect to $A$, and so
$I_q(P_a;\zeta)$ has a unique closed extension, has closed range, and its domain is also $\cD(A)$.

Since $\eth_{\dR}^Z$ is essentially self-adjoint, the adjoint of $I_q( P_a )(\zeta)$ on $L^2(Z)$ is
\begin{equation*}
\begin{split}
	I_q( P_a; \zeta)^*
	&= \begin{pmatrix}
		\eth_{\dR}^{Z_0}   & \zeta + \bN + a+1/2 \\
		-\zeta - f + \bN -a-1/2) & - \eth_{\dR}^{Z_0}
	\end{pmatrix} \\
	&= I_q(P_a; -(\zeta + f + 2a+1) )
\end{split}
\end{equation*}
Notice that $\zeta$ is in one of the sets in \eqref{IndSets} if and only if $-(\zeta + f + 2a +1)$ is.
Thus we see that if $\zeta$ is not in one of these sets, then $I_q(P_a; \zeta)$ is in fact invertible with bounded inverse. In fact, since the domain of $I_q(P_a; \zeta)$ is $\cD(A)$, its inverse is a compact operator.
This proves that \eqref{IndSets} contains the indicial roots of $N_q(P_a)$.
Denote the inverse of $I_q(P_a; \zeta)$ by 
\begin{equation*}
	Q(\zeta) : L^2_{\iie}(Z) \to \cD(A) \hookrightarrow L^2_{\iie}(Z).
\end{equation*}
We obtain an inverse for $I_q(P_a)$ as an operator on $t^{-1/2}L^2_{\iie}(R^+ \times Z)$ by applying the inverse Mellin transform to $Q(\zeta)$ along the line $\eta = -\frac f2 -1$, which we can do as long as $-\frac f2 -1$ is not an indicial root.
If ($a$) and ($b$) hold, then this is true for all $a \in (0,1)$.
\end{proof}

%%%%%%%%%%%%%%%%%%%%%%%%%%%%%%%%%%%%%%%%%%%%%%%%%%
%%%%%%%%%%%%%%%%%%%%%%%%%%%%%%%%%%%%%%%%%%%%%%%%%%
\subsection{Closed range properties of $N_q( P_a)$.} $ $\newline 
%%%%%%%%%%%%%%%%%%%%%%%%%%%%%%%%%%%%%%%%%%%%%%%%%%
%%%%%%%%%%%%%%%%%%%%%%%%%%%%%%%%%%%%%%%%%%%%%%%%%%
Using the formal transpose, we shall obtain conditions for $N_q( P_a)$ to have dense range 
Since $\eth_{\dR}$ is symmetric on $L^2_{\iie}(X; \Iie\Lambda^*X)$, the operator
\begin{equation*}
	D_0 : x^{-1/2}L^2_{\iie}(X;\Iie\Lambda^*X) \to x^{1/2}L^2_{\iie}(X; \Iie\Lambda^*X)
\end{equation*}
also coincides with its formal adjoint.
A simple computation then shows that the formal adjoint of $P_0$ is
\begin{equation*}
	(P_0)^* = x^{-1/2} \eth_{\dR} x^{3/2} : 
	x^{-1/2}L^2_{\iie} (X;\Iie\Lambda^*X) 
	\to 
	x^{-1/2}L^2_{\iie} (X;\Iie\Lambda^*X), 
\end{equation*}
and similarly, 
\begin{equation}\label{PaTranspose}
	(P_a)^*
	= (x^{1/2-a} \eth_{\dR} x^{1/2+a})^*
	= x^{-1/2+a} \eth_{\dR} x^{3/2-a}
	=P_{1-a}.
\end{equation}
Observe that $N_q(P_a)$ is a closed unbounded operator and that its (maximal) domain is dense.
Thus from Lemma \ref{lem:Bessel} we see that, if conditions $a)$ and $b)$ hold, $N_q(P_a)$ will have dense range if $a <1$.
In the following lemma we will show that the range of $N_q(P_a)$ is closed for $a \in (0,1)$ and hence this operator is bijective.

\begin{lemma} \label{lem:NClosedRange}
The normal operator $N_q(P_a)$ has closed range as an operator on $t^{-1/2}L^2_{\iie}(Z \times T_qY^+)$ acting on its maximal domain, for all $a \in (0,1)$.
\end{lemma}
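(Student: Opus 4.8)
The plan is to deduce the closed range of $N_q(P_a)$ from the structure already exposed in the proofs of Lemma~\ref{lem:Bessel} and Lemma~\ref{lem:Indicial}, using a standard ``parametrix plus compactness'' argument together with the scaling (dilation) invariance of $N_q(P_a)$ on the cone $Z\times T_qY^+$.

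First I would record the two facts we already have in hand. By Lemma~\ref{lem:Bessel}, $N_q(P_a)$ is injective on its maximal domain for $a\in(0,1)$ (here the transpose identity \eqref{PaTranspose}, $N_q(P_a)^* = N_q(P_{1-a})$, shows that the same injectivity statement holds for the formal adjoint when $1-a\in(0,1)$, i.e. again for all $a\in(0,1)$; so $N_q(P_a)$ already has dense range). By Lemma~\ref{lem:Indicial}, the indicial operator $I_q(P_a)$ is invertible with bounded inverse on $t^{-1/2}L^2_{\iie}(Z\times\bbR^+_t)$ for all $a\in(0,1)$, equivalently the line $\{\eta = -\tfrac{f_0}2 - 1\}$ contains no indicial root. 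What remains is purely the closed-range (semi-Fredholm) estimate.

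The key step is an a priori estimate of the form
\begin{equation*}
	\norm{u}_{t^{-1/2}L^2_{\iie}} \leq C\,\norm{N_q(P_a)u}_{t^{-1/2}L^2_{\iie}}
	\qquad \text{for all } u \in \cD_{\max}(N_q(P_a)).
\end{equation*}
To prove this I would take the Fourier transform in the $b$ edge variables $u\in\bbR^b$, introducing $\eta$ and the normalized variables $t = s|\eta|$, $\hat\eta = \eta/|\eta|$, exactly as in the proof of Lemma~\ref{lem:Bessel}; on each fibre $N_q(P_a)$ becomes a conic operator $\hat N(\hat\eta)$ on $Z\times\bbR^+_t$ plus the harmless rescaled term $t\,\eth^{\bbR^b}_{\dR}$ absorbed into the $\hat\eta$-dependence. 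For $|\eta|$ bounded away from $0$ (say $t$ living on a fixed ray) the potential term $t^2$ in $N_q^2$ dominates and, combined with the inductive hypothesis that $\eth^Z_{\dR}$ has discrete spectrum and compact-domain inclusion, gives coercivity; the delicate region is $t\to 0$ (equivalently $|\eta|\to 0$), where $N_q(P_a)$ degenerates to its indicial operator $I_q(P_a)$. There the estimate follows from the invertibility with bounded inverse of $I_q(P_a)$ on $t^{-1/2}L^2_{\iie}(Z\times\bbR^+_t)$ established in Lemma~\ref{lem:Indicial}, together with the fact that the line $\eta = -\tfrac{f_0}2-1$ used there is precisely the one matching the weight $t^{-1/2}$ on $L^2(t^{f_0}dt)$. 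The uniformity in $\hat\eta$ comes from the compactness of the sphere $\{|\hat\eta|=1\}$ and continuity of the relevant norms; the scaling $R_r$ then lets us patch the two regimes. Dense range plus this a priori estimate give that $N_q(P_a)$ is injective with closed range, hence (its adjoint also being densely defined with the analogous estimate by \eqref{PaTranspose}) bijective onto $t^{-1/2}L^2_{\iie}(Z\times T_qY^+)$.

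The main obstacle I anticipate is making the transition between the two regimes $t\gtrsim 1$ and $t\to 0$ genuinely uniform in $\hat\eta$ and compatible with the edge weight: one must check that the constant in the indicial estimate near $t=0$ does not blow up as the transverse frequency varies, and that the gluing of a ``large $t$'' parametrix (built from the $t^2$ term and the inductive discreteness of $\eth^Z_{\dR}$) with the Mellin-transform inverse of $I_q(P_a)$ near $t=0$ produces only a compact error on $t^{-1/2}L^2_{\iie}(Z\times T_qY^+)$. This is exactly the kind of estimate that the edge calculus of \cite{Mazzeo:edge} would furnish directly; here it has to be reproved by hand, but the ingredients—the explicit Bessel asymptotics from Lemma~\ref{lem:Bessel}, the bounded invertibility from Lemma~\ref{lem:Indicial}, and the relative compactness argument from its proof—are all already available, so the argument is assembly rather than new analysis.
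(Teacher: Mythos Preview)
Your proposal is correct and follows essentially the same route as the paper: Fourier transform in the edge variables, rescale to $t=s|\eta|$, then split into a small-$t$ regime where the indicial operator inverse from Lemma~\ref{lem:Indicial} gives a left parametrix up to a compact error, and a large-$t$ regime where the $t^2$ term in $N_q^2(\hat\eta)$ dominates and a symbolic inverse suffices. One small imprecision: after the rescaling, the $|\eta|$-dependence is entirely absorbed into $\hat\eta$, so the dichotomy is genuinely small $t$ versus large $t$ (not small $|\eta|$ versus large $|\eta|$); the uniformity in $\hat\eta$ is then automatic since $\hat\eta$ ranges over a compact sphere and enters only through the bounded zeroth-order term $t\,\cl{\hat\eta}$.
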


\begin{proof}
Recall that an operator is {\em essentially injective} if it has closed range and finite dimensional null space. An operator $L$ on a non-compact manifold is essentially injective if there is a compact set $K$ and a constant $C>0$ such that
\begin{equation}\label{GoalIneq}
	\norm{Lu} \geq C \norm{u}, \text{ for every $u$ supported outside } K,
\end{equation}
We will show that $N_q(P_a)$ has closed range for $a \in (0,1)$ by showing that it is essentially injective. 
For the duration of the proof, we simplify notation by omitting the bundle 
$\Iie\Lambda^* (Z \times T_qY^+)$.

As in the proof of \eqref{lem:Bessel}, it is useful to analyze $N_q(P_a)$ by carrying out a Fourier transform of the horizontal variables, introducing the new variable $\eta$ dual to $y$, and then making the change of variable $t = s|\eta|$, $\hat \eta = \eta/ |\eta|$. The resulting operator is given by
\begin{equation*}
	\wt N_q(\hat \eta)
	= \begin{pmatrix}
		\eth_{\dR}^{Z_0} + t\cl{\hat\eta}   & -t\pa_t - (f_0-\bN+a+1/2) \\
		t\pa_t + \bN + a+1/2 & - \eth_{\dR}^{Z_0} - t\cl{\hat\eta}
	\end{pmatrix}
\end{equation*}
%These changes constitute an isometry
%\begin{equation*}
%	L^2_{\iie}(Z \times T_qY^+)
%	= L^2(Z \times \bbR^+_s \times \bbR^b; s^f \; \dvol_Z \; ds\; dy)
%	\rightarrow
%	L^2(Z \times \bbR^+_t \times \bbR^+_{|\eta|} \bbR^{b-1}_{\eta}; t^f |\eta|^{h-f-2} \; \dvol_Z \; dt \; d|\eta| \; d\eta)
%\end{equation*}
To prove \eqref{GoalIneq} for $N_q(P_a)$ it suffices to prove that, for some compact set $K$ and constant $C>0$ (independent of $|\eta|$),
\begin{equation*}
	\norm{ \wt N_q(\hat \eta) u }_{L^2(Z_0 \times \bbR^+_t, t^{f_0} \; dt \; dvol_Z)} \geq C \norm{ u }_{L^2(Z_0 \times \bbR^+, t^{f_0} \; dt \; dvol_Z)}.
\end{equation*}

We will prove this separately for $u$ supported near $t =0$ and $u$ supported on $t \gg 0$.
For the former, we will show that $\wt N_q(\hat \eta)$ has a bounded left inverse on
\begin{equation*}
	L^2_{\Dir}(Z_0 \times (0,1]_t),
\end{equation*}
the closure of the set of $u$ in $L^2(Z_0 \times \bbR^+_t, t^{f_0} \; dt \; dvol_Z)$ with support in $\{ t <1 \}$.

From Lemma \ref{lem:Indicial}, we know that the indicial operator $I_q(P_a) = \wt N_q(\hat\eta)\rest{t=0}$ has a bounded inverse, $H_0$, on $t^{-1/2}L^2_{\iie}(Z \times \bbR^+)$.
On the other hand, the difference between $\wt N_q(\hat\eta)$ at $t=0$ and at $t>0$ is given by $t\cl{\hat\eta}\begin{pmatrix} \Id & 0 \\ 0 & -\Id \end{pmatrix}$.
Notice that, since $Q(\zeta)$ in the proof of Lemma \ref{lem:Indicial} is always a compact operator on $L^2_{\iie}(Z)$, the operator $H_0t$ is a compact operator on $L^2_{\Dir}(Z_0 \times (0,1]_t)$.
Thus $H_0 \wt N_q(\hat\eta)$ is equal to the identity plus a compact operator.
Since we know that $\wt N_q(\hat\eta)$ is injective, this shows that it has a bounded left inverse when acting on sections supported near $t =0$.

To handle the setting $t \gg 0$, note that since the Fourier transform is an isometry, it suffices to work with $N_q^2(\hat\eta)$.
For this operator, as in  \cite[Lemma 5.5]{Mazzeo:edge}, we consider the partial symbol 
\begin{equation*}
	\wt\sigma(N_q^2)
	=
	\begin{pmatrix}
	\Delta^{Z_0} + t^2 + t^2\tau^2 & 0 \\ 0 & -(\Delta^{Z_0} + t^2 + t^2\tau^2)
	\end{pmatrix}
\end{equation*}
where $\tau$ is a variable dual to $\pa_t$.
We clearly have
\begin{equation*}
	|\ang{ \wt \sigma(N_q^2)u, u }| \geq t^2(1+\tau^2)\norm{u}
\end{equation*}
where $\ang{\cdot, \cdot}$ and $\norm{\cdot}$ are taken in $t^{-1/2}L^2(Z \times \bbR^+_r \times \bbR_\tau; t^{f_0} \dvol_Z \; dt\; d\tau)$.
Hence the operator norm of $\wt \sigma(N_q^2)^{-1}$ is bounded by $t^{-2}(1+\tau)^{-2}$, and so the operator 
\begin{equation*}
	H_{\infty}(u) = \int e^{it\tau} \wt\sigma(N_q^2)^{-1} \hat u \; d\tau
\end{equation*}
serves as a bounded left inverse for $N_q^2(\hat \eta)$ for large $t$, and hence $H_{\infty}\wt N_q(\hat\eta)$ serves as a bounded left inverse for $\wt N_q(\hat\eta)$.
\end{proof}

%%%%%%%%%%%%%%%%%%%%%%%%%%%%%%%%%%%%%%%%%%%%%%%%%%
%%%%%%%%%%%%%%%%%%%%%%%%%%%%%%%%%%%%%%%%%%%%%%%%%%
\subsection{Integration by parts identity for $N_q(P_a)$} $ $\newline
%%%%%%%%%%%%%%%%%%%%%%%%%%%%%%%%%%%%%%%%%%%%%%%%%%
%%%%%%%%%%%%%%%%%%%%%%%%%%%%%%%%%%%%%%%%%%%%%%%%%%
In computing the indicial roots of $P_a$, we have made strong use of the symmetries of the normal operator of $P_a$, namely the translation invariance along horizontal directions (i.e., those tangent to $Y$) and dilation invariance in $T_qY^+$. 
In this section we exploit this invariance to establish an integration by parts identity, which will ultimately allow us to show that any `extra' vanishing of $N_q(P_a)u$ at $x=0$ translates to some degree of vanishing of $u$ at $x=0$, the latter degree bounded by the indicial roots of $N_q(P_a)$.

We will need the Sobolev spaces on $Z \times T_q Y^+$ analogous to those on $X$.
\begin{definition} \label{def:partial} Let $N \in \mathbb{N}.$  We define 
$H_{\pie}^{N}(Z \times T_qY^+;\Iie\Lambda^* )$ to be the set of 
$u \in L^2_{\iie}(Z \times T_qY^+;\Iie\Lambda^* )$ such that 
for any positive integer $p \leq N, $
$$
X_1 \ldots X_p u \in L^2_{\iie}(Z \times T_qY^+;\Iie\Lambda^* )
$$ where the $X_j$ are vector fields which are either of the form $s \partial_s, s\partial_{u_j} \, ( 1\leq j \leq b_0)$ 
or of the form $X(z,s,u)=X(z)$ for each $(z,s,u) \in Z \times T_qY^+,$ where $X(z)$ is an edge vector field of the fibre $Z=Z_{q}.$ Notice that these vectors fields $s \partial_s, s\partial_{u_j} \,X(z)$ generate 
a Lie algebra.
\end{definition}

As we have already used in \S\ref{sec:MapProp},
if a function in $L^2_{\iie}(X)$ is $\cO(x^\gamma)$ near $x=0$ then we must have $2\gamma+f_0 >-1$. As the $L^2$ cut-off will be very important below we introduce the function
\begin{equation}\label{Defdelta}
	\delta_0(\gamma) = \gamma - \frac{f_0+1}2,
\end{equation}
thus a function in $\cO(x^{\gamma})$ is in $x^aL^2(x^{f_0 }\; dx )$ precisely when $\gamma>\delta_0(a)$. 

Briefly, let us abbreviate $L^2_{\iie}( Z \times T_qY^+, \Iie \Lambda^*( Z \times T_qY^+))$ by $L^2_{\iie}(q)$.
Let $C$ be a fixed number in $[-1/2,1]$ and $\eps \in (0,1).$  Let now $R$ be the unbounded operator induced by $N_q(P_0)$ on $s^{C +\eps}L^2_{\iie}( q )$ with domain $C^\infty_c$; with a small abuse of notation
we denote also by $R$ the operator induced by $N_q(P_0)$ on $s^{C -\eps}L^2_{\iie}( q )$ (acting distributionally).
We consider the natural pairing 
$\ang{\cdot, \cdot}: s^{C +\eps}L^2_{\iie}( q )\times s^{C -\eps}L^2_{\iie}( q )\to \bbC$
between these two spaces
\footnote{Recall that this pairing is given by $<u,v>:= (u',v')_{s^C L^2}$ if $u=s^\epsilon u'$ and $v=s^{-\epsilon} v'$.}.
Let $R^t$ be the formal transpose of $R$ with respect to this pairing. $R^t$ is a differential operator and  we let it act, distributionally, 
on $s^{C-\eps} L^2_{\iie}( q )$.
%Notice that if we choose a maximal domain for $R$, then  we should work with the minimal
%domain for $R^t$.

We will establish that, if
\begin{equation*}
	u \in s^{C}L^2_{\iie}( q ), \quad 
	v \in s^{C-\eps}L^2_{\iie}( q ), \quad
	\Mand Ru, \;R^t v\; \in \; s^{C+\eps}L^2_{\iie}( q )
\end{equation*}
then, with respect to the natural pairing $\ang{\cdot, \cdot}$ above, we have 
$$\ang{v, Ru} = \ang{u, R^t v}.$$
Notice that, although both pairings make sense, this is not an instance of the definition of $R^t$, since both $u$ and $v$ are thought of as elements of $s^{C-\eps}L^2_{\iie}( q )$.

Assume inductively that we have shown $\cD_{\max}(\eth_{\dR}) = \cD_{\min}(\eth_{\dR})$ for stratifications of depth at most $m-1$ so that in particular
\begin{equation*}
	\ang{ \eth_{\dR}^{Z}u, v } = \ang{u, \eth_{\dR}^{Z}v}
\end{equation*}
for any two elements of $\cD_{\max}(\eth_{\dR}^{Z})$.

On the one hand we know that, for $u, v \in s^{C}L^2_{\iie}( q )$, the natural inner product is given by
\begin{equation*}
	\ang{u,v} = \int s^{-2C} u \wedge * v
\end{equation*}
and, on the other, the normal operator is given by
\begin{equation*}
	N_q = N_q(P_a) 
	= \begin{pmatrix}
		\eth_{\dR}^{Z} + s\eth_{\dR}^{\bbR^b}  & -s\pa_s +\bN - f- (a+1/2) \\
		s\pa_s + \bN + (a+1/2) & - \eth_{\dR}^{Z_0} - s\eth_{\dR}^{\bbR^b}
	\end{pmatrix},
\end{equation*}
so as anticipated we only have to justify integrating by parts the $s\pa_s$ and $s \eth_{\dR}^{\bbR^b}$.

We can  assume that we are working with sections compactly supported in a basic neighborhood $W.$

Our main tool is the Mellin transform \eqref{DefMellin}.
Using the inclusions $x^aL^2 \subset x^bL^2$ whenever $b<a$ it follows that the Mellin transform of a function in
$x^aL^2(\bbR^+, dx)$ is holomorphic in the half-plane $\{ \eta < a-1/2 \}$.
The Mellin transform is very useful for studying asymptotics. For instance, if $u$ is polyhomogeneous then $\cM u$ extends to a meromorphic function on the whole complex plane with poles at locations determined by the exponents occuring in the expansion of $u$.
Switching from $L^2(\bbR^+)$ to $L^2_{\iie}(X)$, assume that $\omega$ is supported in a basic neighborhood $W$ of $q \in Y$, then we have
\begin{equation*}
	\omega \in s^\alpha L^2_{\iie}(X)
	\iff \cM \omega \in L^2\lrpar{ \{ \eta = \delta_0(\alpha_0) \}, d\xi; L^2 (dy \; \dvol_{Z}) }
\end{equation*}
where $\cM$ denotes Mellin transform in $s$ (in the usual coordinates),  $d y$ denotes the Lebesgue 
measure of $\mathbb{R}^{b_0}$, and $\dvol_{Z}$ denote the volume form  associated to 
the iterated conic metric of $Z.$
Notice that $\cM\omega$ extends to a holomorphic function on the half-plane $\{ \eta < \delta_0(\alpha_0) \}$ with values in $L^2(dy \; \dvol_{Z})$.

Elliptic regularity (via the symbolic calculus) tells us that elements in the null space of an elliptic edge-operator are in $H^\infty_{\iie }(X;\Iie \Lambda^*)$, and hence smooth in the interior of the manifold. However, the derivatives of elements in $H^{\infty}_{\iie}(X;\cE)$ will typically blow-up at the boundary, which is just to say that knowing $\rho \pa_y u \in L^2_{\iie}(X; \Iie \Lambda^*)$ tells us that $\pa_y u \in \rho^{-1} L^2_{\iie}(X;\Iie \Lambda^*)$.
Using the Mellin transform we can turn this around: 
if $u$ is in the null space of an elliptic $\ie$-operator, $A$, as a map
\begin{equation*}
	A: \rho^{\alpha}L^2_{\iie}(X; \Iie \Lambda^*) \to
	\rho^{\alpha}L^2_{\iie}(X; \Iie \Lambda^*)
\end{equation*}
then, in the absence of indicial roots, we can view $u$ as an element of a space with a stronger weight at the cost of giving up tangential regularity at the boundary. We shall concentrate directly on the normal operator of $P_a$, even though
much of what we prove could be extended to more general differential operators.

\begin{lemma} \label{lem:integration}
Let $W$ be a basic neighborhood for the point $q \in Y$. Set $R= N_q(P_a)$ and assume 
 that, for some $\alpha \in \bbR$ and $\eps \in(0 , 1)$,
\begin{equation}\label{SymReq}
	\{ \Re \zeta + \tfrac f2 + \tfrac12: \zeta \in \spec_b(  R ) \} \cap [\alpha-\eps, \alpha+ \eps] \subseteq \{ \alpha \}.
\end{equation}

\begin{enumerate}
\item \label{lem:Asympt}
Assume $v \in s^\alpha L^2_{\iie}(Z \times T_qY^+ ; \Iie \Lambda^*)$ is supported in $W$ and
$R v \in s^{\alpha+\eps}L^2_{\iie}(Z \times T_qY^+ ; \Iie \Lambda^*)$
then%\footnote{The symbol $\lrceil\eps$ denotes the smallest integer greater than or equal to $\eps$.}
\begin{equation*}
\begin{split}
	v &\in s^{\alpha+\eps} L^2( s^{f_0} \;d s\;dvol_{Z},  H^{-1}(dy) \otimes \Iie \Lambda^*)  \\
	&= \{ s^{\alpha+\eps} u: u \in \Diff^{1}(Y) L_{\iie}^2(W ; \Iie \Lambda^*) \}
\end{split}
\end{equation*}
Moreover, as a map into $L^2( \dvol_Z, H^{-1}(dy) \otimes \Iie \Lambda^* )$ the Mellin transform of $v$ is holomorphic in the half-plane 
$\{\eta < \delta_0( \alpha+\eps) \}$.

\item \label{lem:IntByParts}
Assume that 
$u \in s^\alpha L^2_{\iie}(Z \times T_qY^+ ; \Iie \Lambda^*)$ and 
$w \in s^{\alpha-\eps}H^2_{\pie}(Z \times T_qY^+ ; \Iie \Lambda^*)$ (cf Definition \ref{def:partial}) are such that
\begin{equation*}
\begin{gathered}
	\supp u \subseteq W \\
Ru, R^tw \in s^{\alpha+\eps}L^2_{\iie}(Z\times T_qY^+ ; \Iie \Lambda^*),	
\end{gathered}
\end{equation*}
then with respect to the natural pairing
\begin{equation*}
	\ang{\cdot, \cdot}: s^{\alpha-\eps}L^2_{\iie}(Z\times T_qY^+ ; \Iie \Lambda^*) \times s^{\alpha+ \eps} L^2_{\iie}(Z \times T_qY^+ ; \Iie \Lambda^*) \to \bbC
\end{equation*}
we have $\ang{ w, Ru } = \ang{ u, R^t w }$.
\end{enumerate}
\end{lemma}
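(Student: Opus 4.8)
The plan is to prove both parts by Mellin transform in $s$ (cf.\ \eqref{DefMellin}, \eqref{MellinIso}), reducing everything to the operator pencil $I_q(P_a;\zeta)$ and then invoking its invertibility off the boundary spectrum $\spec_b(N_q(P_a))$ from Lemma~\ref{lem:Indicial}, together with the inductive hypothesis that $\eth_{\dR}^Z$ is essentially self-adjoint with domain compactly included in $L^2_{\iie}(Z)$. I would first establish the analytic-continuation statement \eqref{lem:Asympt} and then deduce the integration-by-parts identity \eqref{lem:IntByParts} from it by a cutoff argument. Throughout, elliptic regularity \eqref{EllReg} (applied after rescaling $R$ to an elliptic $\ie$ operator) lets me assume at the outset that the sections in question lie in $s^\bullet H^\infty_{\iie}$, so that the Mellin manipulations are legitimate.

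For \eqref{lem:Asympt}: after Fourier transforming in the horizontal variable $y$ (so that $s\eth_{\dR}^{\bbR^b}$ becomes $s|\eta|$ times Clifford multiplication by $\hat\eta$, as in \eqref{NormalPa}) and then Mellin transforming in $s$, the equation satisfied by $v$ becomes a functional equation of the form $I_q(P_a;\zeta)\widehat{\cM v}(\zeta,\eta) = \widehat{\cM f}(\zeta,\eta) - B(\eta)\widehat{\cM v}(\zeta-i,\eta)$, where $f:=Rv\in s^{\alpha+\eps}L^2_{\iie}$ and $B(\eta)$ carries the single factor of $\eta$ coming from $s\eth_{\dR}^{\bbR^b}$. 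A priori $\cM v$ is holomorphic in $\{\Im\zeta<\delta_0(\alpha)\}$; since $f\in s^{\alpha+\eps}L^2_{\iie}$, $\cM f$ is holomorphic in $\{\Im\zeta<\delta_0(\alpha)+\eps\}$, and the shifted term $\cM v(\zeta-i,\cdot)$ is holomorphic there as well because $\eps<1$. The hypothesis \eqref{SymReq} says $I_q(P_a;\zeta)$ has no indicial roots whose level $\Re\zeta+\tfrac f2+\tfrac12$ lies strictly between $\alpha$ and $\alpha+\eps$, so $Q(\zeta)=I_q(P_a;\zeta)^{-1}$ from Lemma~\ref{lem:Indicial} is holomorphic across that strip, with the bounds along vertical lines supplied by the ellipticity of $\eth_{\dR}^Z$. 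Multiplying the functional equation through by $Q(\zeta)$ continues $\cM v$ holomorphically, with $L^2$ bounds on vertical lines, up to (but not including) the level $\alpha+\eps$. The one factor of $\eta$ in $B(\eta)$ is exactly why the continued $\cM v$ is valued only in $H^{-1}(dy)$ and not $L^2(dy)$; reversing the Fourier transform yields the claimed membership $v\in s^{\alpha+\eps}L^2(s^{f_0}\,ds\,dvol_Z;H^{-1}(dy)\otimes\Iie\Lambda^*)=\{s^{\alpha+\eps}u: u\in\Diff^1(Y)L^2_{\iie}(W;\Iie\Lambda^*)\}$ together with the stated holomorphy of $\cM v$ in $\{\eta<\delta_0(\alpha+\eps)\}$.

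For \eqref{lem:IntByParts}: first apply \eqref{lem:Asympt} with $v=u$ (its hypotheses hold verbatim) to upgrade $u$ to $s^{\alpha+\eps}L^2(\,\cdot\,;H^{-1}(dy)\otimes\Iie\Lambda^*)$. Next, $R^t$ is again a normal operator of the same type, and by the computation of $I_q(P_a;\zeta)^*$ in the proof of Lemma~\ref{lem:Indicial} together with the identification \eqref{PaTranspose} of the formal transpose, $\spec_b(R^t)$ is the affine reflection of $\spec_b(R)$ about the weight at which $\ang{\cdot,\cdot}$ is symmetric; hence \eqref{SymReq} for $R$ forces, for $R^t$, the absence of boundary-spectrum levels in $(\alpha-\eps,\alpha-\eps']$ for every $\eps'\in(0,\eps)$. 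The transposed version of \eqref{lem:Asympt}, fed with $R^t w\in s^{\alpha+\eps}L^2_{\iie}$, then improves $w$ from $s^{\alpha-\eps}H^2_{\pie}$ to $s^{\alpha-\eps'}L^2(\,\cdot\,;H^1(dy)\otimes\Iie\Lambda^*)$ for some fixed $\eps'\in(0,\eps)$. Now choose $\psi_\delta(s)=\psi(s/\delta)$ supported in $\{s>\delta\}$ and equal to $1$ on $\{s>2\delta\}$. Since $\psi_\delta u$ is compactly supported away from $\{s=0\}$ and $w\in H^2_{\pie}$ is regular enough to serve as a test section, the honest integration by parts $\ang{w,R(\psi_\delta u)}=\ang{\psi_\delta u,R^t w}$ holds with no boundary contribution (the $Z$-directions contributing none by the inductive self-adjointness $\cD_{\max}(\eth_{\dR}^Z)=\cD_{\min}(\eth_{\dR}^Z)$, the $y$-directions none by compact support in $W$, and the face $\{s=0\}$ none because of $\psi_\delta$). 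Writing $R(\psi_\delta u)=\psi_\delta Ru-[R,\psi_\delta]u$ and noting that $[R,\psi_\delta]$ only sees the $s\pa_s$ term, so that $[R,\psi_\delta]u=-(s/\delta)\psi'(s/\delta)\cdot(\text{bounded})\,u$ is supported in $\{\delta<s<2\delta\}$, the commutator term is estimated by pairing the $H^{-1}(dy)$-improved $u$ against the $H^1(dy)$-improved $w$ in the measure $s^{f_0}\,ds\,dy\,dvol_Z$; using $\delta_0(\alpha)=\alpha-\tfrac{f_0+1}2$ from \eqref{Defdelta}, the powers of $\delta$ combine to give a bound $O(\delta^{\eps-\eps'})\to0$. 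Letting $\delta\to0$ and using dominated convergence in the two remaining pairings gives $\ang{w,Ru}=\ang{u,R^t w}$.

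The main obstacle is the weight bookkeeping in \eqref{lem:IntByParts}: one must know both that $\spec_b(R^t)$ reflects $\spec_b(R)$ about the correct line, so that \eqref{SymReq} genuinely transfers and permits a gain of a full $\eps'>0$ of decay for $w$ beyond $\alpha-\eps$, and that the improved regularities of $u$ (in $H^{-1}(dy)$) and of $w$ (in $H^1(dy)$) cancel precisely against the weight $s^{-2\alpha}$ and the one derivative lost in the commutator. The $L^2$ cutoff is borderline at the threshold $\delta_0$, so this is a tight estimate, and it is exactly the gap hypothesis \eqref{SymReq} that rules out a nonzero asymptotic term of $u$ or $w$ at the critical weight which would otherwise destroy it.
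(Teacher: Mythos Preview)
Your argument for part \eqref{lem:Asympt} is essentially the same as the paper's: decompose $R$ into the indicial operator plus the shift term $s\eth_{\dR}^{\bbR^b}$, Mellin transform, and use the invertibility of $I_q(P_a;\zeta)$ off the boundary spectrum to push the region of holomorphy, at the cost of one tangential derivative coming from the factor $\eta$. The paper phrases this without first taking the Fourier transform in $y$, writing the higher-order piece as an operator $H$ mapping into $H^{-1}(dy)$, but the content is the same.

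For part \eqref{lem:IntByParts} your route genuinely differs. The paper stays entirely on the Mellin side: it writes both pairings via Parseval, then deforms the contour of integration from $\{\eta=\eps\}$ to $\{\eta=-\eps\}$ (after normalizing to $\alpha=0$). Along the deformation, the loss of $y$-regularity of $\cM(u)$ furnished by part \eqref{lem:Asympt} is exactly offset by a \emph{gain} of $y$-regularity of $\cM(R^tw)$, obtained not from a transposed version of part \eqref{lem:Asympt} but from the hypothesis $w\in s^{\alpha-\eps}H^2_{\pie}$, which implies $R^tw\in s^{\alpha+\eps}H^1_{\pie}$ and hence, by interpolation, $\cM(R^tw)(\zeta)\in H^{\eps-\eta}(dy)$ across the strip. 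After the contour is shifted, a literal integration by parts in each Mellin variable and another Parseval gives $\ang{w,Ru}$.

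Your cutoff argument is a reasonable alternative in spirit, but the step ``the transposed version of \eqref{lem:Asympt} improves $w$ from $s^{\alpha-\eps}H^2_{\pie}$ to $s^{\alpha-\eps'}L^2(\,\cdot\,;H^1(dy))$'' is not a direct application of part \eqref{lem:Asympt}: as stated that lemma \emph{loses} one $y$-derivative, landing in $H^{-1}(dy)$, so you would need to formulate and prove a version that, starting from $H^2_{\pie}$, lands in $H^1(dy)$. This is plausible (the Mellin argument loses exactly one derivative regardless of where you start), and your reflection claim for $\spec_b(R^t)$ follows from the identity $I_q(P_a;\zeta)^*=I_q(P_a;-(\zeta+f+2a+1))$ in the proof of Lemma~\ref{lem:Indicial}; but both points need to be made explicit before the $O(\delta^{\eps-\eps'})$ commutator estimate can be invoked. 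The paper's contour-deformation bypasses this by never improving $w$ itself, using instead the extra tangential regularity of $R^tw$ directly in the Parseval integrand.
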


\begin{proof}
{\bf (1):}
Since $v$ is supported in a normal neighborhood of $q \in Y$, we can write 
\begin{equation*}
	I_q( R ) v = H v + h
\end{equation*}
where $I_q(R)$ is the indicial operator of $R$ and $H$ contains all of the `higher order terms' at the boundary, e.g., $s^2\pa_s$, $s\pa_u$. 

Passing to the Mellin transform, and using that $I_q(R;\zeta)$ depends polynomially on $\zeta$, we have an equality
\begin{multline}\label{Hol1}
	\cM v (\zeta)= I(R; i\zeta)^{-1} \lrpar{\cM (H v + h) (\zeta)} \\
	\text{ as meromorphic functions }
	\{ \eta < \delta_0(\alpha) \} \to L^2(dy \; \dvol_{Z} ;  \Lambda^*),
\end{multline}
of course since the left hand side is holomorphic on this half-plane so is the right hand side.
On the other hand, $\cM(h)$ is a holomorphic function into this space on the half plane $\{ \eta < \delta_0(\alpha +\eps) \}$, and, reasoning as in \cite{Mazzeo:edge},
$\cM( H v)$ extends holomorphically to this half plane but we have to give up tangential regularity,
\begin{equation*}
	\cM( H v) :
	\{ \eta < \delta_0(\alpha+\eps) \} \to L^2(\dvol_{Z}; H^{-1}(dy) \otimes  \Lambda^*) 
	\text{ holomorphically}.
\end{equation*}
This gives us an extension of \eqref{Hol1} to
\begin{multline}\label{Hol2}
	\cM v (\zeta)= I( R; i\zeta)^{-1} \lrpar{\cM ( H v + h) (\zeta)} \\
	\text{ as meromorphic functions }
	\{ \eta < \delta_0(\alpha+\eps) \} \to L^2(\dvol_{Z}; H^{-1}(dy) \otimes  \Lambda^*).
\end{multline}
The possible poles occur at indicial roots of $R$, so the first possibility would occur at $\zeta = \delta_0(\alpha)$, and by hypothesis this is the only potential indicial root with real part less than or equal to $\delta_0(\alpha+\eps)$. However we know that
\begin{equation*}
	v(s,y,z) = \frac1{2\pi} \int_{\eta = \delta_0(\alpha)} \cM v(\xi,y,z) s^{-i\zeta} \; d\xi
\end{equation*}
so in particular (as $1/\xi^2$ is not integrable) $\cM v$ does not have any poles on this line. Hence
\begin{multline*}
	\cM v (\zeta)= I ( R ; i \zeta )^{-1} \lrpar{\cM ( H v + h) (\zeta)} \\
	\text{ as holomorphic functions }
	\{ \eta < \delta_0(\alpha+\eps) \} \to L^2( \dvol_{Z};H^{-1}(dy) \otimes  \Lambda^* )
\end{multline*}
and we conclude that
\begin{equation*}
	v \in s^{\eps} L^2(s^{f_0} ds \dvol_{Z}; H^{-1}(dy) \otimes  \Lambda^*).
\end{equation*}

{\bf (2):}
This follows as in \cite[Corollary 7.19]{Mazzeo:edge} by analyzing the Mellin transform. 
Without loss of generality we can arrange, by conjugating $R$ with an appropriate power of $s$,  to work with the measure $\frac1s(ds dy \dvol_Z )$. We will assume, for the duration of the proof, that this has been done without reflecting it in the notation.
This has the advantage that the Parseval formula for the Mellin transform has the form\footnote{
For the measure $s^{f_0} \; ds$ the Parseval formula for the Mellin transform takes the form
\begin{equation*}
	\int_0^\infty g_1(s) g_2(s)  s^{f_0} \; ds 
	= \int_{ \eta = C } \cM g_1(\zeta) \;\; \cM g_2( -(f_0+1)i -\zeta ) \; d\xi
\end{equation*} }
\begin{equation*}
	\int_0^\infty g_1(s) g_2(s) \; \frac{ds}s 
	= \int_{ \eta = C } \cM g_1(\zeta) \;\; \cM g_2(-\zeta ) \; d\xi
\end{equation*}
with $C$ chosen so that the integral on the right makes sense.

Notice that from knowing $u,w \in s^{-\eps}L^2_{\iie}$ and $R(u), R^t(w) \in s^\eps L^2_{\iie}$ the respective Mellin transforms are defined on the half-planes
\begin{gather*}
	\cM(w)(\zeta) \text{ on } \{ \eta \leq -\eps \},
	\quad
	\cM(Ru)( - \zeta ) \text{ on } \{ \eta \geq -\eps \} \\
	\cM(R^tw)(\zeta) \text{ on } \{ \eta \leq \eps \},
	\quad
	\cM(u)( - \zeta ) \text{ on } \{\eta \geq \eps \}
\end{gather*}
so that {\em a priori} there is in each case only one choice for the constant $C$ appearing in Parseval's formula. 
More precisely, $C=-\eps$ for the first pair and $C=\eps$ for the second pair.

Using part 1) of this Lemma  we know we can extend 
\begin{equation*}
	\cM(u)( - \zeta) \text{ to } \{ \eta \geq -\eps \}
\end{equation*}
albeit with a loss in tangential regularity.
Fortunately this loss in tangential regularity is compensated by a gain in tangential regularity in $\cM(R^t w)$ in this same region. 
Indeed, since $w \in s^{-\eps}H^2_{\pie}$,
we know that $R^t w \in s^{\eps}H^1_{\pie}$ hence we have $\pa_y R^t w \in s^{-1+\eps}L^2_{\iie}$. It follows that the Mellin transform of $\pa_y R^t w$ is a holomorphic map from $\{ \eta < -1+\eps \}$ into $L^2(dy\; \dvol_Z ; \Lambda^*)$ and hence on this same half-plane $\cM(R^t w)$ maps holomorphically into $L^2(\dvol_Z, H^1(dy) \otimes  \Lambda^*)$.
Again applying Calderon's complex interpolation method, we conclude that
\begin{equation}\label{InterpolatePW}
	\cM(R^t w)(\zeta) \in L^2(dz, H^{\eps - \eta} )
	\text{ for } \eps -1 \leq \eta \leq \eps.
\end{equation}
The same reasoning applies to $w$.% since it is itself in $s^{-\eps}H^{2}_{\pie}.$ 

Thus if we start out with $\ang { u, R^t w }$ which we can write as
\begin{equation*}
	\int \int_{\eta =\eps } \cM(R^t w)(\zeta) \;\; \cM(u)(-\zeta) \; d\xi \; dy \; \dvol_Z,
\end{equation*}
we can deform the contour from $\{\eta =\eps\}$ to $\{\eta =-\eps\}$ and throughout this deformation the integrand stays holomorphic with the loss in tangential regularity of $\cM(u)$ exactly compensated by a gain in regularity by $\cM(R^t w)$, i.e. the integrand makes sense as a pairing throughout the deformation. 
Moreover the integrand is holomorphic in this region and so the value of the integral does not change during the deformation.
Hence we can write $\ang{ u, R^t w }$ as
\begin{equation*}
	\int \int_{\eta =-\eps } \cM(R^t w)(\zeta) \;\; \cM(u)(-\zeta) \; d\xi \; dy \; \dvol_Z.
\end{equation*}
Now integrating each term by parts we write this as
\begin{equation*}
	\int \int_{\eta =-\eps } \cM(w)(\zeta) \;\; \cM(Ru)(-\zeta) \; d\xi \; dy \; \dvol_Z,
\end{equation*}
which by another application of Parseval's formula we recognize as $\ang{ w, Ru }$.
\end{proof}

%%%%%%%%%%%%%%%%%%%%%%%%%%%%%%%%%%%%%%%%%%%%%%%%%%
%%%%%%%%%%%%%%%%%%%%%%%%%%%%%%%%%%%%%%%%%%%%%%%%%%
\subsection{End of induction: $\eth_{\dR}$ is essentially self-adjoint and Fredholm } $ $\newline
%%%%%%%%%%%%%%%%%%%%%%%%%%%%%%%%%%%%%%%%%%%%%%%%%%
%%%%%%%%%%%%%%%%%%%%%%%%%%%%%%%%%%%%%%%%%%%%%%%%%%
Our next task is to use the information gleaned in the previous section to show that elements of the maximal domain of $\eth_{\dR}$ as an operator on 
$L^2_{\iie}(X; \Iie\Lambda^*X)$ are automatically in $\rho^{\eps}L^2_{\iie,\Gamma}(X; \Iie\Lambda^*X)$.
As explained in Lemma \ref{localize}, 
and also in Proposition \ref{prop:upshot},
we can prove this by showing that it holds  in any basic neighborhood at the boundary.

\begin{proposition}\label{DomVanBdy} Up to rescaling suitably the metric, the following is true.
{\item 1)} Let $u$ be in the maximal domain of $\eth_{\dR}$ as an operator on $L^2_{\iie}(X;\Iie\Lambda^*X)$ then 
for any $\eps\in (0,1)$, $u \in \rho^{\eps}H^1_{\iie}(X;\Iie\Lambda^*X)$. 
{\item 2)} The maximal domain $\cD_{\max}(\eth_{\dR})  $ is compactly embedded in $L^2_{\iie}.$  
\end{proposition}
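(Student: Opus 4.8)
The plan is to establish 1) first; 2) then follows formally from a weighted Rellich embedding together with the closed graph theorem. For 1), by Lemma~\ref{localize} and Proposition~\ref{prop:upshot} it suffices to work in a basic neighbourhood $W$ at a point $q$ of a stratum $Y$: the link $Z$ of $Y$ has strictly smaller depth than $\wh X$, so the inductive hypothesis (Theorem~\ref{InductiveThm1} for $Z$) applies, and a suitable rescaling of $g$ arranges conditions $a)$ and $b)$ of \eqref{Ass2} for $Z$ (condition $b)$ being the Witt hypothesis via Theorem~\ref{theo:cheeger}), so that Lemmas~\ref{lem:Bessel}, \ref{lem:NClosedRange} and \ref{lem:Indicial} are all available. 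If $u\in\cD_{\max}(\eth_{\dR})$ is supported in $W$, then, writing $w = x^{-1/2}u$ and passing to the model space through $\psi$, the conjugation identities preceding \eqref{PMap} together with Proposition~\ref{prop:upshot} say exactly that $\tilde u := w\circ\psi^{-1}$ lies in $s^{-1/2}L^2_{\iie}(Z\times T_qY^+)$ while $N_q(P_0)\tilde u$ lies in $s^{1/2}L^2_{\iie}(Z\times T_qY^+)$. I would then show that this forces $\tilde u\in s^{-1/2+a}L^2_{\iie}$ for every $a\in(0,1)$.

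To do so, fix $a\in(0,1)$ and conjugate by $s^{-a}$. Since $N_q(P_a) = s^{-a}N_q(P_0)s^{a}$, the section $v := s^{-a}\tilde u\in s^{-1/2-a}L^2_{\iie}$ satisfies $N_q(P_a)v = s^{-a}N_q(P_0)\tilde u\in s^{1/2-a}L^2_{\iie}\subseteq s^{-1/2}L^2_{\iie}$. By Lemma~\ref{lem:NClosedRange} (closed range), Lemma~\ref{lem:Bessel} applied to $1-a\in(0,1)$ together with the transpose identity \eqref{PaTranspose} (dense range), and Lemma~\ref{lem:Bessel} once more (injectivity), the operator $N_q(P_a)$ is a bijection from its maximal domain in $s^{-1/2}L^2_{\iie}$ onto $s^{-1/2}L^2_{\iie}$; let $\tilde v$ be the unique preimage of $s^{-a}N_q(P_0)\tilde u$. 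Then $w_0 := v - \tilde v$ lies in $s^{-1/2-a}L^2_{\iie}$ and solves $N_q(P_a)w_0 = 0$. The Bessel-function computation in the proof of Lemma~\ref{lem:Bessel} shows, since conditions $a)$ and $b)$ keep $\nu\ge\tfrac12$, that $N_q(P_a)$ admits no nonzero null solution even in the larger space $s^{-1/2-a}L^2_{\iie}$ (the admissibility threshold $2a+2\nu\ge1$ only relaxes to $2\nu\ge1$), so $w_0 = 0$ and hence $\tilde u = s^{a}\tilde v\in s^{-1/2+a}L^2_{\iie}$. Undoing the localization, using $\rho\sim x$ near $q$, and summing over a finite cover of $\wh X$ gives $u\in\rho^{a}L^2_{\iie}(X;\Iie\Lambda^*X)$ for every $a\in(0,1)$. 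Finally $B := \rho\,\eth_{\dR}\in\Diff_{\ie}^1(X)$ is elliptic by Lemma~\ref{lem:iie} and $Bu = \rho\,\eth_{\dR}u\in\rho L^2_{\iie}\subseteq\rho^{a}L^2_{\iie}$, so elliptic regularity \eqref{EllReg} upgrades $u\in\rho^{a}L^2_{\iie}$ to $u\in\rho^{a}H^1_{\iie}$ (in fact to $\rho^{a}H^N_{\iie}$ for every $N$), which is 1).

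For 2), the point is that for every $a>0$ the inclusion $\rho^{a}H^1_{\iie}(X;\Iie\Lambda^*X)\hookrightarrow L^2_{\iie}(X;\Iie\Lambda^*X)$ is compact: on $\{\rho<\delta\}$ it has norm at most $\delta^{a}$, because $\|u\|_{L^2_{\iie}(\{\rho<\delta\})}\le\delta^{a}\|\rho^{-a}u\|_{L^2_{\iie}}\le\delta^{a}\|u\|_{\rho^{a}H^1_{\iie}}$, while $\{\rho\ge\delta\}$ is relatively compact in the interior $X$ and there the ordinary Rellich lemma applies; extracting a subsequence convergent on each $\{\rho\ge\delta\}$ and noting the tails are uniformly small then yields compactness on all of $X$. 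By 1) we have $\cD_{\max}(\eth_{\dR})\subseteq\rho^{1/2}H^1_{\iie}$, and this inclusion is continuous by the closed graph theorem, since both the graph-norm topology on $\cD_{\max}(\eth_{\dR})$ and the $\rho^{1/2}H^1_{\iie}$-topology refine the $L^2_{\iie}$-topology; composing it with the compact inclusion above shows $\cD_{\max}(\eth_{\dR})\hookrightarrow L^2_{\iie}$ is compact.

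The step I expect to be the real obstacle is the local weight gain — concretely, ruling out nonzero null solutions $w_0$ of $N_q(P_a)$ in the a priori too-weak space $s^{-1/2-a}L^2_{\iie}$. This is exactly where $a>0$ is needed, so that the $\nu=\tfrac12$ indicial roots are pushed off the critical line, and this in turn forces the ``suitable rescaling'' hypothesis, since keeping $\nu\ge\tfrac12$ requires condition $a)$ of \eqref{Ass2}. The technical engine is the explicit Bessel/Mellin analysis underlying Lemmas~\ref{lem:Bessel} and \ref{lem:Indicial}, supported by the closed-range property of Lemma~\ref{lem:NClosedRange} and, for relating the pairings involved, the integration-by-parts identity of Lemma~\ref{lem:integration}; by contrast the localization, the patching, and the deduction of 2) are soft.
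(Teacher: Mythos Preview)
Your argument for the weight gain in the single radial variable $x$ is correct and is in fact a cleaner alternative to what the paper does at this step. The paper (in the intermediate Proposition~\ref{prop:intermediate}) works with $R=N_q(P_0)$, passes to the generalized inverse $G$ of $R^t$ on $s^{-1/2-\eps_0}L^2$, and uses the integration-by-parts identity of Lemma~\ref{lem:integration} to conclude $u=G^tRu\in s^{-1/2+\eps_0}L^2$, then iterates in small steps. You instead conjugate to $N_q(P_a)$, invert on $s^{-1/2}L^2$, and kill the discrepancy $w_0$ by injectivity on the larger space $s^{-1/2-a}L^2$; this works because the Bessel analysis behind Lemma~\ref{lem:Bessel} gives $\nu\ge\tfrac12$ under \eqref{Ass2}, which is precisely the threshold for that larger-space injectivity. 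Your route bypasses Lemma~\ref{lem:integration} entirely and gets the full gain in one shot.

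There is, however, a genuine gap at the phrase ``using $\rho\sim x$ near $q$''. This is only correct when the link $Z$ is a smooth closed manifold, i.e.\ when $\wh X$ has depth one. In a basic neighbourhood $W\cong\calU\times C(Z)$ the total boundary defining function satisfies $\rho\rest{W}\sim x\cdot(\text{total bdf of }Z)$; so your argument only yields $u\in x^{a}L^2_{\iie}$, which is exactly Proposition~\ref{prop:intermediate}, not yet part~1). The paper closes this gap by a second use of the induction on depth that you have set up but not exploited: once $u\in r_{j_1}^{\eps}H^1_{\iie}$, the structure \eqref{DNearBdy} shows that for almost every $(r_{j_1},y)$ the slice $u(r_{j_1},y,\cdot)$ lies in $\cD_{\max}(\eth_{\dR}^{Z})$, and then Theorem~\ref{InductiveThm1} for the link $Z$ (depth $\le m-1$) gives the missing $(r_{j_2}\cdots r_{j_h})^{\eps}$ vanishing, hence $u\in\rho^{\eps}L^2_{\iie}$; a final application of \eqref{EllReg} upgrades to $\rho^{\eps}H^1_{\iie}$. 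You invoke the inductive hypothesis on $Z$ only to make Lemmas~\ref{lem:Bessel}--\ref{lem:NClosedRange} available; you must also invoke it here to propagate the weight gain into the link directions. Part~2) is fine once 1) is repaired.
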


\begin{proof}
We can write $u=u_1 + u_2$ where $u_2$ vanishes on a neighborhood of the singular locus 
and $u_1$ has compact support in the union of a finite number of distinguished 
neighborhoods $W_j, j\in J$ as in Section \ref{section:iterated}.  
Clearly $u_1$ and $u_2$ are both in the maximal domain of $\eth_{\dR}.$
By a partition of unity argument, 
we can write $u_1 = \sum_{j \in J} \chi_j u_1$ for a suitable choice of functions  
$\chi_j$ as in Lemma \ref{localize}. Lemma \ref{localize} shows that 
each $\chi_j u_1$ ($j \in J$) belongs to the maximal domain of $\eth_{\dR}.$ 
It is clear that it suffices to  show the  Proposition \ref{DomVanBdy} for each $\chi_j u_1.$ 
Therefore we can assume that $u$ has compact support in a basic neigborhood $W.$
We begin with the following intermediate result.
\begin{proposition} \label{prop:intermediate}
Let $u$, with compact support in $W$,  be in the maximal domain of $\eth_{\dR}$ as an operator on $L^2_{\iie}(X;\Iie\Lambda^*X).$ Then, for any $\eps\in (0,1)$, $u \in x^{\eps}L^2_{\iie}(X;\Iie\Lambda^*X)$.

\end{proposition}

\begin{proof} 
Fix $\eps_0 \in (0,1)$ small enough that 
\begin{equation*}
	\{ \Re \zeta + \tfrac f2 + \tfrac12: \zeta \in \spec_b(  R ) \} \cap [-1/2-\eps, -1/2+ \eps] \subseteq \{ -1/2 \}.
\end{equation*}
According to Proposition \ref{prop:upshot},  we only need to check the following. Let  $u \in s^{-1/2}L^2_{\iie}(Z \times T_qY^+;\Iie\Lambda^*)$ be  such that 
$N_q(P_0)(u) \in s^{1/2}L^2_{\iie}(Z \times T_qY^+;\Iie\Lambda^*)$ then $u \in s^{-1/2+\eps_0}L^2_{\iie}(Z \times T_qY^+;\Iie\Lambda^*)$. 
We fix such a $u$ in the rest of this proof.
We will abbreviate $L^2_{\iie}(Z_ \times T_qY^+;\Iie\Lambda^*)$ by $L^2(q)$.

Applying  Lemmas \ref{lem:Bessel} and \ref{lem:NClosedRange}, we know that
$R=N_q(P_0)$ is injective and has closed range as a map from $s^{-1/2 + \eps_0}L^2(q)$ to itself (when defined on its maximal domain).
%From Lemma \ref{lem:param}, we also know 
It follows that $R^t$ is surjective from $s^{-1/2-\eps_0}L^2(q)$ to itself (when defined on its minimal domain) . 

Let $G$ be the bounded generalized inverse of $R^t$. %considered in Lemma \ref{lem:param}. 
That is, $G$ is a bounded map from $s^{-1/2-\eps_0}L^2(q)$ to itself, with image contained in the domain of $R^t$, and satisfies
\begin{equation*}
	R^t G = \Id_{s^{-1/2-\eps_0}L^2(q)}.
\end{equation*}

Let $\phi$ be any element of $s^{-1/2+\eps_0}H_{\pie}^{1}(Z \times T_qY^+;\Iie\Lambda^* )$. Then $v = G\phi$ satisfies
\begin{equation*}
	v \in s^{-1/2-\eps_0} L^2(a), \quad
	R^t v = R^t G\phi = \phi  \in s^{-1/2+\eps_0} L^2(q),
\end{equation*} 
the latter statement and elliptic regularity allows us to strengthen the former to $v \in s^{-1/2-\eps_0} H_{\pie}^{2}(Z \times T_qY^+;\Iie\Lambda^* )$.
On the other hand, we know that $R u \in s^{1/2} L^2(q)\subset s^{-1/2+\eps_0} L^2(q)$, 
so we can apply Lemma \ref{lem:integration} (2) with ($\alpha=-1/2$) to conclude that
\begin{equation*}
	\ang{R u,v} = \ang{R^t v, u}.
\end{equation*}

But then we also have
\begin{equation} \label{eq:1}
	\ang{R u, v} = \ang{ R u, G \phi} = \ang{G^t R u, \phi}
	\end{equation} 
	where we recall that $R u\in s^{1/2}L^2(q)  \subset s^{-1/2+\eps_0}L^2(q)$, $G\phi\in s^{-1/2-\eps_0}L^2(q)$
and where $G^t$ denotes the functional analytic transpose of the {\it bounded} operator $G$;
$G^t$ acts continuously on $s^{-1/2+\eps_0}L^2(q)$, so in fact $G^t R u \in s^{-1/2+\eps_0}L^2(q).$

Moreover, we have:
\begin{equation} \label{eq:2}
\ang{R u, v} =\ang{R^t v, u} = \ang{ R^t G \phi, u} = \ang{ \phi , u}.
\end{equation}	
	By comparing the last terms of  
	\eqref{eq:1}  and  \eqref{eq:2} we see that $\ang{u-G^t R u, \phi}=0$ and since $\phi$
	was arbitrary we finally get: $u= G^t R u.$
Therefore $u\in s^{-1/2+\eps_0}L^2(q)$.

Next, taking $\eps_1 \in (0,1)$ small enough that
\begin{equation*}
	\{ \Re \zeta + \tfrac f2 + \tfrac12: \zeta \in \spec_b(  R ) \} \cap [-1/2+\eps_0-\eps_1, -1/2+\eps_0+ \eps_1] = \emptyset \subseteq \{ -1/2+\eps_0 \},
\end{equation*}
we can repeat the argument above and conclude $u \in s^{-1/2 + \eps_0 + \eps_1}L^2(q)$ and indeed continuing in this way we can conclude that
$u \in s^{-1/2 + \eps}L^2(q)$ for any $\eps \in (0,1)$ as required.

\end{proof}

\noindent {\it Proof of Proposition \ref{DomVanBdy}.}

1) 
We proceed by induction   on the depth of a Witt space.  For Witt spaces of depth zero 
 there is nothing we need to prove. Let $m\in \mathbb N^*$ and assume that the result is true 
 for any Witt space of depth $\leq m-1.$  
With the notations of Section \ref{section:iterated},  we can assume that  
 $u \in \cD_{\max}(\eth_{\dR})  $ has compact support in 
$W=W_{\alpha_{j_0}, \ldots, \alpha_{j_l}}.$ Then  combining Proposition \ref{prop:intermediate}, \eqref{EllReg} and Lemma \ref{lem:iie}, one gets  
$u \in r_{j_1}^\epsilon H^1_{\iie} (W_{\alpha_{j_0}, \ldots, \alpha_{j_l}}, 
\Iie\Lambda^*
 W_{\alpha_{j_0}, \ldots, \alpha_{j_l}} ).$
  Now we use formula \eqref{DNearBdy} with $Z=L_{j-1}$  for the chart $(W_{\alpha_{j_0}, \ldots, \alpha_{j_l}} , \Theta)$  \eqref{eq:chart}.
 We get:
 $$
 (r_{j_1}, u^0_{\alpha_0}) \rightarrow \eth_{\dR}^{L_{j-1}} u(r_{j_1}, u^0_{\alpha_0}, z) \in r_{j_1}^\epsilon L^2_{\iie} ( \{( r_{j_1}, u^0_{\alpha_0})\} ;  L^2_{\iie} ( L_{j-1}, \Iie\Lambda^*) ).
 $$ Thus, for any any $\epsilon \in (0,1),$ we get that
 $$
 (r_{j_1}, u^0_{\alpha_0}) \rightarrow  u(r_{j_1}, u^0_{\alpha_0}, z) \in r_{j_1}^\epsilon L^2_{\iie} ( \{( r_{j_1}, u^0_{\alpha_0})\} ;  \cD_{\max}(\eth_{\dR}^{L_{j-1}})   ).
 $$
 Therefore, for any $\epsilon \in (0,1),$ the induction hypothesis  shows that:
$$
 (r_{j_1}, u^0_{\alpha_0}) \rightarrow  u(r_{j_1}, u^0_{\alpha_0}, z) \in r_{j_1}^\epsilon L^2_{\iie} ( \{( r_{j_1}, u^0_{\alpha_0})\} ;  (r_{j_2}\ldots r_{j_h} )^\epsilon L^2_{\iie} ( L_{j-1}, \Iie\Lambda^*) ).
 $$   From this we get that:
 $$
 u \in (r_{j_1} r_{j_2}\ldots r_{j_h})^\epsilon L^2_{\iie} (W_{\alpha_{j_0}, \ldots, \alpha_{j_l}}, \Iie\Lambda^* W_{\alpha_{j_0}, \ldots, \alpha_{j_l}} ).
 $$ Then, using \eqref{EllReg} and Lemma \ref{lem:iie}, we get that:
 $$
 u \in (r_{j_1} r_{j_2}\ldots r_{j_h})^\epsilon H^1_{iie} (W_{\alpha_{j_0}, \ldots, \alpha_{j_l}}, \Iie\Lambda^* W_{\alpha_{j_0}, \ldots, \alpha_{j_l}} ).
 $$ Therefore, 1) is proved.
 
 2) This is a consequence of the fact that $\rho^{\eps}H^1_{\iie}(X;\Iie\Lambda^*X)$ is compactly embedded in $L^2_{\iie}.$ 

\noindent {\it End of Proof of Proposition \ref{DomVanBdy}}
\end{proof}

With this proposition we know that elements of the maximal domain have some `extra' degree of vanishing, and we can apply the following argument of Gil-Mendoza \cite{Gil-Mendoza}.

\begin{proposition}[Gil-Mendoza] \label{GilMendoza}
If $\cD_{\max}(\eth_{\dR}) \subseteq \rho^C L^2_{\iie}(X;\Iie\Lambda^*X)$ for some $C >0$, then, as an operator on $L^2_{\iie}(X;\Iie\Lambda^*X)$,
\begin{equation*}
	\cD_{\max}(\eth_{\dR}) \cap \bigcap_{\eps>0} \rho^{1-\eps} L^2_{\iie}(X; \Iie\Lambda^*X) \subseteq \cD_{\min}(\eth_{\dR})
\end{equation*}
\end{proposition}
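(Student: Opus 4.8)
The plan is to follow the standard functional-analytic argument of Gil--Mendoza, which reduces the statement to two facts: first, that the minimal domain $\cD_{\min}(\eth_{\dR})$ is exactly the set of $u \in \cD_{\max}(\eth_{\dR})$ for which the ``boundary pairing'' with every element of $\cD_{\max}(\eth_{\dR})$ vanishes; and second, that the hypothesis on the weights forces this boundary pairing to vanish. Concretely, recall that $\cD_{\min}(\eth_{\dR})$ is the closure of $C^\infty_c(X;\Iie\Lambda^*X)$ in the graph norm, while $\cD_{\max}(\eth_{\dR}) = \{u \in L^2_{\iie}: \eth_{\dR}u \in L^2_{\iie}\}$, and since $\eth_{\dR}$ is formally self-adjoint these are related by $\cD_{\min}(\eth_{\dR})^\perp = \cD_{\max}(\eth_{\dR})$ with respect to the graph inner product; equivalently $u \in \cD_{\min}(\eth_{\dR})$ iff $\ang{\eth_{\dR}u, v} = \ang{u, \eth_{\dR}v}$ for all $v \in \cD_{\max}(\eth_{\dR})$.

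The key step is therefore to prove the integration-by-parts identity $\ang{\eth_{\dR}u, v} = \ang{u, \eth_{\dR}v}$ for $u \in \cD_{\max}(\eth_{\dR}) \cap \bigcap_{\eps>0}\rho^{1-\eps}L^2_{\iie}$ and $v \in \cD_{\max}(\eth_{\dR})$. First I would localize: using a partition of unity subordinate to a cover by basic neighborhoods $W$ (as in the proof of Proposition~\ref{DomVanBdy}), it suffices to prove this when both $u$ and $v$ are compactly supported in a single $W$, since in the interior there is nothing to check. In such a $W$ we have the explicit form \eqref{DNearBdy} of $\eth_{\dR}$ near the stratum $Y$ with radial variable $x$, and the difference $\ang{\eth_{\dR}u, v} - \ang{u, \eth_{\dR}v}$ reduces, after integrating by parts in $x$ and in the $Y$-directions and using Proposition~\ref{DomVanBdy} applied on the link $Z$ (the inductive hypothesis that $\cD_{\max}(\eth_{\dR}^Z) = \cD_{\min}(\eth_{\dR}^Z)$, so that the $Z$-part of the operator is genuinely symmetric on the maximal domain), to a boundary term of the shape $\lim_{x\to 0}\int_{\{x = \mathrm{const}\}} x^{f_0}\,\sigma(\eth_{\dR})(dx/x)(u)\wedge * v$, up to the weight convention. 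By Proposition~\ref{DomVanBdy} we know $u \in \rho^{1-\eps}H^1_{\iie}$ and $v \in \rho^{C}H^1_{\iie}$ for some $C>0$ (using the hypothesis $\cD_{\max}(\eth_{\dR}) \subseteq \rho^C L^2_{\iie}$ together with elliptic regularity \eqref{EllReg} and Lemma~\ref{lem:iie}), and the combined rate of vanishing $1 - \eps + C > 1$ against the measure factor $x^{f_0}\,dx$ and the scaling of the conic form $*$ is enough to kill this boundary term as $x \to 0$; choosing $\eps$ small compared to $C$ makes the exponent strictly larger than needed, so the limit is zero.

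The cleanest route, and the one I would actually write, is to mirror the Mellin-transform computation of Lemma~\ref{lem:integration}(2): conjugating $\eth_{\dR}$ near $W$ to its partial completion $P_a$ and its normal/indicial models, the vanishing rate $\rho^{1-\eps}$ places $u$ strictly to the right of the relevant indicial line while $v \in \rho^C L^2_{\iie}$ places $\cM(\eth_{\dR}v)$ to the left of it, so that the contour in Parseval's formula can be shifted across the strip with no residue — exactly the mechanism that produced $\ang{w, Ru} = \ang{u, R^tw}$ in Lemma~\ref{lem:integration}(2), here with the added tangential regularity supplied by \eqref{EllReg}. Once $\ang{\eth_{\dR}u, v} = \ang{u, \eth_{\dR}v}$ holds for all $v \in \cD_{\max}(\eth_{\dR})$, the characterization of $\cD_{\min}(\eth_{\dR})$ as the graph-orthogonal complement of $\cD_{\max}(\eth_{\dR})$ gives $u \in \cD_{\min}(\eth_{\dR})$, which is the claim.

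The main obstacle is the bookkeeping of weights and regularity in the boundary term: one must be careful that the extra vanishing of $u$ (only $\rho^{1-\eps}$, not $\rho$, so it is a \emph{borderline} gain) combined with the possibly small but positive vanishing $\rho^C$ of $v$ genuinely beats the conic volume and Hodge-star factors at \emph{every} stratum of $W$, and that the tangential ($Y$-direction) derivatives appearing when one integrates by parts in $y$ are controlled — this is where elliptic regularity \eqref{EllReg} and the $H^1_{\iie}$ (in fact $H^1_{\mathrm{loc}}$) conclusion of Proposition~\ref{DomVanBdy} are essential, and where the interpolation trick balancing a loss of tangential regularity in $u$ against a gain in $\eth_{\dR}v$ (as in the proof of Lemma~\ref{lem:integration}(2)) has to be invoked once more. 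Everything else is a formal consequence of self-adjointness of the formal operator and the definition of the minimal domain.
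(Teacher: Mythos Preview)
Your opening move---characterizing $\cD_{\min}(\eth_{\dR})$ as the set of $u\in\cD_{\max}(\eth_{\dR})$ for which $\ang{\eth_{\dR}u,v}=\ang{u,\eth_{\dR}v}$ for all $v\in\cD_{\max}(\eth_{\dR})$---is exactly the lemma the paper isolates first. From there, however, the paper takes a much shorter and more elementary route than the one you outline.

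The paper does \emph{not} localize, compute boundary terms, or invoke any Mellin analysis. Instead it sets $u_n=\rho^{1/n}u$. Since $u\in\bigcap_{\eps>0}\rho^{1-\eps}H^1_{\iie}$, each $u_n$ lies in $\rho H^1_{\iie}$ and hence in $\cD_{\min}(\eth_{\dR})$, so the pairing identity $\ang{\eth_{\dR}u_n,v}=\ang{u_n,\eth_{\dR}v}$ holds for every $v\in\cD_{\max}$. One then checks, for any fixed $\eps\in(0,1)$, that $u_n\to u$ in $\rho^{1-\eps}H^1_{\iie}$ and $\eth_{\dR}u_n\to\eth_{\dR}u$ in $\rho^{-\eps}L^2_{\iie}$. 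The hypothesis $\cD_{\max}\subseteq\rho^C L^2_{\iie}$ (with elliptic regularity giving $v\in\rho^\eps H^1_{\iie}$ for small $\eps$) is exactly what makes the pairings $\ang{\eth_{\dR}u_n,v}=\ang{\rho^\eps\eth_{\dR}u_n,\rho^{-\eps}v}$ and $\ang{u_n,\eth_{\dR}v}$ pass to the limit. That is the whole argument.

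Your proposed route is not wrong in spirit, but it is both harder and less safe. The direct boundary-term computation you sketch presumes well-defined traces of $u$ and $v$ on the level sets $\{x=\mathrm{const}\}$ and a limit as $x\to 0$; for elements merely in weighted $L^2$ or even $H^1_{\iie}$ on an iterated edge space this is delicate, and your weight count (``$1-\eps+C>1$ beats the measure'') is a heuristic rather than a proof. The Mellin alternative you mention is the contour-shift mechanism of Lemma~\ref{lem:integration}(2), but that lemma is proved for the \emph{normal} operator on $Z\times T_qY^+$; to run it for $\eth_{\dR}$ itself you would have to absorb the error terms from Proposition~\ref{prop:upshot} and redo the interpolation bookkeeping. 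All of this is avoidable: the approximation $u_n=\rho^{1/n}u$ gives the pairing identity by a pure limiting argument, with no boundary analysis at all.
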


{\bf Remark.}
Since we have actually shown not only that 
\begin{equation*}
	\cD_{\max}(\eth_{\dR}) \subseteq \rho^C H^1_{\iie}(X;\Iie\Lambda^*X)
\end{equation*}
but in fact
\begin{equation*}
	\cD_{\max}(\eth_{\dR}) \subseteq \bigcap_{\eps>0} \rho^{1-\eps} H^1_{\iie}(X;\Iie\Lambda^*X),
\end{equation*}
this proposition implies $\cD_{\max}(\eth_{\dR}) = \cD_{\min}(\eth_{\dR})$.

\begin{proof}
We point out the following simple consequence of the formal self-adjointness of $\eth_{\dR}$ and the definitions of the minimal/maximal domains and weak derivatives:
\begin{lemma}
An element $u \in \cD_{\max}(\eth_{\dR})$ is in $\cD_{\min}(\eth_{\dR})$ if and only if
\begin{equation}\label{MinDomChar}
	(\eth_{\dR}u, v) = (u, \eth_{\dR}v), \Mforevery v \in \cD_{\max}(\eth_{\dR}).
\end{equation}
\end{lemma}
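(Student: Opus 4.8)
The plan is to deduce this from the definitions of the minimal and maximal domains together with the formal self-adjointness of $\eth_{\dR}$, so the proof is essentially a bookkeeping exercise with adjoints. Recall that $\cD_{\min}(\eth_{\dR})$ is the graph closure of $\eth_{\dR}$ acting on $\CIc(X;\Iie\Lambda^*X)$, i.e.\ the domain of the closed operator $\eth_{\dR,\min} := \overline{\eth_{\dR}|_{\CIc}}$, while $\cD_{\max}(\eth_{\dR})$ is by definition the domain of the Hilbert space adjoint $(\eth_{\dR}|_{\CIc})^*$. Because $\eth_{\dR}$ is formally self-adjoint (Green's formula for compactly supported forms on $\mathrm{reg}(\wh X)$ shows $\eth_{\dR}|_{\CIc}$ is symmetric), the maximal operator $\eth_{\dR,\max}$ coincides with $(\eth_{\dR}|_{\CIc})^*$, and since a symmetric densely defined operator is closable with $T^{**}=\overline T$, one also has $(\eth_{\dR,\max})^* = \eth_{\dR,\min}$. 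These two identities are the only structural inputs needed.

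First I would prove the forward implication. Assume $u \in \cD_{\min}(\eth_{\dR})$ and choose $u_n \in \CIc(X;\Iie\Lambda^*X)$ with $u_n \to u$ and $\eth_{\dR}u_n \to \eth_{\dR}u$ in $L^2_{\iie}(X;\Iie\Lambda^*X)$. For every $v \in \cD_{\max}(\eth_{\dR})$, the definition of the adjoint gives $(\eth_{\dR}u_n, v) = (u_n, \eth_{\dR}v)$ for each $n$; letting $n\to\infty$ produces \eqref{MinDomChar}.

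Conversely, suppose $u \in \cD_{\max}(\eth_{\dR})$ satisfies \eqref{MinDomChar}. Then the identity $(\eth_{\dR}u,v) = (u,\eth_{\dR}v)$ for all $v \in \cD_{\max}(\eth_{\dR}) = \Dom(\eth_{\dR,\max})$ says exactly that $u$ lies in $\Dom\big((\eth_{\dR,\max})^*\big)$ with $(\eth_{\dR,\max})^* u = \eth_{\dR}u$. Invoking $(\eth_{\dR,\max})^* = \eth_{\dR,\min}$ gives $u \in \cD_{\min}(\eth_{\dR})$, as desired.

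The only point that requires care is the identity $(\eth_{\dR,\max})^* = \eth_{\dR,\min}$; I would justify it by noting that $T := \eth_{\dR}|_{\CIc}$ is densely defined and symmetric, hence closable with $\overline T = \eth_{\dR,\min}$, and that $T^{**} = \overline T$ for any densely defined closable operator, while $T^* = \eth_{\dR,\max}$ by definition. I do not expect any genuine analytic obstacle here: the whole content is the chain of adjoint identities plus the density of $\CIc$ in $\cD_{\min}(\eth_{\dR})$ in the graph norm, both of which are standard. (This lemma is of course only the opening move of Proposition~\ref{GilMendoza}; the substantive work is the construction of the cutoff/approximation argument that follows.)
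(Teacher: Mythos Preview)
Your proof is correct and follows essentially the same route as the paper: both hinge on the identity $(\eth_{\dR,\max})^* = \eth_{\dR,\min}$ (equivalently $\cD(D_{\min}) = \cD(((D^*)_{\max})^*)$ for a formally self-adjoint $D$), from which the characterization \eqref{MinDomChar} follows immediately. You have simply unpacked the forward direction via an approximating sequence and justified the adjoint identity through $T^{**}=\overline{T}$, which is exactly the content the paper compresses into one line.
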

\begin{proof}
 For any operator $D$ with \underline{formal} adjoint $D^*$ one has, 
\begin{gather*}
	u \in \cD(D_{\min}) \iff
	u \in \cD\lrpar{ \lrpar{ (D^*)_{\max} }^* } \\
	\iff
	\ang{Du, v} = \ang{u, D^*v} \text{ for every } v \in \cD( (D^*)_{\max} )
\end{gather*}
If $D$ is symmetric so that $D^* = D$, then this is \eqref{MinDomChar}. 
\end{proof} 
Let $u \in \cD_{\max}(\eth_{\dR}) \cap \bigcap_{\eps>0} \rho^{1-\eps} L^2_{\iie}(X; \Iie\Lambda^*X),$ so 
\begin{equation*}
	u \in  \bigcap_{\eps>0} \rho^{1-\eps} H^1_{\iie}(X; \Iie\Lambda^*X).
\end{equation*}
Set $u_n = \rho^{1/n}u$ for $n \in \bbN$, so that for each $n,$ 
$u_n \subseteq \rho H_{\iie}^1(X;\Iie\Lambda^*),$ and, for every $\eps \in (0,1)$,
\begin{equation}\label{FirstConv}
	u_n \to u \text{ in } \rho^{1-\eps}H_{\iie}^1(X;\Iie\Lambda^*)
	\Mand
	\eth_{\dR} u_n \to \eth_{\dR} u \text{ in } \rho^{-\eps} L^2_{\iie}(X;\Iie\Lambda^*).
\end{equation}
Let $\eps \in (0,1)$  so that $\cD_{\max}(\eth_{\dR}) \subseteq \rho^\eps H_{iie}^1(X;\Iie\Lambda^*)$. 
Then, for any  $v \in \cD_{\max}(\eth_{\dR})$, \eqref{FirstConv} implies
\begin{gather*}
	(\eth_{\dR}u_n, v)_{L^2} 
	= (\rho^\eps \eth_{\dR} u_n, \rho^{-\eps}v)_{L^2}
	\to (\rho^\eps \eth_{\dR} u, \rho^{-\eps}v)_{L^2}
	= (\eth_{\dR}u, v)_{L^2}, \\
	\Mand 
	(u_n, \eth_{\dR}v) \to (u, \eth_{\dR}v).
\end{gather*}
Moreover, by the previous Lemma,  $u_n \in \cD_{\min}(\eth_{\dR})$ implies $(\eth_{\dR}u_n, v) = (u_n, \eth_{\dR}v)$.

It follows that $(\eth_{\dR}u, v) = (u, \eth_{\dR}v)$ for every $v \in \cD_{\max}(\eth_{\dR})$ and hence $u \in \cD_{\min}(\eth_{\dR})$.
\end{proof}

Altogether, we have shown the theorem we advertised. We recall the statement for the benefit of the reader.

\begin{theorem} \label{thm:fred} Up to a rescaling of the metric $g$, the following is true.
\begin{itemize}
\item[1)] If $\hat{X}$ satisfies \eqref{Ass2} for all strata, then the iterated incomplete edge de Rham operator $\eth_{\dR}$ is 
essentially self-adjoint as an operator on $L^2_{\iie}(X;\Iie\Lambda^*X)$; 
\item[ 2)] the maximal domain is contained in $\cap_{\eps>0} \rho^{1-\eps} H^1_{\iie}(X;\Iie\Lambda^*X)$ which is compactly included
in  $L^2_{\iie}(X;\Iie\Lambda^*X)$; 
\item[ 3)] $\eth_{\dR}$ is Fredholm when defined on its maximal domain $\cD_{\max}(\eth_{\dR})$ endowed with the graph norm;
\item[ 4)] $\eth_{\dR}$ has discrete $L^2$-spectrum of finite multiplicity.
\end{itemize}
\end{theorem}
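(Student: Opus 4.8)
The plan is to deduce all four assertions from results already established in this section; no further analysis is needed, only bookkeeping. I would begin by recording that, after the rescaling of $g$ referred to in the statement --- which on each link makes the nonzero part of $\Spec(\eth_{\dR}^Z)$ avoid the interval $(-1,1)$, while condition \eqref{Ass2}(b) is precisely the Witt hypothesis, guaranteed by Theorem \ref{theo:cheeger} --- we may assume \eqref{Ass2} holds for every stratum of $\widehat X$. Under this assumption Lemmas \ref{lem:Bessel}, \ref{lem:NClosedRange}, \ref{lem:Indicial} and \ref{lem:integration}, and hence Propositions \ref{prop:intermediate}, \ref{DomVanBdy} and \ref{GilMendoza}, are all available.

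For item 1): Proposition \ref{DomVanBdy}(1) gives the inclusion $\cD_{\max}(\eth_{\dR}) \subseteq \bigcap_{\eps>0}\rho^{1-\eps}H^1_{\iie}(X;\Iie\Lambda^*X)$; feeding this into Proposition \ref{GilMendoza} (the Gil--Mendoza argument) yields $\cD_{\max}(\eth_{\dR}) \subseteq \cD_{\min}(\eth_{\dR})$, and since the reverse inclusion is automatic we obtain $\cD_{\max}(\eth_{\dR}) = \cD_{\min}(\eth_{\dR})$. As $\eth_{\dR}$ is symmetric on $C^\infty_c(X;\Iie\Lambda^*X)$, this is exactly essential self-adjointness. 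Item 2) is the containment just cited, together with the compactness of the inclusion $\rho^{1-\eps}H^1_{\iie}(X;\Iie\Lambda^*X) \hookrightarrow L^2_{\iie}(X;\Iie\Lambda^*X)$, which is Proposition \ref{DomVanBdy}(2); passing to the weights of the bounded-geometry metric $\wt g$, this is a weighted Rellich lemma, which one proves with a uniformly locally finite cover as in \S\ref{sec:uniform}, reducing to the classical Rellich lemma on Euclidean balls, with the weight $\rho^{-\eps}$ supplying the decay needed to sum the local estimates.

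For items 3) and 4): by item 1) the operator $\eth_{\dR}$ with domain $\cD_{\max}(\eth_{\dR})$ is self-adjoint, hence closed, and $\eth_{\dR} + i$ is a bijection of $\cD_{\max}(\eth_{\dR})$ onto $L^2_{\iie}(X;\Iie\Lambda^*X)$ whose inverse is an isometry onto $\cD_{\max}(\eth_{\dR})$ in the graph norm, since $\|(\eth_{\dR}+i)u\|^2 = \|\eth_{\dR}u\|^2 + \|u\|^2$ for $u$ in the domain. Composing this inverse with the compact inclusion of item 2) shows that the resolvent $(\eth_{\dR}+i)^{-1}$ is compact on $L^2_{\iie}(X;\Iie\Lambda^*X)$, so the spectral theorem for self-adjoint operators with compact resolvent gives item 4): the spectrum of $\eth_{\dR}$ is a discrete subset of $\bbR$ with no finite accumulation point, each eigenvalue of finite multiplicity. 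Finally, $0$ is then either in the resolvent set, in which case $\eth_{\dR}:\cD_{\max}(\eth_{\dR})\to L^2_{\iie}(X;\Iie\Lambda^*X)$ is invertible, or an isolated eigenvalue of finite multiplicity, in which case self-adjointness forces the range to be the closed orthogonal complement of the finite-dimensional kernel; in either case $\eth_{\dR}$ is Fredholm of index zero on its maximal domain with the graph norm, which is item 3).

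The only step that calls for any care is the weighted Rellich statement in item 2), and it is entirely routine on a manifold of bounded geometry; the resolvent-compactness reasoning in items 3) and 4) is standard. All of the genuine work has already been carried out: it lies in Proposition \ref{DomVanBdy} and the chain of lemmas feeding it --- the mapping properties of the normal operator $N_q(P_a)$, the location of its indicial roots, and the Mellin-transform integration-by-parts identity of Lemma \ref{lem:integration} --- all run through the induction on the depth of the stratification, so the present statement is merely a matter of assembling those conclusions.
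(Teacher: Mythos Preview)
Your proposal is correct and follows essentially the same approach as the paper: both derive items 1) and 2) from Propositions \ref{DomVanBdy} and \ref{GilMendoza}, and then obtain 3) and 4) from self-adjointness together with the compact inclusion of the domain, via the compactness of $(\eth_{\dR}\pm i)^{-1}$. Your one-line deduction of essential self-adjointness from $\cD_{\max}=\cD_{\min}$ for a formally symmetric operator (using $(\eth_{\dR}\rvert_{C^\infty_c})^* = \eth_{\dR,\max}$) is in fact slightly cleaner than the paper's explicit verification that $\eth_{\dR,\max}$ is self-adjoint, but the content is identical.
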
 

\begin{proof} The equality of the maximal and minimal domain as well as  2) are a direct consequence of the last Proposition.
Let us show that $\eth_{\dR}$ is self-adoint on its maximal domain. We denote by  $\eth_{\dR,{\rm max}}$ the operator 
$\eth_{\dR}$ on its maximal domain. If $v$ is in the domain of  $\eth_{\dR,{\rm max}}$ then integration by parts, which is allowed 
because of the extra vanishing, implies 
that $v$ is in the domain of  $(\eth_{\dR,{\rm max}})^*$ and that  $\eth_{\dR,{\rm max}}v=   (\eth_{\dR,{\rm max}})^* v$.
Viceversa let $v$ be  in the domain of  $(\eth_{\dR,{\rm max}})^*$. Observe that $\forall u\in C^\infty_c$ we have
$< \eth_{\dR} u,v>=<u,\eth_{\dR} v>$, with $\eth_{\dR}$ acting distributionally on $v$. But from the definition 
of adjoint we also know that $< \eth_{\dR} u,v>= <u, (\eth_{\dR,{\rm max}})^*v>$ and since this is true for
all $u\in C^\infty_c$ we infer that $\eth_{\dR} v$ is in $L^2_{\iie}$ (indeed, by definition,
  $(\eth_{\dR,{\rm max}})^*v\in L^2_{\iie}$). Thus
$v$ is in the domain of $\eth_{\dR,{\rm max}}$ and $\eth_{\dR,{\rm max}} v= (\eth_{\dR,{\rm max}})^*v$. 
This proves that $\eth_{\dR,{\rm max}}$ is self-adjoint. \\
3) Since $\eth_{\dR}$ is self-adjoint on its maximal domain, $(i \,{\rm Id} + \eth_{\dR})$ is invertible. Since 
$\cD_{\max}(\eth_{\dR})$ is compactly  embedded into $L^2_{\iie}(X;\Iie\Lambda^*X)$, $(i\, {\rm Id} + \eth_{\dR})^{-1}$  
defines a parametrix for $\cD_{\max}(\eth_{\dR})$  with compact reminder.\\
4) Since $\eth_{\dR}$ is Fredholm, there exists $\epsilon >0$ such that 
$(\epsilon\,{\rm Id} + \eth_{\dR})$ is invertible. Since the maximal domain is compactly embedded in 
$L^2_{\iie}$, $(\epsilon\,{\rm Id} + \eth_{\dR})^{-1}$  is compact and self-adjoint. Thus, 
the spectrum of $(\epsilon\,{\rm Id} + \eth_{\dR})^{-1}$ is discrete with finite multiplicity. Therefore, the spectrum 
of $\eth_{\dR}$ is discrete and has finite multiplicity.

\end{proof}

%%%%%%%%%%%%%%%%%%%%%%%%%%%%%%%%%%%%%%%%%%%%%%%%%%
%%%%%%%%%%%%%%%%%%%%%%%%%%%%%%%%%%%%%%%%%%%%%%%%%%
%%%%%%%%%%%%%%%%%%%%%%%%%%%%%%%%%%%%%%%%%%%%%%%%%%
%%%%%%%%%%%%%%%%%%%%%%%%%%%%%%%%%%%%%%%%%%%%%%%%%%
\section{The signature operator on Witt spaces} \label{sec:Sign} $\;$
%%%%%%%%%%%%%%%%%%%%%%%%%%%%%%%%%%%%%%%%%%%%%%%%%%
%%%%%%%%%%%%%%%%%%%%%%%%%%%%%%%%%%%%%%%%%%%%%%%%%%
%%%%%%%%%%%%%%%%%%%%%%%%%%%%%%%%%%%%%%%%%%%%%%%%%%
%%%%%%%%%%%%%%%%%%%%%%%%%%%%%%%%%%%%%%%%%%%%%%%%%%

We now turn from the de Rham operator to the signature operator, first on forms with scalar 
coefficients and then with $C^*$-algebra coefficients. We show first that these are Fredholm 
operators, but more importantly, that they define classes in the topological groups $K_*(\hat X)$ 
and $K_*(C^*\Gamma)$, respectively. The index of these operators is independent of the choice of
metric and defines a topological invariant.  In a sequel to this paper we show that this class is 
a Witt bordism invariant and thereby identify it with the so-called "symmetric signature". 

%%%%%%%%%%%%%%%%%%%%%%%%%%%%%%%%%%%%%%%%%%%%%%%%%%
%%%%%%%%%%%%%%%%%%%%%%%%%%%%%%%%%%%%%%%%%%%%%%%%%%
\subsection{The signature operator $\eth_{\sign}$}  $ $\newline
%%%%%%%%%%%%%%%%%%%%%%%%%%%%%%%%%%%%%%%%%%%%%%%%%%
%%%%%%%%%%%%%%%%%%%%%%%%%%%%%%%%%%%%%%%%%%%%%%%%%%
If $X$ is even-dimensional, the Hodge star induces a natural involution on the differential forms on $X$,
\begin{equation*}
	\cI: \Omega^*(X) \to \Omega^*(X), \quad \cI^2 = \Id
\end{equation*}
whose $+1$, $-1$ eigenspaces are known as the set of self-dual, respectively anti-self dual, forms and are denoted $\Omega^*_+$, $\Omega^*_-$.
The involution $\cI$ extends naturally to $\Iie \Omega^*(X)$ and with respect to the splitting 
$\Iie\Omega^*(X) = \Iie \Omega^*_+ \oplus \Iie \Omega^*_-$, the de Rham operator decomposes
$$
\eth_{\dR} 
	= \begin{pmatrix}
	0&  \eth_{\sign}^-  \\
	 \eth_{\sign}^+  & 0
	\end{pmatrix}
$$
where
$$
\eth_{\sign}^+=d+\delta: \Iie \Omega^*_+(X) \to \Iie\Omega^*_-(X),\;  \eth_{\sign}^-=( \eth_{\sign}^+)^*.
$$
If instead the manifold $X$ is odd-dimensional, the signature operator of an iterated conic metric is
\begin{equation*}
	\eth_{\sign} = -i(d \cI + \cI d)
	= -i\cI (d - \delta) = -i (d - \delta) \cI.
\end{equation*}
We point out for later use that in either case, given a continuous map $r: \hat X \to B\Gamma$, 
we also obtain a twisted Mishchenko-Fomenko 
signature operator $ \wt \eth_{\sign}$ acting on sections of the bundle $\Iie\Lambda_\Gamma^*(X)$.

\begin{theorem}  \label{thm:Fred}
Up to rescaling suitably the metric the following is true. If $\hat{X}$ satisfies \eqref{Ass2} for all strata, then 
the  iterated  incomplete edge signature operator $\eth_{\sign}$ is essentially self-adjoint  with maximal domain contained in 
$$
\bigcap_{\eps>0} \rho^{1-\eps} H^1_{\iie}(X;\Iie\Lambda^*X).
$$ 
Its  unique self-adjoint extension is Fredholm on its maximal domain endowed with the 
graph-norm; moreover it has discrete $L^2$-spectrum of finite multiplicity.
\end{theorem}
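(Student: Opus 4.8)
The plan is to deduce Theorem~\ref{thm:Fred} for $\eth_{\sign}$ from the corresponding statements for $\eth_{\dR}$, namely Theorem~\ref{thm:fred}, by exploiting the fact that $\eth_{\sign}$ is, up to the decomposition induced by the involution $\cI$, nothing but the de Rham operator $\eth_{\dR}$ restricted to $\cI$-eigenspaces (in the even-dimensional case) or a first-order operator built from $d-\delta$ and $\cI$ (in the odd-dimensional case). The key observation is that $\cI$ is an involution of $\Iie\Lambda^*X$ which, since it is pointwise given by the Hodge star (up to constants), is a bounded bundle isomorphism commuting with multiplication by functions; in particular it preserves each of the scales $\rho^aL^2_{\iie}$ and $\rho^aH^t_{\iie}$. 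One should first check that $\cI$ extends to the maximal/minimal domains, which follows from the (anti)commutation relations of $\cI$ with $d$ and $\delta$ that are recorded when $\eth_{\sign}$ is defined.

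First I would treat the even-dimensional case. Since $\eth_{\dR}$ anticommutes with $\cI$, the splitting $\Iie\Omega^*(X)=\Iie\Omega^*_+\oplus\Iie\Omega^*_-$ is preserved by $\eth_{\dR}^2$ and $\eth_{\dR}$ is off-diagonal with entries $\eth_{\sign}^{\pm}$. Consequently $\cD_{\max}(\eth_{\dR})=\cD_{\max}(\eth_{\sign}^+)\oplus\cD_{\max}(\eth_{\sign}^-)$ and likewise for the minimal domains, so essential self-adjointness of $\eth_{\dR}$ on $L^2_{\iie}(X;\Iie\Lambda^*X)$ forces $\cD_{\max}(\eth_{\sign}^\pm)=\cD_{\min}(\eth_{\sign}^\pm)$; assembling the two pieces gives essential self-adjointness of the full operator $\eth_{\sign}=\eth_{\sign}^+\oplus\eth_{\sign}^-$. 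The domain containment $\cD_{\max}(\eth_{\sign})\subseteq\bigcap_{\eps>0}\rho^{1-\eps}H^1_{\iie}(X;\Iie\Lambda^*X)$ is immediate from the same containment for $\eth_{\dR}$ together with the fact that the projections onto $\Iie\Omega^*_\pm$ (which are $\tfrac12(\Id\pm\cI)$) preserve $\rho^{1-\eps}H^1_{\iie}$. Fredholmness on the graph-normed maximal domain and discreteness of the spectrum with finite multiplicity then follow by exactly the argument given in the proof of Theorem~\ref{thm:fred}: the maximal domain is compactly embedded in $L^2_{\iie}$ (by Rellich for $\rho^{1-\eps}H^1_{\iie}$), so $(i\,\Id+\eth_{\sign})^{-1}$ is compact, hence $\eth_{\sign}$ is Fredholm, and choosing $\eps>0$ with $(\eps\,\Id+\eth_{\sign})$ invertible produces a compact self-adjoint resolvent, yielding discrete spectrum of finite multiplicity.

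For the odd-dimensional case, where $\eth_{\sign}=-i\cI(d-\delta)=-i(d-\delta)\cI$, I would argue similarly but a little indirectly. The operator $d-\delta$ is unitarily equivalent to $\eth_{\dR}=d+\delta$ via the grading operator $(-1)^{\deg}$ on forms (which is again a bounded bundle involution preserving all the weighted Sobolev scales), and composition with the bounded invertible bundle map $\cI$ does not change domains, compactness of embeddings, or Fredholm/spectral properties — it only conjugates. Hence $\cD_{\max}(\eth_{\sign})=\cI\,\cD_{\max}(d-\delta)=\cD_{\max}(d+\delta)$ as subsets of $L^2_{\iie}$, and all four conclusions transfer verbatim from Theorem~\ref{thm:fred}. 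One must also note that $\eth_{\sign}$ is formally self-adjoint in this case (the two expressions $-i\cI(d-\delta)$ and $-i(d-\delta)\cI$ agree and are symmetric), so the self-adjoint extension statement is again inherited.

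The main obstacle, such as it is, is purely bookkeeping: one must verify carefully that $\cI$ and $(-1)^{\deg}$ genuinely extend to bounded maps on the maximal (and minimal) domains of the relevant operators, i.e.\ that they intertwine $\eth_{\dR}$ with $\pm\eth_{\dR}$ (resp.\ conjugate $d-\delta$ to $d+\delta$) \emph{as unbounded operators}, not merely on $C^\infty_c$. This is where the extra vanishing established in Proposition~\ref{DomVanBdy} is used again: because every element of the maximal domain lies in $\bigcap_{\eps>0}\rho^{1-\eps}H^1_{\iie}$, the integration-by-parts manipulations needed to identify $\cI\,\cD_{\max}$ with a maximal domain are legitimate, with no boundary contributions at the singular strata. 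Once this compatibility is in hand, everything else is a formal consequence of Theorem~\ref{thm:fred}, and no new analysis is required. Finally, the twisted statement for $\wt\eth_{\sign}$ with $C^*_r\Gamma$-coefficients is obtained by repeating this reduction verbatim in the Mishchenko--Fomenko setting, using Theorem~\ref{InductiveThm2} in place of Theorem~\ref{InductiveThm1}/\ref{thm:fred} and noting that $\cI$ acts as a bounded $C^*_r\Gamma$-linear bundle involution on $\Iie\Lambda_\Gamma^*(X)$.
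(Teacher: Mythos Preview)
Your even-dimensional argument matches the paper's proof exactly: the domains of $\eth_{\sign}^\pm$ are the $\cI$-eigencomponents of the domains of $\eth_{\dR}$, and everything descends.

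In the odd-dimensional case you take a different route from the paper. The paper does not invoke a unitary equivalence between $d+\delta$ and $d-\delta$; instead it uses the strong Kodaira decomposition $L^2_{\iie}\Omega^* = L^2\cH \oplus \Image d \oplus \Image \delta$ (available because $\eth_{\dR}$ has closed range) to see that $d$ and $\delta$ separately have closed range and that $\cD_{\max}(d-\delta)=\cD_{\max}(\eth_{\dR})$, then reapplies the Gil--Mendoza argument (Proposition~\ref{GilMendoza}) to $i(d-\delta)$. Your idea of a direct unitary equivalence is slicker and avoids the Kodaira detour, but the specific unitary you name is wrong: conjugation by $(-1)^{\deg}$ sends $d\mapsto -d$ and $\delta\mapsto -\delta$, hence $d+\delta\mapsto -(d+\delta)$, not $d-\delta$. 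The map that works is $U=i^{\deg}$ (multiplication by $i^k$ on $k$-forms), which is unitary on $L^2_{\iie}$ and satisfies $U(d+\delta)U^{-1}=i(d-\delta)$; combined with the bounded bundle involution $\cI$ this gives $\cD_{\max}(\eth_{\sign})=\cI\,U\,\cD_{\max}(\eth_{\dR})$ and $\cD_{\min}(\eth_{\sign})=\cI\,U\,\cD_{\min}(\eth_{\dR})$, from which all four conclusions transfer as you describe. With this correction your argument is complete; the remark about integration by parts being ``legitimate'' is unnecessary here since the transfer is purely by bounded conjugation.
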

  
\begin{proof}
If $X$ is even-dimensional, it is immediate to see that 
\begin{gather*}
	\cD_{\min}(\eth_{\sign}^+) = \cD_{\min}(\eth_{\dR}) \cap L^2_{\iie}(X;\Iie \Lambda^*_+(X)), \\
	\cD_{\max}(\eth_{\sign}^+) = \cD_{\max}(\eth_{\dR}) \cap L^2_{\iie}(X;\Iie \Lambda^*_+(X))
\end{gather*}
so the result follows from the corresponding results for $\eth_{\dR}$.

For $X$ odd-dimensional, we point out that one can characterize the maximal domain of $d-\delta$ through the same analysis used for $d+\delta$.
Alternately, we can use the result for $d+\delta$ to deduce it for $d-\delta$ as follows.
As explained above, a byproduct of our results is the existence of a strong Kodaira decomposition
\begin{equation*}
	L^2_{\Iie} \Omega^* = 
	L^2\cH \oplus \Image d \oplus \Image \delta
\end{equation*}
where $L^2\cH$ is the intersection of the null spaces of $d$ and $\delta$.
The de Rham operator $d+\delta$ decomposes into
\begin{equation*}
	\lrpar{ d: \Image \delta \to \Image d } \oplus
	\lrpar{ \delta: \Image d \to \Image \delta },
\end{equation*}
hence $d$ and $\delta$ individually have closed range and 
\begin{gather*}
	\cD_{\max}(\eth_{\dR}) \cap \Image \delta = \cD_{\max}(d) \cap \Image \delta \\
	\cD_{\max}(\eth_{\dR}) \cap \Image d = \cD_{\max}(\delta) \cap \Image d 
\end{gather*}
hence $i(d-\delta)$ has closed range with domain contained in (hence, by symmetry, equal to) $\cD_{\max}(\eth_{\dR})$.
Applying Proposition \ref{GilMendoza} to $i(d-\delta)$ then shows that it too is essentially self-adjoint.
\end{proof}

%%%%%%%%%%%%%%%%
%%%%%%%%%%%%%%%
\subsection{The K-homology class $ [\eth_{\sign}]\in K_* (\hat{X})$} $ $\newline
%%%%%%%%%%%%%%%
%%%%%%%%%%%%%%%
The results proved so far for the signature operator $\eth_{\sign}$ on a Witt space $\widehat{X}$ allow one to define the 
K-homology class  $[\eth_{\sign}]\in K_* (\widehat{X})=KK_* (C(\widehat{X}),\bbC)$. The K-homology signature class
already appears in the work of Moscovici-Wu \cite{Mosc-Wu-Witt}; the definition there is based
on the results of Cheeger.

Recall that an {\it even} unbounded Fredholm module for the $C^*$-algebra $C(\widehat{X})$ is a pair $(H,D)$
such that:
\begin{itemize}
\item $H$ is a Hilbert space endowed with a unitary $*$-representation of $C(\widehat{X})$; $D$ is a self-adjoint unbounded linear operator on $H$;
\item there is a dense $*$-subalgebra $\mathcal A\subset C(\widehat{X})$ such that $\forall a \in \mathcal A$ the domain
of $D$ is invariant by $a$ and $[D,a]$ extends to a bounded operator on $H$;
\item $(1+D^2)^{-1}$ is a compact operator on $H$;
\item $H$ is equipped with a grading $\tau=\tau^*$, $\tau^2=I$, such that $\tau f = f \tau$ and $\tau D= -D \tau$.
\end{itemize}
An {\it odd} unbounded Fredholm module is defined omitting the last condition.

An unbounded Fredhom module defines a Kasparov $(C(\widehat{X}),\bbC)$-bimodule and thus
an element in  $KK_* (C(\widehat{X}),\bbC)$. We refer to  \cite{Baaj-Julg} \cite{Blackadar} for more
on this foundational material.

\begin{theorem}\label{theo:k-homology}
The signature operator $\eth_{\sign}$ associated to a Witt space 
$\widehat{X}$ endowed with an iterated conic metric $g$ defines an unbounded Fredholm module
for $C(\widehat{X})$ and thus a class   $[\eth_{\sign}]\in KK_* (C(\widehat{X}),\bbC)$, $* \equiv\dim X \,{\rm mod} \,2$.
Moreover, the class  $[\eth_{\sign}]$ does not depend on the choice of iterated conic metric on $\widehat{X}$.
\end{theorem}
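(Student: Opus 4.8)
The plan is to first exhibit the structure of an unbounded Fredholm module carrying $\eth_{\sign}$, and then to deduce metric independence from the homotopy invariance of $KK$-theory.

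For the module, I would take $H = L^2_{\iie}(X;\Iie\Lambda^*X)$ with $C(\widehat X)$ acting by multiplication operators — a continuous function on $\widehat X$ restricts to a bounded function on $X = \mbox{reg}\,(\widehat X)$, so this is a unital $*$-representation — and $D = \eth_{\sign}$. By Theorem \ref{thm:Fred}, after rescaling $g$ so that \eqref{Ass2} holds at every stratum, $\eth_{\sign}$ is essentially self-adjoint, its maximal domain is compactly included in $H$, and it has discrete spectrum of finite multiplicity; hence $D$ is self-adjoint and $(1+D^2)^{-1}$ is compact. For the dense $*$-subalgebra I would take $\mathcal A = C^\infty(\widehat X)$, lifted to $\wt X$ through the resolution. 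For $f \in \mathcal A$ one has $[\eth_{\sign},f] = \sigma(\eth_{\sign})(df)$, i.e.\ Clifford multiplication by $df$; since $f$ is smooth on $\widehat X$, in a distinguished neighbourhood $\calW_{q_1, \ldots, q_s}$ the link-directional part of $df$ vanishes to first order in the relevant radial variable, so $df$ is a bounded section of $\Iie T^*X$ with respect to $g$ and $[\eth_{\sign},f]$ is bounded on $H$; the same identity gives $f\cdot\cD_{\max}(\eth_{\sign})\subseteq\cD_{\max}(\eth_{\sign})$. When $\dim X$ is even, grade $H$ by the Hodge involution $\cI$: it commutes with multiplication operators and anticommutes with $\eth_{\sign}$ in view of the off-diagonal decomposition of $\eth_{\dR}$ into $\eth_{\sign}^\pm$; when $\dim X$ is odd, omit the grading. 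This produces the even, resp.\ odd, unbounded Fredholm module and hence the class $[\eth_{\sign}]\in KK_*(C(\widehat X),\bbC)$, $*\equiv\dim X\;{\rm mod}\;2$.

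For metric independence, let $g_0,g_1$ be two iterated conic metrics; after an overall rescaling (which does not affect the above construction) I may assume both are suitably scaled, and by Proposition \ref{prop:homotopymet} they are joined by a path $(g_t)_{t\in[0,1]}$ within the class of admissible iterated edge metrics, which after a further $t$-dependent scaling I may take to satisfy \eqref{Ass2} uniformly. By Proposition \ref{prop:adm} all the $g_t$ are uniformly quasi-isometric, so the spaces $L^2_{\iie}(X;\Iie\Lambda^*X;g_t)$ coincide as topological vector spaces with uniformly equivalent norms; likewise the weighted spaces $\rho^{1-\eps}H^1_{\iie}$, whose definition uses only $\rho$ and $\cV_{\ie}$, are metric-independent up to uniform norm equivalence and are compactly included in $H$. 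Choosing a smooth family of bundle isometries $(\Iie\Lambda^*X,g_t)\to(\Iie\Lambda^*X,g_0)$ and absorbing the change of volume form, I obtain unitaries $U_t\colon L^2(g_t)\xrightarrow{\cong}H_0$ and a family of self-adjoint operators $\wt D_t = U_t\,\eth_{\sign}^{g_t}\,U_t^{-1}$ on the fixed Hilbert space $H_0$, each fitting into an unbounded Fredholm module for $C(\widehat X)$ with the same representation and grading. It then suffices to show $t\mapsto\wt D_t$ is continuous in the sense of Baaj--Julg \cite{Baaj-Julg}, i.e.\ that $t\mapsto(\wt D_t+i)^{-1}$ (equivalently $t\mapsto\wt D_t(1+\wt D_t^2)^{-1/2}$) is norm continuous; homotopy invariance of $KK$ then gives $[\eth_{\sign}^{g_0}] = [\wt D_0] = [\wt D_1] = [\eth_{\sign}^{g_1}]$.

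The hard part will be precisely this continuity statement, since a priori the domains $\cD_{\max}(\wt D_t)$ depend on $t$. The key observation is that the entire analysis leading to Theorem \ref{thm:fred} depends continuously on the metric: the normal operator $N_q(P_a)$, its indicial family and indicial roots (Lemma \ref{lem:Indicial}), the Bessel-function computation giving injectivity (Lemma \ref{lem:Bessel}), the closed-range estimates (Lemma \ref{lem:NClosedRange}), and the integration-by-parts identity (Lemma \ref{lem:integration}) all vary continuously with $g$ and retain uniform bounds once \eqref{Ass2} holds uniformly. Consequently the conclusion of Proposition \ref{DomVanBdy} — that $\cD_{\max}(\eth_{\sign}^{g_t})$ lies in the fixed compactly included space $\bigcap_{\eps>0}\rho^{1-\eps}H^1_{\iie}$ — holds uniformly for $t$ in compact intervals, the operators $\eth_{\sign}^{g_t}$ all have $C_c^\infty$ as a common core with coefficients depending continuously on $t$, and a standard perturbation argument then upgrades this to norm continuity of the resolvents $(\wt D_t+i)^{-1}$, which automatically take values in a fixed compactly included subspace. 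This shows the class is locally constant, hence constant, in $t$, and completes the proof.
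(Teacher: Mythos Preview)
Your proof is correct and follows essentially the same approach as the paper: the same Hilbert space, operator, and grading for the Fredholm module, and the same path-of-metrics argument (Proposition~\ref{prop:homotopymet}) combined with operatorial homotopy for metric independence. The only differences are cosmetic: the paper takes $\mathcal A$ to be the Lipschitz functions on $\widehat X$ (manifestly metric-independent by Proposition~\ref{prop:adm}) rather than your $C^\infty(\widehat X)$ via the resolution, and for the homotopy step it simply cites Hilsum~\cite{Hilsum-LNM} instead of sketching the resolvent-continuity argument as you do.
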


\begin{proof}
We take $H=L^2_{\iie}(X;\Iie\Lambda^*X)$, endowed with the natural representation of $C(\widehat{X})$
by multiplication operators. We take $D$ as the unique closed self-adjoint extension of $\eth_{\sign}$. 
These data depend of course on the choice of the iterated conic metric. We take
$\mathcal{A}$ equal to the space of Lipschitz functions on $\widehat{X}$ with respect to $g$;
$\mathcal{A}$ does not depend on the choice of $g$, since two iterated conic metric are quasi-isometric
(see Prop \ref{prop:adm}).
Finally, in the even dimensional case we take the involution defined by $\mathcal{I}$.
All the conditions defining an unbounded Kasparov module are easily proved using the results
of the previous section: indeed, if $f$ is Lipschitz then it is elementary to see that 
multiplication by $f$ sends the maximal domain of $\eth_{\sign}$
into itself; moreover $[f,\eth_{\sign}]$ is given  by Clifford multiplication by $df$
which exists almost everywhere and is an element
in $L^\infty (\widehat{X})$; in particular $[f,\eth_{\sign}]$ 
extends to a bounded operator on $H$; finally we know that $(1+D^2)^{-1}$ is a compact operator (indeed, we proved that
this is true for $(i+D)^{-1}$ and $(-i+D)^{-1}$).
Thus there is a well defined class $ KK_* (C(\widehat{X}),\bbC)$ which we denote simply by $[\eth_{\sign}]$;
this class depends a priori on the choice of the metric $g$. 
Recall however  that two iterated conic metric $g_0$ and $g_1$ are joined by a path of iterated conic metrics
$g_t$. See Proposition \ref{prop:homotopymet}. 
Let $\eth^0_{\sign}$ and $\eth^1_{\sign}$ the corresponding signature operators, with domains in $H_0$
and $H_1$.
Proceeding as in the work of Hilsum on Lipschitz manifolds \cite{Hilsum-LNM} one can prove that the 1-parameter family
$(H_t, \eth^t_{\sign})$ defines an {\it unbounded} operatorial  homotopy; using the homotopy invariance of
$KK$-theory one obtains
  $$[\eth^0_{\sign}]=[\eth^1_{\sign}] \;\,\text{ in }\;\;  KK_* (C(\widehat{X}),\bbC)\,.$$
  We omit the details since they are a repetition of the ones given in  \cite{Hilsum-LNM}. 
\end{proof}

%%%%%%%%%%%%%%%%
%%%%%%%%%%%%%%%
\subsection{The index class of  the twisted signature operator $ \wt \eth_{\sign}$} $ $\newline
%%%%%%%%%%%%%%%
%%%%%%%%%%%%%%%
Let $\widehat{X}$ be a Witt space endowed with an iterated conic metric. 
Assume now that we are also given a continuous map $r:\widehat{X}\to B\Gamma$ and let
$\Gamma \to \widehat{X}^\prime \to \widehat{X}$ the Galois $\Gamma$-cover induced by $E\Gamma\to B\Gamma$. 
We consider the Mishchenko bundle
\begin{equation*}
	\wt{C^*_r}\Gamma: = C^*_r\Gamma \btimes_\Gamma \widehat{X}^\prime.
\end{equation*} 
and the signature operator with values in the restriction of $\wt{C^*_r}\Gamma$ to $X$, which we denote
by $ \wt \eth_{\sign}$. 

\begin{proposition}  \label{prop:esa-gamma}
The twisted signature operator $ \wt \eth_{\sign}$ is essentially self-adjoint as an operator on 
$L^2_{\iie, \Gamma}(X;\Iie\Lambda_\Gamma^*X)$, with maximal domain contained 
in \\$\cap_{\eps>0} \rho^{1-\eps} H^1_{\iie,\Gamma}(X;\Iie\Lambda_\Gamma^*X)$ which is in turn 
$C^*_r\Gamma$-compactly 
included in the Hilber $C^*_r\Gamma$-module $L^2_{\iie, \Gamma}(X;\Iie\Lambda_\Gamma^*X)$. 
\end{proposition}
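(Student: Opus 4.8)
The plan is to deduce Proposition~\ref{prop:esa-gamma} from the scalar results of the previous section (Theorem~\ref{thm:fred} and Theorem~\ref{thm:Fred}) by transporting the entire argument to the $C^*_r\Gamma$-setting. The key observation is that all of the analytic input used in the scalar case --- the symbolic parametrix for elliptic $\ie$-operators (\S\ref{sec:uniform}), the boundedness of uniform pseudodifferential operators on weighted Sobolev modules, elliptic regularity \eqref{EllReg}, the structure \eqref{DNearBdy} of the de Rham operator near a stratum, the normal operator and indicial family computations of \S\ref{sec:MapProp}, and the Gil--Mendoza argument (Proposition~\ref{GilMendoza}) --- were set up \emph{ab initio} for $C^*_r\Gamma$-bundles $\cE$, $\cF$. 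Indeed the uniform calculus $\Psi^*_{\cB,\Gamma}$ exists, symbolic parametrices exist modulo $\Psi^{-\infty}_{\cB,\Gamma}$, and the Mishchenko--Fomenko formalism guarantees that a symbolic parametrix modulo $C^*_r\Gamma$-compact operators is available once one controls the behaviour at the singular strata. So the proof is really a matter of checking that every step of \S6 goes through verbatim with $\bbC$ replaced by $C^*_r\Gamma$.

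First I would reduce, exactly as in the scalar proof via Theorem~\ref{thm:Fred}, the statement for $\wt\eth_{\sign}$ to the corresponding statement for the twisted de Rham operator $\wt\eth_{\dR}$: in even dimensions the self-dual/anti-self-dual splitting commutes with the twist by the flat bundle $\wt{C^*_r}\Gamma$ (the involution $\cI$ acts on the form part only), so $\cD_{\min/\max}(\wt\eth^+_{\sign}) = \cD_{\min/\max}(\wt\eth_{\dR})\cap L^2_{\iie,\Gamma}(X;\Iie\Lambda^*_+\otimes \wt{C^*_r}\Gamma)$; in odd dimensions one uses the strong Kodaira decomposition of the twisted complex, which follows once essential self-adjointness and closed range of $\wt\eth_{\dR}$ are known, to pass from $d+\delta$ to $d-\delta$. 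So it suffices to treat $\wt\eth_{\dR}$. Then I would run the induction on depth: the base case is a closed manifold with a flat $C^*_r\Gamma$-bundle, where Mishchenko--Fomenko theory already gives essential self-adjointness, compact (in the $C^*$-sense) resolvent, and the index class. For the inductive step, the crucial point is that formula \eqref{DNearBdy} is unchanged by the twist --- the flat connection on $\wt{C^*_r}\Gamma$ contributes only lower-order terms --- so the normal operator $N_q(\wt P_a)$ and its indicial family are simply $N_q(P_a)\otimes \Id_{C^*_r\Gamma}$ up to lower-order terms, and the injectivity/closed-range/invertibility statements of Lemmas~\ref{lem:Bessel}, \ref{lem:Indicial}, \ref{lem:NClosedRange} carry over: the Bessel-function ODE analysis happens fibrewise over the $C^*$-algebra, the inductive hypothesis that $\eth^Z_{\dR}$ is essentially self-adjoint with $C^*_r\Gamma$-compact resolvent (for $Z$ of smaller depth, twisted by the restricted bundle) supplies the relative-compactness argument in Lemma~\ref{lem:Indicial}, and the Mellin-transform integration-by-parts identity of Lemma~\ref{lem:integration} is a statement about holomorphic families of operators on Hilbert $C^*_r\Gamma$-modules, which is insensitive to the coefficients.

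Next, the extra-vanishing result Proposition~\ref{DomVanBdy} --- that $\cD_{\max}(\wt\eth_{\dR})\subseteq \bigcap_{\eps>0}\rho^{1-\eps}H^1_{\iie,\Gamma}$ --- is obtained by repeating Proposition~\ref{prop:intermediate} and the bootstrap argument of Proposition~\ref{DomVanBdy} with $C^*_r\Gamma$-coefficients; the generalized inverse $G$ of $R^t$ used there is now a bounded adjointable operator on a Hilbert $C^*_r\Gamma$-module, which is exactly what the argument needs. Then Proposition~\ref{GilMendoza} (Gil--Mendoza) applies verbatim to $\wt\eth_{\dR}$ since its proof only uses formal self-adjointness and the characterization \eqref{MinDomChar} of the minimal domain, both of which hold for adjointable operators on Hilbert modules; this yields $\cD_{\max}(\wt\eth_{\dR})=\cD_{\min}(\wt\eth_{\dR})$, i.e.\ essential self-adjointness, and regularity in the sense of unbounded operators on Hilbert modules follows from the graph of $\wt\eth_{\dR}$ being complemented (equivalently from $(\Id+\wt\eth_{\dR}^2)$ having dense range, which is immediate once the domains coincide). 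The $C^*_r\Gamma$-compactness of the inclusion $\rho^{1-\eps}H^1_{\iie,\Gamma}(X;\Iie\Lambda^*_\Gamma X)\hookrightarrow L^2_{\iie,\Gamma}(X;\Iie\Lambda^*_\Gamma X)$ --- the analogue of Rellich --- is the statement that a weighted Sobolev embedding with a strictly positive weight gain and one order of smoothness is $C^*$-compact; this follows from the uniform/Mishchenko--Fomenko calculus, since a symbolic parametrix composed with multiplication by $\rho^{\eps}$ lands in $\Psi^{-\infty}_{\cB,\Gamma}$ acting between appropriate weighted modules, and such operators are $C^*_r\Gamma$-compact because $\rho\to 0$ at the boundary (their Schwartz kernels are approximable in the relevant operator norm by finite-rank-over-$C^*_r\Gamma$ kernels). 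Finally, essential self-adjointness plus $C^*$-compact resolvent give that $\wt\eth_{\sign}$ defines an unbounded Kasparov $(\bbC,C^*_r\Gamma)$-bimodule and hence the index class $\Ind(\wt\eth_{\sign})\in K_j(C^*_r\Gamma)$.

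The main obstacle I expect is the $C^*$-compactness of the weighted embedding: in the scalar case one invokes ordinary Rellich, but over a $C^*$-algebra ``compact'' must mean compact-in-the-Hilbert-module-sense (norm-limit of finite-rank adjointable operators), and one must verify that the smoothing operators produced by the uniform calculus, when combined with the decay $\rho^\eps$, genuinely land in the algebra of $C^*_r\Gamma$-compact operators rather than merely being bounded. This is where the ``uniform'' nature of the Meladze--Shubin calculus and the MF-twisting must be used carefully: one needs that $\Psi^{-\infty}_{\cB,\Gamma}$-operators with kernels decaying at the singular set are approximable, uniformly over the cover by normal coordinate charts, by operators of the form $\sum \xi_i\langle\eta_i,\cdot\rangle$ with $\xi_i,\eta_i$ in the module. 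A second, more routine but still delicate point is making precise the notion of regularity (in Baaj--Julg's sense) for $\wt\eth_{\dR}$ on its maximal domain; here I would argue that $\cD_{\max}=\cD_{\min}$ together with $\wt\eth_{\dR}\pm i$ being invertible (which follows from self-adjointness and the closed-range property inherited from the scalar analysis) gives regularity directly.
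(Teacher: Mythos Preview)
Your overall strategy --- transport the scalar argument to the $C^*_r\Gamma$-setting --- is correct, and is what the paper does. But you miss the single key observation that makes this transport nearly trivial, and without it several of your steps would actually fail.

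The point you overlook is the hypothesis that the classifying map $r:X\to B\Gamma$ extends continuously to all of $\wh X$. This is not cosmetic. On the regular part, a distinguished neighbourhood $W\cong \bbR^b\times C(Z)$ has interior $\bbR^b\times(0,1)\times Z$, which has the homotopy type of $Z$; so the restriction of the Mishchenko bundle to $W\cap X$ need not be trivial in general. The extension of $r$ to $\wh X$ forces the covering (and hence $\wt{C^*_r}\Gamma$) to be trivial over each $W$ and over each link $Z$. Consequently the normal operator is \emph{exactly} conjugate to $N_q(P_a)\otimes \Id_{C^*_r\Gamma}$, not merely ``up to lower-order terms''. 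This is the content of the paper's proof: once the normal operator is the scalar one tensored with the identity, Lemmas~\ref{lem:Bessel}, \ref{lem:Indicial}, \ref{lem:NClosedRange}, and \ref{lem:integration} are inherited from the scalar case with no further work, and Proposition~\ref{prop:intermediate} and the Gil--Mendoza argument follow.

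By contrast, your formulation --- run the induction with genuinely twisted coefficients on $Z$, using an inductive hypothesis that $\wt\eth^Z_{\dR}$ has $C^*_r\Gamma$-compact resolvent --- would not go through as written. The proof of Lemma~\ref{lem:Bessel} decomposes $L^2(Z)$ into eigenspaces of $\Delta^Z$ with eigenvalues $\lambda^2$ and uses the strong Kodaira decomposition; neither an eigenvalue decomposition nor orthogonal complementation of closed submodules is available for a general self-adjoint operator on a Hilbert $C^*$-module. Likewise your claim that the generalized inverse $G$ of $R^t$ is ``a bounded adjointable operator'' presupposes that $\ker R^t$ is complemented, which is not automatic over a $C^*$-algebra. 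All of these issues evaporate once you use the triviality of the bundle over $W$ and $Z$ to reduce to (scalar operator)$\,\otimes\Id$.

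Two smaller remarks. First, the $C^*_r\Gamma$-compactness of the inclusion $\rho^{\eps}H^1_{\iie,\Gamma}\hookrightarrow L^2_{\iie,\Gamma}$ is not obtained from the symbolic parametrix alone (smoothing operators in $\Psi^{-\infty}_{\cB,\Gamma}$ are not $C^*$-compact in general); the paper simply cites the argument in \cite{LP-SMF}. Second, your final paragraph conflates essential self-adjointness with regularity: on a Hilbert module, a self-adjoint operator need not have $(i\pm D)$ invertible (the paper cites Hilsum's counterexample), and regularity is proved separately in Proposition~\ref{prop:regularity} via Skandalis' $\mathcal D$-connection argument. That is outside the scope of the present proposition, so you should not claim it here.
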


\begin{proof}
We briefly point out how the proof given for $ \eth_{\dR}$  and $\eth_{\sign}$ extends to the case of  $\wt \eth_{\dR}$
and $\wt \eth_{\sign}$.
Recall that a $C^*_r \Gamma-$distribution on $\mbox{reg}\,(\widehat{X})$ is a $\mathbb{C}-$linear form 
$$
T: C^\infty_c ( \mbox{reg}\,(\widehat{X}), \Iie\Lambda_\Gamma^*X) \rightarrow C^*_r \Gamma
$$ satisfying the following 
property. For any compact $K \subset \mbox{reg}\,(\widehat{X}),$ there exists a finite set $S$
of elements of ${\rm Diff}^*_{ie, \Gamma}$ such that:
$$
\forall u \in C^\infty_K ( \mbox{reg}\,(\widehat{X}), \Iie\Lambda_\Gamma^*X), \;
|| < T ; u >||_{C^*_r \Gamma} \leq \sup_{  Q \in S}  || (Q u) ||_{L^2_{\iie,\Gamma}}.
$$ Of course, any element of  $L^2_{\iie, \Gamma}(X;\Iie\Lambda_\Gamma^*X)$ defines
a $C^*_r \Gamma-$distribution on $\mbox{reg}\,(\widehat{X}).$
It is clear that $\wt  \eth_{\dR}$ sends $L^2_{\iie, \Gamma}(X;\Iie\Lambda_\Gamma^*X)$ into
the space of $C^*_r \Gamma-$distributions. Therefore, the notion of maximal domain 
for $\wt  \eth_{\dR}$ is  defined. The notion of minimal domain is also well defined (this is simply the closure
of $C^\infty_c$ with respect to the norm $||u||+||\wt  \eth_{\dR} u||$ ). Notice that these two extensions
are closed.
Our first task is to show that these two extensions coincide. 
To this end we shall make use of the fundamental hypothesis that the reference map $r: X \rightarrow B \Gamma$ extends continously 
to the whole singular  space $\widehat X$. Therefore, for any distinguished neighborhood $W \simeq \bbR^b \times C(Z)$, 
the induced  $\Gamma$-coverings over  $W$ and  over $Z$ are trivial. 
This implies  that for any $q \in Y,$ 
$N_q(\wt  \eth_{\dR} ) $ is conjugate to  $N_q(\eth_{\dR}) \otimes {\rm Id}_{\wt C^*_r \Gamma}.$
Once this has been observed we have, immediately, that  Proposition \ref{prop:upshot} and Lemma \ref{lem:integration} extend 
to the case of $\wt\eth _{\dR}.$  Then, Proposition \ref{prop:intermediate} also extends easily to the present case 
 showing that the maximal domain of  $ \wt \eth_{\sign}$ is included in
$\cap_{\eps>0} \rho^{1-\eps} H^1_{\iie,\Gamma}(X;\Iie\Lambda_\Gamma^*X)$. 
Once the extra vanishing is obtained, we can apply the argument give in the proof of
Theorem \ref{thm:fred} in order to show that the maximal extension is in fact self-adjoint.
The argument of Gil-Mendoza can also be extended, showing the equality of the maximal and the minimal domain. 
The details of all this are easy and for the sake of brevity we omit them. Finally, proceeding as in \cite{LP-SMF},
one can prove that $\rho^\eps H^1_{\iie,\Gamma}(X;\Iie\Lambda_\Gamma^*X)$ is $C^*_r \Gamma$-compactly included into 
$ L^2_{\iie,\Gamma}(X;\Iie\Lambda_\Gamma^*X)$. The Proposition is proved for $\wt \eth_{\dR}$. The extra step needed
for the signature operator is proved as in Theorem \ref{thm:Fred}.
\end{proof}

From now on we shall only consider the closed unbounded self-adjoint  $C^*_r\Gamma$-operator  of Proposition \ref{prop:esa-gamma}
and with common abuse of notation we keep denoting it  by   $ \wt \eth_{\sign}$ .

We now proceed to show the following fundamental 

\begin{proposition}\label{prop:regularity}
The operator   $ \wt \eth_{\sign}$  is a {\it regular} operator. Consequently 
 $(i\pm \wt \eth_{\sign})$ and  $(1+\wt \eth_{\sign}^2)$ are  invertible.  
\end{proposition}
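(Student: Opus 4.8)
The plan is to reduce the statement to a closed-range assertion and then appeal to the standard characterization of regularity. Recall that a densely defined symmetric operator on a Hilbert $C^*_r\Gamma$-module is self-adjoint and regular if and only if both $\wt\eth_{\sign}+i$ and $\wt\eth_{\sign}-i$ are surjective (see \cite{Blackadar}). By Proposition \ref{prop:esa-gamma} we already know that $\wt\eth_{\sign}$, with core $C^\infty_c$, is essentially self-adjoint, so that its unique closed extension is at once its maximal extension and the adjoint of its minimal extension; in particular $\wt\eth_{\sign}=\wt\eth_{\sign}^*$ as a $C^*_r\Gamma$-operator. A routine computation with the $C^*_r\Gamma$-valued inner product then shows that $\Ran(\wt\eth_{\sign}\pm i)$ is dense: if $\langle(\wt\eth_{\sign}+i)u,v\rangle=0$ for every $u$ in the domain, then $v$ belongs to the domain of the adjoint with $(\wt\eth_{\sign}-i)v=0$, and pairing with $v$ forces $\langle v,v\rangle=0$. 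So the proposition reduces to proving that $\Ran(\wt\eth_{\sign}\pm i)$ is \emph{closed} in $L^2_{\iie,\Gamma}(X;\Iie\Lambda_\Gamma^*X)$; once that is known, $\wt\eth_{\sign}$ is self-adjoint and regular, and the bounded functional calculus for regular self-adjoint operators yields the bounded inverses $(i\pm\wt\eth_{\sign})^{-1}$ and $(1+\wt\eth_{\sign}^2)^{-1}$.

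To obtain closed range I would build a parametrix for $\wt\eth_{\sign}\pm i$ with $C^*_r\Gamma$-compact remainders, following the scheme of the proof of Proposition \ref{DomVanBdy} and of \cite{LP-SMF}. Since $i$ is a term of lower order, $\wt\eth_{\sign}\pm i$ has the same principal symbol as $\wt\eth_{\sign}$, which is elliptic as an incomplete iterated edge operator (the twisted analogue of Lemma \ref{lem:iie}); hence, passing to the unitarily equivalent complete $\ie$-operator, the uniform calculus of \S\ref{sec:uniform} furnishes a symbolic parametrix $G_0\in\Psi^{-1}_{\cB,\Gamma}$ with $(\wt\eth_{\sign}\pm i)G_0=\Id-R_0$ and $G_0(\wt\eth_{\sign}\pm i)=\Id-R_0'$, where $R_0,R_0'\in\Psi^{-\infty}_{\cB,\Gamma}$ are smoothing but not yet $C^*_r\Gamma$-compact. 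I would then correct $G_0$ near the singular strata using the invertibility of the model operators already established: on each basic neighbourhood the normal operators $N_q(P_a)$ are bijective on their maximal domains by Lemmas \ref{lem:Bessel} and \ref{lem:NClosedRange}, and the indicial operators $I_q(P_a)$ have bounded inverses by Lemma \ref{lem:Indicial}, for the relevant weights; and, because $r\colon\wh X\to B\Gamma$ extends continuously across the singular set, the Mishchenko bundle is trivial on each distinguished neighbourhood, so these model statements persist after tensoring with $\wt{C^*_r}\Gamma$. Splicing the local inverses into $G_0$ via the basic setups and cutoffs of \S\ref{sec:MapProp} produces a parametrix $G$ whose remainders $R,R'$ now map $L^2_{\iie,\Gamma}(X;\Iie\Lambda_\Gamma^*X)$ into $\rho^{1-\eps}H^1_{\iie,\Gamma}(X;\Iie\Lambda_\Gamma^*X)$ for some $\eps>0$ — precisely the gain in weight exploited in Propositions \ref{prop:intermediate} and \ref{DomVanBdy}. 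Since that inclusion is $C^*_r\Gamma$-compact by Proposition \ref{prop:esa-gamma}, $R$ and $R'$ are $C^*_r\Gamma$-compact, so $\wt\eth_{\sign}\pm i$ is Fredholm over $C^*_r\Gamma$; being moreover injective with dense range, it is boundedly invertible, giving the closed-range property. (Equivalently, and this is essentially the route of \cite{LP-SMF} for the boundary case, one can phrase the conclusion as: a symmetric $C^*_r\Gamma$-operator with essentially self-adjoint closure and $C^*_r\Gamma$-compactly included maximal domain is automatically regular, the extra boundary vanishing of Proposition \ref{prop:esa-gamma} being exactly what secures the compactness of the domain inclusion.)

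I expect the genuine difficulty to be this last step in the $C^*$-module category. In Theorem \ref{thm:fred}, closedness of $\Ran(\eth_{\dR}\pm i)$ is automatic once self-adjointness is known, because on a Hilbert space a self-adjoint operator has $D\pm i$ onto; here self-adjointness of $\wt\eth_{\sign}$ as a $C^*_r\Gamma$-operator does \emph{not} by itself produce surjectivity of $\wt\eth_{\sign}\pm i$, and a $C^*_r\Gamma$-compact operator need not send bounded sets to relatively norm-compact sets, so the usual subsequence-extraction argument applied to the a priori estimate $\|u\|_{\rho^{1-\eps_0}H^1_{\iie,\Gamma}}\lesssim\|(\wt\eth_{\sign}\pm i)u\|+\|u\|$ on the maximal domain is not available. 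The way around this, exactly as in \cite{LP-SMF} (see also \cite{LLP} and Hilsum's analysis in \cite{Hilsum-LNM}), is to stay at the level of operators and parametrices throughout — never passing to a subsequence — and to exhibit the remainder as factoring through a $C^*_r\Gamma$-compact inclusion, which makes $\wt\eth_{\sign}\pm i$ Fredholm over $C^*_r\Gamma$; density of the range, established in the first paragraph, then upgrades Fredholmness to invertibility. Everything else needed — ellipticity and elliptic regularity from \S\ref{sec:uniform}, the extra boundary vanishing, and triviality of the Mishchenko bundle on distinguished charts — is already available from the preceding sections.
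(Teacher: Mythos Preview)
Your approach is genuinely different from the paper's, and there is a real gap in it. The paper does \emph{not} prove regularity by constructing a parametrix for $\wt\eth_{\sign}\pm i$ with $C^*_r\Gamma$-compact remainders; it uses the Skandalis $\mathcal{D}$-connection machinery as recorded by Rosenberg--Weinberger. Concretely, one tensors the scalar Kasparov bimodule $(L^2_{\iie},\eth_{\sign})$ with $C^*_r\Gamma$ to obtain $(\mathcal{E},\mathcal{D})$, observes that the Mishchenko bundle defines a finitely generated projective module $P$ over $A=C(\wh X)\otimes C^*_r\Gamma$, and then checks by a local principal-symbol computation that $\wt\eth_{\sign}$ is a $\mathcal{D}$-connection on $P\hat\otimes_A\mathcal{E}\cong L^2_{\iie,\Gamma}$. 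A general theorem then says that any $\mathcal{D}$-connection is self-adjoint and regular. This reduces regularity to a soft algebraic fact plus a one-line commutator estimate, and it is why the paper never attempts the parametrix construction you outline.

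The gap in your argument is precisely the ``splicing'' step. The analysis in \S6 establishes that elements of $\cD_{\max}$ enjoy extra vanishing, but it does \emph{not} produce an operator $G$ on $L^2_{\iie,\Gamma}$ with $(\wt\eth_{\sign}\pm i)G-\Id$ and $G(\wt\eth_{\sign}\pm i)-\Id$ factoring through $\rho^{1-\eps}H^1_{\iie,\Gamma}$. The inverses of the model operators $N_q(P_a)$ live on $Z\times T_qY^+$, not on $X$; patching them into a global parametrix with controlled remainders is exactly the content of an iterated edge pseudodifferential calculus, which the introduction explicitly flags as not yet available and as the reason for the ``crude'' domain-based approach adopted here. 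In the depth-one case, or in the $b$-setting of \cite{LP-SMF}, such a calculus exists and your strategy would go through; in the present iterated setting it does not follow from anything proved in the paper. Relatedly, your ``equivalent'' principle---self-adjoint plus $C^*_r\Gamma$-compactly included domain implies regular---is not a standard fact: in a Hilbert module, $\text{Ran}(D+i)^\perp=0$ does not by itself force $\text{Ran}(D+i)$ to be dense, and closed range of an unbounded operator does not automatically make the range complemented. One needs either an honest parametrix (to get Mishchenko--Fomenko Fredholmness, hence a complemented range) or an external device such as the $\mathcal{D}$-connection theorem. The paper chooses the latter precisely because the former is unavailable.
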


\begin{proof}
Recall that a closed unbounded self-adjoint operator $D$
on a Hilbert $C^*_r\Gamma$-module is
said to be regular if $1+D^2$ is surjective. One can show, see \cite{Lance}, that $D$ is regular if and only if
$1+D^2$ has dense image if and only if $(i\pm D)$ has dense image if and only if $(i\pm D)$ is surjective.
Moreover, if $D$ is regular then both  $(i\pm D)$  and $1+D^2$ have an inverse.
For a simple example of an unbounded self-ajoint operator on a Hilbert module  such that  $(i+ D)$ and  $(i- D)$ are
not invertible see \cite{Hilsum-K}, page 415.
 \\
We shall prove that our operator is regular by employing ideas of George Skandalis, explained in detail
in work of Rosenberg-Weinberger \cite{ros-weinberger}. We have seen in the previous subsection that $\eth_{\sign}$ defines an
unbounded Kasparov $(C(\widehat{X}),\bbC)$-bimodule and thus a  class
$[\eth_{\sign}]\in KK_* (C(\widehat{X}),\bbC)$. Consider now 
$$
\mathcal{E}:= L^2_{\iie}(X;\Iie\Lambda^*X)\otimes_{\bbC} C^*_r \Gamma \,;
$$
tensoring $\eth_{\sign}$ with $\Id_{C^*_r\Gamma}$ we obtain in an obvious way an unbounded 
Kasparov $(C(\widehat{X})\otimes C^*_r\Gamma, C^*_r\Gamma)$-bimodule that we will denote  by $(\mathcal{E},\mathcal{D})$.
For later use we denote the corresponding KK-class as 
\begin{equation}\label{tensor-class}
[[ \eth_{\sign} ]]\in KK_*(C(\widehat{X})\otimes C^*_r\Gamma, C^*_r\Gamma).
\end{equation}
Consider $A:= C(\widehat{X})\otimes C^*_r\Gamma$ and set
$$
\mathcal{A}:=\{a\in A : a(\Dom \mathcal{D})\subset \Dom \mathcal{D} \text{ and } [a,\mathcal{D}] \ 
\text{extends to an element of }\; \mathcal{L}( \mathcal{E}) \}.
$$
It is a non-trivial result, due to Skandalis, that $\mathcal{A}$ is a dense *-subalgebra of $A$ stable under holomorphic functional calculus.
Consider now the Mishchenko bundle $\wt{C^*_r}\Gamma$ and its continuous sections
$C^0 ( \widehat{X}; \wt{C^*_r}\Gamma)=:P$. It is obvious that $P$ is a finitely generated projective right $A$-module. 
The result cited above, together with Karoubi density theorem, implies that there exists a finitely generated projective 
right $\mathcal{A}$-module $\mathcal{P}$ such that $P=\mathcal{P}\otimes_\mathcal{A} A$. Consider for $\xi\in P$ the operator 
$T_\xi: \mathcal{E} \to P\hat{\otimes}_A \mathcal{E}$ defined by $T_\xi (\eta):= \xi\otimes \eta$. $T_\xi$ is a bounded
operator of $C^*_r\Gamma$  Hilbert modules with adjoint $T_\xi^*$. Recall now, following Skandalis, that a 
$\mathcal{D}$-connection in the present context is a  symmetric  $C^*_r\Gamma$-linear operator $\widetilde{\mathcal{D}}$
$$\widetilde{\mathcal{D}}: \mathcal{P}\otimes_\mathcal{A} \Dom (\mathcal{D}) \longrightarrow P\hat{\otimes}_A \mathcal{E}$$
such that $\forall \xi\in \mathcal{P}$ the following commutator, defined initially on $(\Dom (\mathcal{D}) )\oplus 
\mathcal{P}\otimes_\mathcal{A} \Dom (\mathcal{D}) $,  extends to a bounded operator on $ \mathcal{E}\oplus 
P\hat{\otimes}_A \mathcal{E}$: 
$$\left[
 \begin{pmatrix}
	\mathcal{D} &  0 \\
	0  & \widetilde{\mathcal{D}}
	\end{pmatrix},\begin{pmatrix}
	0 &  T_\xi^* \\
	T_\xi & 0
	\end{pmatrix} \right]
$$
Rosenberg and Weinberger have proved \cite{ros-weinberger}, following Skandalis, that every $\mathcal{D}$-connection is a 
self-adjoint regular operator. We can end the proof of the present Proposition as follows:
first we  observe that as  $C^*_r\Gamma$ Hilbert
modules  $P\hat{\otimes}_A \mathcal{E}= L^2_{\iie,\Gamma}(X;\Iie\Lambda_\Gamma^*X)$; next we consider 
$\wt \eth_{\sign}$ and prove the following.
\begin{lemma} The operator $\wt \eth_{\sign}$ defines a $\mathcal{D}$-connection. 
\end{lemma}
\begin{proof} (Sketch). 
It will suffice to prove the following. Let $U$ be an open
subset of ${X}$ over which $\widetilde{C^*_r\Gamma}$ is trivial. Then the restriction 
of $\xi \in \mathcal{P}$ to $U$ is a finite sum of terms of the form $\theta \otimes u$ where 
$\theta$ is a flat section and $u$ is a $C^1-$function. So we shall assume that 
$\xi= \theta \otimes u.$
Then for any $\eta \in L^2_{\iie}(U;\Iie\Lambda^*X_{| U}) \otimes C^*_r\Gamma $, one has:
$$
( \wt \eth_{\sign} \circ  T_\xi -T_\xi \circ \mathcal{D}) (\eta)=
\theta \otimes c(d u) \eta + \theta  \otimes u (\wt \eth_{\sign} - \mathcal{D})(\eta),
$$  where $ c ( d u)$ denote the Clifford multiplication. Recall that the restrictions to $U$ of $\wt \eth_{\sign}$ and   $\mathcal{D}$ 
are differential operators of order one having the same principal symbol. 
Therefore, $( \wt \eth_{\sign} \circ  T_\xi -T_\xi \circ \mathcal{D}) $ is bounded 
on $ L^2_{\iie}(U;\Iie\Lambda^*X_{| U}) \otimes C^*_r\Gamma. $
One then gets immediately the Lemma by using a partition of unity.
\end{proof}
Finally, we check easily that 
$\mathcal{P}\otimes_\mathcal{A} \Dom (\mathcal{D})\subset \Dom_{{\rm max}} (\wt \eth_{\sign})$. Since 
$(i+\wt \eth_{\sign})$ has dense image  with domain  $\mathcal{P}\otimes_\mathcal{A} \Dom (\mathcal{D})$,
we see that, a fortiori,
the image of $(i+\wt \eth_{\sign})$ with domain $ \Dom_{{\rm max}} (\wt \eth_{\sign})$ must also be dense.
\end{proof}

These two Propositions yield at once the following 

\begin{theorem}\label{theo:kk}
The twisted signature operator $\wt \eth_{\sign}$ and the $C^*_r\Gamma$-Hilbert
module $L^2_{\iie,\Gamma}(X;\Iie\Lambda_\Gamma^*X)$ define an unbounded 
Kasparov $(\bbC,C^*_r\Gamma)$-bimodule and thus a class in $KK_* (\bbC, C^*_r \Gamma)=K_* (C^*_r\Gamma)$.
We call this the index class associated to $\wt \eth_{\sign}$ and denote it by $\Ind (\wt \eth_{\sign})\in K_* (C^*_r\Gamma)$.\\
Moreover, if  as in \eqref{tensor-class} we denote by $[[\eth_{\sign}]]\in KK_*(C(\widehat{X})\otimes C^*_r\Gamma, C^*_r\Gamma)$ 
the class obtained from $[\eth_{\sign}]\in KK_*(C(\widehat{X}),\bbC)$ by tensoring with  $C^*_r\Gamma$, then
$\Ind (\wt \eth_{\sign})$ is equal to the Kasparov product of the class defined by Mishchenko bundle 
$[\widetilde{C^*_r}\Gamma]\in  KK_0(\bbC,C(\widehat{X})\otimes C^*_r\Gamma)$ with  $[[\eth_{\sign}]]$:
\begin{equation}\label{tensor}
\Ind (\wt \eth_{\sign})= [\widetilde{C^*_r}\Gamma]\otimes [[\eth_{\sign}]]
\end{equation}
In particular, the index class $\Ind (\wt \eth_{\sign})$ does not depend on the choice of the iterated conic metric.
\end{theorem}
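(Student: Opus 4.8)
The plan is to assemble Theorem~\ref{theo:kk} from the pieces already in hand: Proposition~\ref{prop:esa-gamma} gives essential self-adjointness and the $C^*_r\Gamma$-compact embedding of the maximal domain, while Proposition~\ref{prop:regularity} gives regularity, hence the invertibility of $(i\pm\wt\eth_{\sign})$ and $(1+\wt\eth_{\sign}^2)$. These are exactly the conditions needed to say that $(L^2_{\iie,\Gamma}(X;\Iie\Lambda_\Gamma^*X),\wt\eth_{\sign})$ is an unbounded Kasparov $(\bbC,C^*_r\Gamma)$-bimodule in the sense of Baaj--Julg \cite{Baaj-Julg}: the bounded transform $\wt\eth_{\sign}(1+\wt\eth_{\sign}^2)^{-1/2}$ is a self-adjoint $C^*_r\Gamma$-operator, the grading (in the even case) or its absence (in the odd case) matches the parity of $\dim X$, and $1-(\wt\eth_{\sign}(1+\wt\eth_{\sign}^2)^{-1/2})^2=(1+\wt\eth_{\sign}^2)^{-1}$ is $C^*_r\Gamma$-compact precisely because the maximal domain sits inside $\bigcap_{\eps>0}\rho^{1-\eps}H^1_{\iie,\Gamma}$, which embeds compactly. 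This yields the class $\Ind(\wt\eth_{\sign})\in KK_*(\bbC,C^*_r\Gamma)=K_*(C^*_r\Gamma)$.

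For the factorization \eqref{tensor}, the strategy is to recognize that the construction in the proof of Proposition~\ref{prop:regularity} \emph{is} already a Kasparov product. There we built, following Skandalis as presented in Rosenberg--Weinberger \cite{ros-weinberger}, the finitely generated projective $\cA$-module $\mathcal P$ with $P=C^0(\widehat X;\wt{C^*_r}\Gamma)=\mathcal P\otimes_\cA A$, the identification $P\hat\otimes_A\mathcal E = L^2_{\iie,\Gamma}(X;\Iie\Lambda_\Gamma^*X)$, and we verified in the Lemma that $\wt\eth_{\sign}$ is a $\mathcal D$-connection for the operator $\mathcal D=\eth_{\sign}\otimes\Id_{C^*_r\Gamma}$ representing $[[\eth_{\sign}]]$. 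By the definition of the Kasparov product of the bimodule $[\widetilde{C^*_r}\Gamma]\in KK_0(\bbC,C(\widehat X)\otimes C^*_r\Gamma)$ (which is just the projective module $P$ with zero operator) against $[[\eth_{\sign}]]\in KK_*(C(\widehat X)\otimes C^*_r\Gamma,C^*_r\Gamma)$, a cycle for the product is exactly a $\mathcal D$-connection on $P\hat\otimes_A\mathcal E$ that is positive modulo compacts in the appropriate sense; our $\wt\eth_{\sign}$ satisfies these, so it represents $[\widetilde{C^*_r}\Gamma]\otimes[[\eth_{\sign}]]$. Thus \eqref{tensor} follows from the uniqueness (up to operatorial homotopy) of the Kasparov product, once we check the connection and positivity conditions, all of which were essentially established in the proof of Proposition~\ref{prop:regularity}.

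Finally, metric independence of $\Ind(\wt\eth_{\sign})$ is immediate from \eqref{tensor}: the class $[\widetilde{C^*_r}\Gamma]$ is purely topological, and $[[\eth_{\sign}]]$ is obtained by tensoring $[\eth_{\sign}]\in KK_*(C(\widehat X),\bbC)$ with $C^*_r\Gamma$, where $[\eth_{\sign}]$ was already shown in Theorem~\ref{theo:k-homology} to be independent of the choice of iterated conic metric (via the operatorial homotopy along a path $g_t$ of such metrics, which exists by Proposition~\ref{prop:homotopymet} and yields a homotopy of cycles à la Hilsum \cite{Hilsum-LNM}). Hence $\Ind(\wt\eth_{\sign})$ depends only on the smooth stratified Witt space $\widehat X$ and the reference map $r$.

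I expect the main obstacle to be the careful verification that $\wt\eth_{\sign}$ satisfies the positivity (or ``connection plus grading-compatibility'') condition in Kasparov's technical definition of the product, rather than merely being \emph{a} $\mathcal D$-connection: one must check that the relevant commutators $[\mathcal F_{\mathcal D}\oplus 0, T_\xi]$ and the positivity of $\mathcal F_{\wt\eth_{\sign}}\mathcal F_{\mathcal D}+\mathcal F_{\mathcal D}\mathcal F_{\wt\eth_{\sign}}$ modulo $C^*_r\Gamma$-compacts hold, and that the module structures and gradings on $\mathcal E$, $P$, and $P\hat\otimes_A\mathcal E$ are compatible. This is exactly the point where one must invoke the Skandalis machinery in full (the density of $\mathcal A$ and its stability under holomorphic functional calculus, used to transfer the projective module) rather than just its output, and where the sketch in the Lemma inside Proposition~\ref{prop:regularity} needs to be upgraded to cover all the product axioms; the remaining bookkeeping is routine.
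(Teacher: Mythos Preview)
Your proposal is correct and follows essentially the same route as the paper: assemble the unbounded Kasparov bimodule from Propositions~\ref{prop:esa-gamma} and~\ref{prop:regularity}, deduce $C^*_r\Gamma$-compactness of $(1+\wt\eth_{\sign}^2)^{-1}$ from the compact inclusion of the domain, identify \eqref{tensor} via the $\mathcal D$-connection machinery already set up in the proof of Proposition~\ref{prop:regularity}, and read off metric independence from Theorem~\ref{theo:k-homology}. Your concern in the final paragraph about separately verifying the positivity axiom for the Kasparov product is unnecessary: in the unbounded Baaj--Julg/Skandalis framework the $\mathcal D$-connection condition (together with regularity and compact resolvent, both already established) is precisely what guarantees that the resulting cycle represents the product, so the paper simply invokes ``the theorem of Skandalis on $\mathcal D$-connections'' at that step.
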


\begin{proof}
We already know that $\wt \eth_{\sign}$ is self-adjoint regular and $\bbZ_2$-graded in the even dimensional case.
It remains to show that the inverse of $(1+\wt \eth_{\sign}^2)$ is a $C^*_r \Gamma$-compact operator.
However, the domain of $\wt \eth_{\sign}$ is compactly included in  $L^2_{\iie,\Gamma}(X;\Iie\Lambda_\Gamma^*X)$;
thus $(i+ \wt \eth_{\sign})^{-1}$ and $(-i+\wt \eth_{\sign})^{-1}$ are both compacts. It follows that  $(1+\wt \eth_{\sign}^2)^{-1}$ is
compact. Thus  $(\wt \eth_{\sign}, L^2_{\iie,\Gamma}(X;\Iie\Lambda_\Gamma^*X))$ define an unbounded 
Kasparov $(\bbC,C^*_r\Gamma)$-bimodule as required. The equality $\Ind (\wt \eth_{\sign})= [\widetilde{C^*_r}\Gamma]\otimes [[\eth_{\sign}]]$
is in fact part of the theorem of Skandalis on $\mathcal{D}$-connections. Finally, since we have proved that 
$ [\eth_{\sign}]$, and thus $ [[\eth_{\sign}]]$, is metric independent, and since $ [\widetilde{C^*_r}\Gamma]$
is obviously metric independent, we conclude that $\Ind (\wt \eth_{\sign})$ has this property too. The Theorem is proved.
\end{proof}

\begin{corollary}\label{cor:assembly}
Let $\beta:K_* (B\Gamma)\to K_* (C^*_r\Gamma)$ be the assembly map; let $r_* [\eth_{\sign}]\in K_* (B\Gamma)$
the push-forward of the signature K-homology class. Then 
\begin{equation}\label{assembly}
\beta(r_* [\eth_{\sign}])=\Ind (\wt \eth_{\sign}) \text{ in }  K_* (C^*_r\Gamma)
\end{equation}
\end{corollary}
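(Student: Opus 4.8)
The plan is to deduce \eqref{assembly} formally from Theorem~\ref{theo:kk}, in particular from the factorization \eqref{tensor}, together with the standard description of the assembly map as a Kasparov product with the Mishchenko bundle over $B\Gamma$. Since $\widehat X$ is compact, the classifying map $r$ factors through a finite subcomplex of $B\Gamma$, so $r_*[\eth_{\sign}]$ lies in the image of $K_*$ of a compact subspace and the assembly map is unambiguously defined on it; I would begin by recalling this and fixing once and for all a model for $\beta$. Writing $\mathcal{L}_{B\Gamma} = E\Gamma\times_\Gamma C^*_r\Gamma$ for the Mishchenko bundle over $B\Gamma$ and $[\mathcal{L}_{B\Gamma}]\in KK_0(\bbC, C_0(B\Gamma)\otimes C^*_r\Gamma)$ for its Kasparov class, the assembly map is, in Kasparov's picture, given by
\[
\beta(\xi) = [\mathcal{L}_{B\Gamma}] \otimes_{C_0(B\Gamma)} \xi, \qquad \xi \in K_*(B\Gamma)=KK_*(C_0(B\Gamma),\bbC).
\]

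The first real step is to move the classifying map out of the way. Pushforward in $K$-homology along $r$ is restriction of the module structure along the $*$-homomorphism $r^*\colon C_0(B\Gamma)\to C(\widehat X)$, i.e.\ $r_*[\eth_{\sign}] = (r^*)^*[\eth_{\sign}]$. I would then invoke the standard functoriality of the Kasparov product: for a $*$-homomorphism $\phi\colon A\to B$, a class $x\in KK(\bbC, A\otimes D)$ and $y\in KK(B,\bbC)$ one has $(\phi\otimes\Id_D)_*(x)\otimes_B y = x\otimes_A \phi^*(y)$ (both sides equal $x\otimes_A[\phi]\otimes_B y$, by associativity). Applying this with $\phi = r^*$, $A = C_0(B\Gamma)$, $B = C(\widehat X)$, $D = C^*_r\Gamma$, $x = [\mathcal{L}_{B\Gamma}]$ and $y = [\eth_{\sign}]$ gives
\[
\beta(r_*[\eth_{\sign}]) = [\mathcal{L}_{B\Gamma}]\otimes_{C_0(B\Gamma)} (r^*)^*[\eth_{\sign}]
= \big((r^*\otimes\Id)_*[\mathcal{L}_{B\Gamma}]\big) \otimes_{C(\widehat X)} [\eth_{\sign}].
\]
Next I would identify $(r^*\otimes\Id)_*[\mathcal{L}_{B\Gamma}]$ with $[\widetilde{C^*_r}\Gamma]\in KK_0(\bbC, C(\widehat X)\otimes C^*_r\Gamma)$; this is immediate because, by construction (see \S\ref{Operators}), the bundle $\widetilde{C^*_r}\Gamma$ over $\widehat X$ is precisely the pullback $r^*\mathcal{L}_{B\Gamma}$, and pullback of a flat bundle of finitely generated projective modules corresponds exactly to $(r^*\otimes\Id)_*$ on the level of Kasparov classes.

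The last step is to recognize the resulting expression. By associativity of the Kasparov product, $[\widetilde{C^*_r}\Gamma]\otimes_{C(\widehat X)}[\eth_{\sign}] = [\widetilde{C^*_r}\Gamma]\otimes_{C(\widehat X)\otimes C^*_r\Gamma}\tau_{C^*_r\Gamma}([\eth_{\sign}]) = [\widetilde{C^*_r}\Gamma]\otimes[[\eth_{\sign}]]$, which by \eqref{tensor} equals $\Ind(\wt\eth_{\sign})$. Chaining the displays proves \eqref{assembly}. The whole argument is formal $KK$-theory; the only point requiring genuine care — and hence the main obstacle — is the bookkeeping around the non-compactness of $B\Gamma$: choosing a model of the assembly map for which the identity $\beta(\xi) = [\mathcal{L}_{B\Gamma}]\otimes_{C_0(B\Gamma)}\xi$ holds on the nose, and verifying that the functoriality identities for the Kasparov product remain valid with a non-unital algebra in play. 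Once such a framework is fixed, no further analysis of the signature operator is needed, since everything analytic has already been absorbed into the constructions of $[\eth_{\sign}]$ and $\Ind(\wt\eth_{\sign})$.
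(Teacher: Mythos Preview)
Your argument is correct and is exactly the approach the paper takes: the paper's proof simply states that since $\Ind(\wt\eth_{\sign}) = [\widetilde{C^*_r}\Gamma]\otimes[[\eth_{\sign}]]$, the result follows immediately from the very definition of the assembly map, citing Kasparov. You have just spelled out that ``follows immediately'' in detail, including the functoriality step identifying $(r^*\otimes\Id)_*[\mathcal{L}_{B\Gamma}]$ with $[\widetilde{C^*_r}\Gamma]$; nothing is missing.
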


\begin{proof}
Since $\Ind (\wt \eth_{\sign})= [\widetilde{C^*_r}\Gamma]\otimes [[\eth_{\sign}]]$, this follows immediately
from the very definition of the assembly map. See  \cite{Kasparov-inventiones} (for a survey, see 
\cite{Kasparov-contemporary}).
\end{proof}

%%%%%%%%%%%%%%%%%%
%%%%%%%%%%%%%%%%%%
\subsection{Concluding remarks and perspectives.}
The results proved in Theorem \ref{theo:k-homology},
Theorem \ref{theo:kk} and 
Corollary \ref{cor:assembly} establish on Witt spaces the first, the fifth and the sixth item of the signature package presented
in the Introduction.
In  the second part of this work we shall complete the formulation and the proof of the signature package
on Witt spaces. 
We shall begin by establishing the Witt bordism \cite{Se}
invariance of the index class $\Ind (\wt \eth_{\sign})$;  the symmetric signature will be introduced through the self-duality properties
of the intersection homology sheaf (see for example \cite{Banagl}); homotopy invariance will be replaced by {\it stratified}
homotopy invariance (see for example \cite{friedman}) ; the Witt higher signatures will be simply
defined as the collection of numbers
$\{ <\alpha,r_* L_*(\hat{X})>\,,\quad \alpha\in H^*(B\Gamma,\bbQ) \}\,,$
with $ L_*(\hat{X})\in H_* (\hat{X},\bbQ)$ the Goresky-MacPherson homology L-class; 
the rational equality of the index class and of the symmetric signature will be established, following an idea
already exploited in \cite{BCS},
through the rational surjectivity of the natural map $\Omega^{{SO}}_* (B\Gamma)\to \Omega^{{\rm Witt}}_* (B\Gamma)$; 
finally, the Chern character of the K-homology signature class $[\eth_{\sign}]\in K_* (\widehat{X})$
has already been proved
by Cheeger and Moscovici-Wu to be equal, rationally, to  the Goresky-MacPherson  L-class $L_* (\hat{X})\in  H_* (\hat{X},\bbQ)$.


\begin{thebibliography}{999}
\bibitem{ALN} 
	\script{Ammann, B;Lauter, R.; Nistor, V.}
	{\em Pseudodifferential operators on manifolds with a Lie structure at infinity.} 
	Annals of Mathematics, {\bf 165}, (2007), pages 717-747.

\bibitem{APS}
	\script{Atiyah, M. F.; Patodi, V. K.; Singer, I. M.}
	{\em Spectral asymmetry and Riemannian geometry. I.} 
	Math. Proc. Cambridge Philos. Soc. {\bf 77} (1975), 43--69  

\bibitem{ADS}
	\script{Atiyah, M. F.; Donnelly, H.; Singer, I. M.}
	{\em Eta invariants, signature defects of cusps, and values of $L$-functions.}
	Ann. of Math. (2)  {\bf 118}  (1983),  no. 1, 131--177.

\bibitem{atiyah-coverings}	
 \script{Atiyah, M. F.}
    {\em Elliptic operators, discrete groups and von Neumann
              algebras},
Colloque ``Analyse et Topologie'' en l'Honneur de
              Henri Cartan (Orsay, 1974),
    Ast\'erisque, No. 32-33, Soc. Math. France
   (1976).
   
   \bibitem{Baaj-Julg}
	\script{Baaj, S.; Julg, P.}
	{\em Th\'eorie bivariante de Kasparov et op\'erateurs non born\'es dans les $C\sp{\ast} $-modules hilbertiens.}
	C. R. Acad. Sci. Paris Sr. I Math.  {\bf 296}  (1983), no. 21, 875--878.

\bibitem{Banagl}
	\script{Banagl, M.}
	{\em Topological invariants of stratified spaces.}
	Springer Monographs in Mathematics. 
	Springer, Berlin, 2007. xii+259 pp. ISBN: 978-3-540-38585-1; 3-540-38585-1
	
\bibitem{BCS}
\script{	Banagl, M.;  Cappell S.E.  and Shaneson J. L.} {\em Computing twisted signatures and L-
classes of stratified spaces.} Math. Ann. {\bf 326} (2003), no. 3, 589-623.

\bibitem{Blackadar}
	\script{Blackadar, B.}
	{\em $K$-theory for operator algebras.}
	Second edition. Mathematical Sciences Research Institute Publications, 5. 
	Cambridge University Press, Cambridge, 1998. xx+300 pp. ISBN: 0-521-63532-2 

\bibitem{BHS}
	\script{Brasselet, J-P. ; Hector, G.; Saralegi, M.}
	 {\em Th\`eor\'eme de De Rham pour les Vari\'et\'es Stratifi\'ees.}
	Ann. Global Anal. Geom.  {\bf 9}  (1991),  no. 3, pages  211-243.

\bibitem{BL}
	\script{Brasselet, J-P. ; Legrand, A.}
	{\em Un complexe de formes differentielles a croissance bornee sur une variete stratifiee}.
	Annali della Scuola Normale Superiore di Pisa, Classe di Scienze 4e serie, tome 21, No {\bf 2} (1994), 213-234. 

\bibitem{BS}
	\script{Br\"uning, J.; Seeley, R.}
	{\em An index theorem for first order regular singular operators.}
	Amer. J. Math.  {\bf 110}  (1988),  no. 4, 659--714.

\bibitem{Cheeger-symp}
	\script{Cheeger, J.}
	{\em On the Hodge theory of Riemannian pseudomanifolds.}
	Geometry of the Laplace operator (Proc. Sympos. Pure Math., Univ. Hawaii, Honolulu, Hawaii, 1979),  pp. 91--146,
	Proc. Sympos. Pure Math., XXXVI, Amer. Math. Soc., Providence, R.I., 1980. 

\bibitem{Ch} 
	\script{Cheeger, J.}
	{\em Spectral geometry of singular Riemannian spaces. }
	J. Differential Geom.  {\bf 18}  (1983),  no. 4, 575-657.

\bibitem{Cheeger-Dai}
	\script{Cheeger, J.;Dai, X.}
	{\em $L^2$-cohomology of a non-isolated conical singularity and nonmultiplicativity of the signature.}
	preprint.

\bibitem{CM}
	\script{Connes, A.; Moscovici, H.}
	{\em Cyclic cohomology, the Novikov conjecture and hyperbolic groups.}
	Topology  {\bf 29}  (1990),  no. 3, 345--388.

\bibitem{oberwolfach} 
	\script{Ferry, S. C.; Ranicki, A.; Rosenberg, J.}
	{\em A history and survey of the Novikov conjecture.}
	Novikov conjectures, index theorems and rigidity, Vol. 1 (Oberwolfach, 1993),  7--66, 
	London Math. Soc. Lecture Note Ser., 226, Cambridge Univ. Press, Cambridge, 1995. 

\bibitem{friedman}
\script{Friedman G.} {\em Stratifed  
fibrations and the intersection homology of the regular neighborhoods of bottom strata.}
Topology Appl. {\bf 134} (2003), 69--109. 



\bibitem{Gil-Mendoza}
	\script{Gil, J. B.; Mendoza, G. A.}
	{\em Adjoints of elliptic cone operators.}
	Amer. J. Math.  {\bf 125}  (2003),  no. 2, 357--408.

\bibitem{GM}
	\script{Goresky, M.; MacPherson, R.}
	{\em Intersection homology theory.}
	Topology  {\bf 19}  (1980), no. 2, 135--162.

\bibitem{higson-roe} 
   \script{Higson, N. and Roe, J.}
     {\em Analytic {$K$}-homology},
    Oxford Mathematical Monographs,
      Oxford University Press (2000),
     pp xviii+405.
     
      
\bibitem{Hilsum-LNM}
	\script{Hilsum, M.}
	{\em Signature operator on Lipschitz manifolds and unbounded Kasparov bimodules.}
	Operator algebras and their connections with topology and ergodic theory (Buteni, 1983),  254--288, 
	Lecture Notes in Math., 1132, Springer, Berlin, 1985.

\bibitem{Hilsum-K} 
	\script{Hilsum, M.}
	{\em Fonctorialit\'e en $K$-th\'eorie bivariante pour les vari\'et\'es lipschitziennes.}
	$K$-Theory  {\bf 3}  (1989),  no. 5, 401--440.

\bibitem{Hughes-Weinberger}
	\script{Hughes, B.; Weinberger, S.}
	{\em Surgery and stratified spaces.}
	Surveys on surgery theory, Vol. 2,  319--352, 
	Ann. of Math. Stud., 149, Princeton Univ. Press, Princeton, NJ, 2001.

\bibitem{Hunsicker-Mazzeo}
	\script{Hunsicker, E.; Mazzeo, R.}
	{\em Harmonic forms on manifolds with edges.}
	Int. Math. Res. Not.  2005,  no. {\bf 52}, 3229--3272.

\bibitem{Kasparov-inventiones} 
	\script{Kasparov, G.}
	{\em Equivariant KK-theory and the Novikov conjecture.}
	Invent. Math. {\bf 91} (1988), 147-201.

\bibitem{Kasparov-contemporary} 
	\script{Kasparov, G.}
	{\em Novikov's conjecture on Higher Signatures: The Operator K-Theory Approach.}
	Contem. Math. Vol. {\bf 145} (1993)

\bibitem{Kirwan-Woolf}
	\script{Kirwan, F.; Woolf, J.}
	{\em An introduction to intersection homology theory.}
	Second edition. Chapman \& Hall/CRC, Boca Raton, FL, 2006. xiv+229 pp. ISBN: 978-1-58488-184-1; 1-58488-184-4

\bibitem{Kordyukov}
	\script{Kordyukov, Y.}
	{\em $L\sp p$-theory of elliptic differential operators on manifolds of bounded geometry.}  
	Acta Appl. Math.  {\bf 23}  (1991),  no. 3,  pages 223-260.

\bibitem{Lance}
	\script{Lance, E. C.}
	{\em Hilbert $C\sp *$-modules.}
	A toolkit for operator algebraists. London Mathematical Society Lecture Note Series, 210. 
	Cambridge University Press, Cambridge, 1995. x+130 pp. ISBN: 0-521-47910-X 

\bibitem{LLP}
	\script{Leichtnam, E.; Lott, J.; Piazza, P.}
	{\em On the homotopy invariance of higher signatures for manifolds with boundary.}
	J. Differential Geom.  {\bf 54}  (2000),  no. 3, 561--633.

\bibitem{LP-SMF}
	\script{Leichtnam, E.; Piazza, P.}
	{\em The $b$-pseudodifferential calculus on Galois coverings and a higher Atiyah-Patodi-Singer index theorem.}
	M\'em. Soc. Math. Fr. (N.S.)  No. 68  (1997), iv+121 pp.


\bibitem{LPFOURIER}
\script{Leichtnam, E.; Piazza, P.}
 {\em Elliptic operators and higher signatures},
   Ann. Inst. Fourier (Grenoble),
  {\bf 54}, no. 5  (2004),  1197--1277.
       
\bibitem{L}
	\script{Lesch, M.}
	{\em Operators of Fuchs type, conical singularities, and asymptotic methods.}
	Teubner-Texte zur Mathematik [Teubner Texts in Mathematics], 136. 
	B. G. Teubner Verlagsgesellschaft mbH, Stuttgart, 1997. 190 pp. ISBN: 3-8154-2097-0

\bibitem{Lott}
	\script{Lott, J.}
	{\em Superconnections and higher index theory.}
	Geom. Funct. Anal.  {\bf 2}  (1992),  no. 4, 421--454.

\bibitem{Mather}
	\script{Mather, J. N.}
	{\em Stratifications and mappings.}
	Dynamical systems (Proc. Sympos., Univ. Bahia, Salvador, 1971),  pp. 195--232. Academic Press, New York, 1973.

\bibitem{Mazzeo:edge}
	\script{Mazzeo, R.}
	{\em Elliptic theory of differential edge operators. I.}
	Comm. Partial Differential Equations  {\bf 16}  (1991),  no. 10, 1615--1664.

\bibitem{Meladze-Shubin} 
	\script{Meladze, G.;Shubin, M.}
	{\em Algebras of pseudodifferential operators on unimodular Lie groups.} 
	Dokl. Akad. Nauk SSSR  {\bf 279}  (1984),  no. 3, pages 542-545.

\bibitem{APS Book} 
	\script{Melrose, R. B.}
	{\em The Atiyah-Patodi-Singer index theorem.} 
	Research Notes in Mathematics, 4. A K Peters, Ltd.,
	Wellesley, MA, 1993. xiv+377 pp. ISBN 1-56881-002-4

\bibitem{Melrose:Kyoto}
	\script{Melrose, R. B.}
	{\em Pseudodifferential operators, corners and singular limits.}
	Proceedings of the International Congress of Mathematicians, Vol. I, II (Kyoto, 1990),  217--234, Math. Soc. Japan, Tokyo, 1991.
	
\bibitem{Mich-Fomenko} 
	\script{Mischenko, A.; Fomenko, A.}
	{\em The index of elliptic operators over $C^*-$algebras.}
	Izv. Akad. Nauk SSSR, Ser. Mat. {\bf 43}, (1979), pages 831-859.

\bibitem{Mosc-Wu-Witt} 
	\script{Moscovici, H; Wu, F.}
	{\em Straight Chern Character for Witt spaces.}
	Fields Institute Communications Vol 17 (1997).

\bibitem{Mue}
	\script{M\"uller, W.}
	{\em Signature defects of cusps of Hilbert modular varieties and values of $L$-series at $s=1$.}
	J. Differential Geom.  {\bf 20}  (1984),  no. 1, 55--119.

\bibitem{NSS1}
	\script{Nazaikinskii, V. E.; Savin, A. Yu.; Sternin, B. Yu.}
        {\em Pseudodifferential operators on stratified manifolds. I} Differ. Uravn. {\bf 43} (2007), no. 4, 519-532.
	
\bibitem{NSS2}
	\script{Nazaikinskii, V. E.; Savin, A. Yu.; Sternin, B. Yu.}
        {\em Pseudodifferential operators on stratified manifolds. II} Differ. Uravn. {\bf 43} (2007), no. 5, 685-696.

\bibitem{Pfl}
	\script{Pflaum, M. J.}
	\em{ Analytic and geometric study of stratified spaces.}
	Lecture Notes in Mathematics, 1768. Springer-Verlag, Berlin, 2001. viii+230 pp. ISBN: 3-540-42626-4

\bibitem{rosenberg-anft}
        \script{Rosenberg, J.},
     \em{Analytic Novikov for topologists},
     Novikov conjectures, index theorems and rigidity, Vol. 1 (Oberwolfach, 1993),  338-372, 
	London Math. Soc. Lecture Note Ser., 226, Cambridge Univ. Press, Cambridge, 1995. 
		
\bibitem{ros-weinberger} 
	\script{Rosenberg, J; Weinberger, S.}
	{\em Higher $G$-signatures for Lipschitz manifolds.}
	$K$-Theory  {\bf 7}  (1993),  no. 2, 101--132.

\bibitem{Sch-MSRI}
	\script{Schulze, B.-W.}
	{\em The iterative Structure of Corner Operators.}
	preprint, 2009, arxiv.org/abs/0905.0977

\bibitem{Se}
	\script{Siegel, P. H.}
	{\em Witt spaces: a geometric cycle theory for $K{\rm O}$-homology at odd primes.}
	Amer. J. Math.  {\bf 105}  (1983),  no. 5, 1067--1105.

\bibitem{ST} 
	\script{Solovyev, Y. P.; Troitsky, E. V.}
	{\em $C^*$-algebras and elliptic operators in differential topology} 
	AMS, 2001, Translations of mathematical monographs, vol 192.

\bibitem{Vai}
	\script{Vaillant, B.}
	{\em Index and spectral theory for manifolds with generalized fibred cusps.}
	Ph.D. dissertation, Bonner Math. Schriften 344, 
	Univ. Bonn, Mathematisches Institut, Bonn, 2001.



\bibitem{vaillant-diploma}
\script{Vaillant, B.}
   {\em Indextheorie f\"ur \"Uberlagerungen},
Universit\"at Bonn,
 1997, Diplomarbeit. Available in English on  arXiv:0806.4043.
		
		
%%
\bibitem{Ver} 
	\script{Verona, A.}
	{\em Stratified mappings-structure and triangulability.} 
	Lecture Notes in Mathematics, Springer-Verlag, 1102, (1984).

\end{thebibliography}
\end{document}